\numberwithin{equation}{section}
\DeclareMathOperator{\Fr}{Fr}
\theoremstyle{plain}
\newtheorem{lem}[equation]{Lemma}
\newtheorem{cor}[equation]{Corollary}
\newtheorem{prop}[equation]{Proposition}
\newtheorem{thm}[equation]{Theorem}
\newtheorem{thmm}[equation]{Theorem}
\newtheorem*{sublem}{Sublemma}
\theoremstyle{definition}
\newtheorem{ex}[equation]{Example}
\newtheorem{rem}[equation]{Remark}
\newtheorem{dfn}[equation]{Definition}
\newtheorem{conv}[equation]{Convention}
\newcommand{\Emb}{\operatorname{Emb}}
\newcommand{\w}{\wedge}
\newcommand{\ZF}{\mathbb{Z}F}
\newcommand{\op}{{\textrm{\rm op}}}
\newcommand{\Sm}{\mathbf{Sm}}
\newcommand{\A}{\mathbb{A}}
\newcommand{\PP}{\mathbb{P}}
\newcommand{\PPo}{\mathbb{P}^1}
\newcommand{\Ss}{\mathbb{S}}
\newcommand{\Os}{\mathcal{O}}
\newcommand{\Z}{\mathbb{Z}}
\newcommand{\Shv}{\mathbf{Shv}}
\newcommand{\Gm}{\mathbb{G}_m}
\newcommand{\G}{\mathbb{G}}
\newcommand{\GL}{\mathbf{GL}}
\newcommand{\Spec}{\operatorname{Spec}}
\newcommand{\gp}{\operatorname{gp}}
\newcommand{\colim}{\operatorname{colim}}
\newcommand{\uhom}{\underline{\operatorname{Hom}}}
\newcommand{\Hom}{\operatorname{Hom}}
\begin{document}

\title{Fibrant resolutions for motivic Thom spectra}

\author{Grigory Garkusha}
\address{Department of Mathematics, Swansea University, Fabian Way, Swansea SA1 8EN, UK}
\email{g.garkusha@swansea.ac.uk}

\author{Alexander Neshitov}
\address{Department of Mathematics, University of Western Ontario,
London, Ontario N6A 5B7, Canada}
\email{alexander.neshitov@gmail.com}


\keywords{Motivic homotopy theory, motivic Thom spectra, $E$-framed motives}

\subjclass[2010]{14F42, 55P42}

\begin{abstract}
Using the theory of framed correspondences developed by
Voevodsky~\cite{Voe2} and the machinery of framed motives introduced and developed in~\cite{GPMain},
various explicit fibrant resolutions for a motivic Thom spectrum $E$ are constructed in this paper.
It is shown that the bispectrum
   $$M_E^{\mathbb G}(X)=(M_{E}(X),M_{E}(X)(1),M_{E}(X)(2),\ldots),$$
each term of which is a twisted $E$-framed motive of $X$, introduced in the paper, represents $X_+\w E$
in the category of bispectra.
As a topological application, it is proved
that the $E$-framed motive with finite coefficients $M_E(pt)(pt)/N$, $N>0$, of the point $pt=\Spec k$
evaluated at $pt$ is a quasi-fibrant model of the topological $S^2$-spectrum $Re^\epsilon(E)/N$
whenever the base field $k$ is algebraically closed of
characteristic zero with an embedding $\epsilon:k\hookrightarrow\mathbb C$.
Furthermore, the algebraic cobordism spectrum $MGL$ is computed in terms of $\Omega$-correspondences
in the sense of~\cite{Omega}. It is also proved that $MGL$ is represented by a bispectrum each term of
which is a sequential colimit of simplicial smooth quasi-projective varieties.
\end{abstract}

\maketitle

\footskip30pt

\thispagestyle{empty} \pagestyle{plain}

\tableofcontents

\section{Introduction}

Voevodsky~\cite{Voe2} introduced framed correspondences in order to
suggest a new approach to stable motivic homotopy theory which will be more
amenable to explicit computations.
In~\cite{GPMain} the machinery of (big) framed motives is
developed converting the classical Morel--Voevodsky stable motivic homotopy theory
into a local theory of framed bispectra and yielding a new model for
$SH(k)$ in~\cite{GP5}. A key computation of~\cite{GPMain} is to give
explicit fibrant resolutions of the suspension spectra/bispectra of
smooth algebraic varieties.

The main results of this paper are concentrated around explicit computations
of motivic Thom spectra, decribed below, which play a central role in stable motivic homotopy theory.
We use computational miracles of Voevodsky's framed correspondences
to extend the machinery of framed motives that are of crucial importance in~\cite{GPMain}
to ``$E$-framed motives", where $E$ is a motivic Thom $T$-spectrum.

By definition, $E$ is called a {\it Thom spectrum}
if every space $E_n$ has the form
\[E_n=\colim_i E_{n,i}, \text{  }E_{n,i}=V_{n,i}/(V_{n,i}-Z_{n,i}),\]
where $V_{n,i}\to V_{n,i+1}$ is a directed sequence of smooth
varieties, $Z_{n,i}\to Z_{n,i+1}$ is a directed system of smooth
closed subschemes in $V_{n,i}.$ We say that a Thom spectrum $E$ {\it
has the bounding constant $d$\/} if $d$ is the minimal integer such
that codimension of $Z_{n,i}$ in $V_{n,i}$ is strictly greater than
$n-d$ for all $i,n$. If $E$ is also symmetric then it is said to be
a {\it spectrum with contractible alternating group action}, if for
any $n$ and any even permutation $\tau\in \Sigma_n$ there is an
$\A^1$-homotopy $E_n\to \uhom(\A^1,E_n)$ between the action of
$\tau$ and the identity map. In other words, $E$ neglects the action
of even permutations up to $\A^1$-homotopy. The most interesting
examples of such symmetric Thom spectra, all of which have the
bounding constant $d=1$, are given by the spectra $MGL$, $MSL$ or
$MSp$ (the latter two are regarded as $T^2$-symmetric spectra for
which the above definitions remain the same). These Thom spectra are of fundamental importance.
If we regard $E$ as a
$\PPo$-spectrum, denote by $\Theta^\infty(E)$ the standard
stabilization $\colim_n\uhom(\mathbb P^{\w n},E[n])$ of $E$. Taking
the Suslin complex at each level, we get a $\PPo$-spectrum
$C_*\Theta^\infty(E)$.

Our first computation (see Theorem~\ref{thmthom}) is as follows.

\begin{thmm}
Let $E$ be a Thom spectrum with the bounding constant $d$. Let
$C_*\Theta^{\infty}(E)^f$ be a spectrum obtained from
$C_*\Theta^{\infty}(E)$ by taking a level Nisnevich local fibrant replacement.
Then the spectrum $C_*\Theta^{\infty}(E)^f$ is motivically fibrant
starting from level $\max(0,d)$ and is stably equivalent to $E$.
\end{thmm}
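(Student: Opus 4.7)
The statement combines two assertions: stable equivalence with $E$ and motivic fibrancy in levels $\geq \max(0,d)$. For the stable equivalence, the map $E \to \Theta^\infty(E)$ is a $\PPo$-stable equivalence by general principles, since $\Theta^\infty(E)_n = \colim_m \uhom(\PP^{\w m}, E_{n+m})$ is the standard stabilization in the $\PPo$-direction. The levelwise Suslin complex $C_*$ and the level local fibrant replacement preserve Nisnevich-local weak equivalences (hence stable equivalences): $C_*$ because it is an $\A^1$-localization on framed presheaves, and the replacement because it is a levelwise local weak equivalence by construction. Composing these three maps yields the desired equivalence $E \simeq C_*\Theta^\infty(E)^f$ in the stable motivic homotopy category.

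For the motivic fibrancy starting at level $\max(0,d)$, I would combine the framed motives machinery of~\cite{GPMain} with the Thom-space presentation of $E$. Level local fibrancy is built in. For $\A^1$-invariance of each level $n \geq \max(0,d)$, the key input is that $\Theta^\infty$ endows the resulting presheaves with (big) framed transfers, since the Thom quotients $E_{n,i}=V_{n,i}/(V_{n,i}-Z_{n,i})$ admit natural framings coming from the presentation, so that $C_*$ produces strictly $\A^1$-invariant Nisnevich sheaves in the sense of \cite{GPMain}. The $\Omega$-spectrum condition, i.e.\ that the assembly maps $C_*\Theta^\infty(E)^f_n \to \uhom(\PPo, C_*\Theta^\infty(E)^f_{n+1})$ are local weak equivalences for $n \geq \max(0,d)$, is where the bounding constant enters: for such $n$ the codimension of $Z_{n,i}$ in $V_{n,i}$ is strictly greater than $n-d\geq 0$, so by homotopy purity each $E_{n,i}$ is $\A^1$-equivalent to the Thom space of a vector bundle of positive rank. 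This connectivity input is exactly what is needed to compute the $\PPo$-loop space of $C_*\Theta^\infty(E)^f_{n+1}$ via the framed motives cancellation/reconstruction theorem and conclude that the assembly map is a motivic equivalence.

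I expect the $\Omega$-spectrum condition to be the main obstacle. The stable equivalence is essentially formal, and $\A^1$-invariance is a clean consequence of framed transfers once the Thom presentation of $E$ is exploited. The $\Omega$-spectrum condition, however, requires converting the codimension bound into an honest $\PPo$-loop computation, which amounts to extending the fibrant-resolution theorem of~\cite{GPMain} (formulated for suspension spectra of smooth varieties) to the relative/Thom setting $V_{n,i}/(V_{n,i}-Z_{n,i})$ and then passing the directed colimit $E_n=\colim_i E_{n,i}$ through the construction. The latter is delicate, as filtered colimits of fibrant motivic spectra need not be fibrant; one has to verify that the Thom-spectrum structure together with the bounding-constant hypothesis is sufficient to control the colimit compatibly with the motivic model structure, and this is where the hypothesis on $d$ is likely being used most essentially.
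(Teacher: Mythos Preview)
Your outline is broadly correct, but you misidentify where the difficulty lies and how the bounding constant is used.

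The $\Omega$-spectrum condition is \emph{not} the main obstacle. By a simple telescope argument (Lemma~\ref{thetaomega} in the paper), the adjoint bonding maps of $\Theta^\infty(E)$ are already \emph{isomorphisms} of sheaves, before taking $C_*$ or any fibrant replacement: the map $\Theta^n(E)_i\to\uhom(\PP^{\w 1},\Theta^n(E)_{i+1})$ is exactly the stabilization $\Theta^n(E)_i\to\Theta^{n+1}(E)_i$, so the colimit is an honest isomorphism. No cancellation theorem is needed here. The only issue is whether this survives level local fibrant replacement, i.e.\ whether the map $\uhom(\PP^{\w 1},C_*\Theta^\infty(E)_{i+1})\to\uhom(\PP^{\w 1},C_*\Theta^\infty(E)^f_{i+1})$ is a local weak equivalence. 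This is where the bounding constant actually enters: after reducing to a single layer $L_nE$ (so the space in question is $\uhom(\PP^{\w n+1},C_*\Fr(E_n\w T^{i+1}))$), one needs that local fibrant replacement commutes with $\uhom(\PP^{\w n+1},-)$ on these framed-correspondence spaces, and this is precisely the content of Theorem~\ref{geommain}/Corollary~\ref{geomaincor}, proved by a Mayer--Vietoris induction reducing to the trivial-embedding case $X/(X-Z)\cong Z_+\w T^d$. The same input (Corollary~\ref{omegass}) shows each individual space $C_*\Theta^\infty(E)^f_i$ is motivically fibrant for $i\ge\max(0,d)$.

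Your worry about filtered colimits is also misplaced: the paper works throughout in the \emph{flasque} local/motivic model structures, in which filtered colimits of fibrant objects are fibrant, so passing the layer filtration $E=\colim_n L_nE$ through the construction is routine. In short, the heavy lifting is not a $\PPo$-loop computation via cancellation but rather the Mayer--Vietoris argument (Section~4) showing that for Thom quotients $X/(X-Z)$ of codimension $>i$, the spaces $\uhom(\PP^{\w i},C_*\Fr(X/(X-Z)))$ are already close enough to their local fibrant replacements.
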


If $E$ is a symmetric $T$-spectrum, then there is another natural
stabilization functor $\Theta^{\infty}_{sym}(E)$ (see
Definition~\ref{thetasymdef}). It is different from
$\Theta^{\infty}(E)$ and involves actions of certain permutations on
$E$. As above, we can take the Suslin complex at each level and form
a $\PPo$-spectrum $C_*\Theta^\infty_{sym}(E)$.

Given a Thom $T$-spectrum $E$, denote by $\Fr_n^E(X)$ the space
$\Fr_n^E(X)=\uhom(\PP^{\w n},X_+\w E_{n})$ and
$\Fr^E(X):=\colim_n\Fr^E_n(X)=\Theta^{\infty}(X_+\w E)_0$. By the
Voevodsky lemma~\ref{Voevlemma} $\Fr_n^E(X)$ and $\Fr^E(X)$ have an
explicit geometric description. We can similarly define the sheaves
$\Fr^E(T^i)$, $i\geqslant 0$. Altogether they form a $\PPo$-spectrum
$\Fr^E(S_T):=(\Fr^E(S^0),\Fr^E(T),\Fr^E(T^2),\ldots)$. As usual,
denote by $C_*\Fr^E(S_T)$ the $\PPo$-spectrum obtained from
$\Fr^E(S_T)$ by taking the Suslin complex levelwise.

The next computation (see Theorem~\ref{main}) gives the following
fibrant resolutions of $E$ (starting from some level).

\begin{thmm}
For a symmetric Thom $T$-spectrum $E$ with the bounding constant $d$
and contractible alternating group action the following
$\PPo$-spectra are isomorphic to $E$ in $SH(k)$ and motivically
fibrant starting from level $\max(0,d)$:
\begin{itemize}
\item $C_*\Fr^E(S_T)^f$
\item $C_*\Theta^{\infty}(E)^f$
\item $C_*\Theta^{\infty}_{sym}(E)^f$,
\end{itemize}
where ``$f$'' refers to levelwise Nisnevich local fibrant replacements of the
corresponding spectra.
\end{thmm}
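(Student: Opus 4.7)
My plan is to deduce this theorem from Theorem~\ref{thmthom} by producing a chain of levelwise motivic weak equivalences
\[
C_*\Fr^E(S_T)^f \simeq C_*\Theta^{\infty}_{sym}(E)^f \simeq C_*\Theta^{\infty}(E)^f
\]
of $\PPo$-spectra. Since Theorem~\ref{thmthom} already identifies the rightmost term with $E$ in $SH(k)$ and establishes motivic fibrancy starting from level $\max(0,d)$, the corresponding conclusions for the other two spectra will follow by transport along these equivalences, and all three will represent $E$ in $SH(k)$.

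For the comparison $C_*\Theta^{\infty}_{sym}(E)^f \simeq C_*\Theta^{\infty}(E)^f$, I would write down the canonical map between $\Theta^\infty(E)$ and $\Theta^\infty_{sym}(E)$ produced from the symmetric structure of $E$. The essential discrepancy at level $n$ between the naive and symmetric stabilizations is measured by the action of certain permutations $\tau \in \Sigma_n$ on $E_n$, composed via the structure maps. The hypothesis of contractible alternating group action states that for even $\tau$ there is a canonical $\A^1$-homotopy $E_n \to \uhom(\A^1,E_n)$ between the action of $\tau$ and the identity; for odd $\tau$, the standard motivic fact that the swap automorphism on $\PP^{\w 2}$ is $\A^1$-homotopic (at the level of the stabilization) to the identity handles the remaining parity. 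Applying $C_*$ converts these $\A^1$-homotopies into simplicial homotopies, so after levelwise local fibrant replacement the comparison map becomes a sectionwise weak equivalence.

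For $C_*\Fr^E(S_T)^f \simeq C_*\Theta^{\infty}_{sym}(E)^f$, I would use the definitional identity $\Fr^E(T^i) = \Theta^{\infty}(T^i_+ \w E)_0$ together with the explicit geometric description of these spaces provided by Voevodsky's Lemma~\ref{Voevlemma}. The structure maps of $\Fr^E(S_T)$ are assembled from those of $E$ together with $T$-suspensions, and they naturally match the structure maps of $\Theta^{\infty}_{sym}(E)$ after an identification that consumes an extra $T^i$-smash factor into the stabilization index using the symmetric $E$-structure. Showing that the resulting comparison is a levelwise equivalence after $C_*$ reduces to the observation that absorbing the extra $T^i$ via the $E$-structure maps rather than keeping it as an external smash factor differs by a permutation action that is trivialized by the contractible alternating group action hypothesis, exactly as in the preceding step.

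The main obstacle is the first of the two comparisons: making it rigorous that the $\A^1$-homotopies supplied by the contractible alternating group action assemble coherently with the filtered colimits defining $\Theta^\infty$ and $\Theta^\infty_{sym}$, so that after $C_*$ and local fibrant replacement one gets an honest levelwise weak equivalence (not merely a stable one). This coherence is what ultimately allows the fibrancy range from level $\max(0,d)$ established in Theorem~\ref{thmthom} to transfer to the symmetric and framed models. Once in place, the identification in $SH(k)$ of all three spectra with $E$ is immediate.
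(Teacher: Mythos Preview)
Your overall strategy---deduce everything from Theorem~\ref{thmthom} via a chain of levelwise equivalences and then transport fibrancy---matches the paper's. However, there is a genuine gap in your comparison of $C_*\Theta^{\infty}(E)$ with $C_*\Theta^{\infty}_{sym}(E)$: there is no evident canonical map of $\PPo$-\emph{spectra} between them. Lemma~\ref{spacetrue=fake} does give an isomorphism of \emph{spaces} $\Theta^\infty(E)_i\cong\Theta^\infty_{sym}(E)_i$ at each level, but the bonding maps of the two spectra are different (naive versus symmetric stabilization), and these levelwise isomorphisms do not assemble into a map of spectra. The paper circumvents this by a zigzag (Proposition~\ref{truefakezigzag}): both $C_*\Theta^{\infty}(E)$ and $C_*\Theta^{\infty}_{sym}(E)$ map into the common target $C_*\Theta^{\infty}(\Theta^{\infty}_{sym}(E))$, and a two-dimensional colimit argument (Lemma~\ref{colimequi}) shows each of these maps is a levelwise local equivalence from level $\max(0,d)$. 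The coherence issue you correctly flag is handled there by restricting the colimit to \emph{even} indices, so that all permutations arising are even and the contractible alternating group action applies directly; your proposed treatment of odd $\tau$ via the swap on $\PP^{\wedge 2}$ is not the mechanism used and would not on its own produce a map of spectra.

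For $C_*\Fr^E(S_T)\to C_*\Theta^{\infty}_{sym}(E)$ the paper does construct an honest map of spectra (Lemma~\ref{FrEsym}), so your outline for that step is essentially correct; Proposition~\ref{fr=true} carries out precisely the permutation-trivialization argument you sketch, again via an even-index trick (Lemma~\ref{colimeq}). Finally, the transfer of ``motivically fibrant from level $\max(0,d)$'' along a levelwise local equivalence between levelwise locally fibrant spectra is not automatic and the paper isolates it as a separate Sublemma; you should make that step explicit rather than folding it into a general ``transport along equivalences.''
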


Our next goal is to represent a Thom spectrum $E$ in the category of
$(S^1,\Gm^{\w 1})$-bispectra and construct an explicit fibrant
resolution for it. To this end, we introduce and study in
Section~\ref{sectionefr} $E$-framed motives of smooth algebraic
varieties $M_E(X)$, $X\in\Sm_k$. They are defined similarly to framed
motives introduced in~\cite{GPMain} and are explicit sheaves of
$S^1$-spectra.

The main result here (see Theorem~\ref{mainbisp}) is as follows.

\begin{thmm}\label{intro:main_thm}
Suppose $X\in \Sm_k$ and $E$ is a symmetric Thom $T$-spectrum with
the bounding constant $d$ and contractible alternating group action.

$(1)$ If $d=1$ then the $(S^1,\Gm^{\w 1})$-bispectrum
\[M_E^{\mathbb G}(X)_f:=(M_E(X)_f,M_E(X_+\w\Gm^{\w 1})_f,M_E(X_+\w\Gm^{\w 2})_f,\ldots)\]
is motivically fibrant and represents the $T$-spectrum $X_+\w E$ in
the category of bispectra, where ``$f$'' refers to stable local
fibrant replacements of $S^1$-spectra.

$(2)$ If $d<1$ then the $(S^1,\Gm^{\w 1})$-bispectrum
\[M_E^{\mathbb G}(X)_f:=(M_E(X)_f,M_E(X_+\w\Gm^{\w 1})_f,M_E(X_+\w\Gm^{\w 2})_f,\ldots)\]
is motivically fibrant and represents the $T$-spectrum $X_+\w E$ in
the category of bispectra, where ``$f$'' refers to level local
fibrant replacements of $S^1$-spectra.

$(3)$ If $d>1$ then the $(S^1,\Gm^{\w 1})$-bispectrum
\[\Omega_{S^{1}\w\Gm^{\w 1}}^{d-1}((M_{E[d-1]}(X)_f,M_{E[d-1]}(X_+\w\Gm^{\w 1})_f,
M_{E[d-1]}(X_+\w\Gm^{\w 2})_f,\ldots))\]
is motivically fibrant and represents the $T$-spectrum $X_+\w E$ in
the category of bispectra, where ``$f$'' refers to stable local
fibrant replacements of $S^1$-spectra. Here $E[d-1]$ stands for the
$(d-1)$-th shift of $E$ in the sense of Definition~\ref{shift}.
Another equivalent model for the $T$-spectrum $X_+\w E$ in the
category of bispectra is given by
\[\Omega_{S^{1}\w\Gm^{\w 1}}^{d-1}((M_{T^{d-1}\w E}(X)_f,
M_{T^{d-1}\w E}(X_+\w\Gm^{\w 1})_f,M_{T^{d-1}\w E}(X_+\w\Gm^{\w
2})_f,\ldots)).\] This bispectrum is motivically fibrant and ``$f$''
refers to stable local fibrant replacements of $S^1$-spectra.
\end{thmm}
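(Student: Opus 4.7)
The plan is to deduce Theorem~\ref{mainbisp} from the $\PPo$-spectrum level computation already done in Theorem~\ref{main}, by translating between $\PPo$-spectra with framed structure and $(S^1,\Gm^{\w 1})$-bispectra via the cancellation/recognition machinery of framed motives from~\cite{GPMain}. First I would unpack the $E$-framed motive $M_E(X)$ in such a way that the $\Gm$-bispectrum $M_E^{\mathbb G}(X)$ appears as the ``unwrapping'' of the $\PPo$-spectrum $C_*\Fr^E(S_T)$ applied to $X$, under the motivic equivalence $\PPo\simeq S^1\w\Gm^{\w 1}$. Concretely, each slot $M_E(X_+\w\Gm^{\w n})$ should be identified, after $S^1$-stabilization, with the $S^1$-spectrum underlying the $n$-th shift of $C_*\Fr^E(S_T)$ evaluated at $X$; this reduction relies on the Voevodsky Lemma~\ref{Voevlemma} to rewrite $\Fr_n^E$ geometrically, and on the framed cancellation theorem to see that the $\Gm^{\w 1}$-structure maps become stable equivalences.

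Given this dictionary, I would prove case $(1)$ directly. By Theorem~\ref{main}, $C_*\Fr^E(S_T)^f$ is motivically fibrant from level $\max(0,d)=1$; after taking $S^1$-loops/suspensions and stable local fibrant replacements at each $\Gm$-level, the resulting bispectrum is motivically fibrant because its vertical ($S^1$) levels are stably fibrant, its horizontal ($\Gm$) structure maps are stable equivalences (cancellation), and its underlying $\PPo$-spectrum coincides with the fibrant $C_*\Fr^E(S_T)^f$ from level $1$ onward. The identification with $X_+\w E$ in $SH(k)$ then follows from the stable equivalence $C_*\Theta^\infty(X_+\w E)\simeq X_+\w E$ of Theorem~\ref{thmthom}.

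For case $(2)$, $d<1$ means the $\PPo$-spectrum $C_*\Theta^\infty(E)^f$ is motivically fibrant already from level $0$, so no $\Gm$-shift is needed and even level local replacements on the $S^1$-direction suffice: the bispectrum is levelwise motivically fibrant and its horizontal structure maps are automatically equivalences in this range. For case $(3)$, $d>1$, the key observation is that the $(d-1)$-th shift $E[d-1]$ (or equivalently $T^{d-1}\w E$) effectively reduces the bounding constant by $d-1$, placing us back in the regime where case~$(1)$ applies to the shifted Thom spectrum. The $\Omega_{S^1\w\Gm^{\w 1}}^{d-1}$ in front then exactly inverts the shift, yielding a bispectrum equivalent to $X_+\w E$. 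The equivalence of the two models in $(3)$ (using $E[d-1]$ versus $T^{d-1}\w E$) will come from the natural comparison map between the two shift conventions, which is a stable equivalence by construction (Definition~\ref{shift}).

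The main obstacle will be case $(3)$: I need to verify that the shift operation $E\mapsto E[d-1]$ is genuinely compatible with the $E$-framed motive construction, i.e.\ that $M_{E[d-1]}(X_+\w\Gm^{\w n})$ agrees, up to the desired $\Omega_{S^1\w\Gm^{\w 1}}^{d-1}$, with $M_E(X_+\w\Gm^{\w n})$ in $SH(k)$, and that the shifted Thom spectrum still satisfies the symmetry and contractible alternating group action hypotheses needed to invoke Theorem~\ref{main} at level $\max(0,d-(d-1))=1$. This bookkeeping, together with checking that applying $\Omega_{S^1\w\Gm^{\w 1}}^{d-1}$ to a motivically fibrant bispectrum preserves motivic fibrancy, is where the argument requires the most care; the remaining steps are formal consequences of Theorems~\ref{thmthom} and~\ref{main} combined with the cancellation theorem for framed motives.
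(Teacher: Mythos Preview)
Your overall strategy matches the paper's: reduce to Theorem~\ref{main} for the identification with $E$ in $SH(k)$, and use a cancellation-type result (the paper's Proposition~\ref{cancel}) plus $\A^1$-invariance of the stable homotopy sheaves to verify motivic fibrancy of the bispectrum. Cases~(2) and~(3) are indeed formal consequences of case~(1), exactly as you say.

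However, you have misjudged where the work lies. Case~(3) is not the main obstacle; the paper dispatches it in one sentence, since $E[d-1]$ and $T^{d-1}\wedge E$ are again symmetric Thom spectra with bounding constant~$1$ and contractible alternating group action (Example~\ref{ThomSpectraex}), so case~(1) applies verbatim and the outer $\Omega^{d-1}_{S^1\wedge\Gm^{\wedge 1}}$ undoes the shift. The bookkeeping you worry about is immediate from the definitions.

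The genuine content is in case~(1), and your description of it as ``unwrapping $C_*\Fr^E(S_T)$ under $\PPo\simeq S^1\wedge\Gm^{\wedge 1}$'' glosses over the actual zigzag the paper performs. One cannot pass directly from the $T$-spectrum $C_*\Fr^E(S_T)^f$ to the $(S^1,\Gm^{\wedge 1})$-bispectrum; the paper instead goes through the intermediate sphere $\widetilde T:=\A^1//\Gm$. Concretely, it shows $C_*\Fr^E(S_{\widetilde T})^f\to C_*\Fr^E(S_T)^f$ is a sectionwise equivalence in positive degrees (Lemma~\ref{a1gm} and Proposition~\ref{efrcompar}), then that $C_*\Fr^E(S_{\widetilde T})^f\to C_*\Fr^E(S_{S^1\wedge\Gm^{\wedge 1}})^f$ is likewise an equivalence (Lemma~\ref{efrcompar2}), and only then identifies the latter with the diagonal of $M_E^{\mathbb G}(X)$. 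Each of these comparisons requires its own argument (Mayer--Vietoris, local connectedness of $C_*\Fr(X_+\wedge(\A^1//\Gm)^{\wedge i})$, the framed cancellation theorem of~\cite{AGP}), and this chain is where the proof actually earns its keep. Your sketch would benefit from naming this route explicitly rather than folding it into a single ``translation'' step.
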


One of the most impressive applications of the theory of framed
correspondences and the machinery of framed motives is that they
lead to computing explicit fibrant resolutions of classical
topological objects in terms of algebraic varieties. These
computations are far relatives for the celebrated constructions of
Pontrjagin~\cite{Pontr} who interpreted homotopy groups of spheres
in terms of smooth manifolds. For example, the classical topological
sphere spectrum is computed in~\cite{GPMain} as the framed motive
$M_{fr}(pt)(pt)$ of the point $pt=\Spec k$ evaluated at the point
whenever the base field $k$ is algebraically closed of
characteristic zero. We use the preceding theorem to get a similar
topological application in Theorem~\ref{fincoeff}. The main example
here concerns the motivic cobordism spectrum $MGL$ whose realization
is isomorphic to the topological complex cobordism spectrum $MU$.

\begin{thmm}
Let $k$ be an algebraically closed field of characteristic zero with
an embedding $\epsilon:k\hookrightarrow\mathbb C$. Suppose E is a
symmetric Thom $T$-spectrum with the bounding constant $d\leqslant
1$ and contractible alternating group action. Then for all integers
$N > 1$ and $n\in\mathbb Z$, the natural realisation functor
$Re^\epsilon:SH(k)\to SH$ in the sense of~\cite{PPR1} induces an
isomorphism
   $$\pi_n(M_E(pt)(pt);\mathbb Z/N)\cong\pi_n(Re^\epsilon(E);\mathbb Z/N)$$
between stable homotopy groups with $\textrm{mod\,} N$ coefficients.
\end{thmm}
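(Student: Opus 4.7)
The plan is to deduce the theorem from Theorem~\ref{mainbisp} combined with Suslin--Voevodsky-type rigidity with finite coefficients over an algebraically closed field of characteristic zero. Since $d\leqslant 1$, parts~(1)--(2) of Theorem~\ref{mainbisp} apply directly with $X=pt$, so the bispectrum
$$M_E^{\mathbb G}(pt)_f=(M_E(pt)_f, M_E(\Gm)_f, M_E(\Gm^{\w 2})_f,\ldots)$$
is motivically fibrant and represents $E$ in $SH(k)$. Applying the realization functor $Re^\epsilon$ levelwise---which is monoidal and sends both $\Gm$ and $S^1$ to $S^1$---produces a topological $(S^1,S^1)$-bispectrum, equivalently an $S^2$-spectrum, representing $Re^\epsilon(E)$ in $SH$.

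The main step is to compare the mod $N$ stable homotopy of the zeroth $S^1$-level $M_E(pt)(pt)/N$ with the mod $N$ stable homotopy of this $S^2$-spectrum. Two rigidity reductions are needed: first, a collapse of the $\Gm$-direction of the bispectrum modulo $N$, identifying the $(n,0)$-bigraded motivic stable homotopy of $E$ mod $N$ with $\pi_n(Re^\epsilon(E);\Z/N)$; and second, an identification $M_E(pt)(\Spec k)/N\simeq M_E(pt)(\Spec\mathbb C)/N$, a form of Suslin rigidity over algebraically closed fields. One also needs to know that $M_E(pt)$ is already quasi-fibrant modulo~$N$, so that no further fibrant replacement is required on the left-hand side of the asserted isomorphism.

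The main obstacle is verifying these rigidity properties for the explicit $E$-framed motive $M_E(pt)$ built from framed correspondences. The construction of $M_E$ as a Suslin-complex-type object should make rigidity available through the compatibility of framed transfers with Betti realization; in particular, the Suslin complex construction typically upgrades a presheaf to one whose sections at $\Spec k$ compute its $\A^1$-invariant mod $N$ homotopy without further fibrant replacement. Combining these ingredients with Theorem~\ref{mainbisp} and tracing $Re^\epsilon$ through the explicit framed models should yield the desired isomorphism $\pi_n(M_E(pt)(pt);\Z/N)\cong \pi_n(Re^\epsilon(E);\Z/N)$ for all $n\in\Z$ and $N>1$.
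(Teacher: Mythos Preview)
Your proposal outlines a plausible strategy but has a genuine gap: you never invoke the key comparison theorem that does the heavy lifting, and you miss the hypothesis needed to apply it.

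The paper's proof is much shorter than what you sketch. It proceeds in three steps: (i) show that $E$ is an \emph{effective} $T$-spectrum (this is Lemma~\ref{effsp}, which uses the bounding constant $d\leqslant 1$); (ii) invoke Levine's comparison theorem~\cite[7.1]{Lev}, which says that for an effective spectrum over an algebraically closed field of characteristic zero with an embedding into $\mathbb C$, the realization map
\[
\pi_{n,0}^{\A^1}(E;\mathbb Z/N)(pt)\to \pi_n(Re^\epsilon(E);\mathbb Z/N)
\]
is an isomorphism; (iii) use Theorem~\ref{mainbisp} to identify $\pi_{n,0}^{\A^1}(E;\mathbb Z/N)(pt)$ with $\pi_n(M_E(pt)(pt);\mathbb Z/N)$, noting that at the point $pt$ the local fibrant replacement is unnecessary.

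Your ``two rigidity reductions'' are essentially a restatement of what Levine's theorem provides, but you neither cite it nor prove it. The collapse of the $\Gm$-direction modulo~$N$ is precisely the content of~\cite[7.1]{Lev}, and it requires effectivity of $E$ as a hypothesis---something you do not verify. Effectivity is not automatic for a general Thom spectrum; the paper devotes Lemmas~\ref{fdsh} and~\ref{effsp} to establishing it from the bounding constant condition. Without this, Levine's theorem does not apply.

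Separately, your step of applying $Re^\epsilon$ levelwise to the motivically fibrant bispectrum and asserting that the result represents $Re^\epsilon(E)$ in $SH$ is not justified: realization of a motivically fibrant bispectrum need not be a fibrant $S^2$-spectrum, since there is no reason the realized bonding maps remain weak equivalences. The paper avoids this issue entirely by working with bigraded homotopy groups and appealing to Levine's result directly.
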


Given a motivic Thom spectrum $E$ and $X\in Sm/k$, we fix any group completion 
$Fr^E(\Delta^\bullet_k,X)^{\gp}$ of the space $Fr^E(\Delta^\bullet_k,X)$, which is functorial in $X$. 
For instance, one can take $Fr^E(\Delta^\bullet_k,X)^{\gp}=\Omega_{S^1}Fr^E(\Delta^\bullet_k,X\otimes S^1)$. Put
   $$\pi^{E}_n(X):= \pi_n(Fr^E(\Delta^\bullet_k,X)^{\gp})$$
and call $\pi^{E}_n(X)$ the $n$-th singular algebraic $E$-homotopy group of $X$. 

The following result on the singular algebraic $E$-homotopy 
is an analogue of the celebrated theorem of
Suslin and Voevodsky~\cite{SV96} on the singular algebraic homology
(see Theorem~\ref{sve}):

\begin{thmm}
Suppose E is a
symmetric Thom $T$-spectrum with the bounding constant $d\leqslant
1$ and contractible alternating group action.
The assignment $X\mapsto\pi^{E}_{\ast}(X)$ is a
generalized homology theory on $Sm/\mathbb C$. Moreover, passing to
homotopy groups with finite coefficients, we get equalities
\[ \pi^{E}_n(X; \mathbb Z/m) = \pi_n(X(\mathbb{C})_+\wedge Re^\epsilon(E); \mathbb Z/m)\]
for all integers $n\geqslant 0$ and $m\neq 0$.

Also, the first part of this theorem is true over any perfect field $k$.
Namely, the assignment $X\mapsto \pi^{E}_{\ast}(X)$
is a generalized homology theory on the category $Sm/k$.
\end{thmm}

We can simplify $E$-framed motives further by removing a bit of
information in the definition of $E$-framed correspondences. In this
way we arrive at ``normally framed motives $\widetilde {M}_E(X)$" (see
Definition~\ref{frecolimit}). They play a pivotal role in our analysis and -- most importantly -- lead
to explicit computations of
the algebraic cobordism spectrum $MGL$ (see below).

We prove the following result (see Theorem~\ref{bisptilde})
computing $E$ in terms of normally framed motives.

\begin{thmm}
Suppose $X\in \Sm_k$ and $E$ is a symmetric Thom $T$-spectrum with
the bounding constant $d=1$ and contractible alternating group action.
Then we have a $(S^1,\Gm^{\w 1})$-bispectrum
\[\widetilde{M}_E^{\mathbb G}(X)_f:=(\widetilde{M}_E(X)_f,\widetilde{M}_E(X_+\w\Gm^{\w 1})_f,\widetilde{M}_E(X_+\w\Gm^{\w 2})_f,\ldots),\]
which is motivically fibrant and represents the $T$-spectrum $X_+\w E$ in
the category of bispectra, where ``$f$'' refers to stable local
fibrant replacements of $S^1$-spectra.
\end{thmm}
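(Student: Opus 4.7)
The strategy is to deduce the statement from part~(1) of Theorem~\ref{mainbisp}, which already establishes that $M_E^{\mathbb G}(X)_f$ is motivically fibrant and represents $X_+\w E$ under the same hypothesis $d=1$. Concretely, I would construct a natural morphism of bispectra $M_E^{\mathbb G}(X)_f \to \widetilde{M}_E^{\mathbb G}(X)_f$ and show that it is a level stable equivalence of $S^1$-spectra; once this is done, both the fibrancy and the representation of $X_+\w E$ transfer immediately.

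The comparison morphism is built levelwise: at the $n$th $\Gm$-level it is induced by the natural map $M_E(X_+\w \Gm^{\w n}) \to \widetilde{M}_E(X_+\w \Gm^{\w n})$ coming from the fact that $\widetilde{M}_E$ is obtained from $M_E$ by ``removing a bit of information'' in the definition of an $E$-framed correspondence (Definition~\ref{frecolimit}). Naturality in the argument $X_+\w \Gm^{\w n}$ and compatibility with the $\Gm$-bonding maps are formal consequences of the construction of both $M_E$ and $\widetilde{M}_E$ as functors on $\Sm_k$.

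The main technical work is to show that these level maps become stable motivic equivalences of $S^1$-spectra after applying the Suslin complex and stable local fibrant replacement. The plan is to analyze the fibers of the forgetful map simplicial-degree-wise: using the geometric description of $\Fr^E_n$ furnished by Voevodsky's Lemma~\ref{Voevlemma}, the forgotten datum should live in a linear or affine bundle with $\A^1$-contractible fibers, which after Nisnevich sheafification yields a local weak equivalence. The hypotheses $d=1$ and contractible alternating action enter here in a crucial way: the latter absorbs the permutations of coordinates that inevitably appear when comparing the two flavors of framings up to $\A^1$-homotopy, while $d=1$ supplies the codimension bound required for Voevodsky's lemma to apply in the form used. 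The principal obstacle is precisely establishing this $\A^1$-contractibility in a form strong enough to survive Nisnevich sheafification, the Suslin complex, and $\Gm$-stabilization simultaneously, and this is likely to require reusing the explicit linear contracting homotopies already deployed in the proof of Theorem~\ref{mainbisp}.
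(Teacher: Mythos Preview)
Your overall strategy is correct and is exactly what the paper does: construct the forgetful map of bispectra $M_E^{\mathbb G}(X)_f\to\widetilde{M}_E^{\mathbb G}(X)_f$, show it is a levelwise stable local equivalence, and then read off both conclusions from Theorem~\ref{mainbisp}(1). The paper packages the level comparison as Proposition~\ref{normfrcor}.

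Where your proposal falls short is in the plan for proving that level comparison. The picture ``the forgotten datum lives in a linear or affine bundle with $\A^1$-contractible fibers'' is too optimistic. The forgetful map $\Fr^E_n(-,Y)\to\widetilde{\Fr}^E_n(-,Y)$ is not even sectionwise surjective; it is only Nisnevich-locally surjective, and only once one knows $Y$ admits an \'etale map to $\A^d$ (Lemma~\ref{locsurj}). Moreover the part of the forgotten data consisting of an extension of $f\colon Z\to Y$ to the \'etale neighborhood $U$ is not an affine bundle for general $Y$; the paper must first reduce to the case $Y$ \'etale over $\A^d$ by a Mayer--Vietoris argument (Lemma~\ref{normMV}), and then use the linear structure on $\A^d$ to average. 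The actual mechanism on the framing side is not a fiberwise contraction either: the paper forms the \v{C}ech object $\check{C}(fog)$, then builds by hand a map $H_Y\colon\check{C}(fog)\to C_*\Fr^E_n(Y)$ using linear combinations $t_0\phi_0+\cdots+t_m\phi_m$ inside the vector bundle $V_n$ together with the Henselian property (Lemma~\ref{homotopy}). None of this is visible from your sketch.

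Two further points. First, you say fibrancy ``transfers immediately'' from the equivalence; in fact the logic runs the other way. The \v{C}ech argument produces only a \emph{motivic} stable equivalence $M_E(Y)\to\widetilde{M}_E(Y)$. To upgrade it to a \emph{stable local} equivalence (so that local fibrant replacement suffices in the statement), one must know independently that $\widetilde{M}_E(Y)_f$ is motivically fibrant. This requires showing that $\widetilde{\Fr}^E(Y)$ carries framed transfers and that the stable homotopy presheaves of $\widetilde{M}_E(Y)$ are $\A^1$-invariant quasi-stable $\ZF_*$-presheaves (Lemmas~\ref{frtilde} and~\ref{tildazf}), so that~\cite{GPPresheaves} and~\cite[7.1]{GPMain} apply. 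Second, your attribution of the hypotheses is off: the contractible alternating group action is not used to compare the two flavors of framings. Its role at this stage is in Lemma~\ref{tildazf}, where it makes $\sigma_X^*$ agree up to $\A^1$-homotopy with the stabilization map, yielding quasi-stability; the comparison of Proposition~\ref{normfrcor} itself involves no permutations. Likewise $d=1$ enters not through Voevodsky's lemma but through Convention~\ref{convention} (so that $\operatorname{rk}V_n=n$ matches $\operatorname{rk}N_{Z/\A^n_X}$) and through Proposition~\ref{cancel}(3) to make $M_E(Y)_f$ motivically fibrant.
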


The last section is dedicated to further explicit models representing
the algebraic cobordism spectrum $MGL$ in the category of bispectra.
We first introduce Nisnevich sheaves
$\Emb(-,X)=\colim_n\Emb_n(-,X)$, $X\in\Sm_k$, where $\Emb_n(U,X)$ is
the set of couples $(Z,f)$ such that $Z$ is a closed l.c.i.
subscheme in $\A^n_U,$ finite and flat over $U$, and $f$ is a
regular map $f\colon Z\to X$. For $U,X\in\Sm_k$ we also denote by
$Cor_n^{\Omega}(U,X)$ the groupoid with objects given by the set
$\Emb_n(X,Y)$ whose morphisms between $(Z_1,f_1)$ and $(Z_2,f_2)$
are isomorphisms $\alpha\colon Z_1\to Z_2$ such that
$\pi_{Z_2}\alpha=\pi_{Z_1}$ and $f_2\alpha=f_1$, where $\pi_{Z_i}$
denotes the projection $\pi_{Z_i}\colon Z_i\to\A^n_X\to X.$ There
are natural stabilization maps $Cor_n^{\Omega}(-,X)\to
Cor_{n+1}^{\Omega}(-,X)$ induced by the natural inclusions
$\A^n_U\to\A^{n+1}_U.$ Denote by $Cor^{\Omega}(-,X)$ the colimit
$\colim_nCor_n^{\Omega}(-,X).$

In Theorem~\ref{mglembomega} we compute $M_{MGL}^{\G}(X)$ as follows.

\begin{thmm}
For $X\in\Sm_k$ there is a natural levelwise stable local equivalence between
$(S^1,\Gm^{\w 1})$-bispectra $M_{MGL}^{\G}(X)$ and
\[(C_*\Emb(X_+\wedge\Ss),C_*\Emb(X_+\wedge\Gm^{\wedge 1}\wedge\Ss),\ldots)\]
or
\[(C_*NCor^{\Omega}(X_+\wedge\Ss),C_*NCor^{\Omega}(X_+\wedge\Gm^{\wedge 1}\wedge\Ss),\ldots).\]
Here ``$N$" refers to the nerve of isomorphisms.
In particular, the $(S^1,\Gm^{\w 1})$-bispectra
\[(C_*\Emb(X_+\wedge\Ss)_f,C_*\Emb(X_+\wedge\Gm^{\wedge 1}\wedge\Ss)_f,\ldots)\]
and
\[(C_*NCor^{\Omega}(X_+\wedge\Ss)_f,C_*NCor^{\Omega}(X_+\wedge\Gm^{\wedge 1}\wedge\Ss)_f,\ldots)\]
are motivically fibrant and represent the $T$-spectrum $X_+\w MGL$ in
the category of bispectra, where ``$f$'' refers to stable local
fibrant replacements of $S^1$-spectra.
\end{thmm}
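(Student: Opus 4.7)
The plan is to factor the comparison through the normal framed motive $\widetilde{M}_{MGL}^{\G}(X)$. Since $MGL$ is a symmetric Thom $T$-spectrum with bounding constant $d=1$ and contractible alternating group action, Theorem~\ref{bisptilde} applies: $\widetilde{M}_{MGL}^{\G}(X)_f$ is motivically fibrant and represents $X_+\w MGL$ in the category of bispectra. Combining this with Theorem~\ref{mainbisp}(1) shows that the canonical quotient $M_{MGL}^{\G}(X)\to\widetilde{M}_{MGL}^{\G}(X)$ is a levelwise stable local equivalence after the $f$-replacement. It therefore suffices to construct natural levelwise stable local equivalences between $\widetilde{M}_{MGL}(Y)$ and each of $C_*\Emb(Y_+\w\Ss)$ and $C_*NCor^{\Omega}(Y_+\w\Ss)$, where $Y=X_+\w\Gm^{\w n}$ ranges over the levels of the bispectrum.

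First I would unpack the definition of a normal $MGL$-framed correspondence from $U$ to $Y$. Such data consist of a finite l.c.i.\ closed subscheme $Z\subset\A^N_U$, finite and flat over $U$, a morphism $f\colon Z\to Y$, and framing data recording a classifying map for the normal bundle of $Z$ in $\A^N_U$ to a Grassmannian, inherited from the Thom-space description $MGL_N=\colim_i\operatorname{Th}(\gamma_{N,i})$. The whole point of passing from $M_{MGL}$ to $\widetilde{M}_{MGL}$ is to discard precisely this Grassmannian-classifying datum, so what remains is the pair $(Z,f)$. After taking the colimit over $N$ the resulting sheaf is $\Emb(U,Y)$. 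The $\w\Ss$ factor records the simplicial sphere inserted at each level of the spectral structure, so the identification upgrades to a map of $S^1$-spectra.

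For the $NCor^{\Omega}$ model I would use the canonical map $\Emb(-,Y)\to NCor^{\Omega}(-,Y)$ that sends an embedding to the constant $0$-simplex in the nerve of the groupoid $Cor^{\Omega}(-,Y)$. After applying the Suslin complex, this map becomes a Nisnevich-local weak equivalence: one constructs explicit $\A^1$-homotopies deforming any automorphism of $(Z,f)$ in $Cor^{\Omega}$ to the identity by exploiting the natural $\A^1$-family of embeddings obtained after further stabilization in $\A^N_U$. Functoriality then transports these equivalences through levelwise stable local fibrant replacement, and combining with the reduction above and Theorem~\ref{bisptilde} yields the motivic fibrancy and representability statements.

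The hard part will be the first identification: one must check that the Grassmannian-classifying datum appearing in an $MGL$-framed correspondence genuinely becomes homotopically trivial inside $\widetilde{M}_{MGL}$, in such a way that nothing is lost besides the embedding datum $(Z,f)$. This requires precise bookkeeping of how the normal framing, the Thom-space stabilization, and the bounding constant $d=1$ interact. A secondary technical point is the $NCor^{\Omega}$-versus-$\Emb$ comparison: unlike the formal statement that a groupoid nerve is equivalent to its set of components, here one needs the $\A^1$-contractions of automorphisms to be natural in $U$ and compatible with the $\Gm$-suspensions, so the argument rests on specific features of embeddings rather than a purely abstract groupoid argument.
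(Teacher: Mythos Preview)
Your overall strategy of factoring through $\widetilde{M}_{MGL}^{\G}(X)$ is correct and matches the paper. However, there is a genuine gap in your first identification: you have misread the definition of normal $E$-framed correspondences. A class in $\widetilde{\Fr}_n^{MGL}(U,Y)$ is a quintuple $(W,Z,\phi,\psi,f)$ where $\psi\colon W\to Gr(n)$ is a map from an \'etale neighbourhood of $Z$ to the Grassmannian and $\phi\colon N_{Z/\A^n_U}\cong(\psi i)^*\tau_n$ is an isomorphism of bundles. Passing from $\Fr^{MGL}$ to $\widetilde{\Fr}^{MGL}$ does \emph{not} discard the Grassmannian datum; it only replaces the single framing map $W\to V_n$ by the pair $(\psi,\phi)$ consisting of its projection to the base and the induced normal-bundle isomorphism. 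So $\widetilde{M}_{MGL}(Y)$ is not yet $C_*\Emb(Y_+\w\Ss)$, and the passage you call ``the whole point'' has not happened.

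What the paper actually does to bridge this gap is to introduce an intermediate presheaf $B_n(-,Y)$ consisting of quadruples $(Z,\phi,\psi,f)$ with $\psi\colon Z\to Gr(n)$ defined only on $Z$ (not on an \'etale neighbourhood), and then to prove that both forgetful maps $C_*\widetilde{\Fr}_n^{MGL}(X,Y)\to C_*B_n(X,Y)$ and $C_*B_n(X,Y)\to C_*\Emb_n(X,Y)$ are trivial Kan fibrations for affine $X$. Each is a genuine lifting problem over the algebraic simplex $\Delta^m_X$ relative to its boundary: the first uses Henselian-pair lifting of Grassmannian maps from $Z$ to its Henselization, and the second uses that an admissible embedding of the normal bundle over $\partial\Delta^m_X$ can be extended (after enlarging the target) to one over $\Delta^m_X$. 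These arguments are the technical heart of the theorem and are entirely absent from your proposal. Your sketch for the $\Emb$-versus-$NCor^{\Omega}$ comparison is closer to the mark: the paper does build an explicit $\A^1$-homotopy $r_i(z,t)=((1-t)z+t\beta_i(z),\,t(t-1)z,\,t)$ after doubling the ambient dimension and checks it is an l.c.i.\ embedding, but your phrasing ``deforming any automorphism to the identity'' is not quite what happens---the homotopy contracts the entire $m$-tuple of isomorphic embeddings onto the constant one, and one must verify directly that the resulting map of closed subschemes remains l.c.i.\ fibrewise over $\A^1$.
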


We finish the paper by the following  important computation (see
Theorem~\ref{reprmgl}) of  the algebraic cobordism in terms of
smooth quasi-projective varieties. This computation is an
application of the preceding theorem.

\begin{thmm}\
The $(S^1,\Gm^{\w 1})$-bispectrum $M_{MGL}^{\G}(X)$ is isomorphic in
$SH(k)$ to a bispectrum
$$(E^{X_+\wedge\Ss},E^{X_+\wedge\Gm^{\wedge 1}\wedge\Ss},\ldots),$$ each term of which is given by
a sequential colimit of simplicial smooth quasi-projective varieties
$E^{X_+\wedge\Gm^{\wedge i}\w S^j}$, $i,j\geqslant 0$.
\end{thmm}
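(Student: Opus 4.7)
The plan is to combine Theorem~\ref{mglembomega} with an explicit Hilbert-scheme representability of the sheaves $\Emb_n(-,X)$, and to transport this structure through the Suslin complex and the bispectrum assembly.

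\textbf{Step 1 (Reduction via the preceding theorem).} By Theorem~\ref{mglembomega}, $M_{MGL}^{\G}(X)$ is isomorphic in $SH(k)$ to the bispectrum whose $i$-th $\Gm$-level is $C_*\Emb(X_+\w\Gm^{\w i}\w\Ss)$. Naming the desired $(i,j)$-level object $E^{X_+\w\Gm^{\w i}\w S^j}$, it suffices to exhibit each pointed sheaf $C_*\Emb(X_+\w\Gm^{\w i}\w S^j)$ as a sequential colimit of simplicial smooth quasi-projective varieties.

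\textbf{Step 2 (Representability of $\Emb_n$).} A section $(Z\subset\A^n_U,f\colon Z\to Y)$ of $\Emb_n(-,Y)$ over $U$ corresponds via the graph construction to a closed subscheme $\Gamma_f\subset\A^n\times Y\times U$ that is finite flat of some degree $d$ over $U$, l.c.i., and whose projection to $\A^n_U$ is a closed immersion. Both the l.c.i.\ condition and the closed-immersion condition are open on the Hilbert functor, so the degree-$d$ piece $\Emb_{n,d}(-,Y)$ is represented by an open subscheme $V_{n,d,Y}$ of $\operatorname{Hilb}^d(\A^n\times Y)$. When $Y$ is quasi-projective (and in general after passing to a Nisnevich cover of $Y$ by quasi-projective opens), $V_{n,d,Y}$ is quasi-projective.

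\textbf{Step 3 (Smoothness and the Suslin complex).} The key technical input is smoothness of $V_{n,d,Y}$. At $[\Gamma]$ the tangent space of the Hilbert scheme is $\Hom_{\Os_\Gamma}(I_\Gamma/I_\Gamma^2,\Os_\Gamma)$ and the obstructions lie in $\operatorname{Ext}^1_{\Os_\Gamma}(I_\Gamma/I_\Gamma^2,\Os_\Gamma)$; since $\Gamma$ is l.c.i.\ and finite (hence zero-dimensional), the conormal module is locally free on a zero-dimensional scheme, so the obstructions vanish and the Hilbert scheme is formally smooth at $[\Gamma]$. Applying the Suslin complex $C_*$ to $\Emb(Y)=\colim_{n,d}h_{V_{n,d,Y}}$ gives at simplicial degree $m$ the sheaf represented by $\colim_{n,d}\operatorname{Res}_{\Delta^m/k}(V_{n,d,Y})$, and Weil restriction along the smooth affine simplex $\Delta^m$ preserves smoothness and quasi-projectivity. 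Smashing with $S^j$ and assembling over $(i,j)$ yields the desired bispectrum.

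\textbf{Main obstacle.} The delicate step is establishing smoothness of $V_{n,d,Y}$ uniformly in $(n,d,Y)$ and tracking it through the Suslin complex and the passage from general smooth $Y$ to the quasi-projective case. A robust fallback, should the direct obstruction-vanishing argument run into pathologies, is to replace $V_{n,d,Y}$ Nisnevich-locally by a free quotient of a smooth affine scheme by a smooth group action, in the spirit of the classical Grassmannian presentation of $MGL$; the nerve together with the Suslin complex then still delivers a sequential colimit of simplicial smooth quasi-projective varieties representing the same object in $SH(k)$.
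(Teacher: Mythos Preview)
Your Step~3 contains a genuine error and is also unnecessary. The Weil restriction $\operatorname{Res}_{\Delta^m/k}(V_{n,d,Y})$ you invoke does not exist as a finite-type scheme: $\Delta^m_k\cong\A^m_k$ is not finite over $k$, and the functor $U\mapsto\Hom(U\times\A^m,V)$ is an infinite-dimensional ind-scheme (already $\underline{\Hom}(\A^1,\A^1)$ is the space of all polynomials). So the claim that ``Weil restriction along the smooth affine simplex $\Delta^m$ preserves smoothness and quasi-projectivity'' is false, and $C_*\Emb(-,Y)$ is \emph{not} levelwise representable by smooth quasi-projective varieties.

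The point you are missing is that the target bispectrum in the theorem is $(\Emb(X_+\w\Ss),\Emb(X_+\w\Gm^{\w1}\w\Ss),\ldots)$ \emph{without} the Suslin complex. Theorem~\ref{mglembomega} identifies $M_{MGL}^{\G}(X)$ with the $C_*$-version, and then one simply observes that the natural map $A\to C_*A$ is always a motivic equivalence, hence an isomorphism in $SH(k)$. The ``simplicial'' direction in the phrase ``simplicial smooth quasi-projective varieties'' comes from the simplicial sphere $S^j$, not from the Suslin complex: at simplicial level $m$, $X_+\w\Gm^{\w i}\w S^j$ is a finite pointed disjoint union of copies of $X\times\Gm^{\times i}$, so $\Emb(-,X_+\w\Gm^{\w i}\w S^j)_m$ is representable by a finite disjoint union of the schemes $E^{X\times\Gm^{\times i}}_{n,d}$. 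This is exactly what the paper does, invoking Proposition~\ref{embn} and Corollary~\ref{embscheme} for the representability.

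Your Step~2 is a reasonable alternative to the paper's representability argument, though slightly more indirect. The paper instead first cites \cite[5.1.3]{EHKSY} for smoothness and quasi-projectivity of $Hilb^{lci}_d(\A^n)$ (representing $\Emb_n(-,pt)_d$), and then obtains $\Emb_n(-,Y)_d$ as the Weil restriction $R_{W_d/Hilb^{lci}_d(\A^n)}(W_d\times_k Y)$ along the universal \emph{finite flat} family $W_d\to Hilb^{lci}_d(\A^n)$; here Weil restriction is well-behaved and \cite[7.6.4--7.6.5]{BLR} applies directly. This avoids having to cut out the extra open condition ``projection to $\A^n_U$ is a closed immersion'' inside $\operatorname{Hilb}^d(\A^n\times Y)$.
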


Throughout the paper we denote by $\Sm_k$ the category of smooth
separated schemes of finite type over the base field $k$. We shall
assume that $k$ is perfect for the reason that the main
result of~\cite{GPPresheaves} (complemented by~\cite{DP} in
characteristic 2 and by~\cite[A.27]{DKO} for finite fields) says that over such fields for any
$\A^1$-invariant quasi-stable radditive framed presheaf of Abelian
groups $\mathcal F$, the associated Nisnevich sheaf $\mathcal
F_{nis}$ is strictly $\A^1$-invariant. By a motivic space we shall
mean a pointed simplicial Nisnevich sheaf on $\Sm_k$. If $\mathcal C$ is a category
cotensored over the category of pointed motivic spaces $\mathcal M$, we shall write
$\uhom(A,C)\in\mathcal C$ for the cotensor object associated with $A\in\mathcal M$ and $C\in\mathcal C$ unless it is specified otherwise.
We choose the flasque local/motivic model structures
on motivic spaces (respectively $S^1$- or $\mathbb P^1$-spectra of
motivic spaces) in the sense of~\cite{Is}.

\subsubsection*{Relations to other works.}
This paper (first appeared in the archive in April 2018) depends on
a series of papers on framed
motives~\cite{AGP,GNP,GPPresheaves,GPMain}. Computations of motivic
Thom spectra like those of Theorem~\ref{mainbisp} in terms of
tangentially framed correspondences as defined in~\cite{EHKSY} were
later obtained in~\cite{EHKSY1}. Our approach is based on
Voevodsky's framed correspondences~\cite{Voe2}. Technique developed
in Sections~\ref{thetasym} and~\ref{thespectrum} is crucial for the
theory of motivic $\Gamma$-spaces~\cite{GPO}, an extension of the
celebrated Segal machine of $\Gamma$-spaces~\cite{S} to the world of
motivic homotopy theory. A systematic study of normally framed
correspondences associated with Thom spectra is given in
Section~\ref{normally}. This type of correspondences associated with
the motivic sphere spectrum is of great utility in~\cite{AN,EHKSY}.
Normally framed correspondences lead to representability of some
important motivic Thom spectra like $MGL$ by schemes (see
Theorem~\ref{reprmgl}). The representability theorem is also proven
in~\cite{EHKSY1}.

\section{Preliminaries}

In this section we collect basic facts about spectra and motivic spaces with framed correspondences.

\subsubsection*{Spectra of Thom type}

\begin{dfn}\label{cstar}
For every space $X$ denote by $C_*X$ its Suslin complex. It is the
diagonal of the bisimplicial sheaf $(n,m)\mapsto
\uhom(\Delta^n_k,X_m)$ where $X_m$ is the sheaf of $m$-simplices of
$X$.
\end{dfn}

\begin{dfn}
Given two spaces $X,Y$ and maps $f,g\colon X\to Y$,

\begin{itemize}
\item a {\it simplicial homotopy\/} between $f$ and $g$ is a map
$H\colon X\w\Delta[1]_+\to Y$ such that the composition $Hi_0=f$ and
$Hi_1=g$, where $i_0,i_1\colon X\to X\w\Delta[1]_+$ are the face
maps;

\item an {\it $\A^1$-homotopy\/} between $f$ and $g$ is a map
$H\colon X\to\uhom(\A^1,Y)$ such that $i_0^*H=f$ and $i_1^*H=g$,
where $i_0,i_1\colon \uhom(\A^1,Y)\to Y$ are maps induced by zero
and unit embeddings of $pt$ into $\A^1$.
\end{itemize}
\end{dfn}

\begin{rem}\label{a1sim}
Every $\A^1$-homotopy $H\colon X\to\uhom(\A^1,Y)$ between $f$ and
$g$ gives rise to a simplicial homotopy $H'\colon
C_*X\w\Delta[1]_+\to C_*Y$ between $f$ and $g$.
\end{rem}

\begin{conv} We shall use the following notation:

\begin{itemize}
\item Given two motivic spaces $B$ and $C$, we denote by $tw$ the twist isomorphism $C\w B\xrightarrow{\cong} B\w C$.
\item For brevity, we shall sometimes write $(A,B)$ to denote $\uhom(A,B)$, where $A$ and $B$ are motivic spaces.
We shall use the canonical map
\[(A,B)\w C\to (A,C\w B)\]
which is adjoint to
\[(A,B)\xrightarrow{C\w id}(C\w A, C\w B) = (C,(A,C\w B).\]
When $C$ and $B$ are distinct spaces we shall often compose the
previous map with the twist isomorphism $tw\colon C\w B\to B\w C$ to get the map
\[(A,B)\w C\to (A,B\w C).\]

\item If there is no likelihood of confusion, we shall use the
equality sign $\PP^{\w m}\w\PP^{\w n}=\PP^{\w n}\w\PP^{\w m}$
for the associativity isomorphism
   \[\PP^{\w m}\w\PP^{\w n}\cong\PP^{\w m+n}=\PP^{\w n+m}\cong\PP^{\w n}\w\PP^{\w m}.\]

\item For any $m,n$ we shall identify the spaces (via associativity isomorphisms)
   \[(\PP^{\w m},(\PP^{\w n},X))=(\PP^{\w m}\w\PP^{\w n},X)=(\PP^{\w n}\w\PP^{\w m},X)=(\PP^{\w n},(\PP^{\w m},X)).\]
\end{itemize}
\end{conv}

Let $T$ be the pointed Nisnevich sheaf $\A^1/(\A^1-0)$. A {\it
$T$-spectrum\/} is a sequence of spaces $E_n$ together with bonding
maps, denoted by $u$. In what follows we work with right spectra,
and so each bonding map is a map $u\colon E_n\w T\to E_{n+1}$.
Denote by $\Sigma_n$ the $n$th symmetric group. A {\it symmetric
$T$-spectrum\/} is a spectrum $E$ together with a left action of
$\Sigma_n$ on $E_n$ such that the bonding maps satisfy the relevant
equivariance properties.

\begin{dfn}
Given $\tau\in \Sigma_n$ we shall write $\tau=(\tau(1),\ldots,
\tau(n))$. The reader should not confuse this notation with cyclic
permutations. For any $n,m$, denote by $\chi_{n,m}\in \Sigma_{n+m}$
the obvious shuffle permutation
$\chi_{n,m}=(n+1,\ldots,n+m,1,\ldots, n)$.
\end{dfn}

If we denote by $S_T$ the symmetric motivic sphere $T$-spectrum
$(S^0, T, T^2,\ldots)$, then
any symmetric $T$-spectrum is a right module over the monoid $S_T$
in the category of symmetric sequences~\cite[7.2]{H}.

\begin{dfn}\label{shift}
Given a symmetric $T$-spectrum $E$ and $n\geqslant 0$, denote by
$u_l\colon T\w E_n\to E_{n+1}$ the composition
\[T\w E_n\xrightarrow{tw} E_n\w T\xrightarrow{u}E_{n+1}\xrightarrow{\chi_{n,1}} E_{1+n}.\]
Observe that the maps give a map of symmetric $T$-spectra
$u_l:T\wedge E\to E[1]$. Here $E[1]$ is the {\it shift symmetric
spectrum\/} whose spaces are given by $E[1]_n=E_{1+n}$ with action
of $\Sigma_n$ by restriction of the $\Sigma_{1+n}$-action on
$E_{1+n}$ along the obvious embedding $\Sigma_n\hookrightarrow
\Sigma_{1+n}$ taking $\tau\in\Sigma_n$ to
$1\oplus\tau\in\Sigma_{1+n}$. The structure maps of $E[1]$ are the
reindexed structure maps for $E$. In turn, $T\wedge E$ is the {\it
suspension spectrum\/} of $E$ whose spaces are defined as $(T\wedge
E)_n=T\wedge E_n$. The symmetric group $\Sigma_n$ acts on $T\wedge
E_n$ through the given action on $E_n$ and trivially on $T$. Each
structure map is the composite
   $$(T\wedge E)_n\wedge T\cong T\wedge(E_n\wedge T)\xrightarrow{id\wedge u_n}(T\wedge E)_{n+1}.$$
\end{dfn}

\begin{dfn}
A symmetric spectrum $E$ is said to be a {\it spectrum with
contractible alternating group action}, if for any $n$ and any even
permutation $\tau\in \Sigma_n$ there is an $\A^1$-homotopy $E_n\to
\uhom(\A^1,E_n)$ between the action of $\tau$ and the identity map.
In other words, $E$ neglects the action of even permutations up to
$\A^1$-homotopy.
\end{dfn}

\begin{dfn}\label{thomspectra} A $T$-spectrum $E$ is called a {\it Thom spectrum}
if every space $E_n$ has the form
\[E_n=\colim_i E_{n,i}, \text{  }E_{n,i}=V_{n,i}/(V_{n,i}-Z_{n,i}),\]
where $V_{n,i}\to V_{n,i+1}$ is a directed sequence of smooth varieties, $Z_{n,i}\to Z_{n,i+1}$
is a directed system of smooth closed subschemes in $V_{n,i}.$

We shall say that a Thom spectrum $E$ {\it has the bounding constant
$d$\/} if $d$ is the minimal integer such that codimension
of $Z_{n,i}$ in $V_{n,i}$ is strictly greater than $n-d$ for all $i,n$.

\end{dfn}

\begin{ex}\label{ThomSpectraex}
The suspension spectrum $\Sigma^{\infty}X_+=X_+\w S_T$ and the
algebraic cobordism spectrum ${MGL}$ of~\cite{VoevodskyICM} (see
also~\cite{PPR,PW}) are examples of symmetric Thom spectra with the
bounding constant $1$ and contractible alternating group action.

If $E$ is a Thom $T$-spectrum with the bounding constant $d\geqslant
0$, then its $n$th shift $E[n]$ (see Definition~\ref{shift}) as well
as the spectrum $T^n\wedge E$, $n\geqslant 0$, is a Thom spectrum
with the bounding constant $d-n$. In turn, its negative shift
$E[-n]=(*,\ldots,*,E_0,E_1,\ldots)$ having $E_0$ in the $n$th entry
is a Thom $T$-spectrum with the bounding constant $n+d$. By
definition, the trivial Thom spectrum $*=(*,*,\ldots)$ has the
bounding constant $+\infty$.

In practice we also deal with symmetric Thom $T^2$-spectra like
$MSL$ or $MSp$ (see~\cite{PW} for definitions). 
We also say that a Thom $T^2$-spectrum $E$
{\it has the bounding constant $d$\/} if $d$ is the minimal integer
such that codimension of $Z_{n,i}$ in $V_{n,i}$ is strictly greater
than $2n-d$ for all $i,n$. The Thom $T^2$-spectra $MSL$ and $MSp$
have the bounding constant $d=1$.

By construction (see~\cite{PW}), the  action of the symmetric group
$\Sigma_n$ on the spaces $MSL_{2n}$ and $MSp_{2n}$ factors through
the action of $SL_{2n}$ and $Sp_{2n}$ respectively. Since $SL_{2n}$
and $Sp_{2n}$ are semisimple simply-connected groups, the sets of
$k$-points $SL_{2n}(k)$ and $Sp_{2n}(k)$ are generated by the root
subgroups $U_{\alpha}(k)$~(see \cite{PraRa}). Since every root
subgroup $U_{\alpha}$ is isomorphic to the affine line $\A^1_k$, we
have that for every element $A$ of $G=SL_{2n}$ or $G=Sp_{2n}$ there
exists a map $h\colon\A^1_k\to G$ such that $h(0)=I,h(1)=A$. It
follows that $MSL$ and $MSp$ are $T^2$-spectra with contractible
alternating group action as well.
\end{ex}

\begin{lem}\label{barbashev}
Let $\Sigma_n\to\GL_n(k)$ be the standard inclusion and let $\tau$ be an even
permutation. Then there is an $\A^1$-curve $L\colon\A^1_k\to\GL_n$
such that $L(0)=I$ is the identity matrix and $L(1)=\tau.$
\end{lem}

\begin{proof}
Since $\tau$ is even, its image belongs to $SL_n(k)$. Thus it 
can be written as a product of elementary matrices:
\[\tau = \prod_{l=1}^m e_{i_l, j_l}(\lambda_l),
\quad\text{ where } 1\leqslant i_l,j_l\leqslant n,\, \lambda_l\in k,\, i_l\neq j_l.\] 
Here an elementary matrix $e_{i,j}(\lambda)$ is a matrix with all its diagonal elements equal to 1, 
$\lambda$ being placed in the $(i,j)$-th entry and zero elsewhere. Then 
$L(t)=\prod_{l=1}^m e_{i_l, j_l}(t\lambda_l)$ defines a regular 
map $\A^1_k\to\GL_n(k)$ with $L(0)=I$ and $L(1)=\tau.$
\end{proof}

\emph{In order to avoid a heavy presentation, from now on we shall
deal with Thom $T$-spectra only. The interested reader will be able
to prove the relevant results for Thom $T^2$-spectra as well.
}

\begin{dfn}
There is a functorial fibrant replacement of motivic spaces $X\to
X^f$ in the flasque Nisnevich local model structure (e.g. given by
controlled fibrant models in the sense
of~\cite[Section~1.2]{JardineMSS}) such that for any $\PPo$-, $T$-
or $S^1$-spectrum $E=(E_0,E_1,\ldots)$ the sequence
$E^f=(E_0^f,E_1^f,\ldots)$
can be canonically equipped with a structure of a spectrum and $E\to
E^f$ is a map of spectra.
\end{dfn}

\subsubsection*{The Voevodsky Lemma}

One of the key facts in the theory of framed correspondences is the
following lemma of Voevodsky that computes Hom-sets between certain
Nisnevich sheaves. Its proof can be found
in~\cite[Section~3]{GPMain}.

\begin{lem}[Voevodsky's Lemma]\label{Voevlemma}
For $X,Y\in\Sm_k$ and a closed subset $X'$ of $X$ and open subset $V$ of $Y$ the set
   \[\Hom_{\Shv_{\bullet}}(X/X',Y/V)\]
is in a natural bijection with the set of equivalence classes of
triples $(U,Z,\phi)$, where $Z$ is a closed subset of $X$ disjoint
with $X'$, $U$ is an \'etale neighborhood of $Z$ in $X$ and
$\phi\colon U\to Y$ is a regular map such that $\phi^{-1}(Y-V)=Z$.
By definition, two triples $(U,Z,\phi)$ and $(U',Z',\phi')$ are {\it
equivalent\/} if $Z=Z'$ and $\phi,\phi'$ coincide on some common
etale neighbourhood of $Z$ in $X$.
\end{lem}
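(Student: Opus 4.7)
The plan is to construct mutually inverse maps between pointed sheaf morphisms $X/X'\to Y/V$ and equivalence classes of triples $(U,Z,\phi)$. A pointed morphism $X/X'\to Y/V$ in $\Shv_{\bullet}$ is the same datum as a morphism of unpointed Nisnevich sheaves $f\colon X\to Y/V$ whose restriction to $X'$ factors through the basepoint. For the forward direction, given $(U,Z,\phi)$, the assumption that $U\to X$ is an \'etale neighborhood of $Z$ (so that $U\times_X Z\cong Z$) produces a Nisnevich distinguished square
\[\begin{CD} U-Z @>>> U \\ @VVV @VVV \\ X-Z @>>> X \end{CD}\]
Passing to horizontal cofibers yields the excision isomorphism $X/(X-Z)\cong U/(U-Z)$ in $\Shv_{\bullet}$. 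Since $Z\cap X'=\emptyset$ one has $X'\subseteq X-Z$, hence a canonical quotient $X/X'\to X/(X-Z)$. Composing this with the excision isomorphism and with the map $U/(U-Z)\to Y/V$ induced by $\phi$ (well-defined since $\phi^{-1}(Y-V)=Z$ forces $\phi(U-Z)\subseteq V$) produces the desired pointed morphism $X/X'\to Y/V$. Two equivalent triples agree on a common \'etale refinement of $Z$ and so yield the same map.

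For the backward direction, given $f\colon X\to Y/V$ I would set $Z\subseteq X$ to be the set-theoretic preimage of $Y-V$, endowed with the reduced structure. It is closed in $X$: locally $f$ is represented either by a map to $Y$ (in which case the preimage of the closed subset $Y-V$ is closed) or by the constant basepoint map (in which case the preimage is empty); and it is disjoint from $X'$ by hypothesis. The existence of an \'etale neighborhood $U$ of $Z$ and of a regular lift $\phi\colon U\to Y$ with $\phi^{-1}(Y-V)=Z$ follows from the construction of $Y/V$ as the Nisnevich sheafification of the pointed presheaf quotient: at each point $x\in Z$ the germ of $f$ in the Nisnevich localization of $X$ at $x$ is, by definition of the sheafification, represented by an honest map to $Y$ landing near $x\in Y-V$, and spreading out and gluing along overlaps (using that two such local lifts agree as maps to $Y$ on the non-basepoint locus) yields $U$ and $\phi$ on an \'etale neighborhood of all of $Z$ simultaneously. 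Any two such choices differ by passage to a common \'etale refinement, so the resulting class $(U,Z,\phi)$ is well-defined.

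Finally, the two constructions are mutually inverse. Starting from a triple and passing back recovers the same $Z$ (as the non-basepoint locus of the associated map) and the same $\phi$ up to shrinking $U$. Starting from a morphism $f$, the reconstructed map agrees with $f$ on the Nisnevich cover $\{U\to X,\;X-Z\hookrightarrow X\}$: it equals $\phi$ on $U$ by construction, and equals the basepoint on $X-Z$, matching $f$ there. The main obstacle is the backward direction, namely extracting the \'etale neighborhood $U$ and the lift $\phi$ from an abstract sheaf morphism; this is a Hensel-type argument exploiting the open/closed decomposition $V\hookrightarrow Y\hookleftarrow Y-V$, and constitutes the technical core of the proof.
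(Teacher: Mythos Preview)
The paper does not supply its own proof of this lemma; it simply records the statement and refers the reader to \cite[Section~3]{GPMain} for the argument. So there is no in-paper proof to compare your proposal against.

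That said, your outline matches the standard argument one finds in the cited source. The forward direction via Nisnevich excision $X/(X-Z)\cong U/(U-Z)$ is exactly right. In the backward direction your sketch is correct in spirit but, as you yourself flag, the extraction of $(U,Z,\phi)$ from an abstract sheaf map is where the work lies. Two points deserve tightening: first, the phrase ``set-theoretic preimage of $Y-V$'' needs a precise definition, since $Y/V$ is not a scheme; the clean way is to evaluate $f$ on Henselian local rings and declare $x\in Z$ iff the stalk $f_x\in (Y/V)(\mathcal O^h_{X,x})$ is not the basepoint. Second, the gluing of local lifts $\phi$ over overlaps is not automatic---two lifts to $Y$ of the same section of $Y/V$ can differ by something landing in $V$, and one must use that near points of $Z$ the lift is pinned down (it does not factor through $V$) to get agreement on a neighbourhood of $Z$. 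These are the details carried out in \cite{GPMain}; your plan is the right one.
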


\begin{cor}\label{Pmpermut}
For any Thom spectrum $E$ there is a natural isomorphism of motivic
spaces
   \[\uhom(\PP^{\w m},E_n)\cong\uhom(\PP^m/\PP^{m-1},E_n).\]
As a consequence, the $\Sigma_m$-action on $\uhom(\PP^{\w m},E_n)$
permuting factors of $\PP^{\w m}$ can be extended to an action of
$\GL_m(k)$ (it naturally acts on $\PP^{m}/\PP^{m-1}$), and thus for
any even permutation $\tau\in \Sigma_m$ there is an $\A^1$-homotopy
between the action of $\tau$ and the identity map of $\uhom(\PP^{\w
m},E_n)$ by Lemma~\ref{barbashev}.
\end{cor}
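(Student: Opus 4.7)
The plan is to invoke Voevodsky's Lemma~\ref{Voevlemma} at the level of each Thom piece $E_{n,i} = V_{n,i}/(V_{n,i}-Z_{n,i})$ and observe that both internal Hom sheaves classify the same geometric data. For any $U \in \Sm_k$, a section of $\uhom(\PP^{\w m}, E_{n,i})$ over $U$ is a morphism $U_+ \w \PP^{\w m} \to V_{n,i}/(V_{n,i}-Z_{n,i})$, and by Lemma~\ref{Voevlemma} such morphisms are classified by equivalence classes of triples $(U', W, \phi)$ where $W$ is a closed subset of $U \times (\PP^1)^m$ disjoint from the basepoint locus $U \times (\PP^1 \vee \cdots \vee \PP^1)$, $U'$ is an \'etale neighborhood of $W$ in $U \times (\PP^1)^m$, and $\phi\colon U' \to V_{n,i}$ satisfies $\phi^{-1}(Z_{n,i}) = W$. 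The disjointness condition forces $W \subset U \times (\PP^1 - \infty)^m = U \times \A^m$, and we may replace $U'$ by its restriction over $U \times \A^m$ without changing the equivalence class. Applying Voevodsky's lemma analogously to $\uhom(\PP^m/\PP^{m-1}, E_{n,i})(U)$ yields triples in which $W$ is closed in $U \times \PP^m$ and disjoint from $U \times \PP^{m-1}$, forcing $W \subset U \times (\PP^m - \PP^{m-1}) = U \times \A^m$. In both cases the remaining data (\'etale neighborhood inside $U \times \A^m$ plus a map to $V_{n,i}$ with the specified preimage) are identical, so we obtain a natural bijection at each $i$, and passing to the colimit in $i$ delivers the natural isomorphism $\uhom(\PP^{\w m}, E_n) \cong \uhom(\PP^m/\PP^{m-1}, E_n)$.

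For the second assertion, the tautological left $\GL_m$-action on $\A^m$ extends to an action on $\PP^m$ preserving the hyperplane $\PP^{m-1}$ at infinity and hence descends to $\PP^m/\PP^{m-1}$; by functoriality this induces a $\GL_m(k)$-action on $\uhom(\PP^m/\PP^{m-1}, E_n)$. Transporting this action through the isomorphism just constructed produces a $\GL_m(k)$-action on $\uhom(\PP^{\w m}, E_n)$. To see that this extends the tautological $\Sigma_m$-permutation action, restrict to the subgroup of permutation matrices: its effect on the Voevodsky datum $W \subset U \times \A^m$ is precisely to permute the coordinates of $\A^m$, which is exactly the effect of permuting the smash factors of $\PP^{\w m} = \PP^1 \w \cdots \w \PP^1$ on the same data (each $\PP^1$ contributing one coordinate axis).

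For the final $\A^1$-homotopy claim, given an even permutation $\tau \in \Sigma_m$ the preceding lemma supplies an $\A^1$-morphism $L\colon \A^1_k \to \GL_m$ with $L(0) = I$ and $L(1) = \tau$. Composing the action map $\GL_m \times \PP^m/\PP^{m-1} \to \PP^m/\PP^{m-1}$ with $L \times \mathrm{id}$ produces an $\A^1$-morphism $\A^1 \times \PP^m/\PP^{m-1} \to \PP^m/\PP^{m-1}$ interpolating between the identity and the action of $\tau$; adjointing it yields the required $\A^1$-homotopy $\uhom(\PP^m/\PP^{m-1}, E_n) \to \uhom(\A^1, \uhom(\PP^m/\PP^{m-1}, E_n))$, which transports along the natural isomorphism to $\uhom(\PP^{\w m}, E_n)$. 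The main delicate point is the identification of the two $\Sigma_m$-actions in the middle step; I expect this to go through cleanly via the Voevodsky-data description, where both actions reduce manifestly to the same coordinate permutation of $U \times \A^m$.
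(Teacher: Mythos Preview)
Your proof is correct and follows essentially the same approach as the paper's: both apply Voevodsky's Lemma to each piece $E_{n,i}$ to identify the sections of the two internal Homs with the same set of triples $(U',W,\phi)$ supported in $U\times\A^m$, then pass to the colimit. Your treatment is in fact somewhat more detailed than the paper's, spelling out the compatibility of the permutation action with the restricted $\GL_m(k)$-action and the construction of the $\A^1$-homotopy via the preceding lemma, both of which the paper leaves implicit; one minor notational slip is describing the basepoint locus of $\PP^{\wedge m}$ as $\PP^1\vee\cdots\vee\PP^1$ rather than the union of the coordinate hyperplanes at infinity in $(\PP^1)^m$, but the complement is $\A^m$ either way and the argument is unaffected.
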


\begin{proof}
By Definition~\ref{thomspectra} $E_n=\colim_i E_{n,i}$, where
$E_{n,i}=V_{n,i}/(V_{n,i}-Z_{n,i})$. For any $X\in\Sm_k$
Lemma~\ref{Voevlemma} implies both sets $\Hom(X_+\w\PP^{\w
m},E_{n,i})$ and $\Hom(X_+\w\PP^m/\PP^{m-1},E_{n,i})$ are naturally
isomorphic. They are described up to isomorphism of sets
(see~\cite[Section~3]{GPMain}) as the equivalences classes of
triples $(U,Z,\phi)$, where $Z$ is a closed subset of $\A^m_X,$
finite over $X$, $U$ is its \'etale neighborhood and $\phi\colon
U\to V_{n,i}$ is such that $\phi^{-1}(Z_{n,i})=Z$.
\end{proof}

\begin{dfn}\label{framesdef}
Following~\cite{GPMain} for $X,Y\in\Sm_k$ and an open subscheme $U$ of $Y$, we set
    \[\Fr_n(X,Y/U):=\Hom_{\Shv_{\bullet}}(X_+\w\PP^{\w n},(Y/U)\w T^n).\]
$\Fr_n(X,Y/U)$ is pointed at the empty correspondence or, equivalently, at the zero map.
By smashing the elements of $\Fr_n(X,Y/U)$ with the canonical motivic equivalence
$\sigma:\PP^{\w 1}\to T$, we get a map of pointed sets $\Fr_n(X,Y/U)\to\Fr_{n+1}(X,Y/U)$. Denote by
   \[\Fr(X,Y/U):=\colim_n(\cdots\to\Fr_n(X,Y/U)\to\Fr_{n+1}(X,Y/U)\to\cdots).\]
We shall also write $C_*\Fr(Y/U)$ to denote the Suslin complex associated to the Nisnevich sheaf
$X\mapsto\Fr(X,Y/U)$ (see Definition~\ref{cstar}).

More generally, we can define the sets $\Fr_n(X,\mathcal
G):=\Hom_{\Shv_{\bullet}}(X_+\w\PP^{\w n},\mathcal G\w T^n)$,
$\Fr(X,\mathcal G)$ for every pointed Nisnevich sheaf $\mathcal G$
as well as the Suslin complex $C_*\Fr(\mathcal G)$ associated to the
Nisnevich sheaf $X\mapsto\Fr(X,\mathcal G)$.

Below we shall often deal with sheaves of the form
   $$\uhom(\PP^{\w i},\Fr_n(Y/U)),\quad i,n\geqslant 0.$$
By Voevodsky's Lemma its value at $X\in \Sm_k$ consists of the
triples $(W,Z,\phi)$, where $Z$ is a closed subset of $\A^{i+n}_X$,
finite over $X$, $W$ is its \'etale neighborhood and $\phi\colon
W\to Y$ is such that $\phi^{-1}(Y-U)=Z$.
\end{dfn}

\begin{prop}[Additivity Theorem]\label{additivity}
Suppose $X,X'\in\Sm_k$
and $Z,Z'$ are closed subsets of $X$ and $X'$ respectively. Denote
$Y=X/(X-Z),Y'=X'/(X'-Z').$ Then for every $i\geqslant 0$ the canonical
map
\[\uhom(\PP^{\w i},C_*\Fr(Y\vee Y'))\to\uhom(\PP^{\w i},C_*\Fr(Y))\times \uhom(\PP^{\w i},C_*\Fr(Y'))\]
is a schemewise weak equivalence.
\end{prop}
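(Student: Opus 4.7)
The plan is to construct a simplicial homotopy inverse to the canonical map, working level by level in the stabilization index $n$ and then passing to the colimit defining $\Fr$. I would first unpack both sides using Voevodsky's Lemma in the form recorded at the end of Definition~\ref{framesdef}: at a test scheme $V\in\Sm_k$, a section of $\uhom(\PP^{\w i},\Fr_n(Y\vee Y'))$ is a triple $(W,S,\phi)$ with $S\subset\A^{i+n}_V$ finite over $V$, $W$ an étale neighborhood of $S$, and $\phi\colon W\to X\sqcup X'$ satisfying $\phi^{-1}(Z\sqcup Z')=S$. Any such triple splits canonically as $W=W_1\sqcup W_2$ and $S=S_1\sqcup S_2$ according to whether $\phi$ lands in $X$ or $X'$, and the canonical map $\alpha$ of the proposition is the projection to the pair $((W_1,S_1,\phi|_{W_1}),(W_2,S_2,\phi|_{W_2}))$. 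In particular $\alpha$ is injective at each fixed level $n$, and the supports $S_1,S_2$ coming from a single triple are automatically disjoint in $\A^{i+n}_V$.

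The key step is to construct a back-map $\beta$ inverting $\alpha$ up to homotopy. Since two supports coming independently from $\Fr_n(Y)$ and $\Fr_n(Y')$ need not be disjoint, I would trade one level of stabilization for an extra coordinate $t$ and define
\[\beta\bigl((W_1,S_1,\phi_1),(W_2,S_2,\phi_2)\bigr)\in\uhom(\PP^{\w i},\Fr_{n+1}(Y\vee Y'))(V)\]
to be the triple with support $(S_1\times\{0\})\sqcup(S_2\times\{1\})\subset\A^{i+n}_V\times\A^1_t$, étale neighborhood $(W_1\times\A^1_t)\sqcup(W_2\times\A^1_t)$, and map sending $(w_1,t)\mapsto(\phi_1(w_1),t)$ on the first summand and $(w_2,t)\mapsto(\phi_2(w_2),t-1)$ on the second, landing in $(X\sqcup X')\times\A^1$. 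Distinctness of the $t$-coordinates guarantees that the two pieces of the support are disjoint and finite over $V$. By construction $\alpha\circ\beta$ coincides with the stabilization map $\Fr_n\to\Fr_{n+1}$ applied componentwise, which becomes the identity after passing to $\Fr=\colim_n\Fr_n$.

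For the other composition I would build an $\A^1$-homotopy, with parameter $s\in\A^1$, between the stabilization of a triple $(W,S_1\sqcup S_2,\phi)\in\Fr_n(Y\vee Y')$ and $\beta\circ\alpha$ applied to it, by sliding $S_2$ from $t=1$ down to $t=0$. Concretely, the homotopy is the section over $V\times\A^1_s$ with support $(S_1\times\{0\})\sqcup\{(x,s):x\in S_2\}$ in $\A^{i+n}_V\times\A^1_t$ and with map twisted on the second summand by $t\mapsto t-s$. Finiteness over $V\times\A^1_s$ follows from distinctness of $t$-coordinates for $s\neq 0$, together with the automatic disjointness of $S_1$ and $S_2$ at $s=0$ observed in the first paragraph. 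Remark~\ref{a1sim} then converts this $\A^1$-homotopy into the required simplicial homotopy after applying $C_*$, concluding the homotopy-inverse argument at $V$.

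The main technical obstacle, in my view, is the careful bookkeeping of the étale neighborhoods, of the maps out of them, and of compatibility with the stabilization maps used to form $\Fr$, so that the constructions above assemble into honest, natural sections of the relevant Nisnevich sheaves rather than just pointwise data. These manipulations are essentially the ones carried out for ordinary framed correspondences in the additivity theorem of~\cite{GPMain}, and they go through here once one observes that a wedge of Thom-type quotients is again of Thom type, with underlying smooth variety $X\sqcup X'$ and closed subset $Z\sqcup Z'$.
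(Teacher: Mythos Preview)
Your approach is correct and is essentially the argument of the Additivity Theorem in~\cite{GPMain} to which the paper defers; you have reconstructed that proof faithfully in the present setting, including the key observation that $Y\vee Y'\cong (X\sqcup X')/((X\sqcup X')-(Z\sqcup Z'))$ so that Voevodsky's Lemma applies. One small imprecision: $\alpha\circ\beta$ does not literally coincide with the stabilization on the second factor, since the support there sits at $t=1$ with framing $t-1$ rather than at $t=0$ with framing $t$; you need one more $\A^1$-homotopy (linear translation $t\mapsto t-s$) to connect this to the honest stabilization, exactly parallel to the homotopy you build for $\beta\circ\alpha$.
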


\begin{proof}
The proof is like that of the Additivity Theorem of~\cite{GPMain}.
\end{proof}

\begin{cor}\label{specialgamma}
Let $\Gamma^{\op}$ be the category of finite pointed sets and
pointed maps. Under the notation of Proposition~\ref{additivity} the
association
   $$K\in\Gamma^{\op}\mapsto\uhom(\PP^{\w i},C_*\Fr(Y\wedge K))$$
is a special $\Gamma$-space in the sense of Segal~\cite{S}. As a
result, the Segal $S^1$-spectrum
   $$\uhom(\PP^{\w i},M_{fr}(Y))=\uhom(\PP^{\w i},C_*\Fr(Y\wedge\mathbb S))$$
is sectionwise positively fibrant. Here $\mathbb
S=(S^0,S^1,S^2,\ldots)$ is the sphere spectrum and
$M_{fr}(Y):=C_*\Fr(Y\wedge\mathbb S)$ is the framed motive of $Y$ in
the sense of~\cite{GPMain}.
\end{cor}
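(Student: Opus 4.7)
The plan is to verify that the functor $A\colon K\mapsto \uhom(\PP^{\w i}, C_*\Fr(Y\wedge K))$ satisfies the defining conditions of a special $\Gamma$-space sectionwise, and then invoke Segal's theorem that any special $\Gamma$-space yields a positively fibrant $S^1$-spectrum under the construction $n\mapsto A(S^n)$.

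The basepoint condition $A(0_+)\simeq *$ is immediate: since $Y\wedge 0_+=pt$, one has $pt\wedge T^n=pt$ and hence $\Fr_n(X,pt)=*$ for every $X\in\Sm_k$ and every $n$, so $C_*\Fr(pt)=*$ and $A(0_+)=*$. For the Segal decomposition condition, for each $n\geqslant 2$ I need to show that the map $A(n_+)\to A(1_+)^n$ induced by the projections $p_k\colon n_+\to 1_+$ (sending $k\mapsto 1$ and all other elements to the basepoint) is a schemewise weak equivalence. Identifying $Y\wedge n_+$ with the $n$-fold wedge $Y^{\vee n}$, this becomes
\[\uhom(\PP^{\w i}, C_*\Fr(Y^{\vee n}))\longrightarrow \uhom(\PP^{\w i}, C_*\Fr(Y))^n,\]
which I would handle by induction on $n$: the base case $n=2$ is precisely Proposition~\ref{additivity}, and the inductive step follows by writing $Y^{\vee n}=Y^{\vee(n-1)}\vee Y$ and applying Additivity once more in conjunction with the inductive hypothesis. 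One should also observe that the resulting cofiber $Y^{\vee n}$ is still of the form $W/(W-T)$ with $W$ the disjoint union of the ambient varieties and $T$ the disjoint union of the closed subsets, so Proposition~\ref{additivity} is directly applicable at each stage.

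The substance of the argument—that framed correspondences split additively along wedges of cofiber-type sheaves—has already been established in Proposition~\ref{additivity}, so the main nontrivial ingredient is assumed. The remaining step is then purely formal: once $A$ is known to be a special $\Gamma$-space sectionwise, Segal's theorem applied at each section yields that the associated $S^1$-spectrum with $n$-th space $\uhom(\PP^{\w i}, C_*\Fr(Y\wedge S^n))$ is positively fibrant sectionwise, which is exactly the claimed fact for $\uhom(\PP^{\w i}, M_{fr}(Y))=\uhom(\PP^{\w i}, C_*\Fr(Y\wedge\mathbb S))$ by definition of the framed motive. The only place where genuine work is hidden is the invocation of Additivity; everything else is bookkeeping around Segal's machine.
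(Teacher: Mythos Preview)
Your proposal is correct and follows exactly the approach the paper intends: the corollary is stated without proof in the paper because it is an immediate consequence of the Additivity Theorem together with Segal's machine, and you have spelled out precisely those details (basepoint condition, inductive reduction of the Segal map to repeated applications of Proposition~\ref{additivity}, and the observation that $Y^{\vee n}$ remains of the required form $W/(W-T)$).
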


\section{The functor $\Theta^{\infty}$ and the layer
filtration}

If there is no likelihood of confusion, we shall often regard
$T$-spectra as $\PPo$-spectra by means of the canonical motivic
equivalence $\sigma:\mathbb P^{\wedge 1}\to T$. Given a $T$-spectrum
$E$, denote by $u_*\colon E_i\to\uhom(\mathbb P^{\wedge 1},E_{i+1})$
the adjoint to the bonding map.
Following Jardine~\cite[\S 2]{JardineMSS}, we give the following
definition.

\begin{dfn}
Denote by $E\w T$ the {\it fake $T$-suspension spectrum\/} with
terms $(E\w T)_i=E_i\w T$ and bonding maps given by
\[(E_i\w T)\w T\xrightarrow{u\w T} E_{i+1}\w T.\]
It is important to note that the bonding maps do not permute two
copies of $T$ on the left.
Denote by $\Omega^\ell(E)$ the {\it fake loop $\mathbb
P^1$-spectrum\/} with terms $\Omega^\ell(E)_i=\uhom(\PPo,E_i)$ and
bonding maps adjoint to
\[\uhom(\mathbb P^{\wedge 1},E_i)\xrightarrow{u_*}\uhom(\mathbb P^{\wedge 1},\uhom(\mathbb P^{\wedge 1},E_{i+1})).\]
We notice again that two copies of $\mathbb P^{\wedge 1}$ on the
right are not permuted.
\end{dfn}

\begin{dfn}
We denote by $E[1]$ the {\it shifted $T$-spectrum\/} $E[1]_i=E_{i+1}.$
Its bonding maps $u\colon E_i\w T\to E_{i+1}$ induce a
map of $T$-spectra $u\colon E\w T\to E[1]$.
\end{dfn}

\begin{dfn}
Denote by $\Theta^1 (E)=\Omega^\ell(E[1])$. By adjointness there is a canonical map of $\mathbb P^1$-spectra
\[E\to\Omega^\ell(E\w T)\to\Omega^\ell(E[1])=\Theta^1(E).\]
Denote by $\Theta^n(E)$ the $n$-fold composition
$\Theta^1(\Theta^1\ldots (E))$. There are natural stabilization maps
$\Theta^n(E)\to\Theta^{n+1}(E)$ and $\Theta^{\infty}(E)$ denotes the
colimit
\[\Theta^{\infty}(E)=\colim_n\Theta^n(E).\]
\end{dfn}

\begin{rem}
We shall need the following explicit description of spaces of the
$\mathbb P^1$-spectrum $\Theta^nE$ and its bonding maps. The $j$th
space equals
\[\Theta^n(E)_j=\uhom(\PP^{\w n},E_{j+n}).\]
The bonding maps of $\Theta^n(E)$ are adjoint to
\[(\PP^{\w n},E_{n+j})\xrightarrow{u_*}(\PP^{\w n},(\PP^{\w 1},E_{n+j+1}))
=(\PP^{\w n}\w\PP^{\w 1},E_{n+j+1})=(\PP^{\w 1},(\PP^{\w n},E_{n+j+1})).\]
One should note that we do not permute copies of $\PP^{\w 1}$ here.
The stabilization map $\Theta^n(E)_j\to\Theta^{n+1}(E)_j$ can be described as the composite map
\[\uhom(\PP^{\w n},E_{j+n})\xrightarrow{-\wedge\sigma}\uhom(\PP^{\w n}\w\PP^{\w 1},E_{j+n}\w T)
\stackrel{u}\to\uhom(\PP^{\w n+1},E_{j+n+1}).\] Here the left arrow
smashes the simplices of the left space with $\sigma:\mathbb
P^{\wedge 1}\to T$.
\end{rem}

\begin{lem}\label{thetaomega}
For any spectrum $E$ the adjoint of each bonding map
   $$\Theta^{\infty}(E)_i\to\uhom(\mathbb P^{\wedge 1},\Theta^{\infty}(E)_{i+1})$$
 in the spectrum $\Theta^{\infty}E$ is an isomorphism.
\end{lem}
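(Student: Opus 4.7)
The plan is to show that the adjoint bonding map of $\Theta^\infty(E)$ at level $i$ becomes, after a compactness argument commuting internal hom with a filtered colimit, the canonical reindexing map of a sequential colimit along its own structure maps, which is automatically an isomorphism. No hypotheses on $E$ beyond being a $T$-spectrum will be needed.

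First I would invoke that $\PPo=\A^1/(\A^1-0)$ is a compact (finitely presentable) object in the category of pointed motivic spaces, being a finite colimit of representables, so that $\uhom(\PPo,-)$ commutes with filtered colimits. Applied to $\Theta^\infty(E)_{i+1}=\colim_n\uhom(\PP^{\w n},E_{n+i+1})$ this yields
\[\uhom\bigl(\PPo,\Theta^\infty(E)_{i+1}\bigr)\cong\colim_n\uhom\bigl(\PPo,\uhom(\PP^{\w n},E_{n+i+1})\bigr),\]
which, by the paper's convention identifying $(\PPo,(\PP^{\w n},-))=(\PP^{\w n+1},-)$ via associativity, equals $\colim_n\uhom(\PP^{\w n+1},E_{n+i+1})=\colim_n\Theta^{n+1}(E)_i$.

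Next I would identify the adjoint bonding map of $\Theta^\infty(E)_i$ with the canonical comparison $\colim_n\Theta^n(E)_i\to\colim_n\Theta^{n+1}(E)_i$. For each $n$, the adjoint bonding map of $\Theta^n(E)$ at level $i$ is
\[\uhom(\PP^{\w n},E_{n+i})\xrightarrow{u_*}\uhom\bigl(\PPo,\uhom(\PP^{\w n},E_{n+i+1})\bigr)=\uhom(\PP^{\w n+1},E_{n+i+1})=\Theta^{n+1}(E)_i,\]
and this sends a section $f\colon\PP^{\w n}\to E_{n+i}$ to the composite $u\circ(f\w\sigma)$, agreeing with the explicit description of the stabilization map $\Theta^n(E)_i\to\Theta^{n+1}(E)_i$ already recorded in the preceding remark. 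Passing to the colimit in $n$, the adjoint bonding map of $\Theta^\infty(E)$ is therefore the map induced on $\colim_n\Theta^n(E)_i$ by the stabilization maps themselves, which is an isomorphism (reindexing a sequential colimit by a $+1$ shift along its defining structure maps is canonically the identity).

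The main obstacle is the bookkeeping in the identification of Step~2: the bonding map of $\Theta^n(E)$ attaches the new $\PPo$ factor on the left of the internal hom, while the stabilization map attaches it on the right via $-\w\sigma$, so a priori the two maps differ by a twist of $\PPo$ factors. This is precisely the situation resolved by the paper's convention to identify $(\PP^{\w m},(\PP^{\w n},X))=(\PP^{\w m}\w\PP^{\w n},X)=(\PP^{\w n}\w\PP^{\w m},X)=(\PP^{\w n},(\PP^{\w m},X))$ via associativity isomorphisms without permuting factors; once this identification is adopted, the adjoint bonding map and the stabilization map are literally equal, and the argument closes.
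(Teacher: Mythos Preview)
Your proof is correct and follows essentially the same approach as the paper: identify the level-$n$ adjoint bonding map $\Theta^n(E)_i\to\uhom(\PP^{\w 1},\Theta^n(E)_{i+1})=\Theta^{n+1}(E)_i$ with the stabilization map, then pass to the colimit, which becomes the reindexing isomorphism of a sequential colimit. One slip: you write $\PPo=\A^1/(\A^1-0)$, but that sheaf is $T$; $\PP^{\w 1}$ is the projective line pointed at $\infty$, still a finite colimit of representables so your compactness argument (which the paper leaves implicit) goes through unchanged.
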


\begin{proof}
For every $n$ and $i$ the adjoint of the bonding map
   \[\Theta^n(E)_i\to\uhom(\mathbb P^{\wedge 1},\Theta^n(E)_{i+1})=\uhom(\mathbb P^{\wedge n+1},E_{i+n+1})=\Theta^{n+1}(E)_i\]
coincides with the stabilization map $\Theta^n(E)_i\to\Theta^{n+1}(E)_{i}$.
Thus we get an isomorphism of sequences $\Theta^{n+1}(E)_i$ and $\uhom(\mathbb P^{\wedge 1},\Theta^{n}(E)_{i+1})$.
\end{proof}

\begin{dfn}
Given a $T$-spectrum $E$, define its {\it $n$-th layer $L_n(E)$\/}
as the $T$-spectrum
   \[L_nE=(E_0,E_1,\ldots, E_n, E_n\w T, E_n\w T^2,\ldots).\]
The maps of spaces $E_n\w T^i=E_n\w T\w T^{i-1}\xrightarrow{u\wedge
T^{i-1}} E_{n+1}\w T^{i-1}$ induce maps of spectra $L_nE\to
L_{n+1}E$ and an obvious isomorphism of spectra $E\cong\colim_n
L_nE$.
\end{dfn}

We recall the following lemma from~\cite[Section~13]{GPMain}. It
says that $\Theta^\infty$ converts $T$-spectra into framed $\mathbb
P^1$-spectra, i.e. spectra whose spaces are spaces with framed
correspondences.

\begin{lem}\label{thetalayer}
For any $T$-spectrum $E$ there is a canonical isomorphism of
$\mathbb P^1$-spectra
\[\Theta^{\infty}E=\colim_n\Theta^{\infty}(L_nE).\]
Moreover, there is a canonical isomorphism of spaces
$\Theta^{\infty}(L_nE)_i=\uhom(\PP^{\w n},\Fr(E_n\w T^i)).$
\end{lem}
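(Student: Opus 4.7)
\medskip

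\noindent\textbf{Proof plan for Lemma \ref{thetalayer}.}
The plan is to compute both sides levelwise and reduce the claim to a reindexing of filtered colimits, using Voevodsky's Lemma~\ref{Voevlemma} to commute internal homs with the relevant filtered colimits.

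First I would unwind the definition of the layer filtration. By construction, $(L_nE)_j=E_j$ for $j\leqslant n$ and $(L_nE)_j=E_n\w T^{j-n}$ for $j\geqslant n$, and the transition maps $L_nE\to L_{n+1}E$ are the identity in low levels and are induced by the bonding map $u\colon E_n\w T\to E_{n+1}$ in high levels. Consequently, for every fixed index $j$ the sequence $(L_nE)_j$ stabilizes to $E_j$ for $n\geqslant j$, giving $\colim_n L_nE=E$ as $T$-spectra. The first claim $\Theta^{\infty}E=\colim_n\Theta^{\infty}(L_nE)$ then reduces to interchanging two filtered colimits in the formula
\[\Theta^{\infty}(F)_i=\colim_m\uhom(\PP^{\w m},F_{i+m}),\]
so the point is that $\uhom(\PP^{\w m},-)$ commutes with the filtered colimit $\colim_n(L_nE)_{i+m}=E_{i+m}$. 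This is where Voevodsky's Lemma enters: it describes sections of $\uhom(\PP^{\w m},-)$ applied to Thom-type targets in terms of triples $(U,Z,\phi)$, a description that manifestly commutes with filtered colimits of Thom spaces $V/(V-Z)$ indexing the target.

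For the second claim I would compute $\Theta^{\infty}(L_nE)_i$ directly from the definition. For $m$ large enough to have $i+m\geqslant n$ the space $(L_nE)_{i+m}$ equals $E_n\w T^{i+m-n}$, so a cofinal piece of the colimit defining $\Theta^{\infty}(L_nE)_i$ reads
\[\colim_{m\geqslant n-i}\uhom(\PP^{\w m},E_n\w T^{i+m-n}).\]
Reindexing by $m=n+p$ and evaluating at $Y\in\Sm_k$, this becomes $\colim_p\Hom(Y_+\w\PP^{\w n+p},E_n\w T^{i+p})$, which via the adjunction $\uhom(\PP^{\w n},-)$ is exactly $\colim_p\Hom(Y_+\w\PP^{\w n},\Fr_p(E_n\w T^i))$. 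Commuting the colimit with $\Hom(Y_+\w\PP^{\w n},-)$ (again using Voevodsky's Lemma to get the comparison for the Thom-type target $E_n\w T^i$) identifies this with $\uhom(\PP^{\w n},\Fr(E_n\w T^i))(Y)$, which is the desired formula. The naturality in $Y$ and compatibility with the bonding maps of $\Theta^{\infty}(L_nE)$ then promote the levelwise bijection to an isomorphism of $\PPo$-spectra.

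The main obstacle is bookkeeping: verifying that the reindexing genuinely matches the stabilization maps on both sides (the bonding maps in $\Theta^{\infty}(L_nE)$ coming from $u_\ast$ must correspond to the stabilization maps in $\Fr$ coming from smashing with $\sigma\colon\PP^{\w 1}\to T$), and checking that the ``fake'' non-permuting convention for $\uhom(\PP^{\w 1},-)$ in the definition of $\Theta^1$ is consistent with the associativity identifications $\PP^{\w n}\w\PP^{\w 1}=\PP^{\w n+1}$ used to land in $\Fr$. Both are routine once one commits to the conventions fixed in the preliminaries, and no deeper input beyond Voevodsky's Lemma is needed.
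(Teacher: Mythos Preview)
Your overall strategy---reindex the colimit defining $\Theta^{\infty}$, pass to a cofinal subsequence where $(L_nE)_{i+m}=E_n\w T^{i+m-n}$, and then use the adjunction $(\PP^{\w n+p},-)=(\PP^{\w n},(\PP^{\w p},-))$---is exactly the paper's proof. However, your repeated appeal to Voevodsky's Lemma is both unnecessary and, for the lemma as stated, illegitimate: the statement is for an \emph{arbitrary} $T$-spectrum $E$, so $E_n$ need not be a quotient $V/(V-Z)$ and Lemma~\ref{Voevlemma} simply does not apply.

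Neither commutation you worry about requires it. For the first isomorphism, you yourself observe that for fixed $m,i$ the sequence $n\mapsto(L_nE)_{i+m}$ is eventually constant (equal to $E_{i+m}$ once $n\geqslant i+m$); an eventually constant filtered colimit commutes with \emph{any} functor, so $\Theta^m(E)=\colim_n\Theta^m(L_nE)$ on the nose, and then one interchanges the two filtered colimits over $m$ and $n$. For the second isomorphism, the only nontrivial step is $\colim_p\uhom(\PP^{\w n},\Fr_p(E_n\w T^i))\cong\uhom(\PP^{\w n},\Fr(E_n\w T^i))$. This holds because $Y_+\w\PP^{\w n}$ is a compact object in pointed Nisnevich sheaves (it is the quotient of the representable $Y\times(\PP^1)^n$ by the closed representable subfunctor given by the fat wedge, and sections on qcqs schemes commute with filtered colimits), so $\Hom(Y_+\w\PP^{\w n},-)$ preserves filtered colimits for any target---no Thom hypothesis needed. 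With these two corrections your argument is complete and coincides with the paper's; the bookkeeping about the fake-loop conventions that you flag is indeed routine.
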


\begin{proof}
Note that $\Theta^m(E)=\colim_n\Theta^m(L_nE)$ for every $m$. Thus
passing to the colimit over $m$, we get the first isomorphism. For
the second statement note that the left hand side is the colimit
over $m$ of the sequence $(\PP^{\w m+n},E_n\w T^{m+i})\to (\PP^{\w
m+n}\w\PP^{\w 1},E_n\w T^{m+i}\w T)$. Thus the identification
$(\PP^{\w m+n},E_n\w T^{m+i})=(\PP^{\w n},(\PP^{\w m},E_n\w T^i\w
T^m))$ (we do not permute copies of $T$ here) provides its
isomorphism with the sequence $(\PP^{\w n},\Fr_m(E_n\w T^i))$.
\end{proof}

\section{The Mayer--Vietoris sequence}

\begin{dfn}
Suppose $X\in\Sm_k$ and $Z$ is a smooth closed subvariety of
codimension $d$. We say that the embedding $Z\to X$ is {\it
trivial\/} if there is an \'etale map $\alpha\colon X\to\A^{n+d}$
such that $Z=X\times_{\A^{n+d}}\A^n$, where $\A^n\to\A^{n+d}$ is the
standard linear embedding.
\end{dfn}

\begin{lem}\label{cover}
For every closed embedding of smooth varieties $Z\to X$ of codimension $d$ there is an
open cover of $X$ by $X_i$ such that the inclusion $X_i\cap Z\to
X_i$ is trivial.
\end{lem}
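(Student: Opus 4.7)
The plan is to produce, for each point $z\in Z$, a Zariski open neighbourhood $X_z\subset X$ of $z$ on which the embedding $Z\cap X_z\hookrightarrow X_z$ becomes trivial in the sense of the preceding definition. Such $X_z$'s cover $Z$, and augmenting this collection by $X\setminus Z$ (where the intersection with $Z$ is empty) yields the required open cover of $X$.

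To build $X_z$, I would first invoke the fact that $Z\hookrightarrow X$ is a regular embedding of codimension $d$, since both varieties are smooth. Consequently the conormal sheaf $I_Z/I_Z^2$ is locally free of rank $d$ on $Z$, so one may pick sections $f_1,\dots,f_d\in I_Z$ on some affine neighbourhood of $z$ whose classes at $z$ form a basis of $I_{Z,z}/(\mathfrak m_z I_{Z,z}+I_{Z,z}^2)$; by Nakayama they generate $I_{Z,z}$ and, after Zariski-shrinking, generate the ideal of $Z$ in that neighbourhood. Next, use smoothness of $Z$ at $z$ to lift a regular system of parameters $\bar g_1,\dots,\bar g_n$ of $\mathcal O_{Z,z}$ to functions $g_1,\dots,g_n\in\mathcal O_{X,z}$. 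By construction $(f_1,\dots,f_d,g_1,\dots,g_n)$ is a regular system of parameters of the regular local ring $\mathcal O_{X,z}$ of dimension $n+d$, so the morphism
\[
\alpha=(f_1,\dots,f_d,g_1,\dots,g_n)\colon X_z\longrightarrow\mathbb A^{n+d},
\]
defined after possibly shrinking so that all the $f_i,g_j$ are regular on $X_z$, induces an isomorphism on cotangent spaces at $z$; being a morphism of finite type between smooth $k$-varieties of the same dimension, it is therefore étale at $z$, and by openness of the étale locus étale on a smaller neighbourhood which I again call $X_z$.

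Since the standard linear embedding $\mathbb A^n\hookrightarrow\mathbb A^{n+d}$ is cut out by the vanishing of the first $d$ coordinates, one obtains
\[
X_z\times_{\mathbb A^{n+d}}\mathbb A^n=V(f_1,\dots,f_d)=Z\cap X_z
\]
scheme-theoretically, which is exactly the triviality condition. The only genuinely technical step is ensuring that the three open conditions on $X_z$, namely definedness of the $f_i,g_j$, étaleness of $\alpha$, and $(f_1,\dots,f_d)$ generating $I_Z|_{X_z}$, can be realised simultaneously on a common neighbourhood of $z$; all three are open conditions containing $z$, so this is routine bookkeeping, and everything else is the standard construction of étale coordinates around a smooth closed subvariety.
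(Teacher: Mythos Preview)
Your argument is essentially the paper's: choose local equations $f_1,\dots,f_d$ for $Z$ near a point, lift a regular system of parameters on $Z$ to functions $g_1,\dots,g_n$ on $X$, and observe that $(f_1,\dots,f_d,g_1,\dots,g_n)$ gives an \'etale map to $\mathbb A^{n+d}$ realising $Z$ as the pullback of $\mathbb A^n$. The only cosmetic differences are that the paper restricts at the outset to closed points of $Z$ (so that $\dim\mathcal O_{X,z}=n+d$ actually holds) and invokes perfectness of $k$ for \'etaleness, while you spell out the open-condition bookkeeping and the inclusion of $X\setminus Z$ in the cover.
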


\begin{proof}
Let $n$ be the dimension of $Z$ and let $x\in Z$ be a closed point. Since the embedding $Z\to X$ is regular, 
there is an open affine neighborhood $X'$ of $x$ in $X$ such that $Z'=X'\cap Z$ is the zero locus of 
$d$ regular functions $f_1,\ldots, f_d$. These regular functions define a map $f\colon X'\to\A^d$ that
is flat by~\cite[Tag 00R4]{stacks-project}. Since $Z'$ is smooth,~\cite[Tag 01V9]{stacks-project} 
implies that $f$ is smooth at $x$, and by~\cite[Tag 054L]{stacks-project} there is an affine open 
neighborhood $U$ of $x$ in $X'$ such that $f\colon U\to\A^d$ can be presented as a composition of 
an \'etale map followed by a projection: $f\colon U\xrightarrow{\alpha}\A^n_{\A^d}\to\A^d$. 
Then $U\cap Z=U\times_{\A^d}\{0\}=U\times_{\A^{n+d}}\A^n$, and so the embedding
$U\cap Z\to U$ is trivial.
\end{proof}

\begin{lem}\label{ZTd}
If $Z\to X$ is a trivial embedding of codimension $d$, then there is an isomorphism of
Nisnevich sheaves $X/(X-Z)\xrightarrow{\cong} Z_+\w T^d$.
\end{lem}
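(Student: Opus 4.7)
The plan is to reduce the statement to a double application of Nisnevich excision along a common étale thickening of $X$ and $Z\times\A^d$ inside $\A^{n+d}$. First I would identify $T^d$ with the Nisnevich sheaf $\A^d/(\A^d-0)$, via the standard computation $T\w T=(\A^1\times\A^1)/((\A^1-0)\times\A^1\cup\A^1\times(\A^1-0))=\A^2/(\A^2-0)$ together with induction. Under this identification $Z_+\w T^d=(Z\times\A^d)/(Z\times(\A^d-0))$, so it suffices to produce a canonical isomorphism of Nisnevich sheaves $X/(X-Z)\cong (Z\times\A^d)/(Z\times(\A^d-0))$.

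By hypothesis there is an \'etale map $\alpha\colon X\to\A^{n+d}=\A^n\times\A^d$ with $Z=\alpha^{-1}(\A^n\times 0)$. Base change along $\A^n\hookrightarrow\A^{n+d}$ produces an \'etale map $\alpha_0\colon Z\to\A^n$, and hence $\alpha_0\times\mathrm{id}\colon Z\times\A^d\to\A^{n+d}$ is \'etale as well. I would then form the fibre product
\[Y:=X\times_{\A^{n+d}}(Z\times\A^d),\]
both of whose projections $p_X\colon Y\to X$ and $q\colon Y\to Z\times\A^d$ are \'etale. A straightforward calculation on points shows that the scheme-theoretic preimages $p_X^{-1}(Z)$ and $q^{-1}(Z\times 0)$ coincide inside $Y$ with $Z\times_{\A^n}Z$, and that the common diagonal $\Delta\colon Z\to Y$, $z\mapsto(z,z,0)$, is mapped by $p_X$ and by $q$ isomorphically onto $Z$ and $Z\times 0$ respectively.

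Since $\alpha_0$ is \'etale, $\Delta(Z)$ is clopen in $Z\times_{\A^n}Z$; and since $Z\times_{\A^n}Z$ is itself closed in $Y$, the complement $W:=(Z\times_{\A^n}Z)\setminus\Delta(Z)$ is closed in $Y$. Setting $U:=Y\setminus W$ gives an open subscheme of $Y$ containing $\Delta(Z)$ with $U\cap(Z\times_{\A^n}Z)=\Delta(Z)$. Thus the restriction $U\to X$ of $p_X$ is an \'etale Nisnevich neighbourhood of $Z$ in the sense that the preimage of $Z$ is $\Delta(Z)\xrightarrow{\cong}Z$, and symmetrically the restriction $U\to Z\times\A^d$ of $q$ is an \'etale Nisnevich neighbourhood of $Z\times 0$. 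Two applications of Nisnevich excision for the associated quotient sheaves then give
\[X/(X-Z)\xleftarrow{\ \cong\ }U/(U-\Delta(Z))\xrightarrow{\ \cong\ }(Z\times\A^d)/(Z\times(\A^d-0)),\]
and composing these produces the desired isomorphism.

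The main technical point will be the scheme-theoretic verification that both preimages inside $Y$ equal $Z\times_{\A^n}Z$ and that the diagonal is clopen there (the latter because an \'etale, separated morphism has an open-and-closed diagonal); once that is in hand, the conclusion is formal from Nisnevich excision.
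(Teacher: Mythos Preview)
Your proof is correct and follows essentially the same route as the paper: both construct the common \'etale neighbourhood $Y=X\times_{\A^{n+d}}(Z\times\A^d)\cong Z\times_{\A^n}X$ and apply Nisnevich excision on each side to compare $X/(X-Z)$ and $(Z\times\A^d)/(Z\times(\A^d-0))$. Your version is in fact a bit more careful than the paper's, which glosses over the removal of the off-diagonal component of $Z\times_{\A^n}Z$ that you make explicit via the open subset $U$.
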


\begin{proof}
Fix an \'etale map $\alpha\colon X\to\A^{n+d}$ such that
$Z=X\times_{\A^{n+d}}\A^n.$ Consider the projection
$pr_n\colon\A^{n+d}\to \A^n$ and the composition
$pr_n\circ\alpha\colon X\to\A^n$. Then $X'=Z\times_{\A^n} X$ is an
etale neighborhood of $Z$ in $X$. Moreover, $X'$ and $X-Z$ is an
elementary Nisnevich cover of $X$, hence $X'/(X'-Z)\to X/(X-Z)$ is
an isomorphism of Nisnevich sheaves. Then the composition
\[X'= Z\times_{\A^n}X\to Z\times_{\A^n}\A^{n+d}=Z\times_k\A^d\]
is an \'etale neighborhood of $Z=Z\times 0$ in $Z\times_k\A^d$, and
thus it induces an isomorphism $X'/(X'-Z)\to
Z\times\A^d/(Z\times\A^d-Z\times 0)=Z_+\w T^d.$
\end{proof}

\begin{lem}\label{1219}
Suppose $G$ is a strictly homotopy invariant Nisnevich sheaf, then
for any bounded chain complex of presheaves $X$ there is an
isomorphism
\[\Hom_{D^-_{Nis}}(Tot(C_*(X))_{Nis},G[n])\cong \Hom_{D^-_{Nis}}(X_{Nis},G[n])\]
\end{lem}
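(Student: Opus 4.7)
The plan is to compute both sides as Nisnevich hypercohomology groups and show that the augmentation $X\to Tot(C_*X)$ induces an isomorphism between them. For any bounded complex $Y$ of Nisnevich sheaves one has $\Hom_{D^-_{Nis}}(Y,G[n])=\mathbb H^n_{Nis}(Y,G)$, so I have to produce an isomorphism on Nisnevich hypercohomology. Strict $\A^1$-invariance of $G$ gives $H^q_{Nis}(U,G)\xrightarrow{\cong}H^q_{Nis}(U\times\A^m_k,G)$ for all $q\geqslant 0$, $m\geqslant 0$ and $U\in\Sm_k$; this is the only substantive ingredient beyond formal spectral-sequence bookkeeping.

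First I would treat the case in which $X$ is a single presheaf placed in degree zero. Then $C_*X$ is the simplicial presheaf $n\mapsto X(-\times\Delta^n_k)$, viewed as a non-positive chain complex via alternating face sums, and the augmentation $X=C_0(X)\to Tot(C_*X)$ is the inclusion of $0$-simplices. The simplicial filtration on $C_*X$ produces a convergent spectral sequence
\[E_1^{-p,q}=H^q_{Nis}(X\times\Delta^p_k,G)\Longrightarrow\mathbb H^{-p+q}_{Nis}(Tot(C_*X),G).\]
Since $\Delta^p_k\cong\A^p_k$, strict $\A^1$-invariance identifies $E_1^{-p,q}$ with the constant cosimplicial abelian group having value $H^q_{Nis}(X,G)$ in the $p$-direction. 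Its $d_1$-differential is the alternating sum of identity face maps, so its cohomology is $H^q_{Nis}(X,G)$ concentrated in $p=0$. Hence the spectral sequence collapses at $E_2$ and the edge map gives the desired isomorphism $\mathbb H^n_{Nis}(Tot(C_*X),G)\cong H^n_{Nis}(X,G)$.

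For a general bounded chain complex $X=X^\bullet$ of presheaves, $C_*(X^\bullet)$ is a bicomplex; the filtration coming from the chain-complex direction of $X^\bullet$ yields, by boundedness of $X^\bullet$, a convergent spectral sequence whose $E_1$-page involves $\mathbb H^q_{Nis}(Tot(C_*X^p),G)$, which by the single-presheaf case equals $H^q_{Nis}(X^p,G)$. This $E_1$-page coincides with that of the hypercohomology spectral sequence for $X^\bullet$ itself, and the augmentation $X^\bullet\to Tot(C_*X^\bullet)$ induces the identity map of $E_1$-pages, hence an isomorphism on the abutments.

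The main obstacle is organising the spectral sequences so that the map $X\to Tot(C_*X)$ genuinely induces the intended comparison between them, and verifying convergence—boundedness of $X^\bullet$ is essential at this step. Once these formalities are in place, strict $\A^1$-invariance of $G$ makes the spectral sequences collapse as described, and the argument is complete.
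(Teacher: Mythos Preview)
Your overall architecture matches the paper's: reduce to the case of a single presheaf, then handle bounded complexes by filtering in the chain-complex direction. The paper does the latter via the stupid truncations $\sigma_{\geqslant i}X$ and an induction on the length of $X$, which is exactly the spectral-sequence argument you describe; for the single-presheaf base case the paper simply cites \cite[Prop.~12.19]{MVW}.

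The gap is in your attempt to prove the single-presheaf case directly. The graded piece of the simplicial filtration on $C_*X$ in degree $p$ is the presheaf $C_pX\colon U\mapsto X(U\times\Delta^p_k)$, so the $E_1$-term of your spectral sequence is $\operatorname{Ext}^q_{Nis}\bigl((C_pX)_{Nis},G\bigr)$. You write this as $H^q_{Nis}(X\times\Delta^p_k,G)$ and then invoke strict $\A^1$-invariance to identify it with $H^q_{Nis}(X,G)$; but strict $\A^1$-invariance, as you yourself state it, is only the assertion $H^q_{Nis}(U,G)\cong H^q_{Nis}(U\times\A^m,G)$ for \emph{schemes} $U\in\Sm_k$. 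The presheaf $C_pX$ is neither the product presheaf $X\times\Delta^p_k$ nor representable, and passing from the scheme-level statement to $\operatorname{Ext}^q_{Nis}(C_pX,G)\cong\operatorname{Ext}^q_{Nis}(X,G)$ for an arbitrary presheaf $X$ is precisely the substance of the single-presheaf case---so the spectral sequence, as written, begs the question. To close the gap you need an additional argument: either show that strict $\A^1$-invariance of $G$ makes it $\A^1$-local in $D^-_{Nis}$ (whence the $\A^1$-weak equivalence $X\to C_*X$ immediately induces an isomorphism on $R\Hom(-,G)$), or resolve $X$ by direct sums of representables and reduce to the scheme case. This is essentially what \cite[Prop.~12.19]{MVW} provides, and it is why the paper cites it rather than attempting the spectral-sequence collapse directly.
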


\begin{proof}
Consider the stupid truncation $\sigma_{\geqslant i}X$. Then there
is a short exact sequence of complexes of presheaves
\[0 \to C_*X_i[-i]\to Tot(C_*(\sigma_{\geqslant i}X))\to Tot(C_*(\sigma_{\geqslant i+1}X))\to 0\]
Note that $\Hom_{D^-_{Nis}}((C_*X_i)_{Nis},G[n])\cong
\Hom_{D^-_{Nis}}((X_i)_{Nis},G[n])$ by~\cite[Prop. 12.19]{MVW}, and
$\sigma_{\geqslant N_0}X=0$ and $\sigma_{\geqslant N_1}X=X$ for some
$N_0,N_1$. Then the statement follows by induction.
\end{proof}

\begin{lem}\label{Totzero}
Suppose $F$ is a bounded complex of $\ZF_*$-presheaves such that
$F_{Nis}$ is quasi-isomorphic to zero and the homology presheaves
$H_i(Tot(C_*F))$ are quasi-stable. Then the complex of sheaves
$(Tot(C_*F))_{Nis}$ is locally quasi-isomorphic to zero.
\end{lem}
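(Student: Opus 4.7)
The plan is to combine Lemma~\ref{1219} with the main strict $\A^1$-invariance theorem of~\cite{GPPresheaves} and then run a standard top-cohomology/truncation argument. Put $K := (Tot(C_*F))_{Nis}$ and write $\mathcal H_i := H_i(Tot(C_*F))$ for the homology presheaves. I want to show each Nisnevich sheafification $(\mathcal H_i)_{Nis}$ vanishes.

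First I would check that each $\mathcal H_i$ satisfies the hypotheses of the main theorem of~\cite{GPPresheaves}. Since $F$ is a complex of $\ZF_*$-presheaves, $Tot(C_*F)$ is a complex of radditive framed presheaves, and the same structure is inherited by its homology presheaves $\mathcal H_i$. The Suslin complex construction provides explicit simplicial $\A^1$-homotopies (cf.\ Remark~\ref{a1sim}), so each $\mathcal H_i$ is $\A^1$-invariant. By hypothesis each $\mathcal H_i$ is also quasi-stable. Applying~\cite{GPPresheaves} yields that $(\mathcal H_i)_{Nis}$ is a strictly $\A^1$-invariant Nisnevich sheaf.

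Now I invoke Lemma~\ref{1219} with $G=(\mathcal H_i)_{Nis}$: since $F_{Nis}\simeq 0$ by assumption,
\[\Hom_{D^-_{Nis}}\!\bigl(K,(\mathcal H_i)_{Nis}[n]\bigr)\;\cong\;\Hom_{D^-_{Nis}}\!\bigl(F_{Nis},(\mathcal H_i)_{Nis}[n]\bigr)\;=\;0\]
for every $n\in\Z$ and every $i$. Suppose for contradiction that some $(\mathcal H_{i_0})_{Nis}$ is nonzero. Because $F$ is bounded and $C_*$ lives in non-negative simplicial degrees, $K$ belongs to $D^-_{Nis}$ and has a top nonvanishing cohomology sheaf; say $i_0$ realizes it. The canonical truncation $K\to\tau_{\geq i_0}K=(\mathcal H_{i_0})_{Nis}[-i_0]$ is the identity on $H^{i_0}$ and therefore gives a nonzero element of the Hom group displayed above, a contradiction. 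Hence $(\mathcal H_i)_{Nis}=0$ for all $i$, which is precisely the assertion that $(Tot(C_*F))_{Nis}$ is locally quasi-isomorphic to zero.

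The main obstacle is really bookkeeping rather than a deep step: one has to be confident that homology of a bounded complex of radditive quasi-stable framed presheaves keeps the radditive framed structure and, crucially via Remark~\ref{a1sim}, becomes $\A^1$-invariant after passing through the Suslin complex, so that~\cite{GPPresheaves} applies. Once that is in place, the combination of Lemma~\ref{1219} and the truncation argument is formal.
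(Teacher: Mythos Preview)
Your proof is correct and follows exactly the paper's approach. Both verify that the homology presheaves $H_i(Tot(C_*F))$ are $\A^1$-invariant quasi-stable framed presheaves, apply \cite{GPPresheaves} to obtain strict $\A^1$-invariance of their Nisnevich sheafifications, use Lemma~\ref{1219} to deduce $\Hom_{D^-_{Nis}}(K,(\mathcal H_i)_{Nis}[n])=0$, and then conclude by the truncation argument --- the paper simply cites \cite[13.12]{MVW} for this last step, which you spell out directly.
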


\begin{proof}
The presheaves $H_i(Tot(C_*F))$ are quasi-stable and homotopy
invariant. By~\cite{GPPresheaves} the associated sheaves
$H_i=H_i(Tot(C_*F))_{Nis}$ are strictly homotopy invariant, and
hence by Lemma~\ref{1219} there is an isomorphism
   \[\Hom_{D_{Nis}^-}((Tot(C_*F))_{Nis},H_i[n])\cong \Hom_{D_{Nis}^-}(F_{Nis},H_i[n])=0.\]
The inductive argument as in the proof of~\cite[13.12]{MVW} gives a map
$(Tot(C_*F))_{Nis}\to H_i[i]$ inducing an isomorphism on homology sheaves. It is zero by the above arguments,
hence $H_i=0.$
\end{proof}






Suppose $Z\subseteq X$ is a closed subset of $X$, and $X=X_1\cup
X_2$ is a Zariski cover of $X$. Denote by $X_{12}=X_1\cap X_2,
Z_1=X_1\cap Z,Z_2=X_2\cap Z,Z_{12}=X_{12}\cap Z$ and $Y=X/(X-Z),
Y_1=X_1/(X_1-Z_1), Y_2=X_2/(X_2-Z_2),Y_{12}=X_{12}/(X_{12}-Z_{12})$.

\begin{lem}\label{pushout}
The maps $Y_{12}\to Y_1$, $Y_{12}\to Y_2$ are injective and the
sheaf $Y$ is the pushout of the diagram $Y_1\hookleftarrow Y_{12}\hookrightarrow Y_2$.
\end{lem}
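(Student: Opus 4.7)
The plan is to use the universal property of cofibers in the pointed topos $\Shv_{\bullet}$ together with the commutativity of colimits with colimits. By definition the pointed Nisnevich sheaf $X/(X-Z)$ is the cofiber $X\sqcup_{X-Z}\ast$ of the open immersion $X-Z\hookrightarrow X$, and similarly for $Y_1$, $Y_2$, $Y_{12}$. Since pushouts commute with pushouts, the pushout of $Y_1\leftarrow Y_{12}\to Y_2$ in $\Shv_{\bullet}$ equals the cofiber of the induced map
$$(X_1-Z_1)\sqcup_{X_{12}-Z_{12}}(X_2-Z_2)\hookrightarrow X_1\sqcup_{X_{12}}X_2.$$
Both $X=X_1\cup X_2$ and $X-Z=(X_1-Z_1)\cup(X_2-Z_2)$ are Zariski covers (hence Nisnevich covers) with double intersections $X_{12}$ and $X_{12}-Z_{12}$ respectively, so the two pushouts above are just $X$ and $X-Z$ as Nisnevich sheaves. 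The cofiber of $X-Z\hookrightarrow X$ is by definition $Y$, which finishes the pushout part.

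For the injectivity of $Y_{12}\to Y_1$ I invoke the following topos-theoretic fact: given a Cartesian square of monomorphisms in a Grothendieck topos,
$$\begin{array}{ccc} A & \hookrightarrow & A' \\ \cap & & \cap \\ B & \hookrightarrow & B' \end{array}$$
the induced map on cofibers $B/A\to B'/A'$ is again a monomorphism, as is seen stalkwise by a direct set-theoretic check (if $b\in B-A$ and its image lands in $A'$, then Cartesianness forces $b\in A$; injectivity on the complement follows from injectivity of $B\hookrightarrow B'$). Applied to the Cartesian square
$$\begin{array}{ccc} X_{12}-Z_{12} & \hookrightarrow & X_1-Z_1 \\ \cap & & \cap \\ X_{12} & \hookrightarrow & X_1 \end{array}$$
(Cartesian because $Z_{12}=X_{12}\cap Z_1$, and all four arrows are open immersions) this gives $Y_{12}\hookrightarrow Y_1$. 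The injectivity of $Y_{12}\to Y_2$ is identical by symmetry.

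The only mild obstacle is the bookkeeping of pointed versus unpointed colimits and checking that Zariski covers really do give genuine pushouts in the category of Nisnevich sheaves; both are standard consequences of Nisnevich descent and of colimits commuting with colimits in any Grothendieck topos.
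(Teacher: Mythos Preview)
Your proof is correct. For the injectivity of $Y_{12}\to Y_i$ you and the paper argue identically: a stalkwise set-theoretic check using that $(X_{12}-Z_{12})=X_{12}\cap(X_i-Z_i)$ forces the induced map of quotient sets to be injective on Henselian local sections.

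For the pushout statement you take a genuinely different route. The paper argues entirely stalkwise: on a Henselian local scheme $U$ one has $Y(U)=X(U)/(X-Z)(U)$ and $X(U)=X_1(U)\cup_{X_{12}(U)}X_2(U)$, and likewise for $X-Z$, so $Y(U)=Y_1(U)\cup_{Y_{12}(U)}Y_2(U)$ by direct inspection of pointed sets. You instead invoke the abstract fact that colimits commute with colimits in a topos, reducing the claim to the identifications $X_1\sqcup_{X_{12}}X_2\cong X$ and $(X_1-Z_1)\sqcup_{X_{12}-Z_{12}}(X_2-Z_2)\cong X-Z$ in Nisnevich sheaves, which hold because Nisnevich sheaves satisfy Zariski descent. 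Your argument is more conceptual and would transport unchanged to any subcanonical topology refining the Zariski topology; the paper's argument is more elementary and self-contained, needing only the computation of sheaf quotients on local objects. Either is perfectly adequate for a lemma of this size.
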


\begin{proof}
For any Henselian local scheme $U$ the map
   $$Y_{12}(U)=X_{12}(U)/(X_{12}-Z_{12})(U)\to X_1(U)/(X_1-Z_1)(U)=Y_1(U)$$
is injective, because $(X_{12}-Z_{12})(U)=X_{12}(U)\cap (X_1-Z_1)(U)$. Similarly, the map $Y_{12}(U)\to Y_2(U)$ is injective.
Note that $Y(U)=X(U)/(X-Z)(U)$, $X(U)=X_1(U)\cup_{X_{12}(U)}X_2(U)$
and $(X-Z)(U)=(X_1-Z_1)(U)\cup_{(X_{12}-Z_{12})(U)}(X_2-Z_2)(U)$. Hence $Y(U)=Y_1(U)\cup_{Y_{12}(U)}Y_2(U)$.
\end{proof}

\begin{dfn}\label{zfrpi}
Let $F^{\PP^{\w i}}(U,Y)$, $U\in\Sm_k$, be the set of
$x\in\uhom(\PP^{\w i},\Fr(Y))(U)$ such that the support of $x$ is
connected. The free abelian group generated by $F^{\PP^{\w i}}(U,Y)$
is denoted by $\ZF^{\PP^{\w i}}(U,Y)$. Then $\ZF^{\PP^{\w
i}}(U,Y)$ is functorial in $U$. Moreover, $\ZF^{\PP^{\w i}}(-,Y)$ is
a Nisnevich sheaf.
\end{dfn}

The following result gives an explicit computation of homology of
the motivic $S^1$-spectrum $\uhom(\PP^{\w i},M_{fr}(Y))$.

\begin{lem}\label{zfrhomol}
There are isomorphisms of graded presheaves
\[\pi_*(\Z\uhom(\PP^{\w i},M_{fr}(Y)))=H_*(C_*\ZF^{\PP^{\w i}}(Y))\]
for all $i\geqslant 0$.
\end{lem}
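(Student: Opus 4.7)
The plan is to realize both sides of the claimed equality as stable invariants of the same linear $\Gamma$-space, and then invoke Dold--Kan. By Corollary~\ref{specialgamma}, the assignment $A\colon K\mapsto\uhom(\PP^{\w i},C_*\Fr(Y\wedge K))$ is a special $\Gamma$-space whose associated Segal $S^1$-spectrum is precisely $\uhom(\PP^{\w i},M_{fr}(Y))$. Applying the reduced free abelian group functor $\Z(-)$ levelwise produces a $\Gamma$-space $\Z A$ of simplicial abelian presheaves whose Segal spectrum is $\Z\uhom(\PP^{\w i},M_{fr}(Y))$, so the left-hand side of the lemma is by definition its stable homotopy.

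Next I would construct a natural comparison map to a manifestly linear $\Gamma$-space. By Voevodsky's Lemma~\ref{Voevlemma}, every element of $\uhom(\PP^{\w i},\Fr(Y))(U)$ is represented by a triple $(W,Z,\phi)$ with $Z$ finite over $U$; since $Z=\sqcup_j Z_j$ is the disjoint union of its (finitely many) connected components and each $Z_j$ is clopen in $Z$, the restrictions $(W_j,Z_j,\phi_j)$ define classes in $F^{\PP^{\w i}}(U,Y)$. The rule $[x]\mapsto\sum_j[(W_j,Z_j,\phi_j)]$ yields a natural map of Nisnevich sheaves of abelian groups $\Z\uhom(\PP^{\w i},\Fr(Y))\to\ZF^{\PP^{\w i}}(-,Y)$. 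Running the same construction with $Y\wedge K$ replacing $Y$ and applying $C_*$ extends this to a map of Segal spectra from $\Z A(\Ss)$ to the spectrum $\bar A(\Ss)$ associated to the linear $\Gamma$-space $\bar A\colon K\mapsto C_*\ZF^{\PP^{\w i}}(Y\wedge K)$.

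I would then argue that this comparison is a stable local equivalence. Proposition~\ref{additivity}, iterated on wedges $Y\wedge K$ arising in the simplicial degrees of $Y\wedge S^m$, identifies both sides schemewise with iterated products of the values at $K=1_+$, compatibly with Segal maps; at $K=1_+$ the only genuine content is that in the free abelian group every correspondence is congruent to the sum of its connected-support pieces, which is exactly the target $\bar A(1_+)$. Remark~\ref{a1sim} propagates the required $\A^1$-homotopies through the Suslin complex. Since $\bar A$ is linear, $\bar A(\Ss)$ is the Eilenberg--Mac Lane spectrum $HC_*\ZF^{\PP^{\w i}}(Y)$, and Dold--Kan gives $\pi_n(\bar A(\Ss))=H_n(C_*\ZF^{\PP^{\w i}}(Y))$.

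The main obstacle is the stable equivalence step: one must verify that the Segal additivity built into $\Z A$ (which at a wedge $K=K_1\vee K_2$ splits a correspondence by target rather than by geometric components) is compatible with the connected-component decomposition and produces no extraneous homotopy. I expect this is handled by combining Proposition~\ref{additivity} with Lemmas~\ref{1219} and~\ref{Totzero} to check that the comparison cofiber is locally quasi-isomorphic to zero in the derived category of Nisnevich sheaves.
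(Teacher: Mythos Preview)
Your outline—build a comparison of $\Gamma$-spaces $\Z A\to\bar A$ via connected-component decomposition, identify $\bar A(\Ss)$ with the Eilenberg--Mac Lane spectrum of $C_*\ZF^{\PP^{\w i}}(Y)$ through Dold--Kan, and conclude—matches the shape of the argument in~\cite[1.2]{GNP} that the paper invokes verbatim. The construction of the comparison map and the linearity of $\bar A$ (a connected-support correspondence to $Y\wedge K$ lands in a single wedge summand) are correct.

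The gap is exactly where you flag it. Your claim that Proposition~\ref{additivity} ``identifies both sides schemewise with iterated products of the values at $K=1_+$'' fails for the source: additivity gives a weak equivalence $A(N_+)\simeq A(1_+)^N$, but the free abelian group functor does not preserve finite products, so $\Z A(N_+)$ is \emph{not} equivalent to $(\Z A(1_+))^N$. The map $\Z A(1_+)\to\bar A(1_+)$ has nontrivial kernel generated by the elements $[x]-\sum_j[x_j]$ for disconnected $x$, and these do not vanish in $\pi_0$; they die only after passing to the Segal spectrum. The mechanism, which is the actual content of~\cite[1.2]{GNP}, is purely sectionwise: under disjoint union of supports, $\uhom(\PP^{\w i},\Fr(Y))(U)$ is the free commutative monoid on the set $F^{\PP^{\w i}}(U,Y)$ of connected-support correspondences, so $\ZF^{\PP^{\w i}}(U,Y)$ is precisely its group completion; one then invokes a general Segal-machine comparison between $\Z$ applied levelwise to the Segal spectrum of a special $\Gamma$-space and the Eilenberg--Mac Lane spectrum of the group completion of $A(1_+)$.

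Your proposed fix via Lemmas~\ref{1219} and~\ref{Totzero} cannot work: those lemmas produce Nisnevich-\emph{local} quasi-isomorphisms, whereas the statement is an isomorphism of \emph{presheaves}, i.e., a sectionwise assertion. Remark~\ref{a1sim} is likewise irrelevant here—no $\A^1$-homotopies beyond those already encoded in $C_*$ are needed.
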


\begin{proof}
The proof repeats the proof in~\cite[1.2]{GNP} word for word.
\end{proof}

\begin{lem}\label{ZFtriangleaux}
For any $i\geqslant 0$, the natural maps $Y_{12}\to Y_2, Y_{12}\to Y_1,
Y_1\to Y, Y_2\to Y$ give rise to a short exact sequence of Nisnevich
sheaves
\[0\to\ZF^{\PP^{\w i}}(Y_{12})\to\ZF^{\PP^{\w i}}(Y_{1})\oplus\ZF^{\PP^{\w i}}(Y_2))\to\ZF^{\PP^{\w i}}(Y)\to 0.\]
\end{lem}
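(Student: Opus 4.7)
The plan is to verify the asserted exactness stalkwise on Henselian local schemes $U\in\Sm_k$. Since all four sheaves are Nisnevich sheaves by Definition~\ref{zfrpi}, this reduces the problem to checking that
\[0\to\ZF^{\PP^{\w i}}(U,Y_{12})\to\ZF^{\PP^{\w i}}(U,Y_{1})\oplus\ZF^{\PP^{\w i}}(U,Y_2)\to\ZF^{\PP^{\w i}}(U,Y)\to 0\]
is a short exact sequence of abelian groups for every Henselian local $U$.

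By Voevodsky's Lemma~\ref{Voevlemma} (in the form spelled out in Definition~\ref{framesdef}), an element of $F^{\PP^{\w i}}(U,Y)$ is represented by a triple $(W,Z_0,\phi)$ in which $Z_0\subset\A^{i+n}_U$ is closed and finite over $U$ with connected total space, $W$ is an \'etale neighborhood of $Z_0$, and $\phi\colon W\to X$ satisfies $\phi^{-1}(Z)=Z_0$; two triples are identified if they agree on some common open neighborhood of $Z_0$. Because $U$ is Henselian local and $Z_0/U$ is finite and connected, $Z_0$ itself is Henselian local with a unique closed point $z_0$, and $\phi(z_0)$ lies in exactly one of the three sets $Z_{12}$, $Z_1\setminus Z_{12}$, or $Z_2\setminus Z_{12}$. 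This gives a trichotomy of connected supports.

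The key geometric point is a shrinking argument: since $X_1\subseteq X$ is open, $\phi^{-1}(X_1)$ is open in $W$; if $\phi(z_0)\in X_1$, then $z_0\in\phi^{-1}(X_1)$, and because $Z_0$ is local the entire $Z_0$ is contained in $\phi^{-1}(X_1)$. Hence, up to Voevodsky's equivalence, we may replace $W$ by $\phi^{-1}(X_1)$ and obtain a representative whose target is $X_1$, i.e., a triple for $Y_1$. The same works for $X_2$ and $X_{12}$. Letting $F_{12}(U)$, $F_1(U)$, $F_2(U)$ denote the subsets of connected-support triples whose $\phi(z_0)$ lies respectively in $Z_{12}$, $Z_1\setminus Z_{12}$, $Z_2\setminus Z_{12}$, the shrinking argument and the pushout description of Lemma~\ref{pushout} give canonical bijections
\begin{align*}
F^{\PP^{\w i}}(U,Y)&=F_{12}(U)\sqcup F_1(U)\sqcup F_2(U),\\
F^{\PP^{\w i}}(U,Y_1)&=F_{12}(U)\sqcup F_1(U),\\
F^{\PP^{\w i}}(U,Y_2)&=F_{12}(U)\sqcup F_2(U),\\
F^{\PP^{\w i}}(U,Y_{12})&=F_{12}(U).
\end{align*}
Passing to free abelian groups, the displayed sequence becomes the standard Mayer--Vietoris sequence attached to a disjoint-union pushout of sets, which is manifestly exact.

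The main obstacle is the shrinking step: one must check both that the shrunk triple is well defined up to Voevodsky's equivalence, and that the three types form a genuine disjoint decomposition (so that the identifications above are unambiguous). Both points rest on the single observation that over a Henselian local base a connected finite support $Z_0$ is itself local, so that being contained in a preimage of an open set is detected by its closed point. Once this is in hand, the Mayer--Vietoris exactness is formal.
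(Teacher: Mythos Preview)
Your proof is correct and follows essentially the same approach as the paper: both check exactness stalkwise on Henselian local $U$ by analyzing the generating sets $F^{\PP^{\w i}}(U,-)$. The paper's proof is quite terse, merely asserting a coequalizer diagram of pointed sets and that left exactness ``remains to note''; you supply the actual content behind both claims, namely that a connected finite support over Henselian local $U$ is itself local, so the position of its closed point under $\phi$ gives a trichotomy and a shrinking of $W$ to the relevant open piece. Your disjoint decomposition $F^{\PP^{\w i}}(U,Y)=F_{12}\sqcup F_1\sqcup F_2$ (and the compatible ones for $Y_1,Y_2,Y_{12}$) is precisely what makes the paper's coequalizer statement true and simultaneously yields injectivity on the left.

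One minor remark: strictly speaking the framing map has target $X\times\A^n$ rather than $X$ (since you work at a finite level $n$ in the colimit defining $\Fr$), so $\phi(z_0)\in Z\times\{0\}$ and the trichotomy is read off the first projection; your argument is unaffected by this, but it is worth saying so explicitly.
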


\begin{proof}
Let $U$ be a local Henselian scheme. There is a coequalizer diagram
of pointed sets
\[F^{\PP^{\w i}}(U,Y_{12})\rightrightarrows F^{\PP^{\w i}}(U,Y_2)\vee F^{\PP^{\w i}}(U,Y_1)\to F^{\PP^{\w i}}(U,Y).\]
Thus it gives rise to a right exact sequence
\[\ZF^{\PP^{\w i}}(Y_{12})\to\ZF^{\PP^{\w i}}(Y_{1})\oplus\ZF^{\PP^{\w i}}(Y_2)\to\ZF^{\PP^{\w i}}(Y)\to 0.\]
It remains to note that the latter sequence is also exact on the
left.
\end{proof}

\begin{cor}\label{ZFtriangle}
The cone of the morphism of complexes
   \[C_*\ZF^{\PP^{\w i}}(Y_{12})\to C_*\ZF^{\PP^{\w i}}(Y_{1})\oplus C_*\ZF^{\PP^{\w i}}(Y_2)\]
is locally quasi-isomorphic to the complex $C_*\ZF^{\PP^{\w
i}}(Y))$. In particular, we have a triangle in the derived category
of complexes of sheaves
   \[C_*\ZF^{\PP^{\w i}}(Y_{12})\to C_*\ZF^{\PP^{\w i}}(Y_{1})\oplus C_*\ZF^{\PP^{\w i}}(Y_2)
     \to C_*\ZF^{\PP^{\w i}}(Y).\]
\end{cor}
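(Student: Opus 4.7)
The strategy is to upgrade the short exact sequence of Nisnevich sheaves supplied by Lemma~\ref{ZFtriangleaux} to a distinguished triangle in the derived category of complexes of sheaves, after applying the Suslin complex. The key input is Lemma~\ref{Totzero}, which is tailor-made for precisely this kind of passage: it says that a bounded complex of $\ZF_*$-presheaves which is acyclic as a complex of Nisnevich sheaves remains locally acyclic after taking $Tot(C_*-)$, provided its homology presheaves are quasi-stable.

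To apply this, I would view the short exact sequence
\[F:\ \ZF^{\PP^{\w i}}(Y_{12})\to\ZF^{\PP^{\w i}}(Y_1)\oplus\ZF^{\PP^{\w i}}(Y_2)\to\ZF^{\PP^{\w i}}(Y)\]
as a bounded chain complex of $\ZF_*$-presheaves concentrated in three adjacent degrees. By Lemma~\ref{ZFtriangleaux} the associated complex of Nisnevich sheaves $F_{Nis}$ is exact, hence quasi-isomorphic to zero. Each presheaf $\ZF^{\PP^{\w i}}(-)$ is quasi-stable essentially by construction (Definition~\ref{zfrpi}), and this property is preserved by finite direct sums, by the Suslin complex, and by passage to homology of bounded complexes; consequently the homology presheaves $H_*(Tot(C_*F))$ are quasi-stable. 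Lemma~\ref{Totzero} then yields that $(Tot(C_*F))_{Nis}$ is locally quasi-isomorphic to zero.

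To extract the statement, observe that $Tot(C_*F)$ is, up to a shift, the mapping cone of the natural comparison map
\[\mathrm{Cone}\bigl(C_*\ZF^{\PP^{\w i}}(Y_{12})\to C_*\ZF^{\PP^{\w i}}(Y_1)\oplus C_*\ZF^{\PP^{\w i}}(Y_2)\bigr)\longrightarrow C_*\ZF^{\PP^{\w i}}(Y).\]
Its local acyclicity is therefore exactly the assertion that the cone on the left is locally quasi-isomorphic to $C_*\ZF^{\PP^{\w i}}(Y)$, which in particular produces the desired distinguished triangle in the derived category of complexes of Nisnevich sheaves.

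The main obstacle I anticipate is the verification of quasi-stability of the homology presheaves of $Tot(C_*F)$, which is the only nontrivial hypothesis of Lemma~\ref{Totzero}. The check is not geometric and does not use anything about the specific decomposition $X=X_1\cup X_2$; rather it relies on formal closure properties of quasi-stable framed presheaves under direct sums, cones and the Suslin construction. Provided these closure properties are in place from the earlier framed-motives machinery (the same machinery that feeds into the strict $\A^1$-invariance theorem of~\cite{GPPresheaves} invoked in the proof of Lemma~\ref{Totzero}), the corollary follows at once from Lemma~\ref{ZFtriangleaux} and Lemma~\ref{Totzero}.
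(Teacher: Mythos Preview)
Your proposal is correct and follows essentially the same route as the paper: the paper also views the short exact sequence of Lemma~\ref{ZFtriangleaux} as a three-term complex, notes that the homology presheaves of the relevant $C_*\ZF^{\PP^{\w i}}(-)$ are quasi-stable, and then invokes Lemma~\ref{Totzero} to conclude that the totalization is locally acyclic. Your additional remarks about how the totalization compares to the cone and about the closure properties underlying quasi-stability are elaborations the paper leaves implicit, but the argument is the same.
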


\begin{proof}
Note that homology presheaves of $C_*\ZF^{\PP^{\w i}}(Y)$ are
quasi-stable. Then by Lemmas~\ref{ZFtriangleaux} and~\ref{Totzero}
the totalization of the bicomplex
\[0\to C_*\ZF^{\PP^{\w i}}(Y_{12})\to
C_*\ZF^{\PP^{\w i}}(Y_{1})\oplus C_*\ZF^{\PP^{\w i}}(Y_2)\to
C_*\ZF^{\PP^{\w i}}(Y)\to 0\] is locally quasi-isomorphic to zero.
\end{proof}

\begin{prop}[The Mayer--Vietoris sequence]\label{MV}
For every $i\geqslant 0$ the square of $S^1$-spectra
   $$\begin{xymatrix}
     {\uhom(\PP^{\w i},M_{fr}(Y_{12}))\ar[r]\ar[d]& \uhom(\PP^{\w i},M_{fr}(Y_1))\ar[d]\\
      \uhom(\PP^{\w i},M_{fr}(Y_2))\ar[r]&\uhom(\PP^{\w i},M_{fr}(Y))}
   \end{xymatrix}$$
is a homotopy pushout square in the local stable model structure of $S^1$-spectra.
\end{prop}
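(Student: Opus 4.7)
The plan is to show that the canonical map
\[\uhom(\PP^{\w i}, M_{fr}(Y_1)) \cup^h_{\uhom(\PP^{\w i}, M_{fr}(Y_{12}))} \uhom(\PP^{\w i}, M_{fr}(Y_2)) \longrightarrow \uhom(\PP^{\w i}, M_{fr}(Y))\]
from the homotopy pushout of the upper-left half of the square to its fourth corner is a local stable equivalence of $S^1$-spectra; this is equivalent to the given square being homotopy cocartesian. By Corollary~\ref{specialgamma}, each of the four spectra is sectionwise positively fibrant and arises from a special $\Gamma$-space, and is in particular locally connective. For maps between connective $S^1$-spectra of Nisnevich sheaves, stable Hurewicz/Whitehead implies that a map is a local stable equivalence if and only if it induces an isomorphism on all Nisnevich sheaves of integral stable homology $H_n$, $n\geqslant 0$, because the cofibre of such a map is again locally connective and must vanish stably once its homology sheaves vanish.

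Next I would compute the homology sheaves on each side and compare. By Lemma~\ref{zfrhomol} there is a natural identification
\[H_n\bigl(\uhom(\PP^{\w i}, M_{fr}(Y'))\bigr) \;\cong\; H_n\bigl(C_*\ZF^{\PP^{\w i}}(Y')\bigr)\]
for every $Y'\in\{Y,Y_1,Y_2,Y_{12}\}$. The cofibre sequence obtained from the map
\[\uhom(\PP^{\w i}, M_{fr}(Y_{12})) \to \uhom(\PP^{\w i}, M_{fr}(Y_1))\vee\uhom(\PP^{\w i}, M_{fr}(Y_2))\]
gives a long exact sequence of homology sheaves identifying $H_*$ of the homotopy pushout with the cone, in the derived category of Nisnevich sheaves, of the chain map
\[C_*\ZF^{\PP^{\w i}}(Y_{12}) \longrightarrow C_*\ZF^{\PP^{\w i}}(Y_1) \oplus C_*\ZF^{\PP^{\w i}}(Y_2).\]
But Corollary~\ref{ZFtriangle} asserts precisely that this cone is locally quasi-isomorphic to $C_*\ZF^{\PP^{\w i}}(Y)$, and together with the identification above this identifies $H_*$ of the homotopy pushout with $H_*\bigl(\uhom(\PP^{\w i}, M_{fr}(Y))\bigr)$. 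The Hurewicz criterion from the first step then yields the desired local stable equivalence and finishes the proof.

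The main obstacle is not any step in this comparison itself, but rather Corollary~\ref{ZFtriangle}, on which the whole argument rests: one must promote the pointed-set level pushout of Lemma~\ref{pushout}, which only gives right-exactness after linearization (Lemma~\ref{ZFtriangleaux}), to an actual distinguished triangle in the derived category of Nisnevich sheaves. This uses Lemma~\ref{Totzero} and the strict $\A^1$-invariance result of \cite{GPPresheaves}, and is the true technical heart of the Mayer--Vietoris property.
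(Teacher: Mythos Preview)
Your proposal is correct and follows essentially the same approach as the paper: reduce the statement to a homology isomorphism using connectivity (via the special $\Gamma$-space structure from Corollary~\ref{specialgamma}) and stable Hurewicz, then invoke Lemma~\ref{zfrhomol} and Corollary~\ref{ZFtriangle} to identify the homology of the cofibre with that of $C_*\ZF^{\PP^{\w i}}(Y)$. The paper's proof is much terser---it simply says the comparison map ``induces locally an equivalence on homology between connective spectra by Corollary~\ref{ZFtriangle}''---but the ingredients you spell out are exactly those implicit in that sentence.
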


\begin{proof}
The natural map from the cone of the morphism
\[\uhom(\PP^{\w i},M_{fr}(Y_{12}))\to \uhom(\PP^{\w i},M_{fr}(Y_1)\vee M_{fr}(Y_2))\]
to $\uhom(\PP^{\w i},M_{fr}(Y))$ induces locally an equivalence on
homology between connective spectra by Corollary~\ref{ZFtriangle}. Then it is a local
stable equivalence.
\end{proof}

For any space $A$, there is an obvious map
$\Fr_n(A)\to\uhom(B,\Fr_n(A\w B))$ defined by $(\PP^{\w n},A\w
T^n)\to(B\w\PP^{\w n},A\w B\w T^n)$. It gives rise to a map of
spectra $M_{fr}(A)\to\uhom(B,M_{fr}(A\w B)).$

\begin{lem}\label{ploopseasy} For any $X\in\Sm_k$ for $j\geqslant 1$ the map
\[C_*\Fr(X_+\w T^j)\to \uhom(\PP^{\w i}, C_*\Fr(X\w T^j\w T^i))\]
is a local weak equivalence for any $i\geqslant 0$.
\end{lem}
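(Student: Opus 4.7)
The plan is to deduce this from Lemma~\ref{thetaomega}. Consider the $T$-suspension spectrum $E=(X_+\w T^j,\,X_+\w T^j\w T,\,X_+\w T^j\w T^2,\ldots)$ with identity bonding maps. Under the paper's conventional identifications of factor orderings, one has $\Theta^\infty(E)_\ell=\Fr(X_+\w T^j\w T^\ell)$. By Lemma~\ref{thetaomega}, each adjoint bonding map $\Theta^\infty(E)_\ell\to\uhom(\PP^{\w 1},\Theta^\infty(E)_{\ell+1})$ is an isomorphism of Nisnevich sheaves; iterating $i$ times yields an isomorphism
\[\Fr(X_+\w T^j)=\Theta^\infty(E)_0\xrightarrow{\ \sim\ }\uhom(\PP^{\w i},\Theta^\infty(E)_i)=\uhom(\PP^{\w i},\Fr(X_+\w T^j\w T^i)).\]

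Next I would verify that this sheaf-level isomorphism coincides with the map in the statement of the lemma. Unwinding the construction given in the paragraph preceding the lemma, that map sends $\phi\colon\PP^{\w n}\to X_+\w T^j\w T^n$ to $\sigma^i\w\phi\colon\PP^{\w i}\w\PP^{\w n}\to X_+\w T^j\w T^i\w T^n$, which, after applying the paper's conventional identifications $\PP^{\w i}\w\PP^{\w n}=\PP^{\w n+i}$ and $T^j\w T^i\w T^n=T^j\w T^{n+i}$, is precisely the $i$-fold stabilization $\Fr_n(X_+\w T^j)\to\Fr_{n+i}(X_+\w T^j)$ that represents the adjoint of the iterated bonding map appearing in the proof of Lemma~\ref{thetaomega}.

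To finish, I would observe that $\uhom(\PP^{\w i},-)$ commutes with $C_*$: since $(C_*\mathcal F)_n=\uhom(\Delta^n_k,\mathcal F)$ and $\uhom(\PP^{\w i},\uhom(\Delta^n_k,\mathcal F))=\uhom(\Delta^n_k,\uhom(\PP^{\w i},\mathcal F))$ by the exponential adjunction, one has $\uhom(\PP^{\w i},C_*\mathcal F)=C_*\uhom(\PP^{\w i},\mathcal F)$ termwise. Applying $C_*$ to the sheaf isomorphism above then produces an isomorphism of simplicial sheaves, which is a fortiori a local weak equivalence. The main obstacle in implementing this plan is the factor-ordering bookkeeping in the second paragraph, to confirm on the nose that the map in the statement agrees with the iterated bonding adjoint rather than differing from it by a residual twist of $T$-factors; the hypothesis $j\geq 1$ does not appear essential for this direct argument and likely reflects the range in which the lemma is subsequently used.
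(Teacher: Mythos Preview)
Your approach is in spirit the same as the paper's---both ultimately compare the map in the statement with the ``tautological'' identification coming from stabilization---but the crucial second paragraph contains a genuine gap, and the final remark about $j\geqslant 1$ is mistaken.

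The map in the statement does \emph{not} agree on the nose with the iterated adjoint bonding map. Concretely, the map defined just before the lemma sends $\phi\in\Fr_n(X_+\w T^j)=\uhom(\PP^{\w n},X_+\w T^j\w T^n)$ to an element of $\uhom(\PP^{\w i}\w\PP^{\w n},X_+\w T^j\w T^i\w T^n)$ with the new $T^i$ inserted \emph{between} $T^j$ and $T^n$, whereas the $i$-fold $\Fr$-stabilization (which is what Lemma~\ref{thetaomega} produces) lands in $\uhom(\PP^{\w n}\w\PP^{\w i},X_+\w T^j\w T^n\w T^i)$ with the new $T^i$ on the \emph{right}. These differ by the shuffle permutation $\chi_{n,i}$ acting on both $\PP^{\w n+i}$ and $T^{n+i}$; the paper's associativity convention $\PP^{\w m}\w\PP^{\w n}=\PP^{\w n}\w\PP^{\w m}$ goes through $\PP^{\w m+n}$ and is \emph{not} the twist, so you cannot absorb this discrepancy into the notation. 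The paper's proof makes exactly this point: it builds the comparison triangle, observes the $\chi_{n,i}$ discrepancy, and then invokes Corollary~\ref{Pmpermut} to conclude that for even $n$ the two maps are $\A^1$-homotopic, hence simplicially homotopic after $C_*$.

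Because the agreement is only up to simplicial homotopy (and only cofinally in $n$), you obtain a bijection on colimits of $\pi_*^{Nis}$ rather than an isomorphism of sheaves. To upgrade this to a local weak equivalence you need both sides to be locally connected, and this is precisely where $j\geqslant 1$ enters: the paper cites \cite[8.1]{GNP} for local connectedness of $C_*\Fr(X_+\w T^j)$ when $j\geqslant 1$. So the hypothesis is not incidental, and your ``direct isomorphism'' shortcut does not go through.
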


\begin{proof}
The map in question is obtained as the colimit of the maps
   \begin{equation}\label{rrr}
    C_*\Fr_n(X_+\w T^j)\to\uhom(\PP^{\w i},C_*\Fr_n(X_+\w T^j\w T^i)).
   \end{equation}
Consider the triangle
\[
\xymatrix{
C_*\uhom(\PP^{\w i}\w\PP^{\w n},X_+\w T^j\w T^{i+n})& C_*\uhom(\PP^{\w i}\w\PP^{\w n-i},X_+\w T^j\w T^{i}\w T^{n-i})\ar[l]\\
C_*\uhom(\PP^{\w n},X_+\w T^j\w T^n)\ar[u]\ar[ru]_{\cong}}
\]
where the vertical map is the map~\eqref{rrr}, the skew map is the
isomorphism given by identification $\PP^{\w n}=\PP^{\w i}\w\PP^{\w
n-i}$ and $T^n=T^i\w T^{ n-i}$, and the horizontal map is induced by
the stabilization map $(\PP^{\w i},C_*\Fr_{n-i}(X_+\w
T^j))\to(\PP^{\w i},C_*\Fr_{n}(X_+\w T^j))$. The composite map of
the triangle differs from the left vertical map by the shuffle
permutation action $\chi_{n,i}$ on $\PP^{\w i+n}$ and on $T^{i+n}$
respectively. Thus if $n$ is even then the triangle is commutative
up to a simplicial homotopy by Corollary~\ref{Pmpermut} and
Remark~\ref{a1sim}. Note that the horizontal map induces an
isomorphism on the colimit over $n$. Thus the vertical map induces a
bijection on the colimits of sheaves $\pi^{Nis}_*$. For $j\geqslant
1$ the space $C_*(\Fr(X_+\w T^j))$ is locally connected
by~\cite[8.1]{GNP}. The space $\uhom(\PP^{\w i}, C_*(\Fr(X_+\w T^j\w
T^i)))$ is isomorphic to $C_*(\Fr(X_+\w T^j))$ by means of the
horizontal map, and hence it is locally connected as well. We see
that the vertical map induces a local weak equivalence.
\end{proof}

\begin{lem}\label{pi0}
Suppose $Z\to X$ is a closed embedding of smooth varieties of codimension $d$. Then for $i<d$
the space $\uhom(\PP^{\w i},(C_*\Fr(X/X-Z)))$ is locally connected.
\end{lem}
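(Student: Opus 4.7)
Our plan is to reduce to the case of a trivial embedding via a Zariski Mayer--Vietoris induction and then apply Lemma~\ref{ploopseasy} directly. The point is that for a trivial embedding Lemma~\ref{ZTd} identifies $X/(X-Z)$ with $Z_+\wedge T^d$, and the hypothesis $i<d$ then leaves at least one $T$-suspension to absorb the $\uhom(\PP^{\wedge i},-)$ cotensor.

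For the trivial case, assume $Z\hookrightarrow X$ is trivial, so Lemma~\ref{ZTd} gives $X/(X-Z)\cong Z_+\wedge T^d$. Setting $j:=d-i\geqslant 1$ and identifying $T^d=T^{d-i}\wedge T^i$, we apply Lemma~\ref{ploopseasy} to get a local weak equivalence
\[
C_*\Fr(Z_+\wedge T^{d-i})\xrightarrow{\ \sim\ }\uhom(\PP^{\wedge i},C_*\Fr(Z_+\wedge T^{d-i}\wedge T^i))=\uhom(\PP^{\wedge i},C_*\Fr(X/(X-Z))).
\]
The source is locally connected by~\cite[8.1]{GNP} since $d-i\geqslant 1$, so the target is as well.

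For the general case, we use Lemma~\ref{cover} together with quasi-compactness of $X$ to fix a finite Zariski cover $X=X_1\cup\cdots\cup X_r$ such that each $Z\cap X_j\hookrightarrow X_j$ is trivial, and induct on $r$. The base case $r=1$ is the trivial one. For the inductive step we write $X=X_1\cup X''$ with $X''=X_2\cup\cdots\cup X_r$. Since the restriction of a trivial embedding to an open subscheme stays trivial, both $X''$ (covered by $X_2,\ldots,X_r$) and $X_1\cap X''$ (covered by the $X_1\cap X_j$ for $j\geqslant 2$) admit trivializing covers of size $\leqslant r-1$. Setting $Y:=X/(X-Z)$, $Y_1:=X_1/(X_1-Z\cap X_1)$, $Y_2:=X''/(X''-Z\cap X'')$ and $Y_{12}:=(X_1\cap X'')/((X_1\cap X'')-Z\cap X_1\cap X'')$, the inductive hypothesis yields local connectedness of the $0$th spaces corresponding to $Y_1$, $Y_2$ and $Y_{12}$. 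Proposition~\ref{MV} then provides a homotopy pushout square of $S^1$-spectra with $\uhom(\PP^{\wedge i},M_{fr}(Y_{12}))$, $\uhom(\PP^{\wedge i},M_{fr}(Y_1))$, $\uhom(\PP^{\wedge i},M_{fr}(Y_2))$ as the three input corners and $\uhom(\PP^{\wedge i},M_{fr}(Y))$ as the output; the long exact sequence of local stable homotopy sheaves of the resulting cofiber, combined with the positive fibrancy from Corollary~\ref{specialgamma}, then forces local connectedness of the $0$th space $\uhom(\PP^{\wedge i},C_*\Fr(Y))$.

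The main obstacle I expect is the final passage from spectrum-level to space-level connectedness, where one has to verify that for the Segal $\Gamma$-spaces $K\mapsto\uhom(\PP^{\wedge i},C_*\Fr(Y\wedge K))$ the local $\pi_0$ of the $0$th space coincides with the stable $\pi_0$ of the associated spectrum (equivalently, that the Segal monoid structure in degree $0$ is already locally grouplike). This is exactly the kind of additivity argument used throughout Section~\ref{sectionefr} and can alternatively be bypassed by arguing directly on the level of the connected-support sheaves $F^{\PP^{\wedge i}}(-,Y)$: decomposing the support of any framed correspondence over a henselian local into semi-local pieces, each of which lands in a trivially embedded Zariski open of $X$, reduces to the trivial case without having to invoke grouplikeness.
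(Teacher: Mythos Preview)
Your Mayer--Vietoris argument on the level of $S^1$-spectra does not close, and you correctly identify why. Positive fibrancy from Corollary~\ref{specialgamma} only tells you that the associated Segal spectrum is an $\Omega$-spectrum in \emph{positive} degrees, so the stable $\pi_0$ you compute via the cofiber sequence is the group completion of the monoid $\pi_0\bigl(\uhom(\PP^{\wedge i},C_*\Fr(Y))(U)\bigr)$, not that monoid itself. Triviality of the group completion of a commutative monoid does not force triviality of the monoid, and nothing in the inductive hypothesis provides grouplikeness. So the passage ``forces local connectedness of the $0$th space'' is exactly where the argument breaks.

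The fix you propose at the very end---decompose the support of an arbitrary element over a Henselian local $U$ into its finitely many local connected components, observe that each maps into a Zariski open of $X$ where the embedding is trivial, and thereby lift the element to $\Fr\bigl((\sqcup_j Z_j)_+\wedge T^{d-i}\bigr)$---is precisely the paper's proof. The paper does not attempt the spectrum-level Mayer--Vietoris route at all; it argues directly on $\pi_0^{Nis}$ using the explicit geometric description of points of $\uhom(\PP^{\wedge i},\Fr(X/X-Z))$ furnished by Voevodsky's Lemma, together with the fact that a scheme finite over a Henselian local is a disjoint union of Henselian locals, and then invokes \cite[A.1]{GNP} for the vanishing of $\pi_0^{Nis}\bigl(C_*\Fr((\sqcup_j Z_j)_+\wedge T^{d-i})\bigr)$. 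Your trivial-case step via Lemma~\ref{ploopseasy} is correct but unnecessary: the direct argument handles arbitrary $X$ in one stroke and never needs that lemma. In short, drop the Mayer--Vietoris detour and promote your last paragraph to the whole proof.
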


\begin{proof}
If $U$ is a local Henselian scheme, then every correspondence $c$ in
$\uhom(\PP^{\w i},\Fr(X/X-Z))=\colim_n\uhom(\PP^{\w i+n},(X/X-Z)\w
T^n)$ can be described by triples $c=(S,U,\phi)$, where the support
$S$ is a closed subset of $\A^{i+n}_U,$ finite over $U$, and
$\phi\colon U\to X\times \A^n$ is a regular map from an \'etale
neighborhood of $S$ such that $S=\phi^{-1}(Z\times 0)$ (see
Voevodsky's Lemma~\ref{Voevlemma} and~\cite[Section~3]{GPMain} for
details). Since $S$ is finite over Henselian $U$, it is a disjoint
union of local schemes $S_j$, finite over $U$, for $j=1\ldots l$.
Each map $S_j\to Z$ factors through $S_j\to Z_j$, where $Z_j=X_j\cap
Z$ for some open $X_j$ in $X$ and such that $Z_j\to X_j$ is a
trivial embedding. Thus the correspondence $c$ lies in the image
of $\uhom(\PP^{\w i},\Fr(\vee_j (X_j/X_j-Z_j)))=\uhom(\PP^{\w
i},\Fr(\vee(Z_j\w T^d)))= \Fr(\vee (Z_j\w T^{d-i}))=\Fr((\sqcup
Z_j)_+\w T^{d-i})$, and $\pi_0^{Nis}(C_*\Fr((\sqcup Z_j)_+\w
T^{d-i})=*$ for $i<d$ by~\cite[A.1]{GNP}. Since the class of
$c\in\pi_0^{Nis}(\uhom(\PP^{\w i},(C_*\Fr(X/X-Z))))$ belongs to the image of
$\pi_0^{Nis}(C_*\Fr((\sqcup Z_j)_+\w T^{d-i}))=*$, then
$c$ equals the class of the basepoint of $\uhom(\PP^{\w i},(C_*\Fr(X/X-Z)))$.
We conclude that $\pi_0^{Nis}(\uhom(\PP^{\w i},(C_*\Fr(X/X-Z))))=*$.
\end{proof}

\begin{lem}\label{omegas}
Suppose $Z\to X$ is a closed embedding of smooth varieties of
codimension $d$. Then for $i<d$ the $S^1$-spectrum $\uhom(\PP^{\w
i},M_{fr}(X/X-Z))$ is locally an $\Omega$-spectrum and the
$S^1$-spectrum $\uhom(\PP^{\w i},M_{fr}(X/X-Z))_f$, obtained from
$\uhom(\PP^{\w i},M_{fr}(X/X-Z))$ by taking a local fibrant
replacement levelwise, is motivically fibrant. In particular, the
motivic space $\uhom(\PP^{\w i}, C_*\Fr(X/X-Z))_f$ is motivically
fibrant.
\end{lem}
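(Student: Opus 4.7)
The plan is to split the statement into its two assertions: first that the unreplaced $S^1$-spectrum is a local $\Omega$-spectrum, and then that the levelwise locally fibrant replacement is in fact motivically fibrant. The underlying structure already supplied by the previous results will do most of the work, with Lemma~\ref{pi0} closing the gap between positive fibrancy and fibrancy at level $0$.

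For the first assertion I would invoke Corollary~\ref{specialgamma} applied to $Y = X/(X-Z)$, which identifies $\uhom(\PP^{\w i},M_{fr}(Y))$ with the Segal $S^1$-spectrum of the special $\Gamma$-space $K \mapsto \uhom(\PP^{\w i}, C_*\Fr(Y\wedge K))$. Segal's theorem then gives that the spectrum is sectionwise positively $\Omega$, so the bonding maps $E_n \to \Omega E_{n+1}$ are sectionwise weak equivalences for all $n\geqslant 1$. The only missing ingredient to upgrade this to a local $\Omega$-spectrum is that $E_0 \to \Omega E_1$ be a local weak equivalence, equivalently that the monoid $\pi_0^{Nis}(E_0)$ be a group. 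But by Lemma~\ref{pi0}, under the hypothesis $i<d$ we have $\pi_0^{Nis}(\uhom(\PP^{\w i}, C_*\Fr(X/X-Z)))=*$, so $E_0$ is Nisnevich-locally connected, and the Segal construction yields a genuine $\Omega$-spectrum locally.

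For the second assertion, the levelwise local fibrant replacement $E^f$ is, by the first part, sectionwise an $\Omega$-spectrum in the local (flasque) model structure, so it is a fibrant object in the local stable model structure of $S^1$-spectra. To conclude that it is motivically fibrant it remains to verify that each space $E_n^f$ is $\A^1$-local. This is precisely where the strict $\A^1$-invariance theorem of~\cite{GPPresheaves}, quoted in the introduction, enters: the presheaves $\uhom(\PP^{\w i}, C_*\Fr((X/X-Z)\wedge K))$ are homotopy invariant quasi-stable radditive framed presheaves of abelian (or pointed) monoids (after group completion ensured by local connectivity), so their Nisnevich sheafifications are strictly $\A^1$-invariant. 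Consequently the levelwise local fibrant replacements are $\A^1$-local, i.e.\ motivically fibrant, and the $S^1$-spectrum is motivically fibrant as desired. The final ``in particular'' statement for the motivic space $\uhom(\PP^{\w i},C_*\Fr(X/X-Z))_f$ is just the $0$-th space of this motivically fibrant $S^1$-spectrum.

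The main obstacle I anticipate is the careful bookkeeping around the Segal machinery: one must be sure that the delooping step which normally requires a group-completion genuinely succeeds at level $0$, and that is exactly what Lemma~\ref{pi0} is designed to supply. Everything else is an application of already-established facts—Corollary~\ref{specialgamma} for positive fibrancy, and the strict $\A^1$-invariance result of~\cite{GPPresheaves} to pass from local to motivic fibrancy after taking a levelwise local fibrant replacement.
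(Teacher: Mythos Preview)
Your proposal is correct and follows essentially the same route as the paper's proof: use Corollary~\ref{specialgamma} (which already encodes the Additivity Theorem) together with Lemma~\ref{pi0} to see that the $\Gamma$-space $K\mapsto\uhom(\PP^{\w i},C_*\Fr((X/X-Z)\wedge K))$ is locally very special, hence the Segal spectrum is locally an $\Omega$-spectrum; then invoke strict $\A^1$-invariance from~\cite{GPPresheaves} and the fibrancy criterion~\cite[7.1]{GPMain} for the levelwise local replacement. The only refinement the paper makes explicit is the citation of~\cite[7.1]{GPMain} for the passage from ``locally fibrant $\Omega$-spectrum with strictly $\A^1$-invariant homotopy sheaves'' to ``motivically fibrant'', which you should also cite rather than arguing directly that each space is $\A^1$-local.
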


\begin{proof}
It follows from Additivity Theorem~\ref{additivity},
Corollary~\ref{specialgamma} and Lemma~\ref{pi0} that the
$\Gamma$-space taking a finite pointed set $K$ to $\uhom(\PP^{\w i},
C_*\Fr((X/X-Z)\wedge K))$ is locally very special. By the Segal
machine~\cite{S} $\uhom(\PP^{\w i},M_{fr}(X/X-Z))= \uhom(\PP^{\w i},
C_*\Fr((X/X-Z)\wedge\mathbb S))$ is locally an $\Omega$-spectrum,
and hence so is $\uhom(\PP^{\w i},M_{fr}(X/X-Z))_f$. Since all
spaces of $\uhom(\PP^{\w i},M_{fr}(X/X-Z))_f$ are locally fibrant,
we see that $\uhom(\PP^{\w i},M_{fr}(X/X-Z))_f$ is sectionwise an
$\Omega$-spectrum. Since the sheaves of homotopy groups of
$\uhom(\PP^{\w i},M_{fr}(X/X-Z))_f$ are strictly homotopy invariant
by~\cite[1.1]{GPPresheaves}, $\uhom(\PP^{\w i},M_{fr}(X/X-Z))_f$ is
motivically fibrant by~\cite[7.1]{GPMain}.
\end{proof}

\begin{cor}\label{omegass}
For a Thom spectrum $E$ with the bounding constant $d$, $\uhom(\PP^{\w
n},C_*\Fr(E_n\w T^i))_f$ is a motivically fibrant space for
$i\geqslant \max(0,d)$.
\end{cor}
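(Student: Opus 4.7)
The plan is to reduce to Lemma~\ref{omegas} by exhibiting $E_n \w T^i$ as a filtered colimit of quotients $X/(X-Z)$ with $\operatorname{codim}(Z,X) > n$. By Definition~\ref{thomspectra}, $E_n = \colim_j V_{n,j}/(V_{n,j} - Z_{n,j})$, so that
\[ E_n \w T^i \;\cong\; \colim_j\, (V_{n,j}\times\A^i)\big/(V_{n,j}\times\A^i - Z_{n,j}\times\{0\}). \]
The codimension of $Z_{n,j}\times\{0\}$ in $V_{n,j}\times\A^i$ equals $\operatorname{codim}(Z_{n,j},V_{n,j}) + i$, and the bounding constant hypothesis gives $\operatorname{codim}(Z_{n,j},V_{n,j}) > n - d$. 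Hence for $i \geqslant \max(0,d)$ this total codimension is at least $n+1$, in particular strictly greater than $n$.

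Next, I would apply Lemma~\ref{omegas} to each pair $(V_{n,j}\times\A^i,\, Z_{n,j}\times\{0\})$, with the role of ``$i$'' in that lemma played by our ``$n$'', to conclude that for every $j$ the space $\uhom(\PP^{\w n}, C_*\Fr(E_{n,j}\w T^i))_f$ is motivically fibrant. It then remains to pass to the colimit over $j$. By Voevodsky's Lemma~\ref{Voevlemma}, an element of $\uhom(\PP^{\w n}, \Fr_m(E_n\w T^i))(U)$ is described by a triple $(W,Z,\phi)$ with $Z$ finite over $U$; since $Z$ is finite, $\phi$ factors through some $V_{n,j}\times\A^i$, so $\uhom(\PP^{\w n}, C_*\Fr(-))$ commutes with the filtered colimit in $j$ at the level of Nisnevich sheaves.

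The main obstacle is the final step, namely preservation of motivic fibrancy under this filtered colimit. Rather than appeal to a general preservation result, the cleanest fallback is to re-execute the proof of Lemma~\ref{omegas} directly on $E_n\w T^i$ viewed as a single colimit object. Concretely, Additivity Theorem~\ref{additivity} and Corollary~\ref{specialgamma} endow $K \mapsto \uhom(\PP^{\w n}, C_*\Fr(E_n\w T^i\w K))$ with the structure of a locally very special $\Gamma$-space; the vanishing $\pi_0^{Nis}\uhom(\PP^{\w n}, C_*\Fr(E_n\w T^i)) = *$ follows from Lemma~\ref{pi0} applied stage-by-stage combined with the fact that $\pi_0^{Nis}$ commutes with filtered colimits; strict $\A^1$-invariance of the sheaves of homotopy groups follows from~\cite[1.1]{GPPresheaves}; and the conclusion is then a direct application of~\cite[7.1]{GPMain} to see that $\uhom(\PP^{\w n}, M_{fr}(E_n\w T^i))_f$ is a motivically fibrant $\Omega$-spectrum, whose zeroth space is precisely the motivically fibrant space we seek.
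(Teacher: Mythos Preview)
Your proposal is correct and follows essentially the same route as the paper: decompose $E_n\w T^i$ as the filtered colimit of $(V_{n,j}\times\A^i)/(V_{n,j}\times\A^i - Z_{n,j}\times\{0\})$, check that the codimension exceeds $n$ once $i\geqslant\max(0,d)$, apply Lemma~\ref{omegas} stagewise, and pass to the colimit. Your concern about the last step is unnecessary in the paper's framework: the flasque model structure was chosen precisely so that directed colimits of motivically fibrant spaces remain motivically fibrant (and likewise for locally fibrant spaces, so that $(-)^f$ commutes with the colimit); the paper invokes this fact explicitly in the analogous proof of Corollary~\ref{geomaincor} and in the proof of Theorem~\ref{thmthom}. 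Your fallback argument, rerunning the proof of Lemma~\ref{omegas} on the colimit object directly, is also valid and amounts to unpacking the same ingredients, but the one-line appeal to preservation under filtered colimits is what the paper intends.
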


\begin{lem}\label{spectraeqprelim}
Given $i,n\geqslant 0$, the natural map of $S^1$-spectra
\[M_{fr}(X_+\w T^n)_f\to\uhom (\PP^{\w i},M_{fr}(X_+\w T^n\w T^i)_f)\]
is a levelwise local weak equivalence in positive degrees, where
``f" refers to a levelwise local fibrant replacement. In particular,
the map is a stable local weak equivalence. If $n>0$ then this map is
a levelwise local weak equivalence of spectra in all degrees.
\end{lem}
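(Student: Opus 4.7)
At level $k\geqslant 0$ of the two $S^1$-spectra the map in question identifies, via the presentation $M_{fr}(Y) = C_*\Fr(Y\wedge\Ss)$, with the natural comparison map
\[
C_*\Fr(X_+\w T^n\w S^k)^f \;\longrightarrow\; \uhom(\PP^{\w i},C_*\Fr(X_+\w T^n\w T^i\w S^k)^f).
\]
Formally this is exactly the map of Lemma~\ref{ploopseasy} but with the smooth scheme $X_+\w T^j$ replaced by the pointed simplicial sheaf $A:=X_+\w T^n\w S^k$. The first step of the plan is to observe that the proof of Lemma~\ref{ploopseasy} splits into two independent parts: the colimit/shuffle argument involving $\chi_{m,i}$, which invokes only Corollary~\ref{Pmpermut} and Remark~\ref{a1sim} and is purely formal in $A$; and the final upgrade from a $\pi_*^{Nis}$-isomorphism to a local weak equivalence, which uses local connectivity of $C_*\Fr(A)$ via~\cite[8.1]{GNP}. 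The first part goes through verbatim with $A$ in place of $X_+\w T^j$, producing at every level $k$ a $\pi_*^{Nis}$-isomorphism on the comparison map above.

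The second step is to input the required local connectivity in the two regimes of the statement. For arbitrary $n\geqslant 0$ and $k\geqslant 1$, by Additivity Theorem~\ref{additivity} and Corollary~\ref{specialgamma} the $\Gamma$-space $K\mapsto C_*\Fr((X_+\w T^n)\w K)$ is special, so the Segal spectrum $M_{fr}(X_+\w T^n)=C_*\Fr((X_+\w T^n)\w\Ss)$ is sectionwise positively fibrant; hence its $k$-th space is locally connected for $k\geqslant 1$. For $n\geqslant 1$ and $k=0$, local connectivity of $C_*\Fr(X_+\w T^n)$ is~\cite[8.1]{GNP}, the very input used in Lemma~\ref{ploopseasy}. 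In both regimes the $\pi_*^{Nis}$-isomorphism upgrades to a local weak equivalence exactly as in the proof of Lemma~\ref{ploopseasy}, which yields the claimed levelwise local weak equivalence in positive degrees in general and in all degrees when $n>0$.

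Finally, the stable local weak equivalence assertion follows for free: both sides are connective Segal $S^1$-spectra (positively fibrant on sections), so a levelwise local weak equivalence in all sufficiently large degrees automatically implies a stable local weak equivalence.

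I expect the main technical point to be the verification that the argument of Lemma~\ref{ploopseasy} is indeed formal in the pointed space to which $C_*\Fr$ is applied, since one must track the shuffle $\chi_{m,i}$ acting on both $\PP^{\w m+i}$ and $T^{m+i}$ and check that Corollary~\ref{Pmpermut} still supplies the requisite $\A^1$-homotopy; once this bookkeeping is made explicit, everything else is an application of the Segal machine and~\cite[8.1]{GNP}.
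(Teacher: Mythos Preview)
Your plan has a genuine gap at the passage from Lemma~\ref{ploopseasy} to the statement as written. The argument of Lemma~\ref{ploopseasy} takes place \emph{before} any local fibrant replacement: its colimit/shuffle trick shows that
\[
C_*\Fr(A)\;\longrightarrow\;\uhom(\PP^{\w i},C_*\Fr(A\w T^i))
\]
is a local weak equivalence (for suitable $A$). The map you must analyse, however, has $(-)^f$ inside the $\uhom(\PP^{\w i},-)$:
\[
C_*\Fr(A)^f\;\longrightarrow\;\uhom(\PP^{\w i},C_*\Fr(A\w T^i)^f).
\]
Passing from the first to the second would require that
\[
\uhom(\PP^{\w i},C_*\Fr(A\w T^i))\;\longrightarrow\;\uhom(\PP^{\w i},C_*\Fr(A\w T^i)^f)
\]
is a local weak equivalence. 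This is \emph{not} automatic: local fibrant replacement does not commute with $\uhom(\PP^{\w i},-)$, and the shuffle argument gives no control over this map. In fact, precisely this kind of comparison (that $\uhom(\PP^{\w i},-)$ applied to a levelwise replacement is again a replacement) is what Proposition~\ref{spectraeq} establishes \emph{using} the present lemma as input; invoking it here would be circular. Your ``formally this is exactly the map of Lemma~\ref{ploopseasy}'' therefore elides the only nontrivial step.

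The paper's proof avoids this entirely by a different route: it uses that the framed motives $M_{fr}(X_+\w S^1\w T^n)_f$ and $M_{fr}(X_+\w S^1\w T^n\w T^i)_f$ are \emph{motivically} fibrant (\cite[7.5]{GPMain}), reduces via~\cite[4.1(2)]{GPMain} to the analogous map with $\Gm^{\w i}\w S^i$ in place of $\PP^{\w i}$ and $T^i$, and then invokes the Cancellation Theorem for framed motives~\cite{AGP}. The Cancellation Theorem is exactly the external input that supplies the equivalence directly on the fibrant side, bypassing the commutation problem above. Your approach, if it could be made to work, would be more elementary (no cancellation), but as written it lacks the key step.
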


\begin{proof}
The statement of the lemma can be reformulated as follows for
$n\geqslant 0$: the map of $S^1$-spectra
\[M_{fr}(X_+\w S^1\w T^n)_f\to\uhom (\PP^{\w i},M_{fr}(X_+\w S^1\w T^n\w T^i)_f)\]
is a levelwise local weak equivalence. The spectra $M_{fr}(X_+\w
S^1\w T^n)_f, M_{fr}(X_+\w S^1\w T^n\w T^i)_f$ are both motivically
fibrant by~\cite[7.5]{GPMain}.

The proof of~\cite[4.1(2)]{GPMain} shows that the map in question is
a levelwise local weak equivalence if so is the map
   $$M_{fr}(X_+\w S^1\w T^n)_f\to\uhom (\Gm^{\w i}\w S^i,M_{fr}(X_+\w S^1\w T^n\w \Gm^{\w i}\w S^i)_f),$$
where $\Gm^{\w 1}$ is the mapping cone of $pt_+\to(\Gm)_+$ sending
$pt$ to $1\in\Gm$ and $\Gm^{\w i}$ is the $i$th smash product of
$\Gm^{\w 1}$. Our assertion now follows from the Cancellation
Theorem for framed motives~\cite{AGP}. The same arguments apply to
show that the map
\[M_{fr}(X_+\w T^n)_f\to\uhom (\PP^{\w i},M_{fr}(X_+\w T^n\w T^i)_f)\]
is a levelwise local weak equivalence in all degrees for $n>0$.
\end{proof}

\begin{prop}\label{spectraeq}
Suppose $Z\to X$ is a closed embedding of smooth varieties of
codimension $d$ and $M_{fr}(X/(X-Z))_f$ is obtained from
$M_{fr}(X/(X-Z))$ by taking a level local fibrant replacement. Then
\[\uhom(\PP^{\w i},M_{fr}(X/(X-Z)))\to \uhom(\PP^{\w i},M_{fr}(X/(X-Z))_f)\]
is a levelwise local weak equivalence of $S^1$-spectra for $i<d$. In
particular, the right spectrum is a fibrant replacement of the left
spectrum in the stable motivic model structure of $S^1$-spectra
whenever $i<d$. If $i=d$ then the map is a levelwise local weak
equivalence in positive degrees. In particular, the map is a stable
local weak equivalence for $i=d$.
\end{prop}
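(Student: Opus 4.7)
The plan is to reduce by a Mayer--Vietoris cover argument to the case when $Z\hookrightarrow X$ is a trivial embedding of codimension $d$, and then to compare both sides of the arrow with the framed motive $M_{fr}(Z_+\w T^{d-i})$ via Lemmas~\ref{ploopseasy} and~\ref{spectraeqprelim}.

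First, I would use Lemma~\ref{cover} to cover $X$ by finitely many opens on each of which the induced embedding of $Z$ is trivial of codimension $d$. Both sides of the map in the proposition send the resulting Zariski pushout decomposition to homotopy cocartesian squares of $S^1$-spectra. The source does so by Proposition~\ref{MV}, while the target does so because, for $i<d$, Lemma~\ref{omegas} guarantees that all the relevant spectra are locally $\Omega$-spectra, so that the levelwise fibrant replacement $(-)_f$ and the functor $\uhom(\PP^{\w i},-)$ together preserve the Mayer--Vietoris square up to local stable equivalence. An induction on the cardinality of the cover reduces the problem to the case in which $Z\hookrightarrow X$ is itself trivial.

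In that trivial case, Lemma~\ref{ZTd} identifies $X/(X-Z)$ with $Z_+\w T^d$, and one must show that
\[\uhom(\PP^{\w i},M_{fr}(Z_+\w T^d))\longrightarrow\uhom(\PP^{\w i},M_{fr}(Z_+\w T^d)_f)\]
is a levelwise local weak equivalence for $i<d$, and a stable local weak equivalence for $i=d$. Applying Lemma~\ref{spectraeqprelim} with ``$X$'' replaced by $Z$ and ``$n$'' replaced by $d-i$ identifies the target with $M_{fr}(Z_+\w T^{d-i})_f$ levelwise (when $i<d$, so $d-i>0$) or in positive degrees (when $i=d$, so $d-i=0$). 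On the source side, when $i<d$ (so $d-i\geqslant 1$), Lemma~\ref{ploopseasy} with $Z$ replacing $X$ and $d-i$ replacing $j$ identifies the source with $M_{fr}(Z_+\w T^{d-i})$ levelwise locally; when $i=d$, the analogous identification is obtained in positive degrees by repeating the Cancellation-type argument from the proof of Lemma~\ref{spectraeqprelim}. Combining the two identifications with the canonical levelwise local equivalence $M_{fr}(Z_+\w T^{d-i})\to M_{fr}(Z_+\w T^{d-i})_f$ fits everything into a commutative square in which the map in question becomes the identity up to levelwise local weak equivalence. The ``in particular'' assertion that the right-hand spectrum is a fibrant replacement of the left-hand one in the stable motivic model structure then follows, since its levels are locally fibrant, it is locally an $\Omega$-spectrum by Lemma~\ref{omegas} combined with the first part, and its homotopy sheaves are strictly $\A^1$-invariant by~\cite{GPPresheaves}.

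The main obstacle is justifying that the target spectrum satisfies the Mayer--Vietoris property, since $\uhom(\PP^{\w i},-)$ does not preserve arbitrary local weak equivalences; this uses the $\Omega$-spectrum property of Lemma~\ref{omegas} in an essential way and is precisely what restricts the clean levelwise conclusion to the range $i<d$. The boundary case $i=d$ is forced to weaken to a stable local weak equivalence because both Lemma~\ref{spectraeqprelim} and its source-side analogue produce only positive-degree equivalences once the ``$T^n$''-factor becomes trivial.
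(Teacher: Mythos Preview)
Your proposal is essentially the paper's own proof: the same induction on a trivializing Zariski cover (Lemma~\ref{cover}), the same Mayer--Vietoris square (Proposition~\ref{MV}) on both source and target, and the same base case via the square comparing $\uhom(\PP^{\w i},M_{fr}(Z_+\w T^d))$ and its fibrant replacement to $M_{fr}(Z_+\w T^{d-i})$ and $M_{fr}(Z_+\w T^{d-i})_f$ through Lemmas~\ref{ploopseasy} and~\ref{spectraeqprelim}.

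One point to tighten is your treatment of $i=d$. You correctly note that Lemma~\ref{omegas} only gives the $\Omega$-spectrum property for $i<d$, so the Mayer--Vietoris argument on the target side and the upgrade from ``stable'' to ``levelwise'' both break down as stated. The paper's fix is concrete: replace $M_{fr}(Y)$ by $M_{fr}(Y\w S^1)$ throughout. Then every space is sectionwise connected, so $M_{fr}(Y\w S^1)_f$ is motivically fibrant even at $i=d$, Lemma~\ref{ploopseasy} applies to $C_*\Fr(X_+\w S^1)$ (its proof only uses local connectedness, which $S^1$-smashing supplies), and the whole induction runs verbatim to give a levelwise local equivalence for the $S^1$-smashed map---i.e., a levelwise equivalence in positive degrees for the original map. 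Your phrase ``repeating the Cancellation-type argument'' is not quite the right pointer here; the actual device is this $S^1$-shift.
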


\begin{proof}
Suppose $i<d$. By Lemma~\ref{cover} there is a cover of $X$ by open
subsets $X_j$ such that $X_j\cap Z\to X_j$ is a trivial embedding.
We proceed by induction on $n$, the number of elements in the cover.
For $n=1$ we have $X/X-Z\cong Z_+\w T^d$ by Lemma~\ref{ZTd}. Then
the map in question fits into a commutative square
\[
\xymatrix{
\uhom(\PP^{\w i},M_{fr}(Z_+\w T^d))\ar[r] & \uhom(\PP^{\w i},M_{fr}(Z_+\w T^d)_f)\\
M_{fr}(Z_+\w T^{d-i})\ar[u]^{\simeq}\ar[r]^{\simeq} & M_{fr}(Z_+\w T^{d-i})_f\ar[u]^{\simeq}}
\]
The left arrow is a levelwise local weak equivalence by
Lemma~\ref{ploopseasy}, and the right arrow is a levelwise local
weak equivalence by Lemma~\ref{spectraeqprelim}. Thus the upper map
is a levelwise local weak equivalence.

For the induction step present $X$ as the union of $X_1$ and $X_2$
such that $X_1$ can be covered by $n-1$ trivial open pieces, and
$Z\cap X_2\to X_2$ is a trivial embedding. Then for $X_{12}=X_1\cap
X_2$ the embedding $Z\cap X_{12}\to X_{12}$ is trivial. Denote by
$Y$ the sheaf $X/X-Z$ and by $Y_i$ the sheaf $X_{i}/(X_i-(X_i\cap
Z))$. Consider a commutative diagram of $S^1$-spectra\footnotesize
\[\xymatrix{
\uhom(\PP^{\w i},M_{fr}(Y_{12}))\ar[r]\ar[d] & \uhom(\PP^{\w
i},M_{fr}(Y_1))\vee\uhom(\PP^{\w
i},M_{fr}(Y_2))\ar[r]\ar[d] & \uhom(\PP^{\w i},M_{fr}(Y))\ar[d]\\
\uhom(\PP^{\w i},M_{fr}(Y_{12})_f)\ar[r] & \uhom(\PP^{\w
i},M_{fr}(Y_1)_f)\vee\uhom(\PP^{\w i},M_{fr}(Y_2)_f)\ar[r] &
\uhom(\PP^{\w i},M_{fr}(Y)_f) }
\]
\normalsize The upper row is a homotopy cofiber sequence in the
local stable model structure of $S^1$-spectra by
Proposition~\ref{MV}. By Lemma~\ref{omegas}
$M_{fr}(Y_{12})_f,M_{fr}(Y_1)_f,M_{fr}(Y_2)_f,M_{fr}(Y)_f$ are
motivically fibrant. It follows from Proposition~\ref{MV} that the
sequence
   $$M_{fr}(Y_{12})_f\to M_{fr}(Y_1)_f\vee M_{fr}(Y_2)_f\to M_{fr}(Y)_f$$
is a homotopy cofiber sequence of motivically fibrant spectra in the
local stable model structure, and hence so is the lower sequence of
the commutative diagram above, because $\mathbb P^{\wedge i}$ is a
flasque cofibrant motivic space. Two left vertical arrows are
levelwise local weak equivalences by induction hypothesis. Hence the
right arrow is a stable local weak equivalence factoring as
   $$\uhom(\PP^{\w i},M_{fr}(Y))\to\uhom(\PP^{\w i},M_{fr}(Y))_f\to\uhom(\PP^{\w i},M_{fr}(Y)_f).$$
Since the left arrow is a levelwise local equivalence by definition,
then the right arrow is a stable local weak equivalence. But the
middle spectrum is motivically fibrant by Lemma~\ref{omegas} as well
as so is the right spectrum. It remains to observe that a stable
local equivalence between motivically fibrant spectra must be a
levelwise local weak equivalence.

If $i=d$ then we replace all framed motives and their levelwise
local fibrant replacements by framed motives smashed with the unit
circle $S^1$. Then all spaces of $M_{fr}(Y\w S^1)$ become connected
and $M_{fr}(Y\w S^1)_f$ is a motivically fibrant $S^1$-spectrum. It
is now enough to repeat the above arguments word for word
(Lemma~\ref{ploopseasy} is also satisfied for spaces of the form
$C_*\Fr(X_+\w S^1)$ which are automatically sectionwise connected)
to show that
   $$\uhom(\PP^{\w d},M_{fr}(Y\w S^1))\to\uhom(\PP^{\w d},M_{fr}(Y\w S^1)_f)$$
is a stable local weak equivalence of spectra. By
Corollary~\ref{specialgamma} the left spectrum is sectionwise an
$\Omega$-spectrum. Since a stable equivalence between
$\Omega$-spectra is a levelwise weak equivalence, it follows that
the map of spectra is a levelwise local weak equivalence. Therefore,
the map
   $$\uhom(\PP^{\w d},M_{fr}(Y))\to\uhom(\PP^{\w d},M_{fr}(Y)_f)$$
is a levelwise local weak equivalence in positive degrees.
\end{proof}

\begin{thm}\label{geommain}
Suppose $Z\to X$ is a closed embedding of smooth varieties of
codimension $d$. Then the space $\uhom(\PP^{\w i},C_*\Fr(X/X-Z)_f)$
is motivically fibrant and
\[\uhom(\PP^{\w i},C_*\Fr(X/X-Z))\to \uhom(\PP^{\w i},C_*\Fr(X/X-Z)_f)\]
is a local weak equivalence for $i<d.$
\end{thm}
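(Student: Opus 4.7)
The plan is to derive Theorem~\ref{geommain} directly from the two preceding results Lemma~\ref{omegas} and Proposition~\ref{spectraeq} by passing from the $S^1$-spectrum level to the level-zero motivic space. Recall that $M_{fr}(Y)=C_*\Fr(Y\w\mathbb S)$ has zeroth space $C_*\Fr(Y)$, and consequently $\uhom(\PP^{\w i},M_{fr}(Y))$ has zeroth space $\uhom(\PP^{\w i},C_*\Fr(Y))$. Under this identification the map displayed in Theorem~\ref{geommain} is exactly the level-zero entry of the map of $S^1$-spectra $\uhom(\PP^{\w i},M_{fr}(X/X-Z))\to\uhom(\PP^{\w i},M_{fr}(X/X-Z)_f)$ studied in Proposition~\ref{spectraeq}.

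For the local weak equivalence assertion I would simply invoke Proposition~\ref{spectraeq}: for $i<d$ the above map of $S^1$-spectra is a levelwise local weak equivalence, hence in particular a local weak equivalence at level zero, which is the claim.

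For motivic fibrancy of $\uhom(\PP^{\w i},C_*\Fr(X/X-Z)_f)$, Lemma~\ref{omegas} provides that $(\uhom(\PP^{\w i},C_*\Fr(X/X-Z)))_f$ is already motivically fibrant. To bridge the two possible placements of $(-)_f$ relative to $\uhom$, I would form the commutative triangle at the common vertex $\uhom(\PP^{\w i},C_*\Fr(X/X-Z))$ whose two legs land in $(\uhom(\PP^{\w i},C_*\Fr(X/X-Z)))_f$ and $\uhom(\PP^{\w i},C_*\Fr(X/X-Z)_f)$ respectively. The first leg is a local weak equivalence by definition of $(-)_f$; the second is a local weak equivalence by the paragraph above. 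Since $\PP^{\w i}$ is flasque cofibrant and $(C_*\Fr(X/X-Z))_f$ is locally fibrant, the target of the second leg is locally fibrant, so the induced comparison between the two targets is a local weak equivalence between locally fibrant motivic spaces. Motivic fibrancy is preserved under local weak equivalence of locally fibrant spaces, whence $\uhom(\PP^{\w i},C_*\Fr(X/X-Z)_f)$ is motivically fibrant.

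The substantive work has already been done upstream: the Mayer--Vietoris square of Proposition~\ref{MV}, the cancellation-type input of Lemma~\ref{spectraeqprelim}, and the Segal-machine argument together with the strict $\A^1$-invariance result of~\cite{GPPresheaves} applied in Lemma~\ref{omegas}. The passage to level zero is purely formal, so I do not anticipate any new obstacle; the only genuinely delicate point, namely the commutation of $(-)_f$ with $\uhom(\PP^{\w i},-)$, is handled by the factorisation argument above.
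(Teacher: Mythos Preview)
Your proposal is correct and matches the paper's own one-line proof, which simply says the statement follows from Proposition~\ref{spectraeq}: both the local weak equivalence and the motivic fibrancy of $\uhom(\PP^{\w i},C_*\Fr(X/X-Z)_f)$ are the level-zero consequences of that proposition (the latter because Proposition~\ref{spectraeq} already asserts that $\uhom(\PP^{\w i},M_{fr}(X/X-Z)_f)$ is a motivically fibrant $S^1$-spectrum, hence each of its levels is a motivically fibrant space). Your triangle argument comparing the two placements of $(-)_f$ is exactly the factorisation appearing inside the proof of Proposition~\ref{spectraeq}, so you are not taking a different route but rather re-deriving a step already packaged into that proposition.
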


\begin{proof}
The statement follows from Proposition~\ref{spectraeq}.
\end{proof}

\begin{cor}\label{geomaincor}
If $E$ is a Thom spectrum with the bounding constant $d$, then the motivic space
$\uhom(\PP^{\w m},C_*\Fr(E_n\w T^i)_f)$ is motivically fibrant and
\[\uhom(\PP^{\w m},C_*\Fr(E_n\w T^i))\to \uhom(\PP^{\w m},C_*\Fr(E_n\w T^i)_f)\] is a
local weak equivalence for $m\leqslant n+i-d$.
\end{cor}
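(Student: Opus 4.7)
The plan is to reduce the statement to Theorem~\ref{geommain} by unpacking the Thom-spectrum structure and then passing to a filtered colimit. By Definition~\ref{thomspectra}, write $E_n=\colim_j E_{n,j}$ with $E_{n,j}=V_{n,j}/(V_{n,j}-Z_{n,j})$, where $\operatorname{codim}(Z_{n,j},V_{n,j})>n-d$. Using the standard identification
\[E_{n,j}\w T^i\cong (V_{n,j}\times\A^i)/(V_{n,j}\times\A^i-Z_{n,j}\times\{0\}),\]
each term $E_{n,j}\w T^i$ is the cofiber of a closed embedding of smooth varieties whose codimension equals $\operatorname{codim}(Z_{n,j},V_{n,j})+i\geqslant n+i-d+1$.

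Whenever $m\leqslant n+i-d$, this codimension strictly exceeds $m$, so Theorem~\ref{geommain} applies termwise: for each $j$ the space $\uhom(\PP^{\w m},C_*\Fr(E_{n,j}\w T^i)_f)$ is motivically fibrant, and the natural map
\[\uhom(\PP^{\w m},C_*\Fr(E_{n,j}\w T^i))\to\uhom(\PP^{\w m},C_*\Fr(E_{n,j}\w T^i)_f)\]
is a local weak equivalence.

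It then remains to pass to the filtered colimit in $j$. By Voevodsky's Lemma~\ref{Voevlemma}, at a local Henselian base an element of $\uhom(\PP^{\w m},\Fr(E_n\w T^i))$ is represented by a triple $(U,Z,\phi)$ whose support $Z$ is finite over the base; any such finite datum factors through some stage $V_{n,j}\times\A^i$, so that both $C_*\Fr(-\w T^i)$ and $\uhom(\PP^{\w m},-)$ commute with the filtered colimit in $j$. Local weak equivalences and strict $\A^1$-invariance of homotopy sheaves are preserved by filtered colimits, so the Segal-style argument from the proof of Lemma~\ref{omegas}, together with~\cite[7.1]{GPMain}, yields motivic fibrancy of the colimit. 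I expect the main obstacle to be precisely this last step, namely verifying that the levelwise local fibrant replacement $(-)_f$ interacts correctly with the filtered colimit; this should nevertheless be manageable, because each stage is already a locally $\Omega$-spectrum with strictly $\A^1$-invariant homotopy sheaves by the termwise application of Theorem~\ref{geommain}.
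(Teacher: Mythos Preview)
Your reduction to Theorem~\ref{geommain} is exactly the paper's argument: unpack $E_n=\colim_j E_{n,j}$, identify $E_{n,j}\w T^i$ with a quotient $V_{n,j}\times\A^i/(V_{n,j}\times\A^i-Z_{n,j}\times 0)$ of codimension strictly greater than $n+i-d$, and apply Theorem~\ref{geommain} termwise for $m\leqslant n+i-d$.

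The one place you diverge is the colimit step, which you make harder than necessary. The paper dispatches it in a single line using the flasque model structure: a directed colimit of flasque locally fibrant spaces is flasque locally fibrant, a directed colimit of flasque motivically fibrant spaces is flasque motivically fibrant, and a directed colimit of local weak equivalences is a local weak equivalence. Thus $\colim_j C_*\Fr(E_{n,j}\w T^i)_f$ is itself a model for $C_*\Fr(E_n\w T^i)_f$, and since $\uhom(\PP^{\w m},-)$ commutes with this filtered colimit, both claims follow immediately from the termwise statement. There is no need to invoke Voevodsky's Lemma to factor finite data through a stage, nor to rerun the Segal-style argument of Lemma~\ref{omegas}; the interaction of $(-)_f$ with the colimit that you flag as the main obstacle is handled entirely by the closure of flasque fibrant objects under filtered colimits.
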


\begin{proof}
We have $E_n\w T^i=\colim_j V_{n,j}\times\A^i/(V_{n,j}\times
\A^i-Z_{n,j}\times 0)$, where codimension of $Z_{n,j}$ in $V_{n,j}$
is strictly greater than $n-d$. Then codimension of $Z_{n,j}\times
0$ in $V_{n,j}\times\A^i$ is strictly greater than $n+i-d$. Then for
$E_{n,j}=V_{n,j}/(V_{n,j}-Z_{n,j})$ we get that $\uhom(\PP^{\w
m},C_*\Fr(E_{n,j}\w T^i)_f)$ is motivically fibrant and the map
$\uhom(\PP^{\w m},C_*\Fr(E_{n,j}\w T^i))\to \uhom(\PP^{\w
m},C_*\Fr(E_{n,j}\w T^i)_f)$ is a local weak equivalence for every
$j$ by Theorem~\ref{geommain}. By passing to the colimit and using
the fact that a directed colimit of flasque motivically fibrant
spaces (respectively a directed colimit of local weak equivalences)
is flasque motivically fibrant, we get the statement of the lemma.
\end{proof}

\section{Fibrant replacements of Thom spectra}\label{fibrantsect}

In this section we give a model for a fibrant replacement of a Thom spectrum $E$. First
we need the following

\begin{lem}\label{homf}
Suppose $E$ is a Thom spectrum with the bounding constant $d$. Then
for $i\geqslant\max(0,d)$ and $n\geqslant 0$ the map of spaces
$\uhom(\PP^{\w 1},C_*\Theta^{\infty}(L_nE)_{i+1})\to\uhom(\PP^{\w
1},C_*\Theta^{\infty}(L_nE)_{i+1}^f)$ is a local weak equivalence,
where $L_nE$ is the $n$-th layer of $E$ and $(L_nE)_{i+1}^f$ is a
local fibrant replacement of the space $(L_nE)_{i+1}$.
\end{lem}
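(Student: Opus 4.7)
\medskip

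\noindent\textbf{Proof proposal.} The plan is to convert the statement about $\uhom(\PP^{\w 1},-)$ applied to a single space into a statement about $\uhom(\PP^{\w n+1},-)$ applied to the framed motive of $E_n\w T^{i+1}$, and then invoke Corollary~\ref{geomaincor} directly.

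First I would use Lemma~\ref{thetalayer} together with the fact that the Suslin complex $C_*$ commutes with $\uhom(\PP^{\w n},-)$ (which holds because $\uhom$ is taken simplicially and $\Delta^p\wedge\PP^{\w n}$ has the expected exponential adjunction) to identify
\[C_*\Theta^{\infty}(L_nE)_{i+1}\;\cong\;\uhom(\PP^{\w n},\,C_*\Fr(E_n\w T^{i+1})).\]
Then, since $i\geqslant\max(0,d)$ forces $i\geqslant d$ and in particular $n\leqslant n+(i+1)-d$, Corollary~\ref{geomaincor} (applied with $m=n$ and $i$ replaced by $i+1$) says that the space $\uhom(\PP^{\w n},C_*\Fr(E_n\w T^{i+1})^f)$ is already motivically fibrant, and the canonical map
\[\uhom(\PP^{\w n},C_*\Fr(E_n\w T^{i+1}))\to\uhom(\PP^{\w n},C_*\Fr(E_n\w T^{i+1})^f)\]
is a local weak equivalence. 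Since motivically fibrant implies locally fibrant, this target therefore serves as a legitimate local fibrant replacement of $C_*\Theta^{\infty}(L_nE)_{i+1}$, and up to local weak equivalence between locally fibrant objects I may take
\[C_*\Theta^{\infty}(L_nE)_{i+1}^{f}=\uhom(\PP^{\w n},C_*\Fr(E_n\w T^{i+1})^f).\]

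With this choice fixed, applying $\uhom(\PP^{\w 1},-)$ and using the associativity identification $\uhom(\PP^{\w 1},\uhom(\PP^{\w n},-))=\uhom(\PP^{\w n+1},-)$, the map under consideration becomes precisely
\[\uhom(\PP^{\w n+1},C_*\Fr(E_n\w T^{i+1}))\to\uhom(\PP^{\w n+1},C_*\Fr(E_n\w T^{i+1})^f).\]
Now I would invoke Corollary~\ref{geomaincor} once more, this time with $m=n+1$: the required inequality $n+1\leqslant n+(i+1)-d$ is equivalent to $i\geqslant d$, which again follows from $i\geqslant\max(0,d)$. Corollary~\ref{geomaincor} then yields that this is a local weak equivalence, which is the desired conclusion.

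The main obstacle I anticipate is simply the bookkeeping of the fibrant-replacement choice: one must argue that any functorial local fibrant replacement of $C_*\Theta^{\infty}(L_nE)_{i+1}$ is interchangeable (up to local weak equivalence of the replacement, compatible with the unit map) with the concrete model $\uhom(\PP^{\w n},C_*\Fr(E_n\w T^{i+1})^f)$ built from a levelwise replacement of $C_*\Fr(E_n\w T^{i+1})$. This is routine in the flasque local model structure, using that a local weak equivalence between locally fibrant objects is preserved under the right Quillen functor $\uhom(\PP^{\w 1},-)$, and that the two comparison maps agree on the source. Apart from this, the whole argument is a direct pullback of Corollary~\ref{geomaincor} through the exponential adjunction.
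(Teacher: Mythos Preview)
Your proposal is correct and follows essentially the same route as the paper: identify $C_*\Theta^{\infty}(L_nE)_{i+1}$ via Lemma~\ref{thetalayer}, then invoke Corollary~\ref{geomaincor} twice (once with $m=n$ to compare the two fibrant replacements, once with $m=n+1$ for the final map). The paper carries out exactly the ``bookkeeping'' step you flag as the main obstacle by constructing an explicit comparison map $\uhom(\PP^{\w n},C_*\Fr(E_n\w T^{i+1}))^f\to\uhom(\PP^{\w n},C_*\Fr(E_n\w T^{i+1})^f)$ via the lifting property of the fibrant target, showing it is a local weak equivalence between locally fibrant spaces by two-out-of-three, and then observing that $\uhom(\PP^{\w 1},-)$ preserves this because $\PP^{\w 1}$ is flasque cofibrant.
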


\begin{proof}
By Corollary~\ref{geomaincor} the space $\uhom(\PP^{\w
n},C_*\Fr(E_n\w T^{i+1})^f)$ is motivically fibrant. By
Lemma~\ref{thetalayer} the map in question coincides with the
horizontal map of the diagram
\begin{equation}\label{aaa}
\xymatrix{
\uhom(\PP^{\w 1},\uhom(\PP^{\w n},C_*\Fr(E_n\w T^{i+1})))\ar[r]\ar[d] & \uhom(\PP^{\w 1},\uhom(\PP^{\w n},C_*\Fr(E_n\w T^{i+1}))^f)\ar[ld]\\
\uhom(\PP^{\w 1},\uhom(\PP^{\w n},C_*\Fr(E_n\w T^{i+1})^f))
}\end{equation}
The diagram~\eqref{aaa} is obtained by applying $\uhom(\PP^{\w 1},-)$ to the diagram
\begin{equation}\label{ccc}
\xymatrix{
\uhom(\PP^{\w n},C_*\Fr(E_n\w T^{i+1}))\ar[r]\ar[d] & \uhom(\PP^{\w n},C_*\Fr(E_n\w T^{i+1}))^f\ar[ld]\\
\uhom(\PP^{\w n},C_*\Fr(E_n\w T^{i+1})^f) }
\end{equation}
The slanted arrow exists by the right lifting property for fibrant
spaces. The horizontal arrow of~\eqref{ccc} is a local weak
equivalence, and the vertical arrow of~\eqref{ccc} is a local weak
equivalence by Corollary~\ref{geomaincor}. It follows that the
slanted arrow of~\eqref{ccc} is a local weak equivalence between
fibrant spaces, and hence so is the slanted arrow of~\eqref{aaa}
since $\PP^{\w 1}$ is a flasque cofibrant space. The vertical arrow
of~\eqref{aaa} is a local weak equivalence by
Corollary~\ref{geomaincor}. We see that the horizontal map
of~\eqref{aaa} is a local weak equivalence.
\end{proof}

The following theorem says that a fibrant replacement of a Thom
spectrum $E$ can be computed (starting at some level depending on
its bounding constant) by first applying the $\Theta^\infty$-functor
to $E$, then by taking the Suslin complex of each space of
$\Theta^\infty(E)$ and finally by taking local fibrant replacements
for $C_*\Theta^\infty(E)$.

\begin{thm}\label{thmthom}
Let $E$ be a Thom spectrum with the bounding constant $d$. Let
$C_*\Theta^{\infty}(E)^f$ be a spectrum obtained from
$C_*\Theta^{\infty}(E)$ by taking a level local fibrant replacement.
Then the spectrum $C_*\Theta^{\infty}(E)^f$ is motivically fibrant
starting from level $\max(0,d)$ and is stably equivalent to $E$.
\end{thm}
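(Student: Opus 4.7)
The plan is to treat the two assertions separately. The stable equivalence part is standard: the canonical maps $E\to\Theta^{\infty}(E)\to C_*\Theta^{\infty}(E)\to C_*\Theta^{\infty}(E)^f$ are each stable equivalences, since $E\to\Theta^{\infty}(E)$ is a stable equivalence by construction (the adjoints of the bonding maps of $\Theta^{\infty}(E)$ are already isomorphisms by Lemma~\ref{thetaomega}), $\Theta^{\infty}(E)\to C_*\Theta^{\infty}(E)$ is a level $\A^1$-equivalence, and $C_*\Theta^{\infty}(E)\to C_*\Theta^{\infty}(E)^f$ is a level local weak equivalence by the definition of $f$.

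The substantive content is motivic fibrancy, which at each level $i\geqslant\max(0,d)$ splits into two checks: (i) that the motivic space $C_*\Theta^{\infty}(E)^f_i$ is motivically fibrant, and (ii) that the adjoint bonding map $C_*\Theta^{\infty}(E)^f_i\to\uhom(\PPo,C_*\Theta^{\infty}(E)^f_{i+1})$ is a local weak equivalence. For (i), by Lemma~\ref{thetalayer} we have $C_*\Theta^{\infty}(E)_i=\colim_n\uhom(\PP^{\w n},C_*\Fr(E_n\w T^i))$. Corollary~\ref{geomaincor} guarantees that for $i\geqslant d$ each term's fibrant replacement $\uhom(\PP^{\w n},C_*\Fr(E_n\w T^i))^f$ is motivically fibrant and that the replacement map is a local weak equivalence. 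Since in the flasque local model structure filtered colimits preserve both motivic fibrancy and local weak equivalences, one can compare the colimit of these per-layer fibrant replacements with $C_*\Theta^{\infty}(E)^f_i$ up to local weak equivalence, yielding (i).

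For (ii), the key input is Lemma~\ref{homf}, which produces for each $n$ a local weak equivalence
\[\uhom(\PP^{\w 1},C_*\Theta^{\infty}(L_nE)_{i+1})\to\uhom(\PP^{\w 1},C_*\Theta^{\infty}(L_nE)^f_{i+1}).\]
Passing to the filtered colimit over $n$ (which $\uhom(\PPo,-)$ commutes with at the sheaf level, and which $\Theta^{\infty}$ respects via the layer filtration by Lemma~\ref{thetalayer}) yields a local weak equivalence $\uhom(\PPo,C_*\Theta^{\infty}(E)_{i+1})\to\uhom(\PPo,C_*\Theta^{\infty}(E)^f_{i+1})$. I would then contemplate the naturality square whose top row is the isomorphism $C_*\Theta^{\infty}(E)_i\xrightarrow{\cong}\uhom(\PPo,C_*\Theta^{\infty}(E)_{i+1})$ coming from Lemma~\ref{thetaomega} combined with the fact that $C_*$ commutes with $\uhom(\PPo,-)$, whose left vertical $C_*\Theta^{\infty}(E)_i\to C_*\Theta^{\infty}(E)^f_i$ is a local weak equivalence by construction of $f$, and whose right vertical is the local weak equivalence just produced. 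Two-out-of-three then forces the bottom arrow, which is precisely the adjoint bonding map of $C_*\Theta^{\infty}(E)^f$ at level $i$, to be a local weak equivalence.

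The principal obstacle will be the interchange of the functorial fibrant replacement $(-)^f$ with the filtered colimit along the layer filtration, since in general $(\colim_n X_n)^f$ and $\colim_n X_n^f$ are only related up to a zig-zag of local weak equivalences. Resolving this requires explicitly invoking the compatibility of the flasque Nisnevich local model structure with filtered colimits, together with the layer decomposition of Lemma~\ref{thetalayer}, in order to lift the per-layer information of Lemma~\ref{homf} and Corollary~\ref{geomaincor} to a statement about $C_*\Theta^{\infty}(E)^f$ itself.
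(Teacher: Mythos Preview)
Your proposal is correct and follows essentially the same approach as the paper: the paper likewise reduces via the layer filtration (Lemma~\ref{thetalayer}), invokes Corollary~\ref{omegass} for motivic fibrancy of the spaces, and runs exactly your two-out-of-three square built from Lemma~\ref{thetaomega} and Lemma~\ref{homf} for the bonding maps. The only organizational difference is that the paper resolves your ``principal obstacle'' up front by declaring $C_*\Theta^{\infty}(E)^f=\colim_n C_*\Theta^{\infty}(L_nE)^f$ (using that directed colimits of flasque locally fibrant spaces are flasque locally fibrant) and then works entirely at the level of a fixed layer $L_nE$, rather than passing to the colimit separately inside steps (i) and (ii) as you do.
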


\begin{proof}
Since a directed colimit of flasque locally fibrant spaces is
flasque locally fibrant, it follows that
$C_*\Theta^{\infty}(E)^f=\colim_n C_*\Theta^{\infty}(L_nE)^f$. Hence
it is sufficient to prove that for every $n$ the spectrum
$C_*\Theta^{\infty}(L_nE)^f$ is motivically fibrant starting from
level $d$. For $i\geqslant d$ the space
$C_*\Theta^{\infty}(L_nE)_i^f$ equals $\uhom(\PP^{\w n},C_*\Fr(E_n\w
T^i))^f$ by Lemma~\ref{thetalayer}. Moreover, it is motivically
fibrant by Corollary~\ref{omegass}. Thus it remains to prove that
each bonding map
\[C_*\Theta^{\infty}(L_nE)_i^f\to\uhom(\PP^{\w 1},C_*\Theta^{\infty}(L_nE)_{i+1}^f)\]
is a local weak equivalence. It fits into the following commutative diagram:
\[
\xymatrix{
C_*\Theta^{\infty}(L_nE)_i^f\ar[r] & \uhom(\PP^{\w 1},C_*\Theta^{\infty}(L_nE)_{i+1}^f)\\
C_*\Theta^{\infty}(L_nE)_i\ar[u]\ar[r]^(.35)\cong & \uhom(\PP^{\w 1},C_*\Theta^{\infty}(L_nE)_{i+1})\ar[u]}
\]
where the right vertical arrow is a local weak equivalences by Lemma~\ref{homf}
and the lower arrow is an isomorphism by Lemma~\ref{thetaomega}. Since the left vertical
arrow is a local weak equivalence, then so is the upper arrow, as required.
\end{proof}

\section{The functor $\Theta^{\infty}_{sym}$}\label{thetasym}

Whenever a Thom $T$-spectrum $E$ is symmetric, we can also construct
further fibrant replacements for it. To this end, we introduce
another stabilization functor $\Theta^{\infty}_{sym}$ on the level
of symmetric $T$-spectra, which is slightly different from
$\Theta^{\infty}$. The spaces of $\Theta^{\infty}_{sym}(E)$ and
$\Theta^{\infty}(E)$ are in fact isomorphic, but the bonding maps
are different: the bonding maps of $\Theta^{\infty}_{sym}(E)$
require the structure of a symmetric spectrum on $E$, whereas the
bonding maps of $\Theta^{\infty}(E)$ do not.

Given a $T$-spectrum $E$, let $T\w E$ be the suspension spectrum of
$E$ (see Definition~\ref{shift}). The functor $E\mapsto T\w E$ has a
right adjoint loop functor $E\to\Omega_T E$, where  $\Omega_T E$ has
the spaces $(\Omega_T E)_i=\uhom(T,E_i)$. If there is no likelihood
of confusion, we denote by $\Omega E$ the $\PPo$-spectrum
with $(\Omega E)_i=\uhom(\mathbb P^{\wedge
1},E_i)$ and the bonding maps are given by
\[\uhom(\mathbb P^{\wedge 1},E_i)\w\mathbb P^{\wedge 1}\to
\uhom(\mathbb P^{\wedge 1},E_i\w\mathbb P^{\wedge 1})\xrightarrow{\sigma}
\uhom(\mathbb P^{\wedge 1},E_i\w T)\stackrel{u}\to(\mathbb P^{\wedge 1},E_{i+1}),\]
where $\mathbb P^{\wedge 1}\to T$ is a canonical motivic equivalence.

\begin{dfn}
Define the functor $\Theta^1_{sym}(E)=\Omega(E[1])$, where $E[1]$ is the shift
spectrum (see Definition~\ref{shift}), and
\[\Theta^n_{sym}(E):=\Theta^1_{sym}(\Theta^1_{sym}(\ldots(E)))\quad\text{ ($n$ times)}.\]
\end{dfn}

\begin{dfn}\label{thetasymdef}
If $E$ is a symmetric $T$-spectrum, then there is a canonical map of
$T$-spectra $T\w E\to E[1]$ (see Definition~\ref{shift}). Notice
that this map requires the symmetric spectrum structure of $E$. By
adjointness we have a map $E\to\Omega(T\w
E)\to\Omega(E[1])=\Theta^1_{sym}(E)$. Iterating the latter map, we
get a sequence of maps of spectra
\[E\to\Theta^1_{sym}(E)\to\Theta^2_{sym}(E)\to\cdots\]
Denote by $\Theta^{\infty}_{sym}(E)$ the colimit of this sequence. Then for every symmetric
$T$-spectrum $E$ there is a natural map of $\PPo$-spectra
\[\epsilon:E\to\Theta^{\infty}_{sym}(E).\]
\end{dfn}

\begin{rem}\label{rem:stabilization}
We need to describe bonding maps of $\Theta^n_{sym}(E)$ and
stabilization maps $\Theta^n_{sym}(E)\to\Theta^{n+1}_{sym}(E)$
explicitly. One has,
   \[\Theta^n_{sym}(E)_i=\uhom(\PP^{\w n},E_{n+i}).\]
Each bonding map equals the composition
   \[\uhom(\PP^{\w n},E_{n+i})\w\PP^{\w 1}\to
\uhom(\PP^{\w n},E_{n+i}\w\PP^{\w 1})\xrightarrow{\sigma}
\uhom(\PP^{\w n},E_{n+i}\w T)\stackrel{u}\to \uhom(\PP^{\w
n},E_{n+i+1})\] and the stabilization map
$\Theta^n_{sym}(E)_i\to\Theta^{n+1}_{sym}(E)_i$ equals the
composition
\[\uhom(\PP^{\w n},E_{n+i})\to(\PP^{\w n},\uhom(\PP^{\w 1}, E_{n+i}\w T))
\stackrel{u}\to\uhom(\PP^{\w
n+1},E_{n+i+1})\xrightarrow{\chi_{i,1}}\uhom(\PP^{\w
n+1},E_{n+1+i}),\] where the left arrow is induced by the external
smash product with $\sigma:\PP^{\w 1}\to T$ and $\chi_{i,1}$ is the
shuffle permutation in $\Sigma_{n+i+1}$ permuting the last element
with preceding $i$ elements and preserves the first $n$ elements.
\end{rem}

\begin{lem}\label{spacetrue=fake}
For any symmetric $T$-spectrum $E$ for any $i$ there is an
isomorphism of motivic spaces
$\Theta^{\infty}(E)_i\xrightarrow{\cong}\Theta^{\infty}_{sym}(E)_i.$
\end{lem}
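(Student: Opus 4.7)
The plan is to observe that $\Theta^n(E)_i$ and $\Theta^n_{sym}(E)_i$ are, as underlying motivic spaces, \emph{literally the same space} $\uhom(\PP^{\w n},E_{n+i})$, so both $\Theta^\infty(E)_i$ and $\Theta^\infty_{sym}(E)_i$ are computed as sequential colimits of the same sequence of spaces but along different transition maps. Comparing the explicit formulas (from Remark~\ref{rem:stabilization} and the description of the $\Theta^n$-stabilization map earlier in the paper), one sees that both maps send $f\colon\PP^{\w n}\to E_{n+i}$ to $(p\w q)\mapsto u(f(p)\w\sigma(q))$, and the only difference is that the symmetric version post-composes with the action of the shuffle $\chi_{i,1}\in\Sigma_{n+i+1}$ on $E_{n+i+1}$. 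So the two cofiltered systems differ termwise by a symmetric group action.

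Next, I would build a compatible family of isomorphisms $\alpha_n\colon\Theta^n(E)_i\to\Theta^n_{sym}(E)_i$ by letting a suitable permutation act on the target. Define $\sigma_n\in\Sigma_{n+i}$ recursively by $\sigma_0=\mathrm{id}\in\Sigma_i$ and
\[\sigma_{n+1}=\chi_{i,1}\cdot(\sigma_n\oplus 1),\]
and let $\alpha_n$ be post-composition with the action of $\sigma_n$ on $E_{n+i}$. Each $\alpha_n$ is an isomorphism of motivic spaces because permutation actions are invertible.

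The compatibility check is where the symmetric structure of $E$ intervenes. Using the $\Sigma_{n+i}$-equivariance of the bonding map $u\colon E_{n+i}\w T\to E_{n+i+1}$ (relative to the embedding $\Sigma_{n+i}\hookrightarrow\Sigma_{n+i+1}$ fixing the last entry), one computes that for any $f\in\uhom(\PP^{\w n},E_{n+i})$,
\[s'_n(\alpha_n f)(p,q)=\chi_{i,1}\cdot u(\sigma_n\cdot f(p)\w\sigma(q))=\chi_{i,1}\cdot(\sigma_n\oplus 1)\cdot u(f(p)\w\sigma(q)),\]
while
\[\alpha_{n+1}(s_n f)(p,q)=\sigma_{n+1}\cdot u(f(p)\w\sigma(q)),\]
so the recursive definition of $\sigma_{n+1}$ makes the squares commute. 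Passing to the colimit gives the desired isomorphism $\Theta^\infty(E)_i\xrightarrow{\cong}\Theta^\infty_{sym}(E)_i$.

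The main obstacle is purely bookkeeping: identifying the precise permutation to insert at each stage so that the ladder of stabilization squares commutes, and making sure that the application of the symmetric equivariance of $u$ matches the chosen embedding $\Sigma_{n+i}\hookrightarrow\Sigma_{n+i+1}$ on the nose (i.e.\ fixing the last coordinate rather than the first). Once this is set up correctly, the recursion $\sigma_{n+1}=\chi_{i,1}\cdot(\sigma_n\oplus 1)$ falls out and the rest is automatic.
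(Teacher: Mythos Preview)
Your proposal is correct and follows essentially the same approach as the paper: both build a ladder of isomorphisms $\uhom(\PP^{\w n},E_{i+n})\to\uhom(\PP^{\w n},E_{n+i})$ given by the action of a permutation in $\Sigma_{n+i}$ on the target, and check compatibility with the two stabilization maps using the $\Sigma_{n+i}\times\Sigma_1$-equivariance of the bonding map $u$. The only cosmetic difference is that the paper writes the permutation in closed form as the shuffle $\chi_{i,n}$ and asserts directly that the square
\[
\xymatrix{
\uhom(\PP^{\w n},E_{i+n})\ar[r]^{\chi_{i,n}}\ar[d]^{u} & \uhom(\PP^{\w n},E_{n+i})\ar[d]^{\chi_{i,1}\circ u}\\
\uhom(\PP^{\w n+1},E_{i+n+1})\ar[r]^{\chi_{i,n+1}} & \uhom(\PP^{\w n+1},E_{n+1+i})
}
\]
commutes, whereas you define $\sigma_n$ recursively by $\sigma_{n+1}=\chi_{i,1}\cdot(\sigma_n\oplus 1)$ precisely so that commutativity is forced; unwinding your recursion recovers (with the paper's conventions) the shuffle $\chi_{i,n}$.
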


\begin{proof}
Define a map $f_n\colon\Theta^n(E)_i\to\Theta^n_{sym}(E)_i$ by the
formula
\[f_n\colon \uhom(\PP^{\w n},E_{i+n})\xrightarrow{\chi_{i,n}}\uhom(\PP^{\w n},E_{n+i}),\]
where $\chi_{i,n}$ is the shuffle permutation that permutes the last
$n$ elements with the first $i$ elements. Then the following diagram
is commutative:
\[
\xymatrix{
\uhom(\PP^{\w n},E_{i+n})\ar[r]^{\chi_{i,n}}\ar[d]^{u} & \uhom(\PP^{\w n},E_{n+i})\ar[d]^{\chi_{i,1}\circ u}\\
\uhom(\PP^{\w n+1},E_{i+n+1})\ar[r]^{\chi_{i,n+1}} & \uhom(\PP^{\w
n+1},E_{n+1+i}).}
\]
Here the left vertical arrow is the stabilization map
$\Theta^n(E)_i\to\Theta^{n+1}(E)_i$ and the right vertical map is
the stabilization map
$\Theta^n_{sym}(E)_i\to\Theta^{n+1}_{sym}(E)_i$ of
Remark~\ref{rem:stabilization}. So the maps $f_n$ induce a morphism
of sequences. Then the maps $f_n$ induce the desired isomorphism on
colimits
$f\colon\Theta^{\infty}(E)_i\xrightarrow{\cong}\Theta^{\infty}_{sym}(E)_i$.
\end{proof}

\begin{lem}\label{infm=inf}
For any symmetric $T$-spectrum $E$ there are isomorphisms of spaces
\[\Theta^{\infty}(\Theta^m_{sym}(E))_i\cong
\Theta^{\infty}_{sym}(E)_i,\quad
\Theta^{n}(\Theta^{\infty}_{sym}(E))_i\cong
\Theta^{\infty}_{sym}(E)_i.\]
\end{lem}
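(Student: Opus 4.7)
The plan is to prove both isomorphisms by unwinding the definitions to express each side as a filtered colimit of identical mapping spaces $\uhom(\PP^{\w N}, E_{N+i})$, and then identify them via cofinality.

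For the first isomorphism, I would apply the formula $\Theta^{\infty}(F)_j = \colim_n \uhom(\PP^{\w n}, F_{n+j})$ to the $\PPo$-spectrum $F = \Theta^m_{sym}(E)$, whose $j$-th space is $\uhom(\PP^{\w m}, E_{m+j})$ by the description in Remark~\ref{rem:stabilization}. By the hom-tensor adjunction (using the identification $(\PP^{\w n}, (\PP^{\w m}, -)) = (\PP^{\w n+m}, -)$ recorded in the conventions of Section~2), this unwinds to
\[
\Theta^{\infty}(\Theta^m_{sym}(E))_i \;=\; \colim_n \uhom(\PP^{\w n+m}, E_{m+n+i}).
\]
Reindexing by $N = n+m$ exhibits this as a colimit over the cofinal subsystem $\{N \geq m\}$ of the system defining $\Theta^{\infty}_{sym}(E)_i = \colim_N \uhom(\PP^{\w N}, E_{N+i})$, giving the desired isomorphism.

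For the second isomorphism, I would use $\Theta^n(G)_i = \uhom(\PP^{\w n}, G_{n+i})$ on $G = \Theta^{\infty}_{sym}(E)$. Since $\PP^{\w n} \cong \PP^n/\PP^{n-1}$ is a compact object (via the identification in Corollary~\ref{Pmpermut}), the functor $\uhom(\PP^{\w n}, -)$ commutes with the filtered colimit $\colim_k \uhom(\PP^{\w k}, E_{k+n+i})$ defining $\Theta^{\infty}_{sym}(E)_{n+i}$. Adjointness then yields $\colim_k \uhom(\PP^{\w n+k}, E_{k+n+i})$, which after reindexing $N = n+k$ is again cofinal in the colimit computing $\Theta^{\infty}_{sym}(E)_i$.

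The main technical obstacle is that the transition maps in the colimit for $\Theta^{\infty}_{sym}$ carry shuffle permutations $\chi_{i,1}$ (see Remark~\ref{rem:stabilization}), whereas the transition maps arising from $\Theta^{\infty}(\Theta^m_{sym}(E))$ and from $\Theta^n$ applied to the inner colimit do not. To reconcile these, I would insert the termwise shuffles $\chi_{i,n}$ from the proof of Lemma~\ref{spacetrue=fake}, which intertwine the two stabilization systems and witness that the identifications of the underlying spaces assemble into a compatible map of directed systems. Once this compatibility is set up, cofinality delivers both isomorphisms.
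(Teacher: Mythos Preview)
Your approach is correct in outline but takes a longer route than the paper's two-line argument. The paper simply applies Lemma~\ref{spacetrue=fake} as a black box: for the first isomorphism it notes that $\Theta^m_{sym}(E)$ is itself a symmetric $T$-spectrum, so Lemma~\ref{spacetrue=fake} gives $\Theta^{\infty}(\Theta^m_{sym}(E))_i\cong\Theta^{\infty}_{sym}(\Theta^m_{sym}(E))_i$, and the right-hand side equals $\Theta^{\infty}_{sym}(E)_i$ tautologically since $\Theta^{\infty}_{sym}=\colim_k\Theta^k_{sym}$. For the second isomorphism the paper converts $\Theta^{\infty}_{sym}(E)_{i+n}$ to $\Theta^{\infty}(E)_{i+n}$ via Lemma~\ref{spacetrue=fake} and then uses that $\Theta^n(\Theta^{\infty}(E))_i=\Theta^{\infty}(E)_i$ (an immediate consequence of Lemma~\ref{thetaomega}). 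Your explicit cofinality unwinding recovers the same result, and your ``main technical obstacle'' together with its resolution via the shuffles $\chi_{i,n}$ is precisely the content of Lemma~\ref{spacetrue=fake}---so you are essentially reproving that lemma inside this one rather than citing it. The paper's argument buys brevity; yours buys transparency about why the shuffles are the only obstruction.

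One genuine misstep: your appeal to Corollary~\ref{Pmpermut} for the compactness of $\PP^{\w n}$ is misplaced. That corollary only identifies $\uhom(\PP^{\w m},E_n)\cong\uhom(\PP^m/\PP^{m-1},E_n)$ when $E$ is a Thom spectrum, whereas the present lemma is stated for an arbitrary symmetric $T$-spectrum. The commutation of $\uhom(\PP^{\w n},-)$ with filtered colimits that you need holds for the simpler reason that $U_+\wedge\PP^{\w n}$ is a finite colimit of representables in pointed Nisnevich sheaves for each $U\in\Sm_k$; the paper uses this implicitly elsewhere (e.g.\ in Lemma~\ref{thetalayer}) without invoking Corollary~\ref{Pmpermut}. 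If you want to avoid this altogether, follow the paper and use Lemma~\ref{thetaomega} in place of the compactness-plus-cofinality step.
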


\begin{proof}
Applying Lemma~\ref{spacetrue=fake} to the symmetric spectrum
$\Theta^m_{sym}(E)$, we have
   $$\Theta^{\infty}(\Theta^m_{sym}(E))_i\cong\Theta^{\infty}_{sym}(\Theta^m_{sym}(E))_i=\Theta^{\infty}_{sym}(E)_i.$$
Also,
$$\Theta^{n}(\Theta^{\infty}_{sym}(E))_i=\uhom(\PP^{\w
n},\Theta^{\infty}_{sym}(E)_{i+n})\cong\uhom(\PP^{\w
n},\Theta^{\infty}(E)_{i+n})=\Theta^{n}(\Theta^{\infty}(E))_i=\Theta^{\infty}(E)_i,$$
as required.
\end{proof}

\begin{lem}\label{fakeconnected}
Suppose $E$ is a Thom $T$-spectrum with the bounding constant $d$. Then
the space $C_*\Theta^{\infty}(E)_i$ is locally connected for
$i\geqslant\max(0,d).$
\end{lem}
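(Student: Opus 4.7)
My plan is to reduce the assertion to Lemma~\ref{pi0} via the layer filtration of $E$. First I would use Lemma~\ref{thetalayer} to write
\[
\Theta^{\infty}(E)_i=\colim_n\Theta^{\infty}(L_nE)_i=\colim_n\uhom(\PP^{\w n},\Fr(E_n\w T^i)).
\]
Because $\PP^{\w n}$ is constant in the simplicial direction, the internal Hom commutes with the Suslin complex, so applying $C_*$ to the displayed formula gives
\[
C_*\Theta^{\infty}(E)_i=\colim_n\uhom(\PP^{\w n},C_*\Fr(E_n\w T^i)).
\]
Since $\pi_0^{Nis}$ commutes with filtered colimits and colimits of locally connected sheaves are locally connected, it suffices to check that for every fixed $n\geqslant 0$ the space $\uhom(\PP^{\w n},C_*\Fr(E_n\w T^i))$ is locally connected.

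Next, I would write $E_n=\colim_j E_{n,j}$ with $E_{n,j}=V_{n,j}/(V_{n,j}-Z_{n,j})$ as in Definition~\ref{thomspectra}. Since the formation of $\uhom(\PP^{\w n},C_*\Fr(-))$ commutes with filtered colimits of pointed sheaves, it is enough to show that each
\[
\uhom(\PP^{\w n},C_*\Fr(E_{n,j}\w T^i))=\uhom(\PP^{\w n},C_*\Fr((V_{n,j}\times\A^i)/(V_{n,j}\times\A^i-Z_{n,j}\times 0)))
\]
is locally connected.

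Now I would apply Lemma~\ref{pi0} to the closed embedding $Z_{n,j}\times 0\hookrightarrow V_{n,j}\times\A^i$. By the bounding constant hypothesis, the codimension of $Z_{n,j}$ in $V_{n,j}$ is at least $n-d+1$, hence the codimension $D$ of $Z_{n,j}\times 0$ in $V_{n,j}\times\A^i$ satisfies $D\geqslant n+i-d+1$. The condition $n<D$ of Lemma~\ref{pi0} therefore reduces to $i\geqslant d$. Combined with the trivial constraint $i\geqslant 0$ coming from taking powers of $T$, this is exactly the hypothesis $i\geqslant\max(0,d)$. Thus Lemma~\ref{pi0} yields the local connectivity of $\uhom(\PP^{\w n},C_*\Fr(E_{n,j}\w T^i))$, and the argument finishes by passing to the colimits in $j$ and $n$ indicated above.

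The only real point to watch is the codimension bookkeeping: the bounding constant controls the codimension in $V_{n,j}$, so one must check carefully that smashing with $T^i$ shifts this codimension by exactly $i$, and that the resulting inequality matches the sharp numerical hypothesis of Lemma~\ref{pi0}. All other steps are routine commutations of $C_*$ and $\uhom(\PP^{\w n},-)$ with filtered colimits.
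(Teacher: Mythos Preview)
Your proof is correct and follows essentially the same approach as the paper: reduce to the layer description $C_*\Theta^{\infty}(E)_i=\colim_n\uhom(\PP^{\w n},C_*\Fr(E_n\w T^i))$ via Lemma~\ref{thetalayer}, then invoke Lemma~\ref{pi0} for local connectivity. The paper compresses this into two sentences, leaving the decomposition $E_n=\colim_j E_{n,j}$ and the codimension bookkeeping implicit (as in the proof of Corollary~\ref{geomaincor}), while you spell these out explicitly.
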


\begin{proof}
By Lemma~\ref{pi0} the space $C_*\uhom(\PP^{\w n},\Fr(E_n\w
T^i))=C_*\Theta^{\infty}(L_n(E))_i$ is locally connected for every
$n$. Then
$C_*\Theta^{\infty}(E)_i=\colim_nC_*\Theta^{\infty}(L_n(E))_i$ is a
locally connected space.
\end{proof}

\begin{prop}\label{truefakezigzag}
Suppose $E$ is a symmetric Thom $T$-spectrum with the bounding
constant $d$ and contractible alternating group action. Then the
natural maps of spectra
\[\xi:C_*\Theta^{\infty}_{sym}(E)\to C_*\Theta^{\infty}(\Theta^{\infty}_{sym}(E))\] and
\[C_*\Theta^{\infty}(\epsilon):C_*\Theta^{\infty}(E)\to C_*\Theta^{\infty}(\Theta^{\infty}_{sym}(E)),\]
obtained from the map $\epsilon:E\to\Theta^{\infty}_{sym}(E)$ by
applying $C_*\Theta^{\infty}$ to it, induce local weak equivalences
of spaces starting from level $\max(0,d)$.
In particular, there is a commutative diagram
   $$\begin{xymatrix}{
       E\ar[r]^{\eta}\ar[d]_\epsilon&C_*\Theta^{\infty}(E)\ar[d]^{C_*\Theta^{\infty}(\epsilon)}\\
       C_*\Theta^{\infty}_{sym}(E)\ar[r]^(.43){\xi}&C_*\Theta^{\infty}(\Theta^{\infty}_{sym}(E))
      }\end{xymatrix}$$
of $\mathbb P^1$-spectra, in which all arrows are stable motivic equivalences.
\end{prop}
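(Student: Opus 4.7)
The plan is to work at a fixed level $i\geqslant\max(0,d)$ and to unpack both natural maps explicitly using the descriptions in Remark~\ref{rem:stabilization} and Lemma~\ref{thetalayer}. By Lemmas~\ref{spacetrue=fake} and~\ref{infm=inf}, the three spaces $\Theta^{\infty}(E)_i$, $\Theta^{\infty}_{sym}(E)_i$ and $\Theta^{\infty}(\Theta^{\infty}_{sym}(E))_i$ are (noncanonically) isomorphic, and each admits a presentation as a filtered colimit of spaces of the form $\uhom(\PP^{\wedge N},E_M)$. I expect that under these identifications the maps $\xi_i$ and $C_*\Theta^{\infty}(\epsilon)_i$ will differ from the canonical identifying isomorphisms only by shuffle permutations $\chi_{a,b}\in\Sigma_{a+b}$ acting on the $\PP^{\wedge(a+b)}$ factors, coming from the fact that $\Theta^{\infty}_{sym}$ builds its bonding maps with an extra $\chi_{i,1}$-shuffle while $\Theta^{\infty}$ does not.

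The key tool will be Corollary~\ref{Pmpermut}, which states that every even permutation of the factors of $\PP^{\wedge m}$ is $\A^1$-homotopic to the identity on $\uhom(\PP^{\wedge m},E_n)$, combined with Remark~\ref{a1sim}, which upgrades this $\A^1$-homotopy to a simplicial homotopy after passing to the Suslin complex. On any stage of the colimit on which the accumulated shuffle is even, the comparison between $\xi_i$ (respectively $C_*\Theta^{\infty}(\epsilon)_i$) and the canonical identification will therefore be a simplicial homotopy equivalence, and the desired local weak equivalence will follow by passing to the colimit.

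The hard part will be the parity bookkeeping: individual shuffles $\chi_{a,b}$ have sign $(-1)^{ab}$ and can be odd. My plan for circumventing this is a cofinality argument: composing two consecutive stabilization maps in the colimit multiplies two shuffles, so any odd pair becomes even; equivalently, I restrict to a cofinal subsystem on which the accumulated shuffle lies in the alternating group. On this cofinal subsystem every transition map becomes a simplicial homotopy equivalence after $C_*$, and $\xi_i$ together with $C_*\Theta^{\infty}(\epsilon)_i$ are identified with colimits of local weak equivalences. Carrying out this accounting carefully for both maps will be the most delicate part of the proof.

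Finally, for the stated commutative square of stable motivic equivalences, I would verify commutativity directly from naturality of $\epsilon$. The map $\eta\colon E\to C_*\Theta^{\infty}(E)$ is a stable motivic equivalence by Theorem~\ref{thmthom}, since its composite with a level local fibrant replacement lands in a motivically fibrant model stably equivalent to $E$, and level local fibrant replacements are themselves stable equivalences. Since the level local equivalences $\xi$ and $C_*\Theta^{\infty}(\epsilon)$ established above are a fortiori stable motivic equivalences of $\mathbb P^1$-spectra, the 2-out-of-3 property applied to the commutative square then forces $\epsilon$ to be a stable motivic equivalence as well.
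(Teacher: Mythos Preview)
Your overall strategy matches the paper's: both arguments pass to a cofinal subsystem indexed by even integers, on which the discrepancy between the two stabilizations becomes an even permutation, and then invoke the $\A^1$-homotopy--to--simplicial-homotopy upgrade of Remark~\ref{a1sim}. The double-colimit picture you sketch is exactly what the paper formalizes via the two-parameter family $A_{n,m}=C_*\Theta^n(\Theta^m_{sym}(E))_i$ and Lemma~\ref{colimequi}.

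There is, however, a genuine gap in your bookkeeping. You write that the maps differ from the canonical identifications ``only by shuffle permutations $\chi_{a,b}$ acting on the $\PP^{\wedge(a+b)}$ factors.'' This is not correct: the extra shuffle $\chi_{i,1}$ in the stabilization map $\Theta^n_{sym}(E)_i\to\Theta^{n+1}_{sym}(E)_i$ (Remark~\ref{rem:stabilization}) acts on $E_{n+1+i}$ through the symmetric spectrum structure, not on the $\PP$-factors. When you compare the two stabilizations on a cofinal stage, the discrepancy is an even permutation on $\PP^{\wedge N}$ \emph{together with} an even permutation on $E_M$. Corollary~\ref{Pmpermut} handles only the first of these; the second requires precisely the hypothesis that $E$ has contractible alternating group action. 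Without invoking that hypothesis you cannot conclude the required simplicial homotopy, and indeed the statement would be false for a general symmetric Thom spectrum.

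A second, smaller omission: to pass from ``bijection on $\colim\pi_*$'' to ``local weak equivalence'' you need both colimit spaces to be locally connected (this is the role of Lemma~\ref{fakeconnected} in the paper's argument, and is built into the hypotheses of Lemma~\ref{colimequi}). Your sketch does not address $\pi_0$. Finally, your justification that $\eta$ is a stable motivic equivalence via Theorem~\ref{thmthom} is fine; the paper instead cites Hovey's general result~\cite[4.11]{H}, but either reference suffices.
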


\begin{proof}
Fix a number $i\geqslant d$. Consider a two-dimensional sequence
\[A_{n,m}=C_*\Theta^n(\Theta^m_{sym}(E))_i\] with horizontal maps $A_{n,m}\to A_{n+1,m}$
induced by $\Theta^n\to\Theta^{n+1}$ and vertical maps $A_{n,m}\to
A_{n,m+1}$ induced by $\Theta^m_{sym}\to\Theta^{m+1}_{sym}$. To
prove the statement, we need to show that the maps
$\colim_{m}A_{0,m}\to\colim_{n,m}A_{n,m}$ and
$\colim_{n}A_{n,0}\to\colim_{n,m}A_{n,m}$ are local weak
equivalences.

Without loss of generality it is sufficient to prove that for every
$m,n$ the maps
\begin{equation}\label{bbb}
\colim_{n}A_{2n,2m}\to\colim_nA_{2n,2m+2}
\end{equation}
and
\begin{equation}\label{ddd}
\colim_{m}A_{2n,2m}\to\colim_{m}A_{2n+2,2m}
\end{equation}
are local weak equivalences.

Note that the spaces $C_*\Theta^{\infty}(\Theta^m_{sym}(E))_i$ and
$C_*\Theta^{n}(\Theta^{\infty}_{sym}(E))_i$ are isomorphic to
$C_*\Theta^{\infty}(E)_i$ by Lemmas~\ref{infm=inf}
and~\ref{spacetrue=fake}. Hence they are locally connected by
Lemma~\ref{fakeconnected}.

To prove that~\eqref{bbb} is a local weak equivalence, we apply
Lemma~\ref{colimequi} below for the case
$A_n=A_{2n,2m},B_n=A_{2n,2m+2}$ and the maps $i_n^A:A_{2n,2m}\to A_{2n+2,2m}$,
$i_n^B:A_{2n,2m+2}\to A_{2n+2,2m+2}$, $f_n:A_{2n,2m}\to A_{2n,2m+2}$ are given by maps
of the two dimensional sequences above.
Define a map $g_n\colon A_{2n,2m+2}\to A_{2n+2,2m}$ as
an identification via associativity isomorphism
\begin{gather*}
A_{2n,2m+2}=C_*\uhom(\PP^{\w 2n},\uhom(\PP^{\w 2m+2},E_{2n+2m+2+i}))=\\
=C_*\uhom(\PP^{\w 2n+2},\uhom(\PP^{\w 2m},E_{2n+2m+2+i}))=A_{2n+2,2m}.
\end{gather*}
Then $g_nf_n$ differs from $i_n^A$ by the action of an even
permutation on $\PP^{\w 2n+2m+2}$ and an even permutation on
$E_{2n+2m+2+i}$. Thus $g_nf_n$ and $i^A_n$ are simplicially
homotopic by Corollary~\ref{Pmpermut} and our assumption that $E$ is
a spectrum with contractible alternating group action as well as the fact that $\mathbb A^1$-homotopies 
become the usual ones after applying Suslin’s complex $C_*$. Also,
$f_{n+1}g_n$ differs from $i_n^B$ by the action of an even permutation
on $\PP^{\w 2n+2m+4}$ and an even permutation on $E_{2n+2m+4}$.
Therefore $f_{n+1}g_n$ is simplicially homotopic to $i_n^B$ for the
same reasons as above. Thus the map on the colimits is a local weak
equivalence by Lemma~\ref{colimequi}. The proof for the
map~\eqref{ddd} is analogous.

Finally, the map $\eta$ of the commutative square of the proposition
is a stable motivic equivalence by~\cite[4.11]{H}, because the
flasque motivic model structure on spaces is almost finitely
generated in the sense of~\cite{H}. By the first part
of the proof $\xi,C_*\Theta^\infty(\epsilon)$ are stable
motivic equivalences, and hence so is $\epsilon$ by the
two-out-of-three property for weak equivalences.
\end{proof}

\begin{lem}\label{colimequi}
Suppose $i_n^A\colon A_n\to A_{n+1}$,
$i_n^B\colon B_n\to B_{n+1}$ are directed systems of spaces,
and $f_n\colon A_n\to B_n$ is a map of directed sequences.
Suppose that there are maps $g_n\colon B_n\to A_{n+1}$  such that
$g_nf_n$ is simplicially homotopic to $i_n^A$ and $f_{n+1}g_n$ is
simplicially homotopic to $i_n^B$. Also, suppose that the spaces $A=\colim
A_n$ and $B=\colim B_n$ are locally connected. Then the map
$f=\colim f_n\colon A\to B$ is a local weak equivalence.
\end{lem}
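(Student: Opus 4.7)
The plan is to verify that $f\colon A\to B$ is a local weak equivalence by checking stalkwise that it induces isomorphisms on all sheaves of homotopy groups $\pi_k^{\mathrm{Nis}}$, $k\geqslant 0$. Since local weak equivalences of motivic spaces are detected on Nisnevich stalks, and since the sections functor at a Henselian local $U\in\Sm_k$ commutes with filtered colimits of simplicial sets, it suffices to show that for each such $U$ the induced map of simplicial sets
\[
f_U\colon A(U)=\colim_n A_n(U)\longrightarrow\colim_n B_n(U)=B(U)
\]
is a weak equivalence.

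For $\pi_0$ this is automatic from the hypothesis that $A$ and $B$ are locally connected, which gives $\pi_0^{\mathrm{Nis}}(A)=\pi_0^{\mathrm{Nis}}(B)=\ast$; evaluating the associated sheaf at a Henselian local $U$ yields $\pi_0(A(U))=\pi_0(B(U))=\ast$. For $k\geqslant 1$, the spaces are pointed (the basepoints at each level being preserved by $f_n$, $g_n$, $i_n^A$, $i_n^B$), and we use the standard fact that filtered colimits of pointed simplicial sets preserve homotopy groups, i.e.
\[
\pi_k(A(U))=\colim_n\pi_k(A_n(U)),\qquad \pi_k(B(U))=\colim_n\pi_k(B_n(U)).
\]
The maps $g_n\colon B_n\to A_{n+1}$ assemble into a map $g=\colim_n g_n\colon B(U)\to A(U)$ after the harmless reindexing $\colim_n A_{n+1}(U)=\colim_n A_n(U)$. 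Since a simplicial homotopy induces an equality of maps on $\pi_k$, the two homotopy relations $g_nf_n\simeq i_n^A$ and $f_{n+1}g_n\simeq i_n^B$ give equalities $\pi_k(g_n)\circ\pi_k(f_n)=\pi_k(i_n^A)$ and $\pi_k(f_{n+1})\circ\pi_k(g_n)=\pi_k(i_n^B)$ at each finite stage. Passing to the colimit yields $\pi_k(g)\circ\pi_k(f)=\mathrm{id}_{\pi_k(A(U))}$ and $\pi_k(f)\circ\pi_k(g)=\mathrm{id}_{\pi_k(B(U))}$, so $\pi_k(f_U)$ is an isomorphism.

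Conceptually, this is just the classical mapping-telescope argument transplanted into the Nisnevich-stalk setting. The only delicate point to watch is the bookkeeping of basepoints across the colimit: one must know that the $f_n$, $g_n$ and the homotopies are all pointed, and that the local connectedness of $A$ and $B$ makes the choice of basepoint at the stalk level inessential. Once this is granted, no further input is needed and the proof is immediate from the preceding display.
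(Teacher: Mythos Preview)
Your argument is essentially the same as the paper's: evaluate at a Henselian local $U$, pass to homotopy groups using that filtered colimits commute with $\pi_k$, and use the homotopies to see that the $\pi_k(g_n)$ invert the $\pi_k(f_n)$ in the colimit; local connectedness handles $\pi_0$.

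One imprecision worth fixing: you write that ``the maps $g_n\colon B_n\to A_{n+1}$ assemble into a map $g=\colim_n g_n\colon B(U)\to A(U)$''. This would require the $g_n$ to be a strict map of directed systems, i.e.\ $i_{n+1}^A\circ g_n=g_{n+1}\circ i_n^B$, which is \emph{not} assumed. What is true---and all you actually use---is that the induced maps $\pi_k(g_n)$ form a map of directed systems of groups: from $g_{n+1}f_{n+1}\simeq i_{n+1}^A$ and $f_{n+1}g_n\simeq i_n^B$ one gets
\[
\pi_k(g_{n+1})\pi_k(i_n^B)=\pi_k(g_{n+1})\pi_k(f_{n+1})\pi_k(g_n)=\pi_k(i_{n+1}^A)\pi_k(g_n),
\]
so $\colim_n\pi_k(g_n)$ is well defined and inverse to $\colim_n\pi_k(f_n)$. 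The paper phrases it exactly this way, working only on homotopy groups rather than claiming a map $g$ on the space level. With this correction your proof is complete and matches the paper's.
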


\begin{proof}
Given a local Henselian scheme $U$, the map $\pi_i(f)(U)\colon
\pi_i(A(U))\to\pi_i(B(U))$ equals the colimit of the system
$\pi_i(f_n)(U)$. Note that the maps $\pi_i(g_n)(U)$ form a map of
sequences $\pi_i(B_n(U))\to\pi_i(A_{n+1}(U))$, which are inverse to
$\pi_i(f_n)(U)$. Therefore the colimit
$\pi_i(f)(U)=\colim\pi_i(f_n)(U)$ is bijective, and hence the map
$f(U)$ induces a weak equivalence of connected simplicial sets
$A(U)\to B(U)$.
\end{proof}

\section{The spectrum $C_*\Fr^E(S_T)$}\label{thespectrum}

The purpose of this section is to introduce another spectrum
$C_*\Fr^E(S_T)$ associated with a symmetric $T$-spectrum $E$. We
show that it is stably equivalent to the spectrum
$C_*\Theta^\infty_{sym}(E)$ whenever $E$ is a Thom spectrum with the
bounding constant $d$ and contractible alternating group action (see
Proposition~\ref{fr=true}).

\begin{dfn}\label{frnex}
Given a $T$-spectrum $E$ and $X\in\Sm_k$, denote by $\Fr_n^E(X)$ the space
$\Theta^n(X_+\w E)_0$:
\[\Fr_n^E(X)=\uhom(\PP^{\w n},X_+\w E_{n})\]
and $\Fr^E(X):=\colim_n\Fr^E_n(X)=\Theta^{\infty}(X_+\w E)_0$.

\end{dfn}

\begin{dfn}
For any symmetric $T$-spectrum $E$ and any $m,n\geqslant 0$, define
a pairing
\[\Fr_n(X,Y)\times\Fr_m^E(Y,Z)\to\Fr_{n+m}^E(X,Z)\]
as follows. Let $a\in\Fr_n(X,Y)$ be given by a map $a\colon
X_+\w\PP^{\w n}\to Y_+\w T^n$ and let $b\in\Fr_m^E(Y,Z)$ be given by
$b\colon Y_+\w\PP^{\w m}\to Z_+\w E_m$. Define $b\circ a$ as the
composition
\begin{gather*}
X_+\w\PP^{\w n}\w\PP^{\w m}\xrightarrow{a\w\PP^{\w m}}
Y_+\w T^n\w\PP^{\w m}\xrightarrow{tw} Y_+\w\PP^{\w m}\w T^n\xrightarrow{b\w T^n} Z_+\w E_{m}\w T^n\xrightarrow{tw}\\
\to Z_+\w T^n\w E_m\xrightarrow{u_l}Z_+\w E_{n+m},
\end{gather*}
where $u_l$ is the map of Definition~\ref{shift}.
\end{dfn}

Note that if $E=S_T$, this definition coincides with the definition
of the composition of framed correspondences defined
in~\cite{Voe2,GPMain}.

\begin{lem}
The pairing above endows $\Fr^E(X)$ with a structure of a presheaf
with framed correspondences.
\end{lem}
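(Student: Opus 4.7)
The plan is to verify the axioms that make $\Fr^E(X)$, regarded as a presheaf via $Y\mapsto\Fr^E(Y,X)$, into a presheaf with framed correspondences in the sense of \cite{Voe2,GPMain}. Concretely, using the pairing as a rule $b\mapsto b\circ a$ for $a\in\Fr_n$, I must check (i) compatibility of the pairing with the stabilization maps in both factors, so that it descends to a well-defined pairing on the colimits $\Fr(X,Y)\times\Fr^E(Y,Z)\to\Fr^E(X,Z)$; (ii) associativity of the induced threefold composition; and (iii) that the identity in $\Fr_0(X,X)$ acts as the identity.

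For (i), compatibility with the stabilization map $\Fr_n(X,Y)\to\Fr_{n+1}(X,Y)$ (smashing with $\sigma\colon\PP^{\w 1}\to T$) is a direct diagram chase, using only the definition of $u_l$ and the coherence of the twist isomorphisms $tw$ together with the Conventions recorded earlier for identifying $(A,B)\w C$ with $(A,B\w C)$. Compatibility with the stabilization $\Fr^E_m\to\Fr^E_{m+1}$ induced by the bonding maps of $E$ requires in addition the equivariance of $u\colon E_m\w T\to E_{m+1}$ under the $\Sigma_m$-action, which is precisely the symmetric spectrum structure of $E$.

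For (ii), given $a\in\Fr_n(X,Y)$, $b\in\Fr_m(Y,Z)$, $c\in\Fr_\ell^E(Z,W)$, the equality $c\circ(b\circ a)=(c\circ b)\circ a$ unwinds, after rewriting with the twists and $u_l$, to the associativity of the right module action of $E$ over the symmetric monoid $S_T$ in the category of symmetric sequences \cite[7.2]{H}. The shuffle permutations $\chi_{n,1}$ appearing in the definition of $u_l$ are exactly those encoding this module structure, so the associativity of the module multiplication translates directly into the associativity of the composition. The identity axiom (iii) is immediate: the canonical element of $\Fr_0(X,X)$ is the identity map of $X_+$, and composing with it leaves any $b\in\Fr^E_m(X,Z)$ unchanged.

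The main obstacle I expect is bookkeeping: tracking the twists $tw$ and the shuffles $\chi_{n,1}$ through the threefold composition in (ii) so that one genuinely lands in the required $S_T$-module coherence diagrams. A clean organizational device is to note that when $E=S_T$ the pairing reduces to Voevodsky's composition of framed correspondences from \cite{Voe2,GPMain}, and the general symmetric case $E$ follows by treating $E$ as a right $S_T$-module and running the same formal diagram chase, now with the module-action coherence replacing the monoid axioms for $S_T$.
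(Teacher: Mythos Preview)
Your proposal is correct and matches the paper's approach: the paper records the proof as ``This is straightforward,'' and what you have written is precisely the routine verification that this phrase abbreviates. Your identification of the key inputs---the symmetric spectrum equivariance for compatibility with stabilization, and the right $S_T$-module coherence for associativity---is accurate, and the observation that the case $E=S_T$ recovers the composition of framed correspondences from~\cite{Voe2,GPMain} is also made in the paper immediately after the definition of the pairing.
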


\begin{proof}
This is straightforward.
\end{proof}

\begin{dfn}\label{FrnE}
Given a $T$-spectrum $E$ and $X\in\Sm_k$, denote by $\Fr_n^E(X_+\w S_T)$ the
$T$-spectrum with the spaces
\[\Fr_n^E(X_+\w S_T)_i:=\Fr_n^E(X_+\w T^i)=\Fr_n^{T^i\w E}(X).\]
The bonding maps $\Fr_n^E(X_+\w T^i)\w T\to\Fr_n^E(X_+\w T^{i+1})$
are defined as the composite maps
\[\uhom(\PP^{\w n},X_+\w T^i\w E_n)\w T\to\uhom(\PP^{\w n},X_+\w T^i\w E_n\w T)\xrightarrow{tw}\uhom(\PP^{\w n},X_+\w T^i\w T\w E_n).\]
In what follows we normally regard $\Fr_n^E(X_+\w S_T)$ as a
$\PPo$-spectrum. The stabilization maps $\Fr_n^E(X_+\w
T^i)\to\Fr_{n+1}^E(X_+\w T^i)$, given by the compositions
   $$\uhom(\PP^{\w n},X_+\w T^i\w E_n)\xrightarrow{-\wedge\sigma}\uhom(\PP^{\w n}\wedge\PP^{\w 1},X_+\w T^i\w E_n\wedge T)
       \xrightarrow{u_n}\uhom(\PP^{\w n+1},X_+\w T^i\w E_{n+1}),$$
define a map of $\PPo$-spectra $\Fr_n^E(X_+\w
S_T)\to\Fr_{n+1}^E(X_+\w S_T)$. Denote by
   $$\Fr^E(X_+\w S_T):=\colim_n\Fr^E_n(X_+\w S_T).$$
If $E=S_T$ the spectrum $\Fr^E(X_+\w S_T)$ coincides with the
spectrum $\Fr_{\PPo,T}(X)$ defined in \cite{GPMain}.
\end{dfn}

Note that for $X\in\Sm_k$ the spectrum $\Fr^E(X_+\w S_T)$ is
isomorphic to the spectrum $\Fr^{X_+\w E}(S_T)$. If $E$ is a
symmetric Thom spectrum with the bounding constant $d$, then so is
$X_+\w E.$ Thus we shall consider spectra of the form $\Fr^E(S_T)$
in what follows.

For any $n\geqslant 0$ and any symmetric $T$-spectrum $E$, construct
a map of $\PPo$-spectra $f_n\colon\Fr^E_n(S_T)\to\Theta^n_{sym}(E)$
as the composition at each level $i\geqslant 0$
   \[f_{n,i}\colon\uhom(\PP^{\w n},T^i\w E_n)\xrightarrow{tw}
     \uhom(\PP^{\w n},E_n\w T^i)\xrightarrow{u}\uhom(\PP^{\w n},E_{n+i}),\]
where the first map is induced by twist $T^i\w E_n\to E_n\w T^i$.

\begin{lem}\label{FrEsym}
Each map $f_n$, $n\geqslant 0$, is a morphism of spectra commuting
with stabilization maps $\Fr_n^E(S_T)\to\Fr_{n+1}^E(S_T)$ and
$\Theta^n_{sym}(E)\to\Theta^{n+1}_{sym}(E)$. In particular, they
induce a map of spectra
\[f\colon \Fr^E(S_T)\to\Theta^{\infty}_{sym}(E).\]
\end{lem}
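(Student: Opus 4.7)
The plan is to establish two commutativities from which the lemma follows by passing to colimits in $n$: first, that for each fixed $n$ the family $\{f_{n,i}\}_{i\geqslant 0}$ commutes with the bonding maps of $\Fr_n^E(S_T)$ and $\Theta^n_{sym}(E)$, so that $f_n$ is a map of $\PPo$-spectra; second, that for each fixed $i$ the maps $f_{n,i}$ are compatible with the stabilization maps in $n$. Both are structural verifications that reduce to naturality of twists together with the symmetric-spectrum equivariance that relates the right structure maps $E_n\w T\to E_{n+1}$ of $E$ with their iterates $E_n\w T^j\to E_{n+j}$.

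For the first commutativity, one expands the square
\[
\xymatrix@C=2em{
\uhom(\PP^{\w n},T^i\w E_n)\w\PPo\ar[r]\ar[d]_{f_{n,i}\w\PPo} & \uhom(\PP^{\w n},T^{i+1}\w E_n)\ar[d]^{f_{n,i+1}}\\
\uhom(\PP^{\w n},E_{n+i})\w\PPo\ar[r] & \uhom(\PP^{\w n},E_{n+i+1})
}
\]
on a test map $\alpha\colon X_+\w\PP^{\w n}\to T^i\w E_n$. The top route smashes $\alpha$ with $\sigma\colon\PPo\to T$ to produce $T^i\w E_n\w T$, rearranges it to $T^{i+1}\w E_n$ via twist, and then $f_{n,i+1}$ twists $T^{i+1}$ past $E_n$ and applies the iterated right structure map of $E$ to reach $E_{n+i+1}$. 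The bottom route first twists $T^i$ past $E_n$ and applies $u\colon E_n\w T^i\to E_{n+i}$, then smashes with $\sigma$ and applies $u\colon E_{n+i}\w T\to E_{n+i+1}$. Both composites equal the iterated structure map $E_n\w T^{i+1}\to E_{n+i+1}$ precomposed with the canonical twist of $T^i\w E_n\w T$, so commutativity is naturality of the twist together with associativity of iterated structure maps.

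For the second commutativity, consider
\[
\xymatrix@C=2em{
\uhom(\PP^{\w n},T^i\w E_n)\ar[r]\ar[d]_{f_{n,i}} & \uhom(\PP^{\w n+1},T^i\w E_{n+1})\ar[d]^{f_{n+1,i}}\\
\uhom(\PP^{\w n},E_{n+i})\ar[r] & \uhom(\PP^{\w n+1},E_{n+1+i})
}
\]
whose horizontal arrows are the stabilization maps of Definition~\ref{FrnE} and Remark~\ref{rem:stabilization}. The upper composite smashes with $\sigma$, applies $u_n\colon E_n\w T\to E_{n+1}$, and then has $f_{n+1,i}$ twist the $T^i$ past $E_{n+1}$ before iterating structure; the new $T$-factor thus sits at position $n+1$ in the ordering of $E_{n+1+i}$. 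The lower composite routes through $E_{n+i}$ and $E_{n+i+1}$ and then applies the shuffle $\chi_{i,1}\in\Sigma_{n+i+1}$; this shuffle is precisely what moves the new $T$-factor from the final slot back to the $(n+1)$-st slot, matching the ordering of the upper path. Commutativity then follows from the $\Sigma_*$-equivariance of the symmetric-spectrum structure of $E$.

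The main obstacle is purely bookkeeping of twists and symmetric-group shuffles: each identification $E_{m+n}=E_{n+m}$ carries an implicit reordering that must be reconciled with the action of $E$'s symmetric-spectrum structure. Once the two squares commute, the morphisms $f_n$ assemble into a morphism of directed sequences of $\PPo$-spectra, and taking the colimit in $n$ yields the desired map $f\colon\Fr^E(S_T)\to\Theta^\infty_{sym}(E)$.
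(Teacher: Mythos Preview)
Your proof is correct and follows essentially the same approach as the paper. The paper factors each $f_{n,i}$ as $u\circ tw$ and verifies the two commutative squares by inserting a middle column (the image of $tw$), then for the stabilization square makes explicit the auxiliary equivariance diagram
\[
\xymatrix{E_n\w T^{i+1}\ar[r]^{u}\ar[d]_{id\w\chi_{i,1}}&E_{n+i}\w T\ar[r]^{u}&E_{n+i+1}\ar[d]^{1\oplus\chi_{i,1}}\\
E_n\w T^{1+i}\ar[r]^{u}&E_{n+1}\w T^{i}\ar[r]^{u}&E_{n+1+i}}
\]
which is exactly your observation that the shuffle $\chi_{i,1}$ moves the new $T$-factor from the last slot to the $(n+1)$-st and that this is accounted for by the $\Sigma_n\times\Sigma_{i+1}$-equivariance of the iterated structure map; your more verbal tracking of the $T$-factor ordering amounts to the same verification.
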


\begin{proof}
The following diagram commutes:
\[
\xymatrix{
\uhom(\PP^{\w n},T^i\w E_n)\w T\ar[r]^{tw}\ar[d] &
\uhom(\PP^{\w n},E_n\w T^i)\w T\ar[r]^{u}\ar[d] & \uhom(\PP^{\w n},E_{n+i})\w T\ar[d]\\
\uhom(\PP^{\w n},T^{i+1}\w E_n)\ar[r]^{tw} & \uhom(\PP^{\w n},E_n\w
T^{i+1})\ar[r]^{u} &\uhom(\PP^{\w n},E_{n+i+1}),}
\]
where the left vertical arrow is the $i$th bonding map of the
spectrum $\Fr_n^E(S_T)$, and the right vertical map is the $i$th
bonding map of $\Theta_n^{sym}(E)$. We see that each map $f_n$ is a
morphism of spectra. Consider a commutative diagram
\[
\xymatrix{
\uhom(\PP^{\w n},T^i\w E_n)\ar[r]^{tw}\ar[d] & \uhom(\PP^{\w n},E_n\w T^i)\ar[r]^{u}\ar[d]
& \uhom(\PP^{\w n},E_{n+i})\ar[d]\\
\uhom(\PP^{\w n+1},T^{i}\w E_{n+1})\ar[r]^{tw} & \uhom(\PP^{\w
n+1},E_{n+1}\w T^{i})\ar[r]^{u} &\uhom(\PP^{\w n+1},E_{n+1+i}), }
\]
in which the left vertical map is the stabilization
$\Fr_n^E(S_T)_i\to\Fr_{n+1}^E(S_T)_i$ from Definition~\ref{FrnE},
and the right vertical map is the stabilization map
$\Theta^n_{sym}(E)_i\to\Theta^{n+1}_{sym}(E)_i$ (see
Remark~\ref{rem:stabilization}). The middle vertical arrow equals
the composite map\footnotesize
   $$\uhom(\PP^{\w n},E_n\w T^i)\xrightarrow{-\w\sigma}\uhom(\PP^{\w n+1},E_n\w T^{i+1})
     \xrightarrow{(\chi_{i,1})_*}\uhom(\PP^{\w n+1},E_n\w T^{1+i})\xrightarrow{u_n}\uhom(\PP^{\w n+1},E_{n+1}\w T^i),$$
\normalsize where $(\chi_{i,1})_*$ is induced by the shuffle map
$\chi_{i,1}:T^{i+1}\to T^{1+i}$. For commutativity of the right
square we also use here the fact that the diagram
\[
\xymatrix{E_n\w T^{i+1}\ar[r]^{u}\ar[d]_{id\w\chi_{i,1}}&E_{n+i}\w T\ar[r]^{u}&E_{n+i+1}\ar[d]^{1\oplus\chi_{i,1}}\\
E_n\w T^{1+i}\ar[r]^{u}&E_{n+1}\w T^{i}\ar[r]^{u}&E_{n+1+i},}
\]
is commutative because the compositions of horizontal maps are
$\Sigma_n\times\Sigma_{i+1}$-equivariant maps. Thus the maps
$f_{n,i}$ are compatible with stabilization.
\end{proof}

\begin{cor}
If $E=X_+\w S_T$ then the map $f$ of Lemma~\ref{FrEsym} gives an
isomorphism of spectra $\Fr_{\PPo,T}(X)=\Fr^{X_+\w
E}(S_T)\xrightarrow{\cong}\Theta^{\infty}_{sym}(X_+\w S_T)$.
\end{cor}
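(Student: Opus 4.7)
The plan is to verify that when $E=X_+\w S_T$, the morphism $f$ constructed in Lemma~\ref{FrEsym} is in fact a \emph{levelwise} isomorphism of spectra already before passing to the colimit in $n$, so that the colimit over $n$ is automatically an isomorphism. The whole argument reduces to unwinding the definitions and observing that the bonding maps of the suspension spectrum $X_+\w S_T$ are canonical associativity isomorphisms.

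First I would spell out both sides at bidegree $(n,i)$. Since $E_m=X_+\w T^m$ for every $m$, we have
\[\Fr^E_n(S_T)_i=\uhom(\PP^{\w n},T^i\w X_+\w T^n)\quad\text{and}\quad \Theta^n_{sym}(E)_i=\uhom(\PP^{\w n},X_+\w T^{n+i}).\]
By construction of $f$, the map $f_{n,i}$ equals the composition
\[\uhom(\PP^{\w n},T^i\w X_+\w T^n)\xrightarrow{tw_*}\uhom(\PP^{\w n},X_+\w T^n\w T^i)\xrightarrow{u_*}\uhom(\PP^{\w n},X_+\w T^{n+i}),\]
where the first arrow is induced by the twist $T^i\w X_+\w T^n\to X_+\w T^n\w T^i$ and the second is induced by the iterated bonding map of $E$. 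The twist is an isomorphism of motivic spaces. For $E=X_+\w S_T$, each structure map $E_m\w T\to E_{m+1}$ is the canonical associativity isomorphism $X_+\w T^m\w T\cong X_+\w T^{m+1}$, so its iteration $E_n\w T^i\to E_{n+i}$ is again an isomorphism. Hence $f_{n,i}$ is the composite of two isomorphisms, and is therefore itself an isomorphism.

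Next I would assemble this levelwise information. Lemma~\ref{FrEsym} already guarantees that each $f_n:\Fr^E_n(S_T)\to\Theta^n_{sym}(E)$ is a morphism of $\PPo$-spectra and that the collection $\{f_n\}$ is compatible with the stabilization maps $\Fr^E_n(S_T)\to\Fr^E_{n+1}(S_T)$ and $\Theta^n_{sym}(E)\to\Theta^{n+1}_{sym}(E)$. Combined with the preceding paragraph, each $f_n$ is a levelwise isomorphism, hence an isomorphism of spectra, and therefore
\[f=\colim_n f_n:\Fr^E(S_T)\to\Theta^{\infty}_{sym}(E)\]
is an isomorphism of $\PPo$-spectra, which is what we want. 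Finally, the identification $\Fr_{\PPo,T}(X)=\Fr^{X_+\w S_T}(S_T)$ follows from the remark preceding the corollary (combining $\Fr^E(X_+\w S_T)\cong\Fr^{X_+\w E}(S_T)$ with the definition of $\Fr_{\PPo,T}(X)=\Fr^{S_T}(X_+\w S_T)$).

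There is essentially no obstacle here: the content is bookkeeping about smash factors, and the only potential subtlety — whether the iterated bonding map used in $f_{n,i}$ differs from a naive associativity isomorphism by some shuffle permutation — is irrelevant because for $E=X_+\w S_T$ every candidate bonding map on $X_+\w T^{n+i}$ is the identity up to canonical associativity, so no nontrivial permutation is introduced.
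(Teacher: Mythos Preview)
Your proof is correct and follows exactly the same idea as the paper's: the paper's proof is the single sentence ``It suffices to note that the bonding maps of $X_+\w S_T$ are isomorphisms,'' and you have simply unpacked this observation in detail, verifying that each $f_{n,i}$ is a composite of a twist isomorphism with an iterated bonding map that, for the suspension spectrum, is itself an associativity isomorphism.
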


\begin{proof}
It suffices to note that the bonding maps of $X_+\w S_T$ are
isomorphisms.
\end{proof}

\begin{prop}\label{fr=true}
For a symmetric Thom $T$-spectrum $E$ with the bounding constant $d$ and
contractible alternating group action, the map $f$ induces a local
weak equivalence for any $i\geqslant\max(0,d)$:
\[f_i\colon C_*\Fr^E(S_T)_i\to C_*\Theta^{\infty}_{sym}(E)_i.\]
\end{prop}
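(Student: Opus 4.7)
The plan is to apply a mild variant of Lemma~\ref{colimequi} to the directed systems $A_n=C_*\uhom(\PP^{\w n},T^i\w E_n)$ and $B_n=C_*\uhom(\PP^{\w n},E_{n+i})$, whose colimits are $C_*\Fr^E(S_T)_i$ and $C_*\Theta^{\infty}_{sym}(E)_i$ respectively, with structure maps from Definition~\ref{FrnE} and Remark~\ref{rem:stabilization}, and with $f_{n,i}\colon A_n\to B_n$ as constructed in Lemma~\ref{FrEsym}. For the backward maps I take $g_n\colon B_n\to A_{n+i}$ given by
\[\phi\mapsto g_n(\phi)=tw\circ(\phi\w\sigma^i)\in\uhom(\PP^{\w n+i},T^i\w E_{n+i}),\]
after identifying $\PP^{\w n}\w\PP^{\w i}\cong\PP^{\w n+i}$.

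Unwinding the definitions shows that $f_{n+i,i}\circ g_n(\phi)$ equals $u\circ(\phi\w\sigma^i)\colon\PP^{\w n+i}\to E_{n+2i}$ (the two copies of $tw$ cancel), while the $i$-fold iterated stabilization $i^B_{n,n+i}\colon B_n\to B_{n+i}$ from Remark~\ref{rem:stabilization} produces the same map precomposed with a specific shuffle permutation on $E_{n+2i}$ built from the $\chi$'s appearing in the stabilization. A parallel direct computation shows that $g_n\circ f_{n,i}$ differs from the $i$-fold iterated stabilization $i^A_{n,n+i}\colon A_n\to A_{n+i}$ by the action of shuffle permutations on the factors of $T^i\w E_{n+i}$ and on $\PP^{\w n+i}$. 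In exactly the same spirit as in the proof of Proposition~\ref{truefakezigzag}, I will pass to a suitable (even) subsequence---or equivalently replace $g_n$ by the analogous map with $\sigma^{2i}$ landing in $A_{n+2i}$---so that all the permutations involved are even. By Corollary~\ref{Pmpermut} and the hypothesis that $E$ has contractible alternating group action, these even permutations are $\A^1$-homotopic to the identity, and by Remark~\ref{a1sim} they become simplicially homotopic to the identity after applying $C_*$. This yields the homotopy identities $g_n\circ f_{n,i}\simeq i^A_{n,n+i}$ and $f_{n+i,i}\circ g_n\simeq i^B_{n,n+i}$ required by Lemma~\ref{colimequi}.

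For the local connectivity hypothesis in Lemma~\ref{colimequi}, note that $\Fr^E(S_T)_i=\Theta^{\infty}(T^i\w E)_0$ directly from the definitions, and by Example~\ref{ThomSpectraex} the spectrum $T^i\w E$ has bounding constant $d-i\leqslant 0$ since $i\geqslant\max(0,d)$; hence $C_*\Fr^E(S_T)_i$ is locally connected by Lemma~\ref{fakeconnected}. Similarly, $C_*\Theta^{\infty}_{sym}(E)_i\cong C_*\Theta^{\infty}(E)_i$ is locally connected by Lemmas~\ref{spacetrue=fake} and~\ref{fakeconnected}. Lemma~\ref{colimequi} then delivers the desired local weak equivalence. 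The main obstacle in this plan is the bookkeeping of the shuffle permutations in the previous paragraph: one must carefully identify the precise elements of the symmetric groups arising in comparing $f_{n+i,i}\circ g_n$ with $i^B_{n,n+i}$ and $g_n\circ f_{n,i}$ with $i^A_{n,n+i}$, and verify that after the passage to an even subsequence all of them acquire even signs (on both the $\PP^{\w *}$-side and the $E_*$-side). This is a technical but essentially routine calculation, modelled on the treatment of permutations between equations~\eqref{bbb} and~\eqref{ddd} in the proof of Proposition~\ref{truefakezigzag}.
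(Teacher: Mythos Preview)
Your overall strategy is sound for the composite $f_{n+i,i}\circ g_n$: that one really does differ from the $i$-fold $B$-stabilization only by a permutation on $E_{n+2i}$, and the even-permutation machinery applies. The problem is the other composite. Unwinding $g_n\circ f_{n,i}$ and $i^A_{n,n+i}$ at $p\w b'\in\PP^{\w n}\w\PP^{\w i}$, writing $\psi(p)=x\w e\in T^i\w E_n$, one finds
\[
(g_n\circ f_{n,i})(\psi)(p\w b')=\sigma^i(b')\w u(e,x),\qquad
i^A_{n,n+i}(\psi)(p\w b')=x\w u(e,\sigma^i(b')),
\]
both lying in $T^i\w E_{n+i}$. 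In the first expression the $T^i$-slot is occupied by the external stabilizing coordinate $\sigma^i(b')$ and the internal $x$ has been absorbed into $E_{n+i}$ via $u$; in the second it is the other way around. No permutation of $\PP^{\w n+i}$, of the $T^i$ factor, or of $E_{n+i}$ can exchange these roles, because the two copies of $T^i$ live in different tensor factors of the target and there is no $\Sigma$-action on $T^i\w E_{n+i}$ that swaps them. So the ``routine'' bookkeeping you defer is not routine: these two maps are simply not related by the kind of permutations that Corollary~\ref{Pmpermut} and the contractible-alternating-group hypothesis control.

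The paper avoids this obstruction by factoring $f_{n,i}$ through an intermediate space,
\[
\Fr^E_n(T^i)=\Theta^n(T^i\w E)_0\xrightarrow{\ u_l\ }\Theta^n(E[i])_0=\Theta^n(E)_i\xrightarrow{\ \chi_{i,n}\ }\Theta^n_{sym}(E)_i,
\]
and treating the two halves separately. The map $\chi_{i,n}$ already induces an isomorphism on colimits by Lemma~\ref{spacetrue=fake}, so only the $u_l$-step remains. For that the paper proves a separate Lemma~\ref{trususpshift}: the twist $T^i\w E_{2n}\to E_{2n}\w T^i$ is a level isomorphism whose failure to commute with stabilization is controlled by an even permutation on $T^{i+2}$, and one then invokes Lemma~\ref{colimeq} together with Lemma~\ref{fakesuspshift} (which says $u\colon E\w T^i\to E[i]$ induces an isomorphism on $\Theta^{\infty}$). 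The point is that each of these comparisons takes place entirely inside spaces of the form $E_m\w T^j$ or $E_{m+j}$, where the relevant symmetric group actions are genuinely available; the problematic interchange of an ``internal'' and an ``external'' $T^i$ inside $T^i\w E_{n+i}$ never arises.
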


\begin{proof}
The map $f_{n,i}\colon\Fr^E_n(T^i)\to\Theta^{n}_{sym}(E)_i$ fits
into the following commutative diagram
\[
\xymatrix{
\Fr^E_n(T^i)\ar@{=}[d]\ar[rr]^{f_{n,i}} && \Theta^{n}_{sym}(E)_i\\
\Theta^{n}(T^i\w E)_0\ar[r]^{u_l} & \Theta^{n}(E[i])_0\ar@{=}[r] & \Theta^n(E)_i\ar[u]^{\chi_{i,n}}
}
\]
where $\chi_{i,n}$ is the map of Lemma~\ref{spacetrue=fake} and
$u_l$ is the left bonding map from Definition~\ref{shift}. Note that
the maps of the diagram are compatible with stabilization maps, and
hence we can pass to the colimit over $n$. Thus our assertion
follows from Lemma~\ref{trususpshift} below.
\end{proof}

\begin{lem}\label{fakesuspshift}
The natural map of $T$-spectra $u\colon E\w T^i\to E[i]$ induces a
levelwise isomorphism of spaces $\Theta^{\infty}(E\w
T^i)\xrightarrow{\cong}\Theta^{\infty}(E[i])$ for any $T$-spectrum
$E$.
\end{lem}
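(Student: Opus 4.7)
The plan is to identify both $\Theta^{\infty}(E\w T^i)_j$ and $\Theta^{\infty}(E[i])_j$ with a common motivic space, namely $\Theta^{\infty}(E)_{j+i}$, and then check that the map induced by $u$ corresponds to the identity under these identifications. On the target side the identification is tautological: by the explicit formula
\[\Theta^{\infty}(E[i])_j=\colim_m\uhom(\PP^{\w m},E[i]_{j+m})=\colim_m\uhom(\PP^{\w m},E_{i+j+m}),\]
whose stabilization maps, being built only from $-\w\sigma$ and the bonding maps of $E$, coincide on the nose with those defining $\Theta^{\infty}(E)_{j+i}$.

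For the source $\Theta^{\infty}(E\w T^i)_j=\colim_m\uhom(\PP^{\w m},E_{j+m}\w T^i)$, I would construct maps in both directions. The forward map $f_m$ sends $\phi\in\uhom(\PP^{\w m},E_{j+m}\w T^i)$ to $u^{(i)}\circ\phi\in\uhom(\PP^{\w m},E_{j+m+i})$, where $u^{(i)}\colon E_n\w T^i\to E_{n+i}$ denotes the $i$-fold iterated bonding map (the level-$n$ part of $u\colon E\w T^i\to E[i]$). The backward map $g_m$ sends $\psi\in\uhom(\PP^{\w m},E_{j+m+i})$ to $\psi\w\sigma^i\in\uhom(\PP^{\w m+i},E_{j+m+i}\w T^i)$, regarded as an element of $\Theta^{m+i}(E\w T^i)_j$. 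I would then verify that the $f_m$ are compatible with the stabilization maps, which boils down to the identity $u^{(i+1)}(\phi\w\sigma)=u(u^{(i)}(\phi)\w\sigma)$ and uses the explicit description of the bonding maps of $E\w T^i$: namely that $u$ is applied to the first two factors $E_n\w T$ and the newly smashed-in $T$ is left untouched at the right end.

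The key computation is that both $g_m\circ f_m$ and $f_{m+i}\circ g_m$ coincide with the $i$-fold stabilization maps in $\Theta^{\infty}(E\w T^i)_j$ and $\Theta^{\infty}(E)_{j+i}$ respectively. Concretely, $(u^{(i)}\phi)\w\sigma^i=(u^{(i)}\w T^i)(\phi\w\sigma^i)$, which unwinds as exactly $i$ iterations of ``smash with $\sigma$ then apply the fake bonding map of $E\w T^i$'', each of which consumes one of the original factors of $T^i$ via $u$ and appends a new $T$ at the right end; the symmetric statement for $f_{m+i}\circ g_m$ follows at once from the definition of $u^{(i)}$ as the iterated bonding map. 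Hence both compositions become identities on the colimits, producing the required isomorphism $\Theta^{\infty}(E\w T^i)_j\cong\Theta^{\infty}(E)_{j+i}$. Combined with the tautological identification on the target, the map $\Theta^{\infty}(u)$ becomes the identity and is therefore an isomorphism of motivic spaces at each level $j$.

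The main obstacle is the careful bookkeeping of which copies of $T$ are being consumed by the bonding maps versus newly introduced by $-\w\sigma$. The ``fake suspension'' structure of $E\w T^i$ is essential here: since its bonding maps never permute copies of $T$, no shuffle permutations appear and the isomorphism holds literally on the nose. If one instead used the symmetric suspension, one would pick up shuffle permutations $\chi$ (as in Remark~\ref{rem:stabilization}), which would only become trivial up to $\A^1$-homotopy and obstruct an honest equality.
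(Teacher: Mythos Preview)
Your argument is correct and is essentially the same as the paper's, only spelled out in full detail. The paper's proof is a single sentence asserting that the level-$n$ map $\uhom(\PP^{\w n},E_{n+m}\w T^i)\to\uhom(\PP^{\w n},E_{n+m+i})$ commutes with stabilisation in $n$ and induces an isomorphism on colimits; your construction of the backward maps $g_m=-\w\sigma^i$ and the verification that $g_m f_m$ and $f_{m+i}g_m$ are $i$-fold stabilisation maps is precisely the interleaving argument that justifies the paper's second claim.
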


\begin{proof}
For any $m$ the map $\uhom(\PP^{\w n},E_{n+m}\w
T^i)\stackrel{u}\to\uhom(\PP^{\w n},E_{n+m+i})$ commutes with
stabilization by $n$ and induces an isomorphism on colimits
$\Theta^{\infty}(E\w T^i)_m\to\Theta^{\infty}(E[i])_m.$
\end{proof}

\begin{lem}\label{trususpshift}
For a symmetric Thom $T$-spectrum $E$ with the bounding constant $d$ and
contractible alternating group action, the map of spectra $u_l\colon
T^i\w E\to E[i]$ induces a local weak equivalence of spaces
\[C_*\Theta^{\infty}(T^i\w E)_0\to C_*\Theta^{\infty}(E[i])_0\]
for any $i\geqslant\max(0,d).$
\end{lem}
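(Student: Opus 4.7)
The plan is to compare $\Theta^{\infty}(u_l)_0$ with the isomorphism of Lemma~\ref{fakesuspshift} via a pointwise twist, and to handle every discrepancy by a permutation that becomes even upon restricting to a cofinal subsystem of doubled indices, in the spirit of the proof of Proposition~\ref{truefakezigzag}.

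At each level $n$, iterating Definition~\ref{shift} (equivalently, reading off the commutative square in the proof of Proposition~\ref{fr=true}) decomposes the component of $u_l$ as
\[(u_l)_n \;=\; \chi_{n,i} \circ u^{(i)} \circ \mathrm{tw}_{T^i, E_n},\]
where $u^{(i)}\colon E_n\wedge T^i\to E_{n+i}$ is the iterated bonding, $\mathrm{tw}_{T^i,E_n}$ is the canonical twist, and $\chi_{n,i}\in\Sigma_{n+i}$ acts on $E_{n+i}$ through the symmetric structure of $E$; its sign is $(-1)^{ni}$, which is even whenever $n$ is even. The maps $u^{(i)}$ are exactly the level-$n$ components of the map $u\colon E\wedge T^i\to E[i]$ of Lemma~\ref{fakesuspshift}, hence they commute with the stabilization maps of $\Theta^{\infty}(E\wedge T^i)$ and $\Theta^{\infty}(E[i])$ and induce an isomorphism on the colimits.

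The twist $\mathrm{tw}$ is a pointwise bijection but does not strictly commute with the stabilization maps of $T^i\wedge E$ and $E\wedge T^i$; a direct comparison of the two square paths shows the discrepancy---after enough further bondings on the target to land in a common space---is a cycle of length $i+1$ on a single $E_m$-factor, with sign $(-1)^i$. Doubling the stabilization step therefore always yields an even permutation. Restricting to the cofinal subsequence of even levels $n=2k$ with doubled stabilizations, both the decomposition discrepancy $\chi_{2k,i}$ and the twist--stabilization cycle become even permutations of $E$-factors. By Remark~\ref{a1sim} combined with the contractible alternating group action on $E$, each such even permutation of $E_m$ induces a simplicial homotopy after $C_*$; any even permutation of $\PP^{\wedge m}$-factors that may arise in the bookkeeping is handled analogously via Corollary~\ref{Pmpermut}.

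We then apply Lemma~\ref{colimequi} to the double-indexed systems $A_k=C_*\uhom(\PP^{\wedge 2k},T^i\wedge E_{2k})$ and $B_k=C_*\uhom(\PP^{\wedge 2k},E_{i+2k})$ with $f_k=(u_l)_{2k}$ and $g_k$ constructed from the inverse twist, a double stabilization and (via Lemma~\ref{fakesuspshift}) a choice of inverse of $u^{(i)}$; the required homotopies $g_kf_k\sim i_k^A$ and $f_{k+1}g_k\sim i_k^B$ hold by the discussion above. Local connectedness of the colimits, needed for Lemma~\ref{colimequi}, follows from Lemma~\ref{fakeconnected}: by Example~\ref{ThomSpectraex} both $T^i\wedge E$ and $E[i]$ have bounding constant $d-i\leqslant 0$ since $i\geqslant\max(0,d)$. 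The main obstacle is the meticulous tracking of which permutation arises at each step, checking that the doubling trick makes every one of them even, and routing each through the correct $\A^1$-trivialization (contractible alternating action on $E$, or Corollary~\ref{Pmpermut} for $\PP^{\wedge m}$-factors); the structure is essentially the one already worked out in Proposition~\ref{truefakezigzag}.
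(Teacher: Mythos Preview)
Your overall strategy --- decompose $(u_l)_n=\chi_{n,i}\circ u^{(i)}\circ\mathrm{tw}$, pass to even indices so that the residual permutations are even, and conclude via the contractible alternating group action and Corollary~\ref{Pmpermut} --- is exactly the paper's. The gap is in your invocation of Lemma~\ref{colimequi}: you need maps $g_k\colon B_k\to A_{k+1}$, and you propose to build them using ``a choice of inverse of $u^{(i)}$'' supplied by Lemma~\ref{fakesuspshift}. But that lemma only gives an isomorphism on the \emph{colimit}; there is no map $E_{i+2k}\to T^i\wedge E_{2k+2}$ (or any finite-level inverse to the bonding) in general, so $g_k$ is not actually defined. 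The obvious candidate replacement (smash with $\sigma^i$) shifts the index by $i$ rather than by $2$, so it does not fit the shape required by Lemma~\ref{colimequi} either.

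The paper sidesteps this by inserting the fake suspension $E\wedge T^i$ as an intermediate and applying Lemma~\ref{colimeq} rather than Lemma~\ref{colimequi}. Concretely, take
\[
A_n=C_*\uhom(\PP^{\wedge 2n},T^i\wedge E_{2n}),\qquad
B_n=C_*\uhom(\PP^{\wedge 2n},E_{2n}\wedge T^i),
\]
with $f_n\colon A_n\to B_n$ the twist (a levelwise isomorphism, so no inversion problem), and with common target $C=C_*\Theta^{\infty}(E[i])_0$; the map $A_n\to C$ is induced by $u_l$ and $B_n\to C$ by $u$. Then your own computation shows the squares and the triangle commute up to even permutations, hence up to simplicial homotopy after $C_*$, and Lemma~\ref{colimeq} (not Lemma~\ref{colimequi}) gives the conclusion since $\colim B_n\to C$ is the isomorphism of Lemma~\ref{fakesuspshift}. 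The connectedness input is indeed Lemma~\ref{fakeconnected}, applied to $C_*\Theta^{\infty}(E)_i\cong C_*\Theta^{\infty}(E[i])_0$.
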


\begin{proof}
For $i\geqslant\max(0,d)$ the space $C_*\Theta^{\infty}(E)_i$ is locally
connected by Lemma~\ref{fakeconnected}. We apply Lemma~\ref{colimeq}
below to spaces
\[A_n=C_*\Theta^{2n}(T^i\w E)_0=C_*\uhom(\PP^{\w 2n},T^i\w E_{2n}),\]
\[B_n=C_*\Theta^{2n}(E\w T^i)_0=C_*\uhom(\PP^{\w 2n},E_{2n}\w T^i),\]
where the maps $i_n^A$ and $i_n^B$ from that lemma are induced by
stabilization maps $\Theta^{2n}\to\Theta^{2n+2}$ and
$C=C_*\Theta^{\infty}(E[i])_0$. Define $f_n\colon A_n\to B_n$ to be
the map induced by the twist $tw\colon T^i\w E_{2n}\to E_{2n}\w
T^i$. Then the composition $f_{n+1}\circ i_n^A$ coincides with the
composition
\[f_{n+1}\circ i_n^A\colon C_*\uhom(\PP^{\w 2n},T^i\w E_{2n})
\to C_*\uhom(\PP^{\w 2n+2},E_{2n}\w T^2\w T^i)\to C_*\uhom(\PP^{\w
2n+2},E_{2n+2}\w T^i).\]
It differs from the composition $i_n^B\circ
f_n$ by the permutation $E_{2n}\w T^2\w T^i\stackrel{tw}\to E_{2n}\w
T^i\w T^2\stackrel{assoc}= E_{2n}\w T^2\w T^i$. Since it is an even
permutation and $E$ is a spectrum with contractible alternating
group action, we have that $f_{n+1}\circ i_n^A$ and $i_n^B\circ f_n$
are simplicially homotopic. Similarly, in the triangle
\[\xymatrix{
C_*(\PP^{\w 2n},T^i\w E_{2n})\ar[d]^{tw}\ar[r]^{u_l} & C_*(\PP^{\w 2n},E_{2n+i})\ar[r] & C_*\Theta^{\infty}(E[i])_0\\
C_*(\PP^{\w 2n},E_{2n}\w T^i)\ar[ru]^{u}}\]
the composition $u\circ
tw$ differs from $u_l$ by the action of the shuffle permutation
$\chi_{2n,i}$ on $E_{2n+i}$, which is an even permutation. Thus the
triangle commutes up to simplicial homotopy, because $E$ is a
spectrum with contractible alternating group action. Then for any
$i\geqslant d$ the space $C_*\Theta^{\infty}(E[i])_0$ is connected
and our statement follows from Lemmas~\ref{colimeq}
and~\ref{fakesuspshift}.
\end{proof}

\begin{lem}\label{colimeq}
Suppose $i_n^A\colon A_n\to A_{n+1},i_n^B\colon B_n\to B_{n+1}$ are
directed sequences of spaces, $C$ is a locally connected space and
there are maps of sequences $A_n\to C$ and $B_n\to C$. Suppose that
for any $n$ there is a local weak equivalence $f_n\colon A_n\to B_n$ such that the
diagrams
\[
\xymatrix{A_n\ar[r]\ar[d]^{f_n} & A_{n+1}\ar[d]^{f_{n+1}} &&\text{ and }&& A_n\ar[r]\ar[d]^{f_n} & C\\
B_n\ar[r] & B_{n+1} &&&& B_n\ar[ru] }\]
commute up to a simplicial
homotopy. Let $A=\colim A_n, B=\colim B_n$. Then the map $B\to C$ is
a local weak equivalence if and only if so is the map $A\to C$.
\end{lem}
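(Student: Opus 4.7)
The plan is to check the statement stalkwise at Henselian local schemes and then pass to homotopy groups, where simplicial homotopies become strict identities and homotopy groups commute with directed colimits. Set $\alpha_n\colon A_n\to C$ and $\beta_n\colon B_n\to C$ for the given maps of sequences, and denote the induced maps on colimits by $\alpha\colon A\to C$ and $\beta\colon B\to C$. The hypothesis gives a simplicial homotopy between $\beta_n\circ f_n$ and $\alpha_n$, and between $f_{n+1}\circ i_n^A$ and $i_n^B\circ f_n$.

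First I would pass to a stalk at a Henselian local scheme $U$. A map of motivic spaces is a local weak equivalence precisely when it induces isomorphisms of Nisnevich sheaves of homotopy groups, and this may be checked at every such $U$. Since Nisnevich sheafification and evaluation at a Nisnevich point commute with directed colimits of simplicial sheaves of sets, we obtain $A(U)=\colim_n A_n(U)$ and $B(U)=\colim_n B_n(U)$ as pointed simplicial sets, and each $f_n(U)\colon A_n(U)\to B_n(U)$ is a weak equivalence of simplicial sets.

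Next I would apply $\pi_k$ at the canonical basepoint. The simplicial homotopies become equalities, so one obtains a strictly commutative ladder
\[
\pi_k f_{n+1}(U)\circ \pi_k i_n^A(U)=\pi_k i_n^B(U)\circ\pi_k f_n(U)
\]
with $\pi_k f_n(U)$ an isomorphism, together with strict triangles $\pi_k\beta_n(U)\circ\pi_k f_n(U)=\pi_k\alpha_n(U)$. Since $\pi_k$ commutes with directed colimits of simplicial sets, passing to colimits yields an isomorphism $\phi\colon \pi_k A(U)\xrightarrow{\cong}\pi_k B(U)$ with $\pi_k\beta(U)\circ\phi=\pi_k\alpha(U)$. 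Hence $\pi_k\alpha(U)$ is an isomorphism if and only if $\pi_k\beta(U)$ is. For $k=0$ the local connectedness of $C$ forces $\pi_0 C(U)=\ast$, so either condition reduces to $\pi_0 A(U)=\ast$ (equivalently $\pi_0 B(U)=\ast$), which are exchanged by the bijection induced by $\phi$ in degree zero. This holds at every Henselian local $U$ and every $k\geqslant 0$, giving the claimed equivalence.

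The main technical ingredient is the identification $A(U)=\colim_n A_n(U)$ at the stalk, i.e.\ the compatibility of Nisnevich sheafification with directed colimits of sheaves of sets; with that in hand the rest is formal. A minor subtlety worth flagging in the write-up is that the family $\{f_n\}$ does not assemble into a strict map $A\to B$, only a homotopy-coherent one; but this is irrelevant for the argument, since the comparison map $\phi$ is constructed purely on $\pi_k$, where the relevant squares already commute strictly.
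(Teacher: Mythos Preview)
Your proof is correct and follows essentially the same approach as the paper: both arguments pass to sections at a Henselian local scheme $U$, apply $\pi_i$ so that the simplicial homotopies become strict equalities and the $\pi_i(f_n(U))$ assemble into a map of directed systems of bijections, and then compare the resulting colimit isomorphism with the maps to $\pi_i(C(U))$, using local connectedness of $C$ to handle $\pi_0$. Your write-up is slightly more explicit about why $\pi_k$ commutes with the filtered colimit and about the fact that the $f_n$ only assemble to a map on homotopy groups rather than on spaces, but these are elaborations of the same argument rather than a different route.
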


\begin{proof}
Given local Henselian scheme $U$ and $i\geqslant 0$, the maps $\pi_i(f_n(U))$ form a
map of sequences $\pi_i(f_n(U))\colon \pi_i(A_n(U))\to\pi_i(B_n(U))$, and the
map $\colim_n\pi_i(f_n(U))$ fits into the commutative diagram
\[\xymatrix{
\pi_i(A(U))\ar[r]\ar[d]_{\colim\pi_i(f_n)} & \pi_i(C(U))\\
\pi_i(B(U))\ar[ru] }\]
Every $\pi_i(f_n(U))$ is a bijection, and
hence so is $\colim \pi_i(f_n(U))$. If the map $B\to C$ is a local
weak equivalence, then $B(U)$ is connected and all maps
$\pi_i(B(U))\to\pi_i(C(U))$ are bijective. Then $A(U)$ is connected,
and all the maps $\pi_i(A(U))\to\pi_i(C(U))$ are bijective.
Therefore the map $A\to C$ is a local weak equivalence. Similarly,
if $A\to C$ is a local weak equivalence, then so is $B\to C$.
\end{proof}

\section{Fibrant resolutions of symmetric Thom spectra}

We have discussed three types of spectra associated with a symmetric
Thom $T$-spectrum $E$ each of which is obtained from $E$ by a
certain stabilization and taking the Suslin complex at each level:
$C_*\Fr^E(S_T)$, $C_*\Theta^{\infty}(E)$ and
$C_*\Theta^{\infty}_{sym}(E)$. Moreover, by
Propositions~\ref{truefakezigzag} and~\ref{fr=true} they are
isomorphic to each other in $SH(k)$ under certain reasonable
assumptions on $E$. The next theorem says that if we take local
fibrant replacements at each level in these spectra, they become
motivically fibrant starting from some level $d$ onwards. More
precisely, the following result is true:

\begin{thm}\label{main}
For a symmetric Thom $T$-spectrum $E$ with the bounding constant $d$
and contractible alternating group action the following
$\PPo$-spectra are isomorphic to $E$ in $SH(k)$ and motivically
fibrant starting from level $\max(0,d)$:
\begin{itemize}
\item $C_*\Fr^E(S_T)^f$
\item $C_*\Theta^{\infty}(E)^f$
\item $C_*\Theta^{\infty}_{sym}(E)^f$,
\end{itemize}
where ``$f$'' refers to levelwise local fibrant replacements of the
corresponding spectra.
\end{thm}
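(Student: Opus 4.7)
The case of $C_*\Theta^{\infty}(E)^f$ is immediate from Theorem~\ref{thmthom}, so my plan is to transfer its motivic fibrancy to the remaining two spectra via a zig-zag. Combining Propositions~\ref{truefakezigzag} and~\ref{fr=true}, I would first assemble a zig-zag of morphisms of $\mathbb{P}^1$-spectra
\[
C_*\Fr^E(S_T)\xrightarrow{f}C_*\Theta^{\infty}_{sym}(E)\xrightarrow{\xi}C_*\Theta^{\infty}(\Theta^{\infty}_{sym}(E))\xleftarrow{C_*\Theta^{\infty}(\epsilon)}C_*\Theta^{\infty}(E),
\]
each arrow of which is a local weak equivalence of motivic spaces at every level $i\geqslant\max(0,d)$. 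The stable motivic equivalence of each of these spectra with $E$ is already recorded in those propositions together with Theorem~\ref{thmthom}, so only the motivic fibrancy statement remains.

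Next I would apply the levelwise fibrant replacement $(-)^f$, which is functorial and lifts to $\mathbb{P}^1$-spectra as noted in the definition of $X^f$ preceding the Voevodsky lemma. The resulting zig-zag consists of morphisms between levelwise locally fibrant $\mathbb{P}^1$-spectra, and each arrow is a sectionwise weak equivalence at every level $i\geqslant\max(0,d)$, since a local weak equivalence between locally fibrant motivic spaces is automatically sectionwise.

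To propagate motivic fibrancy along this zig-zag, at each level $i\geqslant\max(0,d)$ I would verify the three ingredients separately. Local fibrancy holds by construction of $(-)^f$. $\mathbb{A}^1$-locality is preserved under sectionwise equivalences between locally fibrant spaces, so it is inherited from the corresponding levels of $C_*\Theta^{\infty}(E)^f$. For the $\Omega_{\mathbb{P}^{\wedge 1}}$-property of bonding adjoints I would invoke two-out-of-three in the naturality square induced by any arrow of the zig-zag between adjacent levels, whose top row is the known bonding equivalence from Theorem~\ref{thmthom}, whose vertical arrows are the equivalences just obtained, and whose right vertical is the image under $\uhom(\mathbb{P}^{\wedge 1},-)$ of a local weak equivalence between locally fibrant spaces, hence itself a local weak equivalence because $\mathbb{P}^{\wedge 1}$ is flasque cofibrant. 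This shifts motivic fibrancy from $C_*\Theta^{\infty}(E)^f$ first to $C_*\Theta^{\infty}(\Theta^{\infty}_{sym}(E))^f$, then along $\xi^f$ to $C_*\Theta^{\infty}_{sym}(E)^f$, and finally along $f^f$ to $C_*\Fr^E(S_T)^f$.

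The hardest part will be tracking the $\Omega_{\mathbb{P}^{\wedge 1}}$-property across the zig-zag: the arrows are only levelwise local equivalences from level $\max(0,d)$ onward, not globally, so the argument rests on the fact that they are nevertheless morphisms of $\mathbb{P}^1$-spectra and therefore intertwine the bonding maps on the nose. This rigidity is provided by the spectrum-level form of Propositions~\ref{truefakezigzag} and~\ref{fr=true} rather than by their mere levelwise content, and is the essential ingredient that allows motivic fibrancy to be transported coherently beyond level $\max(0,d)$.
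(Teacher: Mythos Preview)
Your proposal is correct and follows essentially the same approach as the paper. The paper organizes the fibrancy transfer step as a separate Sublemma (stating that a levelwise local weak equivalence between levelwise locally fibrant $\mathbb{P}^1$-spectra preserves motivic fibrancy in both directions), whose proof is exactly the three-ingredient verification you spell out inline; otherwise the zig-zag, the cited inputs, and the logic are identical.
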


\begin{proof}
By Propositions~\ref{truefakezigzag} and~\ref{fr=true} we have the
following levelwise local weak equivalences of $\PPo$-spectra
starting from level $d$:
\[C_*\Fr^E(S_T)\to C_*\Theta^{\infty}_{sym}(E)\to
C_*\Theta^{\infty}(\Theta^{\infty}_{sym}(E))\leftarrow
C_*\Theta^{\infty}(E).\]
Since the canonical map $E\to
C_*\Theta^{\infty}(E)$ is a stable motivic equivalence by
Theorem~\ref{thmthom}, we see that $E$ is isomorphic in $SH(k)$ to
each of the spectrum of the theorem.

\begin{sublem}\label{leveleq}
Suppose a map of $\PPo$-spectra $f\colon E\to E'$ is a levelwise
local weak equivalence and all spaces $E_i,E'_i$, $i\geqslant 0$,
are fibrant in the flasque local model structure. Then $E$ is
motivically fibrant if and only if so is $E'$.
\end{sublem}

\begin{proof}
Since each map $f_i\colon E_i\to E'_i$, $i\geqslant 0$, is a local
weak equivalence between locally fibrant spaces, it is a sectionwise
weak equivalence. Therefore if $E_i$ is motivically fibrant, then
$E'_i$ is $\A^1$-invariant, and hence motivically fibrant as well.
Since $E_i,E'_i$ are flasque fibrant by assumption, then the map
$\uhom(\PP^{\w1},E_{i+1})\to\uhom(\PP^{\w1},E'_{i+1})$ is a
sectionwise weak equivalence, because $\PP^{\w1}$ is flasque
cofibrant. Hence the adjoint to the bonding map
$E_i\to\uhom(\PP^{\w1},E_{i+1})$ is a sectionwise weak equivalence
if and only if so is the map $E'_i\to\uhom(\PP^{\w1},E'_{i+1})$.
\end{proof}

Since the spectrum $C_*\Theta^{\infty}(E)^f$ is motivically fibrant
starting from level $d$ by Theorem~\ref{thmthom}, then so are
$C_*\Theta^{\infty}_{sym}(E)^f$ and
$C_*\Theta^{\infty}(\Theta^{\infty}_{sym}(E))^f$ by
Proposition~\ref{truefakezigzag} and the sublemma above. Likewise,
$C_*\Fr^E(S_T)^f$ is motivically fibrant starting from level $d$ by
Proposition~\ref{fr=true} and the sublemma above. This completes the
proof of the theorem.
\end{proof}

The spectrum $C_*\Fr^E(S_T)^f$, which is isomorphic to $E$ in
$SH(k)$ by Theorem~\ref{main}, is of particular interest, because it
will lead to an equivalent model of $E$ in the category of
$(S^1,\mathbb G_m^{\w 1})$-bispectra (see Theorem~\ref{mainbisp}).

\section{$E$-framed motives and bispectra}\label{sectionefr}

Following Definition~\ref{frnex}, for any space $\mathcal X$ and any
$T$-spectrum $E$ denote by $\Fr_n^E(\mathcal X)=\uhom(\PP^{\w
n},\mathcal X\w E_n)$, $n\geqslant 0$. Also, set $\Fr^E(\mathcal
X)=\colim_n(\uhom(\PP^{\w n},\mathcal X\w
E_n))=\Theta^\infty(\mathcal X\w E)_0$. If $\mathcal X=X_+$, $X\in
\Sm_k$, then we shall write $\Fr^E(X)$ dropping $+$ from notation.

\begin{lem}\label{facts}
$\Fr^E(\mathcal X)$ is functorial in $\mathcal X$ and $E$. If $E$ is
a directed colimit of $T$-spectra $\colim_kE_k$, then
$\Fr^E(\mathcal X)=\colim_k\Fr^{E_k}(\mathcal X)$. In particular,
$\Fr^E(\mathcal X)=\colim_k\Fr^{L_kE}(\mathcal X)$, where $L_kE$ is
the $k$-th layer of $E$.
\end{lem}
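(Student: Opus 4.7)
The plan is to verify the three assertions in order, using only the definition $\Fr^E(\mathcal X)=\colim_n\uhom(\PP^{\w n},\mathcal X\w E_n)$ together with Voevodsky's Lemma~\ref{Voevlemma} and the general fact that the smash product is a left adjoint.

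First I would dispatch functoriality in $\mathcal X$. Any map $\mathcal X\to\mathcal Y$ of pointed motivic spaces induces maps $\mathcal X\w E_n\to\mathcal Y\w E_n$ and hence maps $\uhom(\PP^{\w n},\mathcal X\w E_n)\to\uhom(\PP^{\w n},\mathcal Y\w E_n)$. These are compatible with the stabilization maps given by smashing with $\sigma:\PP^{\w 1}\to T$ and then applying a bonding map, because the bonding map lives on the $E_n$-factor which is untouched. So they assemble into a map on the colimit. Functoriality in $E$ is equally routine: a morphism $E\to E'$ of $T$-spectra consists of maps $E_n\to E'_n$ compatible with bonding maps; smashing with $\mathcal X$ and applying $\uhom(\PP^{\w n},-)$ produces maps $\Fr_n^E(\mathcal X)\to\Fr_n^{E'}(\mathcal X)$ that commute with the stabilization maps thanks to the compatibility with the bonding maps.

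For the colimit statement, suppose $E=\colim_k E_k$ in the category of $T$-spectra. Colimits of spectra are levelwise, so $E_n=\colim_k(E_k)_n$ for each $n$. Since $\mathcal X\w(-)$ is a left adjoint, $\mathcal X\w E_n=\colim_k\mathcal X\w(E_k)_n$. The content is then the commutation
\[\uhom(\PP^{\w n},\colim_k\mathcal X\w(E_k)_n)\cong\colim_k\uhom(\PP^{\w n},\mathcal X\w(E_k)_n),\]
after which interchanging the directed colimits over $k$ and $n$ gives the claim. Granting this commutation, $\Fr^E(\mathcal X)=\colim_n\colim_k\Fr_n^{E_k}(\mathcal X)=\colim_k\colim_n\Fr_n^{E_k}(\mathcal X)=\colim_k\Fr^{E_k}(\mathcal X)$. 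The ``in particular'' clause follows by applying this to the canonical presentation $E=\colim_k L_kE$ recorded before Lemma~\ref{thetalayer}.

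The main obstacle is justifying the commutation of $\uhom(\PP^{\w n},-)$ with the filtered colimit, since internal hom is a right adjoint and in general does not commute with colimits. In the situations of interest (Thom spectra and their standard constructions) the spaces $(E_k)_n$ and $E_n$ are themselves filtered colimits of sheaves of the form $V/(V-Z)$ with $V\in\Sm_k$, and the Voevodsky Lemma gives an explicit description of the sets $\uhom(\PP^{\w n},\mathcal X\w E_n)(U)$ for $U\in\Sm_k$ as equivalence classes of triples $(W,Z,\phi)$ where the regular map $\phi$ lands in a finitely-presented piece of the target. Since $\phi$ factors through some finite stage, the colimit commutation is manifest pointwise on $\Sm_k$, and the resulting sheaves agree. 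This is the same compactness principle that was used implicitly in Lemma~\ref{thetalayer} to pass colimits past $\Theta^\infty$, and it is the only non-formal ingredient of the proof.
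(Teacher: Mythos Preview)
The paper states this lemma without proof, treating it as routine, so your write-up is already more detailed than what the paper provides. Your outline is correct and matches the implicit argument: functoriality is formal, and the colimit assertion reduces to interchanging two filtered colimits once one knows that $\uhom(\PP^{\w n},-)$ commutes with filtered colimits.

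One remark on that last step. Your justification via Voevodsky's Lemma is more restrictive than necessary: it only applies when the targets are of Thom type, whereas the lemma is stated for an arbitrary $T$-spectrum $E$ and an arbitrary space $\mathcal X$. The cleaner and fully general argument is that for each $U\in\Sm_k$ the object $U_+\w\PP^{\w n}$ is a finite colimit of representable Nisnevich sheaves, and on the Noetherian site $\Sm_k$ filtered colimits of Nisnevich sheaves are computed sectionwise; hence $\uhom(\PP^{\w n},-)(U)=\Hom(U_+\w\PP^{\w n},-)$ commutes with filtered colimits for arbitrary targets. This removes the dependence on the special form of $E_n$ and makes the lemma hold as stated. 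Your Voevodsky-Lemma route is of course adequate for every application in the paper, since only Thom spectra are ever used.
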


\begin{dfn}
Given a $T$-spectrum $E$, the assignment $K\mapsto C_*\Fr^E(\mathcal
X\w K)$ is plainly a $\Gamma$-space. The {\it $E$-framed motive
$M_E(\mathcal X)$ of $\mathcal X$\/} is the Segal symmetric
$S^1$-spectrum $C_*\Fr^E(\mathcal X\w\mathbb S)$. If $E=S_T$ then
$M_E(\mathcal X)$ is the framed motive $M_{fr}(\mathcal X)$ of
$\mathcal X$ in the sense of~\cite{GPMain}.
\end{dfn}

Lemma~\ref{facts} implies the following

\begin{cor}\label{factscor}
$M_E(\mathcal X)$ is functorial in $\mathcal X$ and $E$. If $E$ is a
directed colimit of $T$-spectra $\colim_kE_k$, then $M_E(\mathcal
X)=\colim_kM_{E_k}(\mathcal X)$. In particular, $M_E(\mathcal
X)=\colim_k M_{L_kE}(\mathcal X)$, where $L_kE$ is the $k$-th layer
of $E$.
\end{cor}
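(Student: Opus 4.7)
The plan is to reduce the statement to Lemma~\ref{facts} by unwinding the definition $M_E(\mathcal X) = C_*\Fr^E(\mathcal X\w\mathbb S)$, where the right-hand side is constructed from the $\Gamma$-space $K\mapsto C_*\Fr^E(\mathcal X\w K)$ via the Segal machine. Both functoriality and compatibility with directed colimits will be checked levelwise on the symmetric $S^1$-spectrum $M_E(\mathcal X)$ and will follow formally from the corresponding assertions for $\Fr^E(-)$.

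Concretely, I would first observe that functoriality in $\mathcal X$ and in $E$ is clear from Lemma~\ref{facts}: applied with $\mathcal X\w K$ in place of $\mathcal X$, it gives functoriality of each pointed sheaf $\Fr^E(\mathcal X\w K)$ in both arguments; these assemble into a $\Gamma$-space functorial in $(\mathcal X, E)$, and passing to the associated Segal symmetric $S^1$-spectrum preserves functoriality. For the colimit statement, I would fix a finite pointed set $K$ and verify
\[
C_*\Fr^{\colim_k E_k}(\mathcal X\w K)\;\cong\;\colim_k C_*\Fr^{E_k}(\mathcal X\w K).
\]
By Lemma~\ref{facts} the identity already holds before applying $C_*$, so it remains to see that the Suslin complex commutes with directed colimits of (pointed sheaves of) simplicial sets. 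This is standard: $C_*X$ is the diagonal of the bisimplicial sheaf $(n,m)\mapsto\uhom(\Delta^n_k,X_m)$; taking sections over an affine $U\in\Sm_k$, the presheaf level $\uhom(\Delta^n_k,-)(U)$ is just evaluation at $\Delta^n_k\times U$, which commutes with filtered colimits of presheaves of sets, and Nisnevich sheafification commutes with all colimits. Directed colimits also commute with the diagonal functor from bisimplicial to simplicial sets.

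Once this is established levelwise on the $\Gamma$-space, the identification lifts to the associated Segal symmetric $S^1$-spectra, because the $n$-th space of the Segal spectrum at a $\Gamma$-space $F$ is built from the values $F(S^n_m)$ via colimits in the $\Gamma$-variable and geometric realization, and all of these operations commute with directed colimits. This yields $M_{\colim_k E_k}(\mathcal X)=\colim_k M_{E_k}(\mathcal X)$ as symmetric $S^1$-spectra. Specializing to the layer filtration $E=\colim_k L_kE$ from Section~3 gives the stated identification $M_E(\mathcal X)=\colim_k M_{L_kE}(\mathcal X)$.

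There is no real obstacle here: the content is purely that $C_*$, sheafification, and the Segal construction all commute with directed colimits, so everything reduces cleanly to Lemma~\ref{facts}. The only point requiring a moment of care is to record that the directed colimit in the category of symmetric $S^1$-spectra is computed levelwise, which justifies the passage from the $\Gamma$-space identification to the spectrum-level identification.
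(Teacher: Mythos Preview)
Your proposal is correct and follows the same route as the paper: the corollary is stated immediately after Lemma~\ref{facts} with no further proof, the text simply saying ``Lemma~\ref{facts} implies the following''. Your argument just spells out the routine verification that $C_*$, the Segal construction, and levelwise colimits in symmetric $S^1$-spectra all commute with directed colimits, which the paper leaves implicit.
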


The next statement is straightforward.

\begin{lem}\label{factmotive}
$M_{L_kE}(\mathcal X)=\uhom(\PP^{\w k},M_{fr}(\mathcal X\w E_k))$
for any $k\geqslant 0$.
\end{lem}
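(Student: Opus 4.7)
The plan is to unfold both sides of the claimed identity and match them level by level. On the right, $\uhom(\PP^{\w k}, M_{fr}(\mathcal X \w E_k))$ is the $S^1$-spectrum whose $i$-th space is $\uhom(\PP^{\w k}, C_*\Fr(\mathcal X \w E_k \w S^i))$. On the left, $M_{L_k E}(\mathcal X) = C_*\Fr^{L_k E}(\mathcal X \w\mathbb S)$ is the Segal $S^1$-spectrum whose $i$-th space is $C_*\Fr^{L_k E}(\mathcal X\w S^i)$. So it is enough to identify, naturally in $\mathcal X$,
\[ C_*\Fr^{L_k E}(\mathcal X) \;\cong\; \uhom(\PP^{\w k}, C_*\Fr(\mathcal X \w E_k)), \]
and then apply the same identification with $\mathcal X$ replaced by $\mathcal X\w S^i$.

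First I would compute $\Fr^{L_k E}(\mathcal X) = \colim_n\uhom(\PP^{\w n}, \mathcal X \w (L_k E)_n)$ by noting that for $n = k+m$ with $m \geqslant 0$ we have $(L_k E)_n = E_k \w T^m$, so the subsequence indexed by $n \geqslant k$ is cofinal in the colimit. For those terms, associativity and the definition $\Fr_m(\mathcal X \w E_k) = \uhom(\PP^{\w m}, \mathcal X \w E_k \w T^m)$ (Definition~\ref{framesdef}) give
\[ \uhom(\PP^{\w k+m}, \mathcal X \w E_k \w T^m) \;\cong\; \uhom(\PP^{\w k}, \uhom(\PP^{\w m}, \mathcal X\w E_k \w T^m)) \;=\; \uhom(\PP^{\w k}, \Fr_m(\mathcal X\w E_k)). \]
Under these isomorphisms the stabilization map in $m$ on the left is obtained from the stabilization map $\Fr_m(\mathcal X \w E_k)\to\Fr_{m+1}(\mathcal X \w E_k)$ by applying $\uhom(\PP^{\w k},-)$. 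Since $\uhom(\PP^{\w k},-)$, being a right adjoint, commutes with this filtered colimit of sheaves, passing to the colimit over $m$ yields
\[ \Fr^{L_k E}(\mathcal X) \;\cong\; \uhom(\PP^{\w k}, \Fr(\mathcal X\w E_k)). \]

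Next I would observe that the Suslin complex $C_*$, being built sectionwise from the internal hom out of $\Delta^\bullet_k$, commutes with the internal hom $\uhom(\PP^{\w k},-)$: we have $C_*\uhom(\PP^{\w k}, X) = \uhom(\PP^{\w k}, C_*X)$ canonically. Applying this to the isomorphism just obtained gives
\[ C_*\Fr^{L_k E}(\mathcal X)\;\cong\;\uhom(\PP^{\w k}, C_*\Fr(\mathcal X \w E_k)). \]

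Finally, inserting $\mathcal X \w K$ for a finite pointed set $K$ produces an isomorphism of $\Gamma$-spaces
\[ K\mapsto C_*\Fr^{L_k E}(\mathcal X\w K)\;\cong\;K\mapsto\uhom(\PP^{\w k}, C_*\Fr(\mathcal X\w E_k\w K)), \]
natural in $K$, which at the level of the associated Segal $S^1$-spectra gives exactly $M_{L_k E}(\mathcal X) \cong \uhom(\PP^{\w k}, M_{fr}(\mathcal X \w E_k))$. The only non-bookkeeping point is verifying the commutation of $C_*$ and $\uhom(\PP^{\w k},-)$ and the naturality of all the identifications with respect to stabilization and the $\Gamma$-space structure; these are formal because $\PP^{\w k}$ is a fixed pointed sheaf and internal homs commute with limits and the diagonal construction in the target.
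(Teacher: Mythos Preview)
Your proposal is correct in substance and is exactly the unfolding the paper has in mind when it calls the statement ``straightforward'' (no proof is given in the paper; the computation is the one already carried out for $\Theta^\infty(L_kE)$ in Lemma~\ref{thetalayer}, applied with $\mathcal X\wedge E$ in place of $E$ at level $0$, together with the evident commutation of $C_*$ with $\uhom(\PP^{\w k},-)$ and naturality in the $\Gamma$-variable).

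One small correction: your justification ``$\uhom(\PP^{\w k},-)$, being a right adjoint, commutes with this filtered colimit'' is not valid as stated---right adjoints preserve \emph{limits}, not colimits. The correct reason $\uhom(\PP^{\w k},-)$ commutes with filtered colimits of Nisnevich sheaves is that $\PP^{\w k}$ (equivalently $\PP^k/\PP^{k-1}$, by Corollary~\ref{Pmpermut}) is a compact object in pointed Nisnevich sheaves, being a finite colimit of representables. This is a harmless slip and does not affect the argument.
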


\begin{dfn}
Given a Thom $T$-spectrum $E$, $U\in \Sm_k$ and $Y=X/(X-Z)$, where
$X\in \Sm_k$ and $Z$ is a closed subset in $X$, denote by
$\ZF_n^E(U,Y)$ the free Abelian group generated by the elements of
$\Fr_n^E(U,Y)=\uhom(\PP^{\w n},Y\w E_n)$ with connected support
(recall that the elements of $\Fr_n^E(U,Y)$ have an explicit
geometric description using Voevodsky's Lemma~\ref{Voevlemma}). We
also set $\ZF^E(U,Y):=\colim_n\ZF_n^E(U,Y)$, where the colimit maps
are defined in the same fashion with those of $\Fr^E(U,Y)$.

The assignment $K\mapsto C_*\ZF^E(U,Y\w K)$ is plainly a
$\Gamma$-space. The {\it linear $E$-framed motive $LM_E(Y)$ of
$Y$\/} is the Segal symmetric $S^1$-spectrum $C_*\ZF^E(Y\w\mathbb
S)$. If $E=S_T$ then $LM_E(Y)$ is the linear framed motive
$LM_{fr}(Y)$ of $Y$ in the sense of~\cite{GPMain}. Note that the
presheaves of stable homotopy groups $\pi_*(LM_E(Y))$ are computed
as the presheaves of homology groups of the complex $C_*\ZF^E(Y)$
(we freely use the Dold--Kan correspondence here).
\end{dfn}

As above we have the following

\begin{lem}\label{factlmotive}
$LM_E(Y)=\colim_k LM_{L_kE}(Y)$ and $LM_{L_kE}(Y)=LM_{fr}^{\PP^{\w k}}(Y\w E_k)$,
where $LM_{fr}^{\PP^{\w k}}(Y\w E_k)$ is Segal's spectrum associated with
the $\Gamma$-space $K\mapsto C_*\ZF^{\PP^{\w k}}(U,Y\w K)$ (see Definition~\ref{zfrpi}).
\end{lem}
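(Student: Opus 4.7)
The lemma is the abelian-group analogue of Corollary~\ref{factscor} combined with Lemma~\ref{factmotive}, so the plan is simply to transport those arguments along the free-abelian-group functor, using that the three operations involved --- taking the free abelian group on a pointed set of connected-support correspondences, forming the Suslin complex $C_*$, and passing to the Segal symmetric $S^1$-spectrum associated with a $\Gamma$-space --- all commute with filtered colimits.

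For the first equality, I would fix a finite pointed set $K$ and show that $\ZF^E(-,Y\w K)=\colim_k\ZF^{L_kE}(-,Y\w K)$ as sheaves. By Lemma~\ref{facts} (applied to the pointed space $Y\w K$ in place of $\mathcal X$) one has $\Fr^E(U,Y\w K)=\colim_k\Fr^{L_kE}(U,Y\w K)$ as pointed sets, and the transition maps $\Fr^{L_kE}\to\Fr^{L_{k+1}E}$ are induced by morphisms of Thom $T$-spectra $L_kE\to L_{k+1}E$, which preserve the geometric supports prescribed by Voevodsky's Lemma~\ref{Voevlemma}. Consequently connected-support elements are sent to connected-support elements, so passing to the free abelian group on such generators commutes with the colimit. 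Smashing with the sphere $\mathbb S$, applying $C_*$ (a colimit of mapping spaces into a simplicial object), and taking the Segal spectrum then gives $LM_E(Y)=\colim_k LM_{L_kE}(Y)$.

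For the second equality, I would use the description of $\Theta^{\infty}(L_k(-))$ provided by Lemma~\ref{thetalayer} with the spectrum $(Y\w K)\w E$ in place of $E$; since the $k$-th layer of $(Y\w K)\w E$ is $(Y\w K)\w L_kE$, this gives a natural identification
\[
\Fr^{L_kE}(U,Y\w K)=\Theta^{\infty}(L_k((Y\w K)\w E))_0(U)=\uhom(\PP^{\w k},\Fr(Y\w K\w E_k))(U),
\]
compatible with supports (on both sides an element is given by a Voevodsky triple whose support lies in $\A^{k+n}_U$ for some $n$). Hence taking the free abelian group on connected-support elements on both sides yields $\ZF^{L_kE}(-,Y\w K)=\ZF^{\PP^{\w k}}(-,Y\w K\w E_k)$ by Definition~\ref{zfrpi}. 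Varying $K\in\Gamma^{\op}$, applying $C_*$, and forming the associated Segal symmetric $S^1$-spectrum then produces the required isomorphism $LM_{L_kE}(Y)=LM_{fr}^{\PP^{\w k}}(Y\w E_k)$.

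The only point requiring any care --- and the place I expect to spend the most attention --- is the compatibility of the "connected support" condition under the identifications above: both the colimit maps over $k$ and the Voevodsky-lemma identification at a fixed layer must be checked to send connected-support correspondences to connected-support correspondences. This is straightforward from Voevodsky's Lemma~\ref{Voevlemma} (and the explicit description of $\uhom(\PP^{\w k},\Fr_n(\mathcal G))$ recorded in Definition~\ref{framesdef}), but it is the only non-formal ingredient; everything else is commuting colimits with free-abelian-group, $C_*$, and the Segal construction.
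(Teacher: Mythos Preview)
Your proposal is correct and follows exactly the approach the paper intends: the paper offers no explicit proof, only the phrase ``As above we have the following,'' pointing back to Corollary~\ref{factscor} and Lemma~\ref{factmotive}, and your argument is precisely the unwinding of those statements with the free-abelian-group-on-connected-supports functor inserted. Your attention to compatibility of the connected-support condition under the layer identifications is the only non-formal point, and you have identified it correctly.
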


The following lemma says that $LM_E(Y)$ computes homology of the
$E$-framed motive of $Y$.

\begin{lem}\label{homolmotive}
Given a Thom $T$-spectrum $E$ and $Y$ as above,
there is an isomorphism of graded presheaves
$\pi_*(\Z M_{E}(Y))=\pi_*(LM_E(Y))$.
\end{lem}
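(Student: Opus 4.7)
The plan is to reduce the statement to the already established Lemma~\ref{zfrhomol} via the layer filtration of $E$. Concretely, by Corollary~\ref{factscor} we have $M_E(Y) = \colim_k M_{L_k E}(Y)$, and since the free abelian group functor $\Z(-)$ commutes with directed colimits and homotopy groups of (presheaves of) $S^1$-spectra commute with filtered colimits, we obtain
\[
\pi_*(\Z M_E(Y)) \cong \colim_k \pi_*(\Z M_{L_k E}(Y)).
\]
By the parallel application of Lemma~\ref{factlmotive}, we similarly have $\pi_*(LM_E(Y)) \cong \colim_k \pi_*(LM_{L_k E}(Y))$. Thus it suffices to exhibit a natural-in-$k$ isomorphism at each layer and then pass to the colimit.

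At each layer, Lemma~\ref{factmotive} identifies $M_{L_k E}(Y)$ with the $S^1$-spectrum $\uhom(\PP^{\w k}, M_{fr}(Y \w E_k))$, while Lemma~\ref{factlmotive} identifies $\pi_*(LM_{L_k E}(Y))$ with $H_*(C_*\ZF^{\PP^{\w k}}(Y \w E_k))$ via the Dold--Kan correspondence applied to the Segal machine. The required level-$k$ identification
\[
\pi_*\bigl(\Z \uhom(\PP^{\w k}, M_{fr}(Y \w E_k))\bigr) \cong H_*\bigl(C_*\ZF^{\PP^{\w k}}(Y \w E_k)\bigr)
\]
is then an instance of Lemma~\ref{zfrhomol} applied to the pointed motivic space $Y \w E_k$. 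Since $Y = X/(X-Z)$ and $E_k = \colim_i V_{k,i}/(V_{k,i}-Z_{k,i})$, the smash product $Y \w E_k$ is a filtered colimit of sheaves of the form $(X \times V_{k,i})/((X \times V_{k,i}) - (Z \times Z_{k,i}))$, each of which is of the quotient-of-smooth-pair type required by Lemma~\ref{zfrhomol}; one more colimit-exchange absorbs this extra index $i$.

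The main obstacle is essentially bookkeeping: verifying that the identifications of Lemmas~\ref{factmotive} and~\ref{factlmotive} are naturally compatible with the stabilization maps $L_k E \to L_{k+1} E$ on both sides (so that the isomorphisms assemble into a map of $\colim_k$-sequences), and that the auxiliary colimit over $i$ commutes with $\Z(-)$, with $C_*(-)$, with $\uhom(\PP^{\w k}, -)$, and with taking (pre)sheaf homotopy/homology. All these commutations are standard—$\Z$ and $C_*$ are left adjoints, $\uhom(\PP^{\w k}, -)$ commutes with filtered colimits of sheaves since $\PP^{\w k}$ is compact, and homotopy groups of $S^1$-spectra commute with filtered colimits—so no new input beyond Lemma~\ref{zfrhomol} and the Segal machine is needed.
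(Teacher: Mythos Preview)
Your proposal is correct and follows essentially the same route as the paper: pass to the layer filtration via Corollary~\ref{factscor} and Lemma~\ref{factlmotive}, identify each layer using Lemmas~\ref{factmotive} and~\ref{factlmotive}, and then invoke Lemma~\ref{zfrhomol} at each level before taking the colimit. Your treatment is in fact slightly more careful than the paper's one-line chain of equalities, since you explicitly address the additional filtered colimit hidden in $E_k=\colim_i E_{k,i}$ and the naturality in $k$ needed to assemble the layerwise isomorphisms.
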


\begin{proof}
We have that $\pi_*(\Z M_{E}(Y))=\colim_k\pi_*(\Z
M_{L_kE}(Y))=\colim_k\pi_*(\Z(\uhom(\PP^{\w k},M_{fr}(Y\w E_k))))
=\colim_k\pi_*(LM_{fr}^{\PP^{\w k}}(Y\w E_k))=\pi_*(LM_E(Y))$.
We have used here Corollary~\ref{factscor}, Lemmas~\ref{zfrhomol}, \ref{factmotive} and~\ref{factlmotive}.
\end{proof}

Following notation of~\cite{GNP}, denote by $\A^1//\Gm$ the mapping
cone of the natural embedding $(\Gm)_+\hookrightarrow\A^1_+$. It is
represented by a simplicial scheme from $\Fr_0(k)$.

\begin{lem}\label{a1gm}
For a Thom $T$-spectrum $E$ with the bounding constant $d$, the natural map
\[M_E(T^{\ell}\w(\A^1//\Gm)^{\w i})\to M_E(T^{\ell}\w T^i),\quad \ell:=\max(0,d-1),\]
is a local stable weak equivalence for any $i> 0.$
\end{lem}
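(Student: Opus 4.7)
The plan is a two-step reduction followed by a direct comparison using the local weak equivalence $\A^1//\Gm \to T$.

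First, by Corollary~\ref{factscor} and the fact that filtered colimits of $S^1$-spectra preserve local stable equivalences, it suffices to prove the statement with each layer $L_k E$ in place of $E$. By Lemma~\ref{factmotive} this reduces the claim to the map
\[\uhom(\PP^{\w k}, M_{fr}(T^\ell \w (\A^1//\Gm)^{\w i} \w E_k)) \to \uhom(\PP^{\w k}, M_{fr}(T^\ell \w T^i \w E_k)),\]
and writing $E_k = \colim_j V_{k,j}/(V_{k,j} - Z_{k,j})$ and invoking filtered colimits once more we may replace $E_k$ by a single Thom space $V/(V-Z)$ of codimension $\geq k-d+1$.

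Second, with $\ell = \max(0,d-1)$ and $i>0$, the target $T^\ell \w T^i \w V/(V-Z) = (V\times\A^{i+\ell})/(V\times\A^{i+\ell}-Z\times 0)$ is the Thom space of a closed embedding of smooth varieties of codimension at least $(k-d+1)+i+\ell \geq k+i > k$. So by Proposition~\ref{spectraeq} (together with Corollary~\ref{geomaincor}) the right-hand $\uhom(\PP^{\w k},-)$ spectrum, after levelwise local fibrant replacement, is motivically fibrant and computes the honest motivic fibrant replacement. On the source side, the natural map of pointed simplicial Nisnevich sheaves $\A^1//\Gm \to T$ is a local weak equivalence: on sections over any Henselian local scheme it collapses the contractible simplicial cone $C(\Gm)_+$ to the basepoint, yielding a simplicial weak equivalence. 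Iterating and smashing, the induced map $T^\ell \w (\A^1//\Gm)^{\w i} \w V/(V-Z) \to T^\ell \w T^i \w V/(V-Z)$ is a local weak equivalence of pointed simplicial sheaves, and it is standard that $M_{fr}$ sends such equivalences to motivic equivalences of $S^1$-spectra (via Remark~\ref{a1sim} applied degreewise together with the framed-motive machinery of~\cite{GPMain}), hence so does $\uhom(\PP^{\w k}, M_{fr}(-))$.

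The main obstacle is the last step: showing that $\uhom(\PP^{\w k}, M_{fr}(-))$ converts the local weak equivalence $\A^1//\Gm \to T$ (after smashing) into a genuine local stable equivalence of $S^1$-spectra, rather than merely a motivic one. This is where the codimension bound $k+i > k$, which is exactly what the bounding-constant hypothesis on $E$ and the choice $\ell = \max(0, d-1)$ provide, becomes crucial: by Proposition~\ref{spectraeq} and Corollary~\ref{geomaincor} it places both sides in the regime where the relevant $S^1$-spectra are stably locally equivalent to their motivically fibrant replacements, so a motivic equivalence between them coincides with a local stable equivalence. This completes the argument.
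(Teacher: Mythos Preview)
Your reduction to layers and the plan to leverage Proposition~\ref{spectraeq} via the bounding constant is exactly the skeleton of the paper's proof. The gap is the step where you claim ``it is standard that $M_{fr}$ sends such equivalences to motivic equivalences of $S^1$-spectra (via Remark~\ref{a1sim}\ldots)''. Remark~\ref{a1sim} is about $\A^1$-homotopies, not local weak equivalences, so it is irrelevant here. More seriously, the functor $\mathcal X\mapsto\uhom(\PP^{\w k},M_{fr}(\mathcal X))$ is built from internal homs $\uhom(\PP^{\w n},-\w T^n)$, which have no reason to preserve local or motivic weak equivalences when the target is not fibrant, and $(\A^1//\Gm)^{\w i}\w V/(V-Z)\w T^{\ell+n}$ is certainly not sectionwise Kan. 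That this particular map \emph{does} become a stable local equivalence after applying the framed-motive functor is precisely the main theorem of~\cite{GNP}, which the paper invokes explicitly as~\cite[1.1;~A.1]{GNP} for the bottom arrow of the comparison square. It is not a formality.

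Your last paragraph does not close the gap: upgrading a motivic equivalence to a local stable one via Proposition~\ref{spectraeq} presupposes that you already have the motivic equivalence, which is exactly the unjustified assertion. The paper's proof forms the same commutative square you have in mind, uses Proposition~\ref{spectraeq} for both vertical arrows (passage to levelwise local fibrant replacement), and then appeals to~\cite{GNP} for the bottom horizontal arrow between motivically fibrant spectra; the top arrow follows by two-out-of-three.
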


\begin{proof}
By Corollary~\ref{factscor} and Lemma~\ref{factmotive}
$M_E(T^{\ell}\w(\A^1//\Gm)^{\w i})=\colim_n\uhom(\PP^{\w
n},M_{fr}(E_n\w T^{\ell}\w(\A^1//\Gm)^{\w i}))$ and $M_E(T^{\ell}\w
T^i)=\colim_n\uhom(\PP^{\w n},M_{fr}(E_n\w T^{\ell}\w T^i))$. Since a
directed colimit of stable local weak equivalences is a stable local
weak equivalence, it is sufficient to check that the natural map
\[\uhom(\PP^{\w n},M_{fr}(E_n\w T^{\ell}\w(\A^1//\Gm)^{\w i}))\to \uhom(\PP^{\w n},M_{fr}(E_n\w T^{\ell}\w T^i))\]
is a local stable weak equivalence of spectra. By definition of the
bounding constant $d$, the space $E_n\w T^{\ell}$ is a colimit of
spaces of the form $X/X-Z$ where $Z$ has codimension greater than or
equal to $n$. Consider a commutative diagram
\[
\xymatrix{
\uhom(\PP^{\w n},M_{fr}(E_n\w T^{\ell}\w(\A^1//\Gm)^{\w i}))\ar[d]\ar[r] &
\uhom(\PP^{\w n},M_{fr}(E_n\w T^{\ell}\w T^i))\ar[d]\\
\uhom(\PP^{\w n},M_{fr}(E_n\w T^{\ell}\w(\A^1//\Gm)^{\w i})_f)\ar[r]
&  \uhom(\PP^{\w n},M_{fr}(E_n\w T^{\ell}\w T^i)_f), }
\]
where ``$f$'' refers to a level local fibrant replacement.
Then by Proposition~\ref{spectraeq} the vertical arrows are local
stable weak equivalences, and the bottom arrow is a stable weak
equivalence between motivically fibrant $S^1$-spectra
by~\cite[1.1; A.1]{GNP}. By the two-out-of-three-property the upper arrow is a
local stable weak equivalence.
\end{proof}

\begin{prop}\label{efrcompar}
Let $E$ be a Thom $T$-spectrum with the bounding constant $d$
and $\ell=\max(0,d-1)$. Then for every $i>0$ the
$S^1$-spectra $M_E(T^{\ell}\w(\A^1//\Gm)^{\w i})_f$, $M_E(T^{\ell}\w
T^i)_f$ and $M_E(T^{\ell}\w(S^1\w\Gm^{\w 1})^{\w i})_f$, where ``$f$'' refers to a level local fibrant
replacement, are motivically fibrant.
\end{prop}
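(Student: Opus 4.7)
The plan is to handle all three $S^1$-spectra by the same strategy: express each as a filtered colimit of $S^1$-spectra whose level local fibrant replacements are motivically fibrant, and then conclude by stability of motivic fibrancy under filtered colimits. The geometric input will be Proposition~\ref{spectraeq} (or the consequences of it already exploited inside the proof of Lemma~\ref{a1gm}), applied after unfolding the bounding-constant description of $E$.

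First I would apply Corollary~\ref{factscor} and Lemma~\ref{factmotive} to present
\[M_E\bigl(T^\ell \wedge \mathcal F\bigr) \;=\; \colim_k \uhom\bigl(\PP^{\wedge k}, M_{fr}(E_k \wedge T^\ell \wedge \mathcal F)\bigr)\]
uniformly for $\mathcal F = T^i$, $\mathcal F = (\A^1//\Gm)^{\wedge i}$ and $\mathcal F = (S^1\wedge\Gm^{\wedge 1})^{\wedge i}$. For $\mathcal F = T^i$ I would expand $E_k = \colim_j V_{k,j}/(V_{k,j}-Z_{k,j})$; the choice $\ell = \max(0,d-1)$ together with $i\geq 1$ is designed so that the codimension of $Z_{k,j}\times\{0\}$ in $V_{k,j}\times\A^{\ell+i}$ strictly exceeds $k$ (a one-line check separating the cases $d\geq 1$ and $d\leq 0$ confirms this). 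Proposition~\ref{spectraeq} then applies at each $(k,j)$ and yields motivic fibrancy of $\uhom(\PP^{\wedge k},M_{fr}(V_{k,j}\times\A^{\ell+i}/(V_{k,j}\times\A^{\ell+i}-Z_{k,j}\times 0))_f)$. The case $\mathcal F = (\A^1//\Gm)^{\wedge i}$ is already carried out inside the proof of Lemma~\ref{a1gm}: the bottom edge of the commutative square there is a stable equivalence between motivically fibrant $S^1$-spectra by~\cite[1.1; A.1]{GNP}, which is exactly motivic fibrancy of $\uhom(\PP^{\wedge k},M_{fr}(E_k\wedge T^\ell\wedge(\A^1//\Gm)^{\wedge i})_f)$. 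The case $\mathcal F = (S^1\wedge\Gm^{\wedge 1})^{\wedge i}$ runs in parallel, invoking the motivic equivalence $S^1\wedge\Gm^{\wedge 1}\simeq T$ at each smash factor together with the Cancellation Theorem for framed motives recalled in the proof of Lemma~\ref{spectraeqprelim}.

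Finally I would pass to the filtered colimit over $j$ and then over $k$. The hard part will be confirming that motivic fibrancy survives this colimit: flasque local fibrancy passes to filtered colimits by the convention on fibrant replacements recalled in the preamble, and $\A^1$-invariance at each level is immediate, but the $\Omega$-spectrum condition demands that $\uhom(\PP^{\wedge 1},-)$ commute with the colimit. This I would justify via the almost finite generation of the flasque motivic model structure in the sense of~\cite{H}---the same input used at the end of the proof of Proposition~\ref{truefakezigzag}. Once this commutation is in hand, the three $S^1$-spectra of the proposition are simultaneously motivically fibrant.
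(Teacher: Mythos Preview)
Your overall strategy---express $M_E(T^\ell\wedge\mathcal F)$ as a filtered colimit of the layer spectra $\uhom(\PP^{\wedge k},M_{fr}(E_k\wedge T^\ell\wedge\mathcal F))$ and verify motivic fibrancy termwise---is exactly the paper's approach, and your treatment of $\mathcal F=T^i$ via the codimension count and Proposition~\ref{spectraeq} is correct. The paper streamlines by first absorbing $T^\ell$ into $E$ (replacing $E$ by $T^{d-1}\wedge E$ when $d>1$), but this is cosmetic.

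There is a genuine gap in your handling of $\mathcal F=(S^1\wedge\Gm^{\wedge 1})^{\wedge i}$. Invoking the motivic equivalence $S^1\wedge\Gm^{\wedge 1}\simeq T$ and the Cancellation Theorem only produces a stable motivic (or stable local) equivalence between $\uhom(\PP^{\wedge k},M_{fr}(E_k\wedge T^\ell\wedge(S^1\wedge\Gm^{\wedge 1})^{\wedge i})_f)$ and the corresponding $T^i$ spectrum; it does not show that the \emph{level} local replacement of the former is already motivically fibrant. A stable equivalence to a motivically fibrant spectrum does not make the source motivically fibrant. What you need is the argument of Lemma~\ref{omegas}: verify that the underlying $\Gamma$-space is locally very special (this requires local connectedness of the zeroth space, which here is immediate from the $S^i$ smash factor, and for the $(\A^1//\Gm)^{\wedge i}$ case follows from~\cite[A.1]{GNP} combined with the proof of Lemma~\ref{pi0}), so that the Segal spectrum is locally an $\Omega$-spectrum; then strict $\A^1$-invariance of the stable homotopy sheaves (via~\cite{GPPresheaves}) and~\cite[7.1]{GPMain} finish. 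The paper runs this uniformly for all three choices of $\mathcal F$.

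A smaller point: in your final paragraph you write that the $\Omega$-spectrum condition requires $\uhom(\PP^{\wedge 1},-)$ to commute with the colimit, but these are $S^1$-spectra, so the relevant loop functor is $\uhom(S^1,-)$. In any case the paper does not argue via almost finite generation here; it simply uses that filtered colimits of flasque motivically fibrant objects remain flasque motivically fibrant (as already invoked in Corollary~\ref{geomaincor} and Theorem~\ref{thmthom}).
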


\begin{proof}
Without loss of generality we assume $d\leqslant 1$. Indeed, if
$d>1$ we replace $E$ by $T^{d-1}\w E$, which has the bounding
constant 1, observing that $M_E(T^{d-1}\w-)\cong M_{T^{d-1}\w
E}(-)$. It suffices to prove the statement for the spectrum
$M_E((\A^1//\Gm)^{\w i})_f$, because our arguments will be the same
for the other two spectra.

By Corollary~\ref{factscor} and Lemma~\ref{factmotive} one has
$M_E((\A^1//\Gm)^{\w i})=\colim_n\uhom(\PP^{\w
n},M_{fr}(E_n\w(\A^1//\Gm)^{\w i}))$. Since flasque motivically
fibrant spectra are closed under filtered colimits, it is enough to
show that each spectrum $\uhom(\PP^{\w
n},M_{fr}(E_n\w(\A^1//\Gm)^{\w i}))_f$ is motivically fibrant. But
this can shown similarly to Lemma~\ref{omegas} if we note that the
space $C_*\Fr(X_+\w(\A^1//\Gm)^{\w i})$, $X\in \Sm_k$, is
locally connected by~\cite[A.1]{GNP} and if we apply the proof of
Lemma~\ref{pi0} to show that the space $\uhom(\PP^{\w
n},C_*\Fr(E_n\w(\A^1//\Gm)^{\w i}))$ is locally connected.
\end{proof}

\begin{lem}\label{efrcompar2}
Under the assumptions of Proposition~\ref{efrcompar} the map
$M_E(T^{\ell}\w(\A^1//\Gm)^{\w i})_f\to
M_E(T^{\ell}\w(S^1\w\Gm^{\w 1})^{\w i})_f$ is a
sectionwise level equivalence for $\ell=\max(0,d-1)$.
\end{lem}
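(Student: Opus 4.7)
The plan is to reduce the claim to Lemma~\ref{a1gm} by going through the intermediate space $T^i$. There is a natural motivic equivalence $\A^1//\Gm \to T$ (used implicitly in Lemma~\ref{a1gm}) obtained by collapsing $\A^1_+$, and there is a natural motivic equivalence $S^1\w\Gm^{\w 1}\to T$ arising from the standard cofiber presentation of $T$. Combined into the $i$-th smash power and further smashed with $T^{\ell}$, these fit into a commutative triangle of motivic spaces connecting $T^{\ell}\w(\A^1//\Gm)^{\w i}$, $T^{\ell}\w(S^1\w\Gm^{\w 1})^{\w i}$ and $T^{\ell}\w T^i$.

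Applying $M_E$ to this triangle yields a commutative triangle of $S^1$-spectra. Lemma~\ref{a1gm} says the slanted arrow $M_E(T^{\ell}\w(\A^1//\Gm)^{\w i})\to M_E(T^{\ell}\w T^i)$ is a stable local weak equivalence. First, I would establish the analogous statement that
\[M_E(T^{\ell}\w(S^1\w\Gm^{\w 1})^{\w i})\to M_E(T^{\ell}\w T^i)\]
is a stable local weak equivalence by repeating the proof of Lemma~\ref{a1gm} verbatim: pass to the colimit via Corollary~\ref{factscor} and Lemma~\ref{factmotive}, replace $M_{fr}$ by its level local fibrant model via Proposition~\ref{spectraeq}, and conclude from the standard motivic equivalence $S^1\w\Gm^{\w 1}\simeq T$ together with the connectivity/cancellation input of~\cite[1.1; A.1]{GNP}. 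Combining with Lemma~\ref{a1gm} and the two-out-of-three property applied to the triangle, I obtain that
\[M_E(T^{\ell}\w(\A^1//\Gm)^{\w i})\to M_E(T^{\ell}\w(S^1\w\Gm^{\w 1})^{\w i})\]
is a stable local weak equivalence.

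By Proposition~\ref{efrcompar}, after applying the level local fibrant replacement ``$f$'', both spectra $M_E(T^{\ell}\w(\A^1//\Gm)^{\w i})_f$ and $M_E(T^{\ell}\w(S^1\w\Gm^{\w 1})^{\w i})_f$ are motivically fibrant $S^1$-spectra with locally fibrant terms. A stable local weak equivalence between such spectra is a sectionwise level weak equivalence: motivically fibrant spectra are in particular sectionwise $\Omega$-spectra, and a stable equivalence between $\Omega$-spectra is always a levelwise equivalence, which is sectionwise because each term is locally fibrant. This yields the desired conclusion.

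The main technical obstacle is the analog of Lemma~\ref{a1gm} for $(S^1\w\Gm^{\w 1})^{\w i}$: one must be careful that the proof of Lemma~\ref{a1gm} really does go through unchanged with $(S^1\w\Gm^{\w 1})^{\w i}$ in place of $(\A^1//\Gm)^{\w i}$, since the geometric input (connectivity of the relevant framed spaces and applicability of Proposition~\ref{spectraeq}) relies on having a Thom-type sheaf structure. The rest is a formal consequence of the general principle that stable equivalences between motivically fibrant $S^1$-spectra are sectionwise level equivalences.
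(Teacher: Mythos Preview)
Your triangle does not exist on the level of motivic spaces. The sheaf $T=\A^1/(\A^1-0)$ is simplicially discrete, while $S^1\wedge\Gm^{\wedge 1}$ has only the basepoint in simplicial degree~$0$; hence any map of simplicial sheaves between them, in either direction, is constant. The equivalence $T\simeq S^1\wedge\Gm^{\wedge 1}$ in the motivic homotopy category is realized by the \emph{span} $T\leftarrow\A^1//\Gm\to S^1\wedge\Gm^{\wedge 1}$, not by a triangle. Consequently there is no third map to which you can apply the two-out-of-three property, and establishing the ``analog of Lemma~\ref{a1gm}'' for $(S^1\wedge\Gm^{\wedge 1})^{\wedge i}$ would already be tantamount to proving the lemma itself. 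You in fact flag this analog as the main technical obstacle; but since Proposition~\ref{spectraeq} is stated for Thom-type sheaves $X/(X-Z)$ and $E_n\wedge(S^1\wedge\Gm^{\wedge 1})^{\wedge i}$ is a genuinely simplicial object, the proof of Lemma~\ref{a1gm} does not transfer verbatim.

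The paper avoids the detour through $T^i$ altogether. It reduces (via Corollary~\ref{factscor} and Lemma~\ref{factmotive}, as in the proof of Proposition~\ref{efrcompar}) to showing that
\[
\uhom(\PP^{\wedge n},M_{fr}(E_n\wedge(\A^1//\Gm)^{\wedge i}))\longrightarrow
\uhom(\PP^{\wedge n},M_{fr}(E_n\wedge(S^1\wedge\Gm^{\wedge 1})^{\wedge i}))
\]
is a stable local weak equivalence. Since both sides are locally connected (again by the proof of Proposition~\ref{efrcompar}), this is a homology question; by Lemma~\ref{zfrhomol} one passes to the $\ZF^{\PP^{\wedge n}}$-complexes, and the required local quasi-isomorphism
\[
C_*\ZF^{\PP^{\wedge n}}(E_n\wedge(\A^1//\Gm)^{\wedge i})\longrightarrow
C_*\ZF^{\PP^{\wedge n}}(E_n\wedge(S^1\wedge\Gm^{\wedge 1})^{\wedge i})
\]
is then obtained as in~\cite[8.2]{GPMain}. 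Your final paragraph, deducing a sectionwise level equivalence from a stable local one between motivically fibrant $S^1$-spectra via Proposition~\ref{efrcompar}, is correct and is exactly how the argument concludes.
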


\begin{proof}
The proof of Proposition~\ref{efrcompar} shows that it suffices to
prove the assertion for $d\leqslant 1$ and that the map of
$\uhom(\PP^{\w n},M_{fr}(E_n\w(\A^1//\Gm)^{\w i}))\to
\uhom(\PP^{\w n},M_{fr}(E_n\w(S^1\w\Gm^{\w 1})^{\w i})$ is a local
stable weak equivalence for any $n\geqslant 0$. Since a map of
locally connected spectra is a stable local equivalence if and only
if so is the map on homology, then using Lemma~\ref{zfrhomol} our assertion
reduces to showing that the map of complexes
$C_*\ZF^{\PP^{\w n}}(E_n\w(\A^1//\Gm)^{\w i})\to
C_*\ZF^{\PP^{\w n}}(E_n\w(S^1\w\Gm^{\w 1})^{\w i})$ is locally a
quasi-isomorphism. The latter fact is proved similar
to~\cite[8.2]{GPMain}.
\end{proof}

We shall need the following fact which was proven in~\cite[Section~12]{GPMain}.

\begin{lem}\label{sublem}
Let $\mathcal X$ be an $\A^1$-local motivic $S^1$-spectrum whose
presheaves of stable homotopy groups are homotopy invariant
quasi-stable radditive presheaves with framed correspondences. Suppose $\mathcal
X^f$ is a local stable fibrant replacement of $\mathcal X$. Then the map
of spectra $\underline{\Hom}(\mathbb G_m^{\w 1},\mathcal
X)\to\underline{\Hom}(\mathbb G_m^{\w 1},\mathcal X^f)$ is a local stable
equivalence.
\end{lem}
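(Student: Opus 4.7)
The strategy is to identify the map on Nisnevich sheaves of stable homotopy groups with the Voevodsky contraction of the stable homotopy sheaves of $\mathcal X$, and then invoke the strict $\A^1$-invariance theorem from~\cite{GPPresheaves} together with the Cancellation Theorem of~\cite{AGP}.

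First I would record the purely presheaf-level identification: for any $S^1$-spectrum of pointed simplicial sheaves $\mathcal Y$, the internal mapping object with $\Gm^{\w 1}$ is compatible with $\pi_n$ in the sense that
\[\pi_n\bigl(\uhom(\Gm^{\w 1},\mathcal Y)\bigr)(U)=\mathrm{fib}\!\left(\pi_n(\mathcal Y)(U\times\Gm)\to\pi_n(\mathcal Y)(U)\right)=\pi_n(\mathcal Y)_{-1}(U),\]
where $F_{-1}$ denotes the Voevodsky contraction of a pointed presheaf $F$. Applying this with $\mathcal Y=\mathcal X$ and $\mathcal Y=\mathcal X^f$ identifies the comparison map, on presheaves of stable homotopy groups, with the natural transformation $\pi_n(\mathcal X)_{-1}\to\pi_n(\mathcal X^f)_{-1}$.

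Next I would use that $\mathcal X\to\mathcal X^f$ is a local stable equivalence, so the induced map $\pi_n(\mathcal X)_{nis}\xrightarrow{\cong}\pi_n(\mathcal X^f)_{nis}$ is an isomorphism of Nisnevich sheaves; by the hypotheses combined with~\cite{GPPresheaves} these sheaves are strictly $\A^1$-invariant radditive framed sheaves. Working stalkwise on Henselian local schemes, where stable fibrancy forces $\pi_n(\mathcal X^f)(U)=\pi_n(\mathcal X)_{nis}(U)$, the induced map $\pi_n(\mathcal X^f)_{-1}\to(\pi_n(\mathcal X)_{nis})_{-1}$ is an isomorphism. Hence the comparison reduces to showing that the sheafification of $\pi_n(\mathcal X)_{-1}$ agrees with $(\pi_n(\mathcal X)_{nis})_{-1}$.

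This commutation of contraction with Nisnevich sheafification is the framed analogue of a classical theorem of Voevodsky, and is the main obstacle of the proof. Its verification rests on the Cancellation Theorem~\cite{AGP} applied to framed motives of $\Gm$-loops, which lets one compare $\pi_n$ of the framed motive of a $\Gm^{\w 1}$-desuspended object with the contraction, together with the strict invariance theorem of~\cite{GPPresheaves} to ensure that the resulting sheaves remain strictly $\A^1$-invariant — so that sheafification commutes with the kernel formed by restriction along the unit section of $\Gm$. This is precisely the content of Section~12 of~\cite{GPMain}. Once this commutation is granted, assembling the identifications above yields an isomorphism on Nisnevich sheaves of stable homotopy groups of $\uhom(\Gm^{\w 1},\mathcal X)\to\uhom(\Gm^{\w 1},\mathcal X^f)$, which implies that the map is a local stable equivalence.
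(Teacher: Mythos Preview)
Your proposal is correct and lands in the same place as the paper: the paper does not argue this lemma at all but simply records it as ``proven in~\cite[Section~12]{GPMain}'', and your sketch culminates in exactly that citation. One minor remark: the invocation of the Cancellation Theorem~\cite{AGP} is not needed for the step you isolate (commutation of Nisnevich sheafification with $(-)_{-1}$); that commutation is a direct consequence of the theory in~\cite{GPPresheaves} for quasi-stable radditive $\A^1$-invariant framed presheaves, which is precisely what~\cite[Section~12]{GPMain} packages.
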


\begin{prop}\label{cancel}
The following statements are true:

$(1)$ Suppose $Z\to X$ is a closed embedding of smooth varieties of
codimension $d$. Then the natural map
   $$\alpha:\uhom(\PP^{\w i},M_{fr}(X/X-Z))\to\uhom(\PP^{\w i}\w\Gm^{\w 1},M_{fr}((X/X-Z)\w\Gm^{\w 1}))$$
is a stable local weak equivalence of $S^1$-spectra for all
$i\leqslant d$.

$(2)$ If $E$ is a Thom $T$-spectrum with the bounding constant
$d\leqslant 1$, then the natural map
   $$\beta:M_{E}(X)\to\uhom(\Gm^{\w 1},M_{E}(X_+\w\Gm^{\w 1})),\quad X\in \Sm_k,$$
is a stable local weak equivalence of $S^1$-spectra.

$(3)$ If $E$ is a Thom $T$-spectrum with the bounding constant
$d\leqslant 1$ and $M_E(X)_f$ is a stable local fibrant replacement
of $M_E(X)$, $X\in \Sm_k$, then $M_E(X)_f$ is a motivically fibrant
$S^1$-spectrum.
\end{prop}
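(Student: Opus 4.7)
The plan is to prove the three parts in order: part (1) is the geometric core and relies on a Mayer--Vietoris reduction to trivial embeddings; part (2) follows by applying (1) levelwise in the layer decomposition of $M_E(X)$; and part (3) is a motivic fibrancy argument that combines these with the framed-presheaf machinery of~\cite{GPPresheaves}.

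For (1), I would induct on the size of a Zariski cover. By Lemma~\ref{cover}, write $X = \bigcup_{j=1}^N X_j$ with each $X_j \cap Z \hookrightarrow X_j$ trivial, and induct on $N$. In the base case $N=1$, Lemma~\ref{ZTd} identifies $X/(X-Z) \cong Z_+ \wedge T^d$; via the associativity $\PP^{\w i}\w T^{d-i} \cong T^d$ and Lemma~\ref{spectraeqprelim} (applied with $n = d - i$), the map $\alpha$ simplifies, stably and in positive $S^1$-degrees, to the canonical map
\[
M_{fr}(Z_+ \w T^{d-i}) \longrightarrow \uhom(\Gm^{\w 1}, M_{fr}(Z_+ \w T^{d-i} \w \Gm^{\w 1})),
\]
which is a stable local equivalence by the Cancellation Theorem for framed motives~\cite{AGP} (the same input used in the proof of Lemma~\ref{spectraeqprelim}). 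For the induction step, split $X = X_1 \cup X_2$ with $X_2 \cap Z \hookrightarrow X_2$ trivial and $X_1$ covered by $N-1$ charts; Proposition~\ref{MV} produces a homotopy pushout square on both source and target of $\alpha$ (for the target one uses that $\PP^{\w i}\w\Gm^{\w 1}$, like $\PP^{\w i}$, is flasque cofibrant, so $\uhom$ preserves homotopy pushouts). The induction hypothesis applied to $Y_{12}, Y_1, Y_2$ then forces $\alpha$ to be a stable local equivalence on $Y$.

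For (2), Corollary~\ref{factscor} and Lemma~\ref{factmotive} yield $M_E(X) = \colim_n \uhom(\PP^{\w n}, M_{fr}(X_+\w E_n))$, and analogously for $M_E(X_+\w\Gm^{\w 1})$. Since $d \leqslant 1$, the closed subschemes $Z_{n,i}$ defining $E_n$ satisfy $\mathrm{codim}_{V_{n,i}}(Z_{n,i}) \geqslant n$, and smashing with $X_+$ preserves codimension; so part (1) applies at each finite stage of the colimit with $i = n$ and codimension $\geqslant n$. Taking the colimit in $n$, and using that $\uhom(\Gm^{\w 1}, -)$ commutes with filtered colimits of presheaves of $S^1$-spectra, identifies $\beta$ as a stable local equivalence. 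For (3), Corollary~\ref{specialgamma} (applied at each level of the layer colimit) shows that $M_E(X)$ is sectionwise a positively fibrant Segal $S^1$-spectrum whose presheaves of stable homotopy groups carry a quasi-stable radditive framed structure; the Suslin complex in the definition makes these presheaves $\A^1$-invariant. By the main result of~\cite{GPPresheaves}, their Nisnevich sheafifications are strictly $\A^1$-invariant, and \cite[7.1]{GPMain} then gives that the stable local fibrant replacement $M_E(X)_f$ is motivically fibrant.

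The main obstacle I expect is the edge case $i = d$ in (1): Lemma~\ref{spectraeqprelim} only furnishes a stable (not levelwise) equivalence in this degree, so one must carefully confirm that the trivial-case simplification still produces a stable local equivalence of $S^1$-spectra, and that this property propagates through the Mayer--Vietoris induction. A secondary subtlety in (2) is verifying that passing $\uhom(\Gm^{\w 1},-)$ past the filtered colimit in $n$ is compatible with stable local equivalences; this is handled by the compactness of $\Gm$ on the presheaf level, combined with Lemma~\ref{sublem} to pass to fibrant replacements when needed.
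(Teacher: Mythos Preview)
Your proposal is correct and uses the same essential ingredients as the paper (Mayer--Vietoris reduction to trivial embeddings, the Cancellation Theorem of~\cite{AGP}, the layer decomposition, and~\cite{GPPresheaves}), but the organization differs in parts (1) and (3).

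For (1), the paper first establishes the case $i=0$ in full generality (reducing via Proposition~\ref{MV} and Lemma~\ref{sublem} to the map $M_{fr}(Z_+\w T^d)\to\uhom(\Gm^{\w 1},M_{fr}(Z_+\w T^d\w\Gm^{\w 1}))$, then comparing with $(\A^1//\Gm)^{\w d}$ and invoking~\cite{AGP}). Only afterward does it treat general $i\leqslant d$: it passes to levelwise local fibrant replacements $M_{fr}(X/X-Z)_f$, uses Proposition~\ref{spectraeq} to control the vertical maps, and observes that after fibrant replacement the $i=0$ result becomes a sectionwise equivalence, which survives applying $\uhom(\PP^{\w i},-)$. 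Your route runs the MV induction once for all $i$ simultaneously, using Lemma~\ref{spectraeqprelim} with $n=d-i$ in the base case. This is legitimate, but two points deserve care: the phrase ``associativity $\PP^{\w i}\w T^{d-i}\cong T^d$'' is not literally correct (there is no such sheaf isomorphism); what you actually need is the local equivalence $M_{fr}(Z_+\w T^{d-i})\to\uhom(\PP^{\w i},M_{fr}(Z_+\w T^{d}))$ supplied by Lemmas~\ref{ploopseasy} and~\ref{spectraeqprelim}. And in the induction step, saying that $\PP^{\w i}\w\Gm^{\w 1}$ is flasque cofibrant only tells you that $\uhom(\PP^{\w i}\w\Gm^{\w 1},-)$ preserves an existing homotopy pushout; you still need that the square of spectra $M_{fr}(Y_{\bullet}\w\Gm^{\w 1})$ is itself a homotopy pushout. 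The paper handles this (for $i=0$) via Lemma~\ref{sublem}; in your setup you can obtain it by applying Proposition~\ref{MV} to $Y_{\bullet}\w(\Gm)_+$ and taking the cofiber along $Y_{\bullet}\to Y_{\bullet}\w(\Gm)_+$.

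For (3), the paper again works layerwise and uses Mayer--Vietoris to reduce $\A^1$-invariance of $\uhom(\PP^{\w k},M_{fr}(X_+\w E_k))_f$ to that of $\uhom(\PP^{\w k},M_{fr}(X_+\w T^k))_f\cong M_{fr}(X)_f$, which is known by~\cite[7.1]{GPMain}. Your direct argument via the framed presheaf structure of $\pi_*(M_E(X))$ and~\cite{GPPresheaves} is also valid, but note that the proposition does not assume $E$ is symmetric, so the framed transfers on $M_E(X)$ should be read off the layers $\uhom(\PP^{\w k},M_{fr}(X_+\w E_k))$ (which inherit them from $M_{fr}$) rather than from the pairing $\Fr_n\times\Fr_m^E\to\Fr_{n+m}^E$ defined only in the symmetric case.
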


\begin{proof}
(1). First suppose $i=0$. Without loss of generality we assume that
$X/X-Z=Z_+\w T^d$, because we can apply the Mayer--Vietoris sequence
of Proposition~\ref{MV} to reduce the general case to this
particular one. Indeed, $M_{fr}(X/X-Z)$ and $M_{fr}((X/X-Z)\w\Gm^{\w
1})$ are homotopy pushouts (=homotopy pullbacks) of framed motives
of the form $M_{fr}(Z_+\w T^d)$ and $M_{fr}(Z_+\w T^d\w\Gm^{\w 1})$
respectively. Using Lemma~\ref{sublem} the functor $\uhom(\Gm^{\w 1},-)$ respects
homotopy pullbacks (=pushouts) for framed motives in question.
Therefore our assertion reduces to showing that the natural map of
$S^1$-spectra
   $$M_{fr}(Z_+\w T^d)\to\uhom(\Gm^{\w 1},M_{fr}(Z_+\w T^d\w\Gm^{\w 1}))$$
is a stable local weak equivalence. It fits into a commutative
diagram
   $$\begin{xymatrix}{
      M_{fr}(Z_+\w T^d)\ar[r]\ar[d]&\uhom(\Gm^{\w 1},M_{fr}(Z_+\w T^d\w\Gm^{\w 1}))\ar[d]\\
      M_{fr}(Z_+\w(\A^1//\Gm)^{\w d})\ar[r]&\uhom(\Gm^{\w 1},M_{fr}(Z_+\w(\A^1//\Gm)^{\w d}\w\Gm^{\w 1}))
     }\end{xymatrix}$$
in which the lower arrow is a stable local weak equivalence by the
Cancellation Theorem for framed motives of~\cite{AGP}. The vertical
arrows are stable local weak equivalences by~\cite[1.1]{GNP}
and Lemma~\ref{sublem}. We see that the
upper arrow is a stable local weak equivalence, as required.

Suppose now $i\leqslant d$. Consider a commutative diagram of
$S^1$-spectra
   $$\begin{xymatrix}{
      \uhom(\PP^{\w i},M_{fr}(X/X-Z))\ar[r]\ar[d]&\uhom(\PP^{\w i}\w\Gm^{\w 1},M_{fr}((X/X-Z)\w\Gm^{\w 1}))\ar[d]\\
      \uhom(\PP^{\w i},M_{fr}(X/X-Z)_f)\ar[r]&\uhom(\PP^{\w i}\w\Gm^{\w 1},M_{fr}((X/X-Z)\w\Gm^{\w 1})_f)
     }\end{xymatrix}$$
where ``$f$" refers to a level local fibrant replacement. By
Proposition~\ref{spectraeq} the left vertical map is a levelwise
local weak equivalence in positive degrees, and hence a stable local
weak equivalence. Since $M_{fr}(X/X-Z)_f,M_{fr}((X/X-Z)\w\Gm^{\w
1})_f$ are motivically fibrant spectra in positive degrees, the
lower arrow is a sectionwise weak equivalence in positive degrees by
the first assertion and Proposition~\ref{spectraeq}, and hence a
sectionwise stable weak equivalence. The right vertical map is a
levelwise local weak equivalence in positive degrees by Lemma~\ref{sublem} and
Proposition~\ref{spectraeq}, hence it is a stable local weak
equivalence. We see that the upper arrow is a stable local weak
equivalence.

(2). If $E$ is a Thom $T$-spectrum with the bounding constant
$d\leqslant 1$, then the natural map
   $$\beta:M_{E}(X)\to\uhom(\Gm^{\w 1},M_{E}(X_+\w\Gm^{\w 1}))$$
is isomorphic to the sequential colimit of maps
   $$\beta_k:M_{L_k E}(X)\to\uhom(\Gm^{\w 1},M_{L_k E}(X_+\w\Gm^{\w 1})).$$
Every such map is isomorphic to
   $$\uhom(\PP^{\w k},M_{fr}(X_+\w E_k))\to\uhom(\PP^{\w k}\w\Gm^{\w 1},M_{fr}(X_+\w E_k\w\Gm^{\w 1})).$$
It follows from assertion~(1) that each $\beta_k$ is a stable local
weak equivalence of $S^1$-spectra, and hence so is $\beta$ as a
sequential colimit of stable local weak equivalences of
$S^1$-spectra.

(3). We can compute a stable local fibrant replacement $M_E(X)_f$ of
$M_E(X)$ as the spectrum $\colim_k M_{L_k
E}(X)_f=\colim_k\uhom(\PP^{\w k},M_{fr}(X_+\w E_k))_f$ (we use Lemma~\ref{factmotive} here), because a
sequential colimit of fibrant spectra is fibrant in the flasque
local stable model model structure of $S^1$-spectra. So it suffices
to show that each spectrum $\uhom(\PP^{\w k},M_{fr}(X_+\w E_k))_f$
is $\A^1$-invariant. By the Mayer--Vietoris sequence of
Proposition~\ref{MV} this reduces to showing that every spectrum of
the form $\uhom(\PP^{\w k},M_{fr}(X_+\w T^k))_f$ is $\A^1$-invariant.
But the latter is obvious because $\uhom(\PP^{\w k},M_{fr}(X_+\w
T^k))_f\cong M_{fr}(X)_f$ and $M_{fr}(X)_f$ is $\A^1$-invariant
by~\cite[7.1]{GPMain}.
\end{proof}

In what follows by bispectra we shall mean $(S^1,\Gm^{\w
1})$-bispectra in the category of motivic spaces. We are now in a
position to prove the main result of the section. It gives an
explicit fibrant resolution of a symmetric Thom spectrum in the
category of bispectra.

\begin{thm}\label{mainbisp}
Suppose $X\in \Sm_k$ and $E$ is a symmetric Thom $T$-spectrum with
the bounding constant $d$ and contractible alternating group action.

$(1)$ If $d=1$ then the $(S^1,\Gm^{\w 1})$-bispectrum
\[M_E^{\mathbb G}(X)_f:=(M_E(X)_f,M_E(X_+\w\Gm^{\w 1})_f,M_E(X_+\w\Gm^{\w 2})_f,\ldots)\]
is motivically fibrant and represents the $T$-spectrum $X_+\w E$ in
the category of bispectra, where ``$f$'' refers to stable local
fibrant replacements of $S^1$-spectra.

$(2)$ If $d<1$ then the $(S^1,\Gm^{\w 1})$-bispectrum
\[M_E^{\mathbb G}(X)_f:=(M_E(X)_f,M_E(X_+\w\Gm^{\w 1})_f,M_E(X_+\w\Gm^{\w 2})_f,\ldots)\]
is motivically fibrant and represents the $T$-spectrum $X_+\w E$ in
the category of bispectra, where ``$f$'' refers to level local
fibrant replacements of $S^1$-spectra.

$(3)$ If $d>1$ then the $(S^1,\Gm^{\w 1})$-bispectrum
\[\Omega_{S^{1}\w\Gm^{\w 1}}^{d-1}((M_{E[d-1]}(X)_f,M_{E[d-1]}(X_+\w\Gm^{\w 1})_f,
M_{E[d-1]}(X_+\w\Gm^{\w 2})_f,\ldots))\]
is motivically fibrant and represents the $T$-spectrum $X_+\w E$ in
the category of bispectra, where ``$f$'' refers to stable local
fibrant replacements of $S^1$-spectra. Here $E[d-1]$ stands for the
$(d-1)$-th shift of $E$ in the sense of Definition~\ref{shift}.
Another equivalent model for the $T$-spectrum $X_+\w E$ in the
category of bispectra is given by
\[\Omega_{S^{1}\w\Gm^{\w 1}}^{d-1}((M_{T^{d-1}\w E}(X)_f,
M_{T^{d-1}\w E}(X_+\w\Gm^{\w 1})_f,M_{T^{d-1}\w E}(X_+\w\Gm^{\w
2})_f,\ldots)).\] This bispectrum is motivically fibrant and ``$f$''
refers to stable local fibrant replacements of $S^1$-spectra.
\end{thm}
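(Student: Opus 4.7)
The plan is to establish part~(1) ($d=1$) as the main case and then deduce parts~(2) and~(3) by straightforward reductions.

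For part~(1), motivic fibrancy of the bispectrum $M_E^{\mathbb G}(X)_f$ requires two things: each level $M_E(X_+\w\Gm^{\w i})_f$ is a motivically fibrant $S^1$-spectrum, and the adjoint bonding maps $M_E(X_+\w\Gm^{\w i})_f\to\uhom(\Gm^{\w 1},M_E(X_+\w\Gm^{\w i+1})_f)$ are sectionwise stable equivalences. The first is supplied by Proposition~\ref{cancel}(3), after observing that its argument extends from $X\in\Sm_k$ to $X_+\w\Gm^{\w i}$ via the same Mayer--Vietoris reduction used in the proof of that proposition (or by induction on $i$ using the cofiber sequence $(\Gm)_+\to\A^1_+\to\A^1//\Gm$). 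The second follows from Proposition~\ref{cancel}(2) applied to the $\Gm^{\w i}$-suspended spaces, combined with Lemma~\ref{sublem} to transport the stable equivalence through the fibrant replacement.

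For the representation claim, I would compare with the $\PPo$-spectrum $C_*\Fr^E(X_+\w S_T)^f=C_*\Fr^{X_+\w E}(S_T)^f$, which by Theorem~\ref{main} applied to $X_+\w E$ (symmetric Thom spectrum with bounding constant $1$ and contractible alternating action, all inherited from $E$) is motivically fibrant from level $1$ onward and represents $X_+\w E$ in $SH(k)$. The bispectrum $M_E^{\mathbb G}(X)_f$ is obtained from this $\PPo$-spectrum via the splitting $T\simeq S^1\w\Gm^{\w 1}$: its motivic space at bi-level $(i,j)$ is $C_*\Fr^E(X_+\w\Gm^{\w i}\w S^j)^f$, and Lemma~\ref{efrcompar2} (with the $(\A^1//\Gm)^{\w i}$-model as the intermediate object) together with the Segal machinery converting $\Gamma$-space values into spectrum levels identifies its diagonal with the levels $C_*\Fr^E(X_+\w T^i)^f$ of the $\PPo$-spectrum. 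Combined with the already-established motivic fibrancy, this identifies the bispectrum with $X_+\w E$ in the stable motivic homotopy category of bispectra.

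Parts~(2) and~(3) follow by reduction. For $d<1$ in part~(2), the relevant motivic spaces are more highly connected (Proposition~\ref{efrcompar} together with the arguments behind Lemma~\ref{pi0} apply with room to spare), so level local fibrant replacements already produce motivically fibrant $S^1$-spectra and the proof of part~(1) adapts verbatim. For $d>1$ in part~(3), by Example~\ref{ThomSpectraex} both $E[d-1]$ and $T^{d-1}\w E$ are symmetric Thom $T$-spectra with bounding constant $1$ and contractible alternating action, so part~(1) applies to produce motivically fibrant bispectra representing $X_+\w E[d-1]$ and $X_+\w T^{d-1}\w E$ respectively. Applying $\Omega_{S^1\w\Gm^{\w 1}}^{d-1}$ levelwise (which preserves motivic fibrancy of bispectra) and invoking the identifications $E\simeq\Omega_{S^1\w\Gm^{\w 1}}^{d-1}(E[d-1])$ and $E\simeq\Omega_{S^1\w\Gm^{\w 1}}^{d-1}(T^{d-1}\w E)$ in $SH(k)$ delivers the representation.

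The main obstacle is the comparison step in paragraph two: making precise how the Segal $\Gamma$-space structure delivers the $S^1$-direction of the bispectrum, while the $T$-indexing of $C_*\Fr^E(X_+\w S_T)^f$ encodes $S^1$ and $\Gm^{\w 1}$ jointly, and reconciling the two via $T\simeq S^1\w\Gm^{\w 1}$. Lemma~\ref{efrcompar2} and the Cancellation Theorem inputs embedded in Proposition~\ref{cancel} are the technical heart of this reconciliation.
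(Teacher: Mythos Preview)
Your proposal is correct and follows essentially the same route as the paper: motivic fibrancy from Proposition~\ref{cancel} and Lemma~\ref{sublem}, representation by passing from $C_*\Fr^E(S_T)^f$ (via Theorem~\ref{main}) through the intermediate $(\A^1//\Gm)$-model to the $S^1\wedge\Gm^{\wedge 1}$-indexed diagonal, and the reductions in (2) and (3) are exactly as you describe. The paper makes the comparison chain slightly more explicit by naming the category equivalences $SH_T(k)\simeq SH_{\widetilde T}(k)\simeq SH_{S^1\wedge\Gm^{\wedge 1}}(k)$ and invoking Lemma~\ref{a1gm} and Proposition~\ref{efrcompar} at the first step before Lemma~\ref{efrcompar2} at the second, and for part~(2) it cites Lemma~\ref{omegas} rather than Proposition~\ref{efrcompar} for why levelwise local replacement suffices; but these are the same ingredients you identify.
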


\begin{proof}
(1). The fact that the bispectrum $M_E^{\mathbb G}(X)_f$ is
motivically fibrant follows from Proposition~\ref{cancel} and
Lemma~\ref{sublem}. It remains to show
that it represent $E$ in the category of bispectra.

Without loss of generality we may assume $X=pt$, because we can
replace $E$ with $X_+\w E$. It follows from Theorem~\ref{main} that
$E$ is isomorphic in $SH_T(k)$ to $C_*\Fr^E(S_T)^f$. The latter is a
$T$-spectrum (see Definition~\ref{FrnE}). It is positively fibrant
by Theorem~\ref{main}. The motivic equivalence $\widetilde
T:=\A^1//\Gm\to T$ induces an equivalence of categories
$SH_T(k)\xrightarrow{\simeq}SH_{\widetilde T}(k)$. It takes $E$ to a
$\widetilde T$-spectrum isomorphic to $C_*\Fr^E(S_T)^f$. By
Lemma~\ref{a1gm} and Proposition~\ref{efrcompar} the natural map
$C_*\Fr^E(S_{\widetilde T})^f\to C_*\Fr^E(S_T)^f$ is a sectionwise
weak equivalence in positive degrees, where $S_{\widetilde
T}=(S^0,\widetilde T,\widetilde T^{\w 2},\ldots)$. By the sublemma
on p.~\pageref{leveleq} $C_*\Fr^E(S_{\widetilde T})^f$ is a
motivically fibrant $\widetilde T$-spectrum in positive degrees
(notice that each space $C_*\Fr^E(S_{\widetilde T})^f_{i>0}$ is
motivically fibrant by Proposition~\ref{efrcompar}). We see that
$C_*\Fr^E(S_{\widetilde T})^f$ is a positively fibrant $\widetilde
T$-spectrum representing $E$ in $SH_{\widetilde T}(k)$. Consider now
the canonical motivic weak equivalence $\widetilde T\to S^1\w\Gm^{\w
1}$. For the same reasons $C_*\Fr^E(S_{S^1\w\Gm^{\w 1}})^f$ is a
positively fibrant $\widetilde T$-spectrum which is sectionwise
weakly equivalent to $C_*\Fr^E(S_{\widetilde T})^f$. We use here
Lemma~\ref{efrcompar2} as well. It follows that
$C_*\Fr^E(S_{S^1\w\Gm^{\w 1}})^f$ is a positively fibrant
$S^1\w\Gm^{\w 1}$-spectrum representing $E$ in $SH_{S^1\w\Gm^{\w
1}}(k)$. It remains to observe that this spectrum is equivalent to
the diagonal spectrum for the bispectrum
   $$M_E^{\mathbb G}(X):=(M_E(X),M_E(X_+\w\Gm^{\w 1}),M_E(X_+\w\Gm^{\w 2}),\ldots).$$

(2). This immediately follows from (1) if we observe that a
levelwise local fibrant replacement of each weighted $E$-framed
motive $M_E(X_+\w\Gm^{\w n})_f$, $n\geqslant 0$, is a motivically
fibrant $S^1$-spectrum. To see the latter, we repeat the proof of
Proposition~\ref{cancel}(3) and apply Lemma~\ref{omegas}.

(3). This follows from~(1) if we observe that $E[d-1]$ and
$T^{d-1}\w E$ are symmetric Thom $T$-spectra with the bounding
constant $d=1$.
\end{proof}

We finish the section by proving the following useful result.

\begin{thm}\label{mainbispapp}
Suppose $E$ is a symmetric Thom $T$-spectrum with
the bounding constant $d\leqslant 1$ and contractible alternating group action.

$(1)$ For every elementary Nisnevich square
   $$\xymatrix{U'\ar[r]\ar[d]&X'\ar[d]\\
                       U\ar[r]&X}$$
the square of $S^1$-spectra
   $$\xymatrix{M_{E}(U')\ar[r]\ar[d]&M_{E}(X')\ar[d]\\
                       M_{E}(U)\ar[r]&M_{E}(X)}$$
is homotopy cartesian locally in the Nisnevich topology. 

$(2)$ The natural map $M_{E}(X\times\A^1)\to
M_{E}(X)$ is a stable local weak equivalence of
$S^1$-spectra.

The same is also true for linear $E$-framed motives.
\end{thm}

\begin{proof}
(1). The square of motivic $T$-spectra
   $$\xymatrix{U'_+\wedge E\ar[r]\ar[d]&X'_+\wedge E\ar[d]\\
                       U_+\wedge E\ar[r]&X_+\wedge E}$$
is homotopy cartesian in the stable motivic model structure. By Theorem~\ref{mainbisp}
it induces a homotopy cartesian square of motivically fibrant bispectra
   $$\xymatrix{M_E^{\mathbb G}(U')_f\ar[r]\ar[d]&M_E^{\mathbb G}(X')_f\ar[d]\\
                       M_E^{\mathbb G}(U)_f\ar[r]&M_E^{\mathbb G}(X)_f}.$$
Here `$f$' refers to local replacements in each weight (see Theorem~\ref{mainbisp}). Passing to weight zero
motivic $S^1$-spectra, one gets a homotopy cartesian square of motivically fibrant $S^1$-spectra
   $$\xymatrix{M_E(U')_f\ar[r]\ar[d]&M_E(X')_f\ar[d]\\
                       M_E(U)_f\ar[r]&M_E(X)_f}$$
Our statement now follows.

(2). It is proven similarly to (1) if we start with the stable motivic equivalence of $T$-spectra
$(X\times\mathbb A^1)_+\wedge E\to X_+\wedge E$.

The same statements for linear $E$-framed motives follow from (1), (2)
and Lemma~\ref{homolmotive}.
\end{proof}

\section{Topological Thom spectra with finite coefficients}

In this section we give a topological application of Theorem~\ref{mainbisp}. Namely,
many important topological Thom spectra like $MU$ can be obtained as the the realization
of their motivic counterparts if the base field is $\mathbb C$. We shall prove below that
the stable homotopy groups of such topological Thom spectra with finite coefficients can
be computed by means of the stable homotopy groups with finite coefficients of weight zero
of the associated $E$-framed motive, which is an {\it explicit\/} positively fibrant
$S^1$-spectrum by the very construction.
We first need a couple of useful lemmas.

By $f_0(SH(k))$ we shall mean the full triangulated subcategory of effective $T$-spectra, i.e.,
the subcategory which is compactly generated by the suspension $T$-spectra of the smooth algebraic varieties.
We shall also write $f_\ell(SH(k))$, $\ell\in\mathbb Z$, to denote $f_0(SH(k))\wedge T^{\ell}$.

\begin{lem}\label{fdsh}
Suppose $Z\to X$ is a closed embedding of smooth varieties of codimension $\ell$. Then the suspension
$T$-spectrum $\Sigma^\infty_T(X/X-Z)$ of the sheaf $X/X-Z$ belongs to $f_\ell(SH(k))$.
\end{lem}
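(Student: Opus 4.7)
The plan is to reduce to the case of a trivial embedding via the Zariski cover from Lemma~\ref{cover}, and then induct on the number of open pieces using a Mayer--Vietoris cofiber sequence. The subcategory $f_\ell(SH(k)) = f_0(SH(k))\wedge T^\ell$ is a localizing (in particular triangulated) subcategory of $SH(k)$, so it is closed under cofibers, extensions, and small coproducts; this is what will carry the induction.

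First I would handle the base case of a trivial embedding. If $Z\to X$ is trivial of codimension $\ell$, then by Lemma~\ref{ZTd} there is an isomorphism of Nisnevich sheaves $X/(X-Z)\cong Z_+\wedge T^\ell$. Hence $\Sigma^\infty_T(X/X-Z)\cong \Sigma^\infty_T(Z_+)\wedge T^\ell$, and since $\Sigma^\infty_T(Z_+)\in f_0(SH(k))$ by definition, the smash with $T^\ell$ lands in $f_\ell(SH(k))$.

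For the induction step, apply Lemma~\ref{cover} to get a finite open cover $X=\bigcup_{i=1}^n X_i$ with each $X_i\cap Z\to X_i$ trivial, and induct on $n$. Write $X=X_1\cup X_2$ where $X_1$ is covered by $n-1$ trivial pieces and $X_2$ is trivial; then $X_{12}=X_1\cap X_2$ is also covered by $n-1$ trivial pieces (intersecting the cover of $X_1$ with $X_2$). With $Y,Y_1,Y_2,Y_{12}$ as in Lemma~\ref{pushout}, that lemma gives a Nisnevich pushout square of sheaves, which yields a homotopy cofiber sequence
\[
\Sigma^\infty_T Y_{12}\to \Sigma^\infty_T Y_1\vee \Sigma^\infty_T Y_2\to \Sigma^\infty_T Y
\]
in $SH(k)$. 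By the inductive hypothesis the first two terms lie in $f_\ell(SH(k))$ (applied to the embeddings $Z\cap X_i\to X_i$ and $Z\cap X_{12}\to X_{12}$, all of codimension $\ell$), and the triangulated subcategory $f_\ell(SH(k))$ is closed under cofibers, so $\Sigma^\infty_T Y\in f_\ell(SH(k))$ as well.

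The main (mild) obstacle is to confirm that the Nisnevich pushout of Lemma~\ref{pushout} genuinely is a homotopy cocartesian square in $SH(k)$; this is the standard fact that a Zariski (hence Nisnevich) cover gives a homotopy pushout after suspension, which can be invoked directly from the Morel--Voevodsky setup. Otherwise every step is formal, using only that $f_\ell(SH(k))$ is triangulated and absorbs wedges.
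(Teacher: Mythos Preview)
Your proposal is correct and follows essentially the same approach as the paper: base case via Lemma~\ref{ZTd}, induction on the number of trivializing opens from Lemma~\ref{cover}, and the cofiber sequence from the pushout square of Lemma~\ref{pushout}. The only cosmetic difference is that the paper observes directly that $Z_{12}\to X_{12}$ is itself trivial (since $X_{12}$ is open in the trivial $X_2$), whereas you cover $X_{12}$ by $n-1$ trivial pieces; both variants work.
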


\begin{proof}
If $X/X-Z=Z_+\w T^\ell$ then our assertion is trivial. By using
induction, we can cover $X$ by open subsets $X_1,X_2$ such that
$Z_2=Z\cap X_2\to X_2$ is a trivial embedding and $X_1$ is covered
by $n-1$ open trivial pieces. Then for $X_{12}=X_1\cap X_2$ the
embedding $Z_{12}:=Z\cap X_{12}\to X_{12}$ is trivial. Denote by
$Y:=X/X-Z$ and by $Y_i:=X_i/X_i-Z_i$. Then $\Sigma^\infty_T
Y_{12},\Sigma^\infty_T Y_2\in f_\ell(SH(k))$ and $\Sigma^\infty_T
Y_1\in f_\ell(SH(k))$ by induction hypothesis. By
Lemma~\ref{pushout} $Y$ is a pushout of sheaves embeddings
$Y_1\hookleftarrow Y_{12}\hookrightarrow Y_2$. Therefore we have a
triangle in $SH(k)$
   $$\Sigma^\infty_T Y_{12}\to\Sigma^\infty_T Y_{1}\oplus\Sigma^\infty_T Y_{2}\to\Sigma^\infty_T Y\xrightarrow{+}$$
in which the left two entries belong to $f_\ell(SH(k))$. It follows that $\Sigma^\infty_T Y\in f_\ell(SH(k))$.
\end{proof}

\begin{lem}\label{effsp}
Let $E$ be a Thom $T$-spectrum with the bounding constant $d$. Then $E$
belongs to $f_{1-d}(SH(k))$. In particular, $E$ is an effective $T$-spectrum if $d\leqslant 1$.
\end{lem}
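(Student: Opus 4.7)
My plan is to use the layer filtration $E = \colim_n L_n E$ from the preceding section and prove by induction on $n$ that each $L_n E$ lies in $f_{1-d}(SH(k))$. Since $f_{1-d}(SH(k))$ is a localizing subcategory (it is compactly generated, hence closed under filtered homotopy colimits), the conclusion $E \in f_{1-d}(SH(k))$ will follow at once.

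For the base case $n=0$, note that $L_0 E = \Sigma^\infty_T E_0 = \colim_i \Sigma^\infty_T E_{0,i}$, and by the definition of the bounding constant the codimension of $Z_{0,i}$ in $V_{0,i}$ is at least $1-d$. Lemma~\ref{fdsh} gives $\Sigma^\infty_T E_{0,i} \in f_{1-d}(SH(k))$, and closure under filtered colimits yields $L_0 E \in f_{1-d}(SH(k))$.

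For the inductive step, I would analyze the cofiber of the natural map $L_{n-1} E \to L_n E$ levelwise. At levels $i < n$ the map is the identity, at level $i = n$ it is the bonding map $u\colon E_{n-1}\w T \to E_n$, and at level $i > n$ it is $u \w T^{i-n}$. Hence the cofiber is the shifted suspension spectrum $(\Sigma^\infty_T C_n)[-n]$, where $C_n$ is the (motivic) cofiber of $u\colon E_{n-1}\w T \to E_n$. The bounding constant condition gives $\Sigma^\infty_T E_n \in f_{n-d+1}(SH(k))$ via Lemma~\ref{fdsh} and a filtered colimit, and similarly $\Sigma^\infty_T(E_{n-1}\w T) = \Sigma^\infty_T E_{n-1} \wedge T \in f_{n-d+1}(SH(k))$, since codimensions of $Z_{n-1,i}$ are at least $n-d$ and smashing with $T$ shifts the effective filtration by $1$. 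Being a cofiber of a map between objects in the triangulated subcategory $f_{n-d+1}(SH(k))$, the spectrum $\Sigma^\infty_T C_n$ also lies there; shifting down by $n$ (which in $SH(k)$ corresponds to smashing with $T^{-n}$) places $(\Sigma^\infty_T C_n)[-n]$ in $f_{1-d}(SH(k))$. Combined with $L_{n-1} E \in f_{1-d}(SH(k))$ by inductive hypothesis and the triangle
\[
 L_{n-1} E \to L_n E \to (\Sigma^\infty_T C_n)[-n] \xrightarrow{+},
\]
this gives $L_n E \in f_{1-d}(SH(k))$, completing the induction.

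The main technical point is the identification of the cofiber of $L_{n-1}E \to L_n E$ with a negatively shifted suspension spectrum and the verification that this shift decreases the effective level exactly by $n$. Once these are in place, everything else reduces to Lemma~\ref{fdsh} together with standard closure properties (cofibers, smashing with $T^{\pm 1}$, filtered colimits) of the compactly generated subcategory $f_{1-d}(SH(k))$.
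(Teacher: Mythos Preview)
Your argument is correct, but it takes a longer route than the paper's. Rather than inducting on the layer filtration via the cofiber sequence
\[
L_{n-1}E\to L_nE\to(\Sigma^\infty_TC_n)[-n]\xrightarrow{+},
\]
the paper simply observes that each layer $L_kE$ already has the stable homotopy type of $\Omega^k_T\bigl((\Sigma^\infty_TE_k)^f\bigr)\simeq T^{-k}\wedge\Sigma^\infty_TE_k$ in $SH(k)$; this is immediate because the tail of $L_kE$ (from level $k$ onwards) is the shifted suspension spectrum $(\Sigma^\infty_TE_k)[-k]$, and truncating a spectrum from below is a stable equivalence. With this identification, Lemma~\ref{fdsh} and a filtered colimit give $\Sigma^\infty_TE_k\in f_{k-d+1}(SH(k))$, hence $L_kE\in f_{1-d}(SH(k))$ for every $k$, and one concludes by closure under filtered homotopy colimits---no induction required.

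Your approach trades this direct identification for a cofiber computation, which is more work but relies on the same underlying principle (the stable type of a spectrum is governed by its tail). The advantage of the paper's route is economy: one does not need to analyse $C_n$ or invoke the two-out-of-three property for the triangulated subcategory. Your route would be preferable only if the direct identification $L_kE\simeq T^{-k}\wedge\Sigma^\infty_TE_k$ were somehow unavailable, which it is not.
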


\begin{proof}
Since $f_d(SH(k))=f_0(SH(k))\wedge T^d$ for any integer $d$, we may assume $d=1$ and show that
$E$ is an effective $T$-spectrum in this case.

We have $E=\colim_k L_kE$, where each layer has stable homotopy type of $\Omega^k_T((\Sigma^\infty_T E_k)^f)$.
Here $(\Sigma^\infty_T E_k)^f$ stands for a stable motivic fibrant replacement of $\Sigma^\infty_T E_k$. Then
$E$ is isomorphic in $SH(k)$ to $\textrm{hocolim}_k\Omega^k_T((\Sigma^\infty_T E_k)^f)$.

Each $E_k=\colim_j (V_{k,j}/V_{k,j}-Z_{k,j})$, where codimension of
$Z_{k,j}$ in $V_{k,j}$ is larger than or equal to $k$. By
Lemma~\ref{fdsh} the flasque cofibrant $T$-spectrum
$\Sigma_T^{\infty}(V_{k,j}/V_{k,j}-Z_{k,j})$ is in $f_k(SH(k))$.
Since $\Sigma^\infty_T E_k$ is isomorphic in $SH(k)$ to
$\textrm{hocolim}_j (\Sigma_T^{\infty}(V_{k,j}/V_{k,j}-Z_{k,j}))$
and $f_k(SH(k))$ is closed under homotopy colimits, it follows that
$\Sigma^\infty_T E_k\in f_k(SH(k))$, and hence
$\Omega^k_T((\Sigma^\infty_T E_k)^f)\in f_0(SH(k))$. The isomorphism
$E\cong \textrm{hocolim}_k\Omega^k_T((\Sigma^\infty_T E_k)^f)$ in
$SH(k)$ now implies $E\in f_0(SH(k))$.
\end{proof}

Suppose that the base field $k$ has an embedding $\epsilon:k\hookrightarrow\mathbb C$. Following
Panin--Pimenov--R\"ondigs~\cite[\S A4]{PPR1} there is a natural realization functor
   $$Re^\epsilon:SH(k)\to SH,$$
where $SH$ is the homotopy category of the stable model category of classical $S^2$-spectra
of topological spaces (it is canonically equivalent to the homotopy category of the stable model
category of classical $S^1$-spectra as well). $Re^\epsilon$ is an extension of the functor
   $$An:\Sm_k\to\textbf{Top}$$
sending a $k$-smooth variety $X$ to $X^{an}:=X(\mathbb C)$ with the classical topology.

Following Levine's indexing~\cite{Lev}, denote by $\pi_{a,b}^{\A^1}(E)$, where $E\in SH(k)$, the
Nisnevich sheaf on $\Sm_k$ associated to the presheaf $U\mapsto\Hom_{SH(k)}(\Sigma_{S^1}^a\Sigma_{\Gm^{\w 1}}^b U_+,E)$.
For $E\in SH(k)$ (respectively $E\in SH$) and a positive integer $N$, we let $E/N$ denote an object of $SH(k)$
(respectively $E/N\in SH$) that fits into a triangle $E\xrightarrow{N\cdot id}E\to E/N\to E[1]$.
By definition, $\pi_{a,b}^{\A^1}(E;\mathbb Z/N):=\pi_{a,b}^{\A^1}(E/N)$ (respectively $\pi_n(E;\mathbb Z/N):=\pi_n(E/N)$).

We are now in a position to prove the main result of the section.

\begin{thm}\label{fincoeff}
Let $k$ be an algebraically closed field of characteristic zero with
an embedding $\epsilon:k\hookrightarrow\mathbb C$. Suppose E is a
symmetric Thom $T$-spectrum with the bounding constant $d\leqslant
1$ and contractible alternating group action. Then for all integers
$N > 1$ and $n\in\mathbb Z$, the realization functor $Re^\epsilon$
induces an isomorphism
   $$\pi_n(M_E(pt)(pt);\mathbb Z/N)\cong\pi_n(Re^\epsilon(E);\mathbb Z/N)$$
between stable homotopy groups with $\textrm{mod\,} N$ coefficients.
\end{thm}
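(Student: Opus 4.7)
The plan is to combine Theorem~\ref{mainbisp} and Lemma~\ref{effsp} with a Betti realization comparison theorem of Levine type. First, take $X=pt$. Since $d\leqslant 1$, one of parts $(1)$, $(2)$ of Theorem~\ref{mainbisp} applies and produces a motivically fibrant $(S^1,\Gm^{\w 1})$-bispectrum $M_E^{\mathbb G}(pt)_f$ which represents $E$ in the category of bispectra. For any such fibrant representative of $E$, the zeroth $S^1$-spectrum entry realizes the weight zero motivic stable homotopy sheaves of $E$; concretely,
\[\pi_n(M_E(pt)_f)\cong\pi_{n,0}^{\A^1}(E)\]
as Nisnevich sheaves on $\Sm_k$ for every $n\in\mathbb Z$. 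Evaluating at $pt=\Spec k$ and passing to $\mathbb Z/N$ coefficients gives
\[\pi_n(M_E(pt)_f(pt);\mathbb Z/N)\cong\pi_{n,0}^{\A^1}(E;\mathbb Z/N)(pt).\]
Because $M_E(pt)(pt)\to M_E(pt)_f(pt)$ is by construction a stable equivalence of topological $S^1$-spectra, the left hand side coincides with $\pi_n(M_E(pt)(pt);\mathbb Z/N)$.

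Second, by Lemma~\ref{effsp} the spectrum $E$ is effective, i.e. $E\in f_0(SH(k))$. Under the hypotheses on $k$, Levine's Betti comparison theorem (applied with mod $N$ coefficients) yields a natural isomorphism
\[\pi_{n,0}^{\A^1}(E';\mathbb Z/N)(pt)\xrightarrow{\cong}\pi_n(Re^\epsilon(E');\mathbb Z/N)\]
for every effective $E'\in SH(k)$. Applying this to our $E$ and combining with the previous chain of isomorphisms gives the claim.

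The main obstacle is to invoke the appropriate form of Levine's comparison: one needs that, over an algebraically closed field of characteristic zero, $Re^\epsilon$ induces an isomorphism on weight zero $\pi_*^{\A^1}(-;\mathbb Z/N)$ for every effective spectrum. Granting Levine's isomorphism for the motivic sphere over $\mathbb C$ with mod $N$ coefficients, the effective case follows from formal properties: $f_0(SH(k))$ is generated as a localizing subcategory by suspension spectra of smooth varieties, $Re^\epsilon$ commutes with arbitrary homotopy colimits and with smashing by a mod $N$ Moore spectrum, and the comparison map is natural and triangulated; hence both sides agree as functors on $f_0(SH(k))$ once they agree on the generators. This, together with the identification of the weight zero piece via $M_E^{\mathbb G}(pt)_f$ furnished by Theorem~\ref{mainbisp}, completes the argument.
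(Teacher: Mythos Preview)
Your proof is correct and follows essentially the same route as the paper: use Lemma~\ref{effsp} to see that $E$ is effective, invoke Levine's comparison theorem~\cite[7.1]{Lev} to identify $\pi_{n,0}^{\A^1}(E;\mathbb Z/N)(pt)$ with $\pi_n(Re^\epsilon(E);\mathbb Z/N)$, and use Theorem~\ref{mainbisp} together with the fact that $pt$ is Henselian to identify the weight-zero piece with $\pi_n(M_E(pt)(pt);\mathbb Z/N)$. The only difference is that you sketch a localizing-subcategory reduction of Levine's result to the sphere spectrum, whereas the paper simply cites~\cite[7.1]{Lev}, which already covers the effective case directly; your extra paragraph is therefore unnecessary but harmless.
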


\begin{proof}
By Lemma~\ref{effsp} $E$ is an effective $T$-spectrum. It follows from~\cite[7.1]{Lev} that the map
   $$\pi_{n,0}^{\A^1}(E;\mathbb Z/N)(pt)\to \pi_n(Re^\epsilon(E);\mathbb Z/N)$$
is an isomorphism for all $n\in\mathbb Z$. Theorem~\ref{mainbisp} implies
that $\pi_{n,0}^{\A^1}(E;\mathbb Z/N)$ is computed as the sheaf $\pi_n^{Nis}(M_E(pt)_f;\mathbb Z/N)$. It
remains to observe that
   $$\pi_n^{Nis}(M_E(pt)_f;\mathbb Z/N)(pt)=\pi_n(M_E(pt);\mathbb Z/N)(pt)=\pi_n(M_E(pt)(pt);\mathbb Z/N),$$
what completes the proof.
\end{proof}

As the realization of $MGL$ is isomorphic to $MU$ in $SH$, the complex
cobordism $S^2$-spectrum, and, by Quillen's Theorem~\cite{Qui},
$\pi_*(MU)$ is isomorphic to the Lazard ring $Laz=\mathbb
Z[x_1,x_2,\ldots]$, $\deg(x_i)=2i$, the preceding theorem implies
the following

\begin{cor}
Let $k$ be an algebraically closed field of characteristic zero with an embedding $\epsilon:k\hookrightarrow\mathbb C$.
For all $n>1$ and $i\in\mathbb Z$, there is an isomorphism $\pi_i(M_{MGL}(pt)(pt);\mathbb Z/n)\cong Laz_i/nLaz_i$,
where $M_{MGL}(pt)$ is the $MGL$-motive of the point $pt=\Spec(k)$.
\end{cor}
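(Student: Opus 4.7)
The plan is to combine Theorem~\ref{fincoeff} applied to $E=MGL$ with Quillen's computation of $\pi_*(MU)$. First I would verify that $MGL$ satisfies all the hypotheses of Theorem~\ref{fincoeff}: by Example~\ref{ThomSpectraex}, $MGL$ is a symmetric Thom $T$-spectrum with bounding constant $d=1$ and contractible alternating group action, so the theorem applies. This gives an isomorphism
\[
\pi_i(M_{MGL}(pt)(pt);\mathbb Z/n)\cong\pi_i(Re^\epsilon(MGL);\mathbb Z/n)
\]
for all $n>1$ and $i\in\mathbb Z$.

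Next I would use the identification $Re^\epsilon(MGL)\cong MU$ in $SH$ (as recalled in the text preceding the corollary) to rewrite the right-hand side as $\pi_i(MU;\mathbb Z/n)$. The final step is to compute $\pi_i(MU;\mathbb Z/n)$ in terms of the Lazard ring. The cofiber sequence $MU\xrightarrow{n\cdot id}MU\to MU/n$ yields the universal coefficient short exact sequence
\[
0\to \pi_i(MU)\otimes\mathbb Z/n\to\pi_i(MU;\mathbb Z/n)\to\operatorname{Tor}(\pi_{i-1}(MU),\mathbb Z/n)\to 0.
\]
By Quillen's Theorem~\cite{Qui}, $\pi_*(MU)\cong Laz=\mathbb Z[x_1,x_2,\ldots]$ with $\deg(x_i)=2i$, which is a polynomial ring over $\mathbb Z$ and in particular torsion-free in every degree. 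Hence the $\operatorname{Tor}$-term vanishes, yielding $\pi_i(MU;\mathbb Z/n)\cong Laz_i\otimes\mathbb Z/n=Laz_i/nLaz_i$.

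Stringing the two isomorphisms together gives $\pi_i(M_{MGL}(pt)(pt);\mathbb Z/n)\cong Laz_i/nLaz_i$, as claimed. There is no real obstacle here: the content of the corollary is already packaged in Theorem~\ref{fincoeff}, the topological realization statement $Re^\epsilon(MGL)\cong MU$, and Quillen's theorem; what remains is only the elementary universal-coefficient computation, which is immediate from the torsion-freeness of $Laz$.
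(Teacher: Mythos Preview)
Your proposal is correct and follows exactly the approach the paper takes: the corollary is stated as an immediate consequence of Theorem~\ref{fincoeff} applied to $E=MGL$, together with $Re^\epsilon(MGL)\cong MU$ and Quillen's computation of $\pi_*(MU)$. Your explicit universal-coefficient argument (using torsion-freeness of $Laz$) spells out a step the paper leaves implicit.
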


We finish the section by the following result about the singular algebraic $E$-homotopy
defined in the introduction.
It is an analogue of the celebrated theorem of
Suslin and Voevodsky~\cite{SV96} on singular algebraic homology.

\begin{thm}\label{sve}
Let $k$ be an algebraically closed field of characteristic zero with
an embedding $\epsilon:k\hookrightarrow\mathbb C$. Suppose E is a
symmetric Thom $T$-spectrum with the bounding constant $d\leqslant
1$ and contractible alternating group action and $X\in Sm/k$.
There are canonical isomorphisms of Abelian groups
\[ \pi^{E}_n(X; \mathbb Z/m) = \pi_n(X(\mathbb C)_+\wedge Re^\epsilon(E); \mathbb Z/m)\]
for all integers $n\geqslant 0$ and $m\neq 0$.

Moreover, if $k$ is any perfect field, then the assignment  
   $$X\mapsto\pi^{E}_{\ast}(X)=\pi_*(Fr^E(\Delta^\bullet_k,X)^{\gp})$$ 
is a generalized homology theory on $Sm/k$.
\end{thm}

\begin{proof}
Since $M_E(X)$ can be identified with $M_{X_+\wedge E}(pt)$ and
$X(\mathbb C)_+\wedge Re^\epsilon(E)\cong Re^\epsilon(X_+\wedge E)$ by~\cite[A.23]{PPR1}, Theorem~\ref{fincoeff} implies
   $$\pi_n(M_E(X)(pt);\mathbb Z/m)\cong\pi_n(X(\mathbb C)_+\wedge Re^\epsilon(E);\mathbb Z/m),\quad n\geqslant 0.$$
We have that 
   $$\pi_n(M_E(X)(pt);\mathbb Z/m)\cong\pi_n(\Omega_{S^1}Fr^E(\Delta^\bullet_k,X\otimes S^1);\mathbb Z/m)
       \cong\pi_n(Fr^E(\Delta^\bullet_k,X)^{\gp};\mathbb Z/m)=\pi_n^E(X;\mathbb Z/m).$$
       
Now the fact that the assignment
   $$X\mapsto\pi^{E}_{\ast}(X)=\pi_*(Fr^E(\Delta^\bullet_k,X)^{\gp})$$ 
is a generalized homology theory on $Sm/k$ with $k$ perfect immediately follows 
from Theorem~\ref{mainbispapp} (verifying the excision property 
and the homotopy invariance property for homology theories).
\end{proof}

\section{Normally $E$-framed motives}\label{normally}

Suppose $E$ is a symmetric Thom $T$-spectrum with contractible
alternating group action and the bounding constant $d=1$. In Theorem~\ref{mainbisp}
we have constructed an explicit fibrant bispectrum representing $E$
in terms of $E$-framed motives. We can simplify $E$-framed motives further by
forgetting a bit of information and construct, up to a local equivalence of $S^1$-spectra,
an equivalent model for them, called normally $E$-framed motives. Then we construct in Theorem~\ref{bisptilde}
an explicit fibrant bispectrum representing $E$ whose entries are expressed in terms of
weighted normally $E$-framed motives. Another advantage of normally $E$-framed motives
is that they lead to representability of important Thom spectra like $MGL$ by schemes
(this material is treated in the next section in details).


\begin{conv}\label{convention}
From now on we shall assume that a symmetric
Thom spectrum $E$ with the bounding constant $d=1$
and contractible alternating group action is of the form:
\begin{itemize}

\item for any $n\geqslant 0$, $E_n=Th(V_n)$ with
$V_n\to Z_n$ a $\Sigma_n$-equivariant vector bundle of rank $n$ over $Z_n\in\Sm_k$;

\item The bonding maps $E_n\w T^m\to E_{n+m}$ are induced by
closed embeddings $i_{n,m}\colon Z_n\to Z_{n+m}$ such that we have a Cartesian square
\[\xymatrix{
V_n\times\A^m\ar[r]^{I_{n,m}}\ar[d] & V_{n+m}\ar[d]\\
Z_n\ar[r]^{i_{n,m}} & Z_{n+m}}\]
and $i_{n+m,r}\circ i_{n,m}=i_{n,m+r}$. Applying the shuffle permutation
$\chi_{n,m}\in\Sigma_{n+m}$, define the {\it left inclusion maps\/} $i_{n,m}^l:=\chi_{n,m}\circ i_{n,m}$.
We require the left inclusion maps $i_{n,m}^l$ to fit into Cartesian squares
\[\xymatrix{
\A^m\times V_n\ar[r]^{I_{n,m}^l}\ar[d] & V_{m+n}\ar[d]\\
Z_n\ar[r]^{i_{n,m}^l} & Z_{m+n}}\]
where $I_{n,m}^l$ is the composition
$\A^m\times V_n\xrightarrow{tw} V_n\times\A^m\xrightarrow{I_{n,m}} V_{n+m}\xrightarrow{\chi_{n,m}} V_{m+n}$.
Observe that the maps $I_{n,m}^l$ induce the left bonding maps $u_l\colon T^m\w E_n\to E_{m+n}$
in the sense of Definition~\ref{shift}.
\end{itemize}
\end{conv}

\begin{rem}
The spectrum $\Sigma^{\infty}_TX_+$ satisfies conditions
of~\ref{convention} with $Z_n=X, V_n=X\times\A^n$. The spectrum
$MGL$ is a directed colimit of spectra of the form~\ref{convention}.
Indeed, for any $i\geqslant 0$ there is a spectrum $E^{(i)}$, where
$E^{(i)}_n=Th(V_n^{(i)})$ and  $V_{n}^{(i)}=\mathcal{T}GL_{n,ni}$ is
the tautological vector bundle over the Grassmannian
$Z_{n}^{(i)}=Gr(n,ni)$. Then the spectra $E^{(i)}$ satisfy
conditions of~\ref{convention} and $MGL=\colim_i E^{(i)}$~\cite[\S
2.1]{PPR}.
\end{rem}

When $E$ is a symmetric Thom $T^2$-spectrum with the bounding constant $d=1$
and contractible alternating group action, we impose analogous conditions:
\begin{itemize}
\item $E_n=Th(V_n)$, where
$V_n\to Z_n$ is a $\Sigma_n$-equivariant vector bundle of rank $2n$
over $Z_n\in\Sm_k$ for any $n\geqslant 0$;

\item The bonding maps $E_n\w T^{2m}\to E_{n+m}$
are induced by closed embeddings $i_{n,m}\colon Z_n\to Z_{n+m}$ such that we have a Cartesian square
\[
\xymatrix{
V_n\times\A^{2m}\ar[r]^{I_{n,m}}\ar[d] & V_{n+m}\ar[d]\\
Z_n\ar[r]^{i_{n,m}} & Z_{n+m}}
\]
and $i_{n+m,r}\circ i_{n,m}=i_{n,m+r}$. Applying the shuffle permutation $\chi_{n,m}\in\Sigma_{n+m}$,
one sets $i_{n,m}^l=\chi_{n,m}\circ i_{n,m}$. The maps $i_{n,m}^l$ are required to fit into Cartesian squares
\[
\xymatrix{
\A^{2m}\times V_n\ar[r]^{I_{n,m}^l}\ar[d] & V_{m+n}\ar[d]\\
Z_n\ar[r]^{i_{n,m}^l} & Z_{m+n}
}
\]
where $I_{n,m}^l$ is the composition
$\A^m\times V_n\xrightarrow{tw} V_n\times\A^m\xrightarrow{I_{n,m}} V_{n+m}\xrightarrow{\chi_{n,m}} V_{m+n}$.
Observe that the maps $I_{n,m}^l$ induce the left bonding maps $u_l\colon T^{2m}\w E_n\to E_{m+n}$
in the sense of Definition~\ref{shift}.
\end{itemize}

\begin{rem}
The $T^2$-spectra $MSL$ and $MSp$ are directed colimits of spectra
that satisfy the above assumptions. Namely, $MSL=\colim E^{(i)}$
where $E^{(i)}_n=Th(V^{(i)}_n)$, $V_{n}^{(i)}=\mathcal{T}SL_{n,ni}$
is the tautological special bundle over the special Grassmannian
$Z_n^{(i)}=SGr(n,ni)$~\cite[\S 4]{PW}. The spectrum $MSp=\colim
E^{(i)}$ where $E^{(i)}_n=Th(V^{(i)}_n)$,
$V_{n}^{(i)}=\mathcal{T}Sp_{n,ni}$ is the tautological symplectic
bundle over the symplectic Grassmannian
$Z_n^{(i)}=HGr(n,ni)$~\cite[\S 6]{PW}.
\end{rem}

As in the previous sections we shall only consider the case of a $T$-spectrum $E$.
The interested reader will easily do the same constructions for $T^2$-spectra in a similar fashion.

\begin{dfn}(Cf.~\cite[B.7.1]{Fulton})
Suppose $X\to Y$ is a closed embedding. We call $X$ a {\it locally
complete intersection (l.c.i.) subscheme of $Y$\/} if for every
point of $X$ there is an affine neighborhood in $Y$ such that ideal
of definition of $X$ is generated by a regular sequence.
\end{dfn}

\begin{rem}\label{remlci}(\cite[Corollary 4.5]{AK})
If $Y$ is regular and $X$ is a
closed subscheme of codimension $d$, then $X$ is an l.c.i. subscheme if and only if
the ideal of definition of $X$ is locally generated by $d$ elements.
\end{rem}

\begin{lem}\label{Frescheme}
For $X,Y\in\Sm_k$, there is a natural bijection between the set
$\Fr_n^E(X,Y)$ and the set of equivalence classes of quadruples $(U,Z,\phi,f)$, where
\begin{itemize}
\item $Z$ is a closed l.c.i. subscheme of $\A^n_X$, finite and flat over $X$;
\item $U$ is an \'etale neighborhood of $Z$ in $\A^n_X$;
\item $\phi\colon U\to V_n$ is a regular map, called a framing, such that $Z=U\times_{V_n}Z_n$;
\item $f\colon U\to Y$ is a regular map.
\end{itemize}
Two quadruples $(U,Z,\phi,f)$ and $(U',Z',\phi',f')$ are equivalent
if $Z=Z'$ and there is an open neighborhood $U_0$ of $Z$ in
$U\times_{\A^n_X}U'$ such that the framings $\phi,\phi'$ as well as
regular maps $f,f'$ coincide on $U_0$.
\end{lem}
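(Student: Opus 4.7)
The plan is to reduce the statement to Voevodsky's Lemma~\ref{Voevlemma} after rewriting the target as a quotient sheaf. Under Convention~\ref{convention}, $E_n=V_n/(V_n-Z_n)$, so
\[Y_+\w E_n\cong(Y\times V_n)/(Y\times V_n-Y\times Z_n),\]
and Corollary~\ref{Pmpermut} lets us replace $\PP^{\w n}$ by $\PP^n/\PP^{n-1}$ when computing maps into the Thom space $Y_+\w E_n$. Thus Voevodsky's Lemma produces a bijection between $\Fr^E_n(X,Y)$ and equivalence classes of triples $(U,Z_{\mathrm{set}},\psi)$, where $Z_{\mathrm{set}}\subset\A^n_X$ is a closed subset finite over $X$, $U$ is an \'etale neighbourhood of $Z_{\mathrm{set}}$ in $\A^n_X$, and $\psi\colon U\to Y\times V_n$ is a regular map with $\psi^{-1}(Y\times Z_n)=Z_{\mathrm{set}}$. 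Decomposing $\psi=(f,\phi)$ with $f\colon U\to Y$ and $\phi\colon U\to V_n$, the pullback condition decouples to $\phi^{-1}(Z_n)=Z_{\mathrm{set}}$, while $f$ is arbitrary.

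The next step is to upgrade the set $Z_{\mathrm{set}}$ to a scheme with the required properties. Form the scheme-theoretic fibre product
\[Z:=U\times_{V_n}Z_n,\]
a closed subscheme of $U$ with underlying set $Z_{\mathrm{set}}$. Since $Z_n\hookrightarrow V_n$ is the zero section of the rank $n$ vector bundle $V_n\to Z_n$, it is a regular closed immersion of codimension $n$, and the property of being a regular immersion is preserved under pullback; hence $Z\hookrightarrow U$ is regularly embedded of codimension $n$. Because $U\to\A^n_X$ is \'etale and $Z$ is finite over $X$, after shrinking $U$ to a possibly smaller \'etale neighbourhood of $Z$ we may identify $Z$ with a closed subscheme of $\A^n_X$ locally defined by $n$ elements, which by Remark~\ref{remlci} makes it a closed l.c.i.\ subscheme of $\A^n_X$. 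Flatness of $Z\to X$ then follows by miracle flatness: $X$ is regular, $Z$ is Cohen--Macaulay as a local complete intersection, the morphism is finite, and its fibres have the expected dimension zero. This produces the quadruple $(U,Z,\phi,f)$ of the statement.

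For the inverse map, a quadruple $(U,Z,\phi,f)$ as in the statement yields the triple $(U,Z_{\mathrm{set}},(f,\phi))$, where $Z_{\mathrm{set}}$ is the underlying set of $Z$; the hypothesis $Z=U\times_{V_n}Z_n$ gives exactly the Voevodsky condition $(f,\phi)^{-1}(Y\times Z_n)=Z_{\mathrm{set}}$. The equivalence relation matches: agreement of $\phi$ and $f$ on a common open neighbourhood of $Z$ is precisely agreement of $\psi=(f,\phi)$ there, and the scheme structure on $Z$ is automatically preserved by such agreement since it is computed from $\phi$ by the fibre product construction. Thus the two sets of equivalence classes are canonically identified.

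The main obstacle is a technical one, namely ensuring that after forming the scheme-theoretic pullback one genuinely obtains a \emph{closed} l.c.i.\ subscheme of $\A^n_X$, together with the flatness assertion. The first point is handled by combining the regularity of the zero section with Remark~\ref{remlci} and a replacement of $U$ by a possibly smaller \'etale neighbourhood through which $Z\hookrightarrow\A^n_X$ is already closed; the second is a standard application of the miracle flatness theorem. Once these are in place, the remainder of the argument is the verification, essentially formal, that Voevodsky's bijection is compatible with the stated equivalence of quadruples.
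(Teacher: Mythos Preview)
Your approach is essentially the same as the paper's: reduce to Voevodsky's Lemma, decompose the resulting map as $(f,\phi)$, and upgrade the closed set to the scheme $Z=U\times_{V_n}Z_n$. Two steps need tightening. First, the sentence ``the property of being a regular immersion is preserved under pullback; hence $Z\hookrightarrow U$ is regularly embedded of codimension $n$'' is not valid as written: regular immersions are not stable under arbitrary base change, and even when the pulled-back ideal is locally generated by $n$ elements the codimension can drop. The paper instead observes that $(Z)_{red}$ is finite over $X$, so $Z$ has codimension $n$ in the smooth scheme $U$, and then invokes Remark~\ref{remlci} to conclude l.c.i. Second, ``after shrinking $U$ we may identify $Z$ with a closed subscheme of $\A^n_X$'' is too vague: shrinking $U$ does not change the map $Z\to\A^n_X$. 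The paper argues directly that $Z\to X$ is finite (its reduction is), hence $Z\to\A^n_X$ is a closed embedding by \cite[Tag 04XV]{stacks-project}, and then transports the l.c.i.\ property from $U$ to $\A^n_X$ via the conormal comparison \cite[Tag 0635]{stacks-project} and Nakayama. Your miracle-flatness argument for $Z\to X$ is fine and agrees with the paper's citation of \cite[Tag 00R3]{stacks-project}.
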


\begin{proof}
By Voevodsky's Lemma~\ref{Voevlemma}, the elements of $\Fr_n^E(X,Y)$ can
be described as the sets of equivalence classes of quadruples
$(U,Z,\phi,f)$, where $Z$ is a closed subset of $\A^n_X,$ finite
over $X$, $U$ is its \'etale neighborhood, and $\phi\colon U\to V_n$
is a regular map such that $Z=\phi^{-1}(Z_n)$, and $f\colon U\to Y$
is a regular map. Two quadruples $(U,Z,\phi,f)$ and $(U,Z',\phi',f')$ are
equivalent if $Z=Z'$ and there is an open neighbourhood of $Z$ in
$U\times_{\A^n_X}U'$, where $\phi$ coincides with $\phi'$, and $f$
coincides with $f'$.

For any such quadruple the framing $\phi\colon U\to E_n$ defines a
closed subscheme $Z'=U\times_{V_n}Z_n$. Then $(Z')_{red}=Z$, hence $Z$ has codimension $n$
in $U$, and is locally defined by $n$ equations. Then it is an l.c.i. subscheme of $U$
by Remark~\ref{remlci}. Since $(Z')_{red}=Z$ is finite over $X$, $Z$ is finite over $X$ as well, and
the composition $Z'\to U\to\A^n_X$ is a closed embedding by~\cite[Tag 04XV]{stacks-project}.
Since $U\to\A^n_X$ is \'etale, it induces an isomorphism between conormal sheaves of $Z'$ in
$U$ and $Z'$ in $\A^n_X$ by~\cite[Tag 0635]{stacks-project}. Then by Nakayama's lemma
$Z'$ locally is defined in $\A^n_X$ by $n$ equations. Thus $Z'$ is an l.c.i. subscheme in $\A^n_X$,
finite over $X$. It is flat over $X$ by~\cite[Tag 00R3]{stacks-project}. Then the assignment
$(U,Z,\phi,f)\mapsto (U,Z',\phi,f)$ defines the desired bijection between $\Fr_n^E(X,Y)$
and the set of the statement of the lemma.
\end{proof}

\begin{dfn}
For $X,Y\in\Sm_k$ the {\it set of normally framed correspondences\/}
$\widetilde{\Fr}_n^E(X,Y)$ is the set of equivalence classes of quintuples
$(U,Z,\phi,\psi,f)$, where
\begin{itemize}
\item $Z$ is an l.c.i. subscheme of $\A^n_X$, finite and flat over $X$;
\item $U$ is an \'etale neighborhood of $Z$ in $\A^n_X$;
\item $\psi\colon U\to Z_n$ is a regular map and $\phi\colon N_{Z/\A^n_X}\cong (\psi i)^*V_n$
is an isomorphism of vector bundles, where $i$ is the inclusion $i\colon Z\to U$;
\item $f\colon Z\to Y$ is a regular map.
\end{itemize}
Two quintuples $(U,Z,\phi,\psi,f)$ and $(U',Z',\phi',\psi',f')$ are
equivalent if $Z=Z'$ as subschemes of $\A^n_X$ and there is an
open neighborhood $U''$ of $Z$ in $U\times_{\A^n_X}U'$ such that
$\psi=\psi'$ on $U''$, $\phi=\phi'$, and $f=f'.$
\end{dfn}

\begin{dfn}
For an affine scheme $X=\Spec A$ and its closed subscheme $Z=\Spec
A/I$ of $X$ denote by $X^h:=\Spec A^h$, where $(A^h,I^h)$ is the
Henselian pair associated to $(A,I)$\cite[Tag 09XD]{stacks-project}.
We call $X^h$ the {\it Henselization of $X$ in $Z$}. If $(A,I)$ is a
Henselian pair, we will call $(X,Z)=(\Spec A, \Spec A/I)$ a {\it
Henselian pair of schemes}.
\end{dfn}

\begin{rem}\label{Nphi} Suppose $i\colon Z\to U$ is a closed l.c.i. subscheme
and $\phi\colon U\to V_n$ is a regular map such that $\phi^*J\subseteq I$,
where $I$ is the sheaf of ideals defining $Z$ in $U$ and $J$ is the sheaf of ideals
defining $Z_n$ in $V_n$. Then it defines a morphism of vector bundles
\begin{equation}\label{eee}
N(\phi)\colon N_{Z/U}\to (\pi\phi i)^*V_n,
\end{equation}
where $\pi$ is the projection $\pi\colon V_n\to Z_n,$ which is dual to the morphism of sheaves
\[J/J^2\otimes_{\Os_{Z_n}}\Os_Z\to I/I^2.\]
\end{rem}

\begin{lem}\label{Nphi12} In the notation of Remark~\ref{Nphi} one has:
\begin{itemize}
\item[$(1)$] if $\phi\colon U\to V_n$ is a framing of $Z$, then $N(\phi)$ is an isomorphism;
\item[$(2)$] if $U$ is affine, $(Z,U)$ is a Henselian pair, and the morphism
$N(\phi)$ is an isomorphism, then $\phi$ is a framing of $Z$ in $U$.
\end{itemize}
\end{lem}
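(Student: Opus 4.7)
The strategy for both parts is to translate between the ideal-theoretic equality $\phi^{-1}J\cdot\Os_U = I$ (the scheme-theoretic meaning of ``$\phi$ is a framing'') and the bundle-map isomorphism $N(\phi)$, using the intermediate object $I/I^2$, together with a rank count on both sides. Before splitting into cases I would verify that both $N_{Z/U}$ and $(\pi\phi i)^{*}V_n$ are locally free of rank $n$ on $Z$. The latter is clear since $V_n\to Z_n$ has rank $n$. For the former, since $Z\to X$ is finite and $Z\subset\A^n_X$, $Z$ has codimension exactly $n$ in $\A^n_X$, hence in the \'etale neighborhood $U$; combined with $Z$ being l.c.i., this forces $I/I^2$ locally free of rank $n$.

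For part (1), the framing hypothesis $Z = U\times_{V_n}Z_n$ is equivalent to the equality of ideal sheaves $I = \phi^{-1}J\cdot\Os_U$, so the natural map $\phi^{-1}J \to I$ of $\Os_U$-modules is surjective. Reducing modulo $I^2$ gives a surjection
\[
(J/J^2)\otimes_{\Os_{Z_n}}\Os_Z \twoheadrightarrow I/I^2
\]
of locally free $\Os_Z$-modules of equal rank $n$, which is therefore an isomorphism; dualizing recovers $N(\phi)$ as an isomorphism.

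For part (2), set $I' := \phi^{-1}J\cdot\Os_U$, an ideal of $A := \Os_U(U)$ contained in $I$; the goal ``$\phi$ is a framing of $Z$ in $U$'' is equivalent to the equality $I' = I$. Dualizing the hypothesis that $N(\phi)$ is an isomorphism shows that $(J/J^2)\otimes\Os_Z \to I/I^2$ is surjective, which reads as the first-order equality of ideals $I = I' + I^2$ in $A$. Writing $M := I/I'$, this says $IM = M$. The Henselian pair condition $(1+I)\subseteq A^{\times}$ forces $I\subseteq \mathrm{rad}(A)$, the Jacobson radical; and in the geometric setting of Convention~\ref{convention}, $A$ is noetherian (being a Henselization of an \'etale algebra over $\A^n_X$ with $X\in\Sm_k$), so $I$, and hence $M$, is finitely generated. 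Nakayama's lemma then gives $M = 0$, i.e.\ $I' = I$, so $Z = V(I) = V(I') = U\times_{V_n}Z_n$ and $\phi$ is a framing.

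The main obstacle lies in part (2): the passage from the infinitesimal equality $I = I' + I^2$ to the global equality $I = I'$ is not formal and requires Nakayama's lemma, whose two hypotheses must both be extracted with some care — the containment of $I$ in the Jacobson radical from the Henselian pair axiom, and the finite generation of $M$ from the (implicit) noetherianness of $A$ in the geometric context under Convention~\ref{convention}.
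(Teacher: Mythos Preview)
Your proof is correct and follows essentially the same route as the paper: in both parts you pass to the conormal map $J/J^{2}\otimes\Os_Z\to I/I^{2}$, use the rank-$n$ count to upgrade surjectivity to an isomorphism in (1), and in (2) deduce $I=I'+I^{2}$ from surjectivity and then apply Nakayama with $I\subseteq\mathrm{Jac}(R)$ coming from the Henselian pair hypothesis. The only difference is that you spell out the auxiliary verifications (rank of $N_{Z/U}$, noetherianness of $A$ for finite generation of $I/I'$) that the paper leaves implicit.
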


\begin{proof}
$(1)$. Note that when $\phi$ is a framing, $I$ is generated by the image of $J$. Hence the
map $J/J^2\otimes_{\Os_{Z_n}}\Os_Z\to I/I^2$ induced by $\phi$ is is a surjection of locally free
sheaves of rank $n$, and so it is an isomorphism. Thus $N(\phi)$ is an isomorphism.

$(2)$. Let $U=\Spec R$ and let $I'\subseteq I$ denote the ideal generated by the image of $J$.
Since $N(\phi)$ is an isomorphism, the dual map $J/J^2\otimes_{\Os_{Z_n}}\Os_Z\to I/I^2$ is
surjective, hence $I=I'+I^2$. Since $I\subseteq Jac(R)$, then $I=I'$ by Nakayama's lemma. We see that $I$
is generated by the image of $J$, and hence $\phi$ is a framing.
\end{proof}

There is a forgetful map
\[fog\colon\Fr_n^E(X,Y)\to\widetilde{\Fr}_n^E(X,Y),\ \ (U,Z,\phi,f)\mapsto (U,Z,N(\phi),\pi\phi,f).\]
There is also a stabilization map
\[\widetilde{\Fr}_n^E(X,Y)\to\widetilde{\Fr}_{n+1}^E(X,Y), \ (U,Z,\psi,\phi,f)\mapsto (U\times\A^1,Z\times 0,\psi',\phi',f),\]
where $\psi'$ is the composition $\psi'\colon U\times\A^1\to
U\stackrel{\psi}\to Z_n\to Z_{n+1}$ and $\phi'$ is the composition
\[\phi'\colon N_{Z\times 0/\A^{n+1}_X}=N_{Z/\A^n_X}
\oplus 1\xrightarrow{\phi\oplus 1} (\psi i)^*(V_n\oplus 1)=(\psi'i)^*(V_{n+1}).\]
Denote by $\widetilde{\Fr}^E(-,Y)$ the colimit of the presheaves
$\widetilde{\Fr}_n^E(-,Y)$ with respect to the stabilization maps
\[\widetilde{\Fr}^E(-,Y):=\colim_n\widetilde{\Fr}_n^E(-,Y).\]

\begin{lem}\label{frtilde}
The presheaf $\widetilde{\Fr}^E(-,Y)$ admits framed transfers and the
forgetful map induces a map $fog\colon \Fr^E(-,Y)\to\widetilde{\Fr}^E(-,Y)$
of presheaves with framed transfers.
\end{lem}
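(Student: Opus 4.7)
The plan is to define, for each $n,m\geqslant 0$, an explicit pairing
   \[\Fr_n(-,X')\times\widetilde{\Fr}_m^E(X',Y)\to\widetilde{\Fr}_{n+m}^E(-,Y)\]
that assembles (after passing to the limit over $m$) into an $\Fr_*(k)$-action on $\widetilde{\Fr}^E(-,Y)$, and then to verify that the forgetful map $fog$ intertwines it with the pairing used to compose elements of $\Fr^E(-,Y)$ in Section~\ref{sectionefr}.

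Given $a=(U_1,Z_1,\varphi_1,g_1)\in\Fr_n(U,X')$, with $\varphi_1\colon U_1\to\A^n$ cutting out $Z_1$ and $g_1\colon U_1\to X'$, and $b=(U_2,Z_2,\psi_2,\phi_2,f_2)\in\widetilde{\Fr}^E_m(X',Y)$, I set $W:=U_1\times_{X'}U_2$ via $g_1$ and $U_2\to\A^m_{X'}\to X'$. Factoring as $W\to\A^m_{U_1}\to\A^{n+m}_U$ exhibits $W$ as an \'etale neighborhood of $\widetilde Z:=Z_1\times_{X'}Z_2$ in $\A^{n+m}_U$. By base change $\widetilde Z$ is finite and flat over $U$ (using flatness of $Z_2/X'$ built into the normal framed structure and of $Z_1/U$ coming from Lemma~\ref{Nphi12}(1) applied to $\varphi_1$), and its defining ideal in $W$ is locally the concatenation of the Koszul regular sequences cutting out $Z_1$ in $\A^n_U$ and $Z_2$ in $\A^m_{X'}$, so $\widetilde Z$ is l.c.i.\ of codimension $n+m$ in $\A^{n+m}_U$. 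I then set $b\circ a:=(W,\widetilde Z,\widetilde\psi,\widetilde\phi,\widetilde f)$, with $\widetilde\psi:=i_{m,n}^l\circ\psi_2\circ pr_2\colon W\to Z_{n+m}$, $\widetilde f:=f_2\circ pr_2$, and $\widetilde\phi$ obtained from the canonical transverse-intersection splitting
   \[N_{\widetilde Z/\A^{n+m}_U}\cong\mathcal O_{\widetilde Z}^{\oplus n}\oplus pr_2^*N_{Z_2/\A^m_{X'}}\]
together with $1\oplus\phi_2$ and the pullback identification $(i_{m,n}^l)^*V_{n+m}\cong\A^n\times V_m$ supplied by the Cartesian square of Convention~\ref{convention}.

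Well-definedness on equivalence classes is immediate: every component of the quintuple depends only on the pullback of $a$ and $b$ over a common open neighborhood of $\widetilde Z$. Compatibility with the stabilization map $\widetilde{\Fr}^E_m(X',Y)\to\widetilde{\Fr}^E_{m+1}(X',Y)$ follows from the relation $i_{m+1,n}^l=i_{m+n,1}\circ i_{m,n}^l$ between left inclusions together with the splitting $V_{m+1}\cong V_m\oplus\mathbf 1$, and associativity with respect to the ordinary framed composition $\Fr_{n'}(-,U)\times\Fr_n(U,X')\to\Fr_{n'+n}(-,X')$ reduces to associativity of fibre products plus a similar compatibility of iterated left-shuffle inclusions. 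For the second assertion one compares, for $b=(U_2,Z_2,\Phi,f)\in\Fr^E_m(X',Y)$, the composition $fog(b\circ a)$ (computed inside $\Fr^E$ and then forgotten) with $fog(b)\circ a$ (computed as above): the supports $\widetilde Z$ and the maps $\widetilde\psi,\widetilde f$ coincide tautologically, and the normal trivialization $\widetilde\phi$ matches by naturality of the construction $N(-)$ of Remark~\ref{Nphi} under fibre product of framings, combined with Lemma~\ref{Nphi12}(1).

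The main obstacle will be the bookkeeping around Convention~\ref{convention}: one must pin down a coherent choice of the left-shuffle isomorphisms on both the total spaces $V_{n+m}$ and the bases $Z_{n+m}$ so that the canonical splitting of $N_{\widetilde Z/\A^{n+m}_U}$, the ordering of factors in $W$, and the pullback identification $(i_{m,n}^l)^*V_{n+m}\cong\A^n\times V_m$ all fit together, and then to rerun the same verification with three factors for the associativity check. No deeper input beyond the functorial data already baked into Convention~\ref{convention} is required.
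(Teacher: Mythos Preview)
Your construction is essentially identical to the paper's: it builds the pairing via the fibre product $U''=U'\times_{X'}U$, $Z''=Z'\times_{X'}Z$, takes $\psi''$ as $i_{m,n}^l\circ\psi\circ pr$ into $Z_{n+m}$, and obtains $\phi''$ from the canonical normal-bundle splitting $N_{Z''/U''}\cong N_{Z'/U'}|_{Z''}\oplus N_{Z/U}|_{Z''}$ together with $N(\varphi_1)\oplus\phi_2$ and the Cartesian square of Convention~\ref{convention}. Your write-up is in fact more detailed on the points the paper leaves implicit (stabilization and $fog$-compatibility); the only imprecision is the citation of Lemma~\ref{Nphi12}(1) for flatness of $Z_1/U$, which really comes from the argument in Lemma~\ref{Frescheme} (regular sequence $\Rightarrow$ l.c.i.\ $\Rightarrow$ flat over the base).
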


\begin{proof}
We shall construct a pairing
\[\Fr_n(X,Y)\times\widetilde{\Fr}_m^E(Y,W)\to\widetilde{\Fr}_{n+m}^E(X,W), (b,a)\mapsto b^*(a)\]
as follows. If $a=(U,Z,\phi,\psi,f)\in\widetilde{\Fr}_m^E(Y,W)$ and
$b=(U',Z',\phi',f')\in\Fr_n(X,Y)$, define
$b^*(a)=(U'',Z'',\phi'',\psi'',f'')$, where $U''=U'\times_YU,
Z''=Z'\times_Y Z$, $\psi''$ is the composition
\[\psi''\colon U'\times_Y U\to U\to Z_m\xrightarrow{i_{m,n}^l} Z_{n+m}.\]
Since the canonical map $\tau:(N_{Z'/U'})|_{Z''}\oplus
(N_{Z/U})|_{Z''}\to N_{Z''/U''}$ is a surjection of vector bundles
of the same rank, it is an isomorphism. Define the isomorphism
$\phi''$ as the composition
\[\phi''\colon N_{Z''/U''}\xrightarrow{\tau^{-1}}(N_{Z'/U'})|_{Z''}\oplus (N_{Z/U})|_{Z''}\xrightarrow{N(\phi')\oplus\phi}
1^n\oplus (i\psi)^*V_m\to (i\psi'')^*V_{n+m},\]
where $N(\phi)$ stands for the isomorphism of formula~\eqref{eee} for
$E=S_T$. The function  $f''$ is, by definition, the composition $U''\to U\to W.$ This
pairing is plainly compatible with stabilization by $m$ and endows
$\widetilde{\Fr}^E(-,Y)$ with the structure of a framed presheaf such
that the forgetful map $\Fr^E(-,Y)\to\widetilde{\Fr}^E(-,Y)$ is a morphism of framed presheaves.
\end{proof}

\begin{rem}\label{henselremark}
If $X$ is an affine smooth variety, the set $\Fr_n^E(X,Y)$
(respectively $\widetilde{\Fr} _n^E(X,Y)$) is in bijective correspondence
with the set of triples $(Z,\phi,f)$, where $Z$ is an l.c.i. closed subscheme in
$\A^n_X$, finite and flat over $X$, $\phi\colon (\A^n_X)^h\to
V_n$ such that $Z=(\A^n_X)^h\times_{V_n}Z_n$, and $f\colon (\A^n_X)^h\to
Y$ (respectively with the set of quadruples $ (Z,\phi,\psi,f)$, where $Z$ is an l.c.i.
closed subscheme in $\A^n_X$, finite and flat over $X $,
$\psi\colon (\A^n_X)^h\to Z_n$, and $\phi\colon
N_{Z/\A^n_X}\stackrel{\cong}\to (\psi i)^*V_n$, $f\colon(\A^n_X)^h\to Y$).
\end{rem}

\begin{lem}\label{locsurj}
Suppose there is an \'etale map $Y\to\A^d$. Then the forgetful map
of presheaves $fog\colon \Fr^E_n(-,Y)\to\widetilde{\Fr}^E_n(-,Y)$ is
locally surjective in the Nisnevich topology.
\end{lem}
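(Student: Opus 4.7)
The plan is to verify Nisnevich-local surjectivity by checking surjectivity on Henselian local stalks: fix a Henselian local $X\in\Sm_k$ and a normal framed correspondence $\gamma=(U,Z,\psi,\phi,f)\in\widetilde{\Fr}_n^E(X,Y)$, and build a preimage $\beta\in\Fr_n^E(X,Y)$ (on a possibly smaller \'etale neighborhood $U''$ of $Z$) with $fog(\beta)$ equivalent to $\gamma$. This separates into two independent tasks: extending $f\colon Z\to Y$ to a regular map on an \'etale neighborhood, and upgrading the pair $(\psi,\phi)$ to an honest framing $\tilde\phi\colon U''\to V_n$.

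For the extension of $f$, I use the \'etale map $p\colon Y\to\A^d$. The composite $p\circ f\colon Z\to\A^d$ is given by $d$ regular functions on $Z$, which lift to regular functions on $U$ (since $Z\hookrightarrow U$ is a closed immersion and hence $\Os(U)\twoheadrightarrow\Os(Z)$), producing a map $\tilde g\colon U\to\A^d$ extending $p\circ f$. Forming $U':=U\times_{\A^d}Y$, the \'etale projection $U'\to U$ admits a canonical section $Z\hookrightarrow U'$ coming from the commutative square involving $f$ and $\tilde g$; passing to a Zariski neighborhood of the image of this section gives an \'etale neighborhood of $Z$ in $\A^n_X$ equipped with the desired extension $\tilde f\colon U'\to Y$ (the second projection).

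For the framing, the Henselian local hypothesis on $X$ forces $Z=\bigsqcup_i Z_i$ into finitely many Henselian local pieces, each of which maps under $\psi$ to a single closed point of $Z_n$. Since $V_n\to Z_n$ is Zariski-locally trivial, around each such closed point I choose a trivializing Zariski open $W_i\subset Z_n$; after shrinking $U$ to a disjoint union of Zariski neighborhoods $U_i$ of the $Z_i$ with $\psi(U_i)\subset W_i$, specifying a lift $U\to V_n$ over $\psi$ amounts to specifying $n$ regular functions on each $U_i$. Dualizing $\phi$ produces $n$ elements of the conormal $I_i/I_i^2$ forming a basis; lift these to elements of the ideal $I_i\subset\Os(U_i)$ using surjectivity of $I_i\twoheadrightarrow I_i/I_i^2$. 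These lifted functions assemble into a map $\tilde\phi\colon U\to V_n$ with $\pi\tilde\phi=\psi$ and $N(\tilde\phi)=\phi$; by Lemma~\ref{Nphi12}(2) applied on the Henselization of $U$ at $Z$, this map is a framing, and since it is defined by finitely many regular functions, the framing property descends to a suitable \'etale neighborhood $U''$ of $Z$ in $U$.

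Taking $U''$ small enough so that both constructions are realized, the quadruple $(U'',Z,\tilde\phi,\tilde f|_{U''})$ belongs to $\Fr_n^E(X,Y)$ and its image under $fog$, namely $(U'',Z,N(\tilde\phi),\pi\tilde\phi,\tilde f|_Z)=(U'',Z,\phi,\psi|_{U''},f)$, is equivalent to $\gamma$. The main technical obstacle is the interplay between the Zariski-local triviality of $V_n\to Z_n$ and the finite-semilocal structure of $Z$: it is this that forces the Nisnevich localization on $X$, and Lemma~\ref{Nphi12}(2) is essential to promote the normal-bundle isomorphism to a genuine framing condition (rather than merely an isomorphism of conormal sheaves).
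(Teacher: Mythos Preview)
Your proof is correct and follows essentially the same strategy as the paper: verify surjectivity on Henselian local stalks by (i) using the \'etale map $Y\to\A^d$ to extend $f$ via the fiber product $U\times_{\A^d}Y$, and (ii) using Zariski-local triviality of $V_n\to Z_n$ together with Nakayama to lift a basis of $I/I^2$ to generators of $I$, thereby producing a genuine framing. The only cosmetic differences are that the paper works throughout on the Henselization $(\A^n_X)^h$ (so the semi-local decomposition of $U$ is automatic and the lift of $f$ comes from a Henselian section rather than a change of \'etale neighborhood), and it invokes Nakayama directly rather than via Lemma~\ref{Nphi12}(2).
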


\begin{proof}
Suppose $X$ is a local Henselian scheme and
$(Z,\phi,\psi,f)\in\widetilde{\Fr}^E_n(X,Y)$. Then $Z$ is semi-local
Henselian and the map $\psi\colon U\to Z_n$, where $U$ is the
Henselization of $Z$ in $\A^n_X$, factors as $U\xrightarrow{\widetilde\psi}
Z_n^0\subseteq Z_n$, where $Z_n^0$ is an open subset of $Z_n$ such that
the fiber $V_n$ over $Z_n^0$ is a trivial vector bundle. Let $i$ denote the inclusion $Z\hookrightarrow U$. Fix a
trivialization $V_n|_{Z_n^0}\cong Z_n^0\times\A^n$. It gives a
trivialization of $(\psi i)^*V_n$. Composing the latter trivialization with $\phi$, one gets a
trivialization $\widetilde\phi$ of the bundle $N_{Z/\A^n_X}=N_{Z/U}$. The
trivialization $\widetilde\phi$ provides a basis of the $k[Z]$-module $I/I^2$,
where $I$ is the ideal of definition of $Z$ in $U$. The basis of
$I/I^2$ lifts to a set of generators
$\gamma=(\gamma_1,\ldots,\gamma_n)$ of $I$. They define a map
$\phi'=(\widetilde\psi,\gamma)\colon U\to Z_n^0\times\A^n\to V_n$ such that
$\phi=N(\phi')$ in the sense of formula~\eqref{eee}.

Now let us extend the regular map $f\colon Z\to Y$ to
$f'\colon U\to Y$. By assumption, there is an \'etale map $g\colon Y\to\A^d$. There
is also a map $h\colon U\to \A^d$ that extends the composition $gf\colon
Z\to\A^d$. Then $W=U\times_{\A^d}Y$ will give an \'etale
neighbourhood of $Z$ in $U$, hence there is a section $U\to W$. Then
the composition $f'\colon U\to W\to Y$ extends $f$.
We see that the triple $(Z,\phi',f')\in\Fr_n^E(X,Y)$ is a preimage of
$(Z,\phi,\psi,f)\in\widetilde{\Fr}^E_n(X,Y)$.
\end{proof}

\begin{dfn}
For $Y\in\Sm_k$ define a presheaf of
$S^1$-spectra $\widetilde{\Fr}^E(Y\otimes \Ss)$ associated to the
presheaf of $\Gamma$-spaces $K\mapsto \widetilde{\Fr}^E(Y\otimes K)$ (cf.~\cite[Section~5]{GPMain})
\[\widetilde{\Fr}^E(Y\otimes \Ss)=(\widetilde{\Fr}^E(Y),\widetilde{\Fr}^E(Y\otimes S^1),\widetilde{\Fr}^E(Y\otimes S^2),\ldots).\]
The {\it normally $E$-framed motive of $Y$\/} is the presheaf of $S^1$-spectra
\[\widetilde{M}_E(Y)=C_*\widetilde{\Fr}^E(Y\otimes\Ss)=
(C_*\widetilde{\Fr}^E(Y),C_*\widetilde{\Fr}^E(Y\otimes S^1),C_*\widetilde{\Fr}^E(Y\otimes S^2),\ldots).\]
It follows from Lemma~\ref{frtilde} that both $\widetilde{\Fr}^E(Y\otimes \Ss)$ and $\widetilde{M}_E(Y)$
are presheaves of $S^1$-spectra with framed transfers.
\end{dfn}

\begin{lem}\label{tildazf}
The presheaves of stable homotopy groups $\pi_i(\widetilde{\Fr}^E(Y\otimes
\Ss))$ have $\ZF_*$-transfers and the presheaves of stable homotopy groups
$\pi_i(\widetilde{M}_E(Y))$ are $\A^1$-invariant stable $\ZF_*$-pre\-sheaves.
\end{lem}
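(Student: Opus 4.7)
The plan is to follow the strategy used in~\cite{GPMain} for the ordinary framed motives $M_{fr}(Y)$ (the case $E=S_T$), adapting each step to the $\widetilde{\Fr}^E$ setting where the correspondence data has been stripped to the normal-bundle framing. The argument splits into two parts, matching the two assertions.

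For the $\ZF_*$-transfers on $\pi_i(\widetilde{\Fr}^E(Y\otimes\Ss))$, I would first invoke Lemma~\ref{frtilde}, which equips $\widetilde{\Fr}^E(-,Y)$ with framed transfers via an explicit pairing $\Fr_n(X,Y)\times\widetilde{\Fr}_m^E(Y,W)\to\widetilde{\Fr}_{n+m}^E(X,W)$. This pairing is manifestly functorial in the $W$-factor, so $K\mapsto\widetilde{\Fr}^E(-,Y\otimes K)$ is a $\Gamma$-space in framed presheaves. Assembling it with Segal's machinery yields that $\widetilde{\Fr}^E(Y\otimes\Ss)$ is a presheaf of $S^1$-spectra with framed transfers; the $\pi_i$'s inherit framed transfers, and linearization produces the desired $\ZF_*$-transfer structure.

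For the second assertion, $\widetilde{M}_E(Y)=C_*\widetilde{\Fr}^E(Y\otimes\Ss)$. The $\A^1$-invariance of the homotopy group presheaves is the standard consequence of applying the Suslin construction $C_*$ (the chain homotopy arising from the simplicial structure on $\Delta^\bullet_k$ gives the required $\A^1$-homotopy on each $\pi_i$, exactly as in~\cite[Section~6]{GPMain}). Quasi-stability follows from the shape of the stabilization maps $\widetilde{\Fr}_n^E\to\widetilde{\Fr}_{n+1}^E$: by direct inspection of the pairing in Lemma~\ref{frtilde}, the framed composition with the distinguished element $\sigma\in\Fr_1(pt,pt)$ agrees with this stabilization shift, so in the colimit $\widetilde{\Fr}^E$ the $\sigma$-action is identified with the identity already at the level of presheaves, and hence on each $\pi_i$ after $C_*$.

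The main obstacle is making precise that the pairing of Lemma~\ref{frtilde} assembles into a morphism of $\Gamma$-spaces, so that the framed transfers propagate to every $\pi_i$ of the resulting Segal spectrum and not just to $\pi_0$; this requires checking compatibility of the composition with the wedge/pinch maps defining the special $\Gamma$-space structure, which proceeds exactly as in the analogous verification for $M_{fr}(Y)$ in~\cite{GPMain}. Once this is in place, $\A^1$-invariance and quasi-stability reduce to the corresponding results loc.\ cit.
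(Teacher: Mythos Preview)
Your overall approach matches the paper's, but there is a genuine error in the stability argument. You assert that precomposition with $\sigma\in\Fr_1(pt,pt)$ agrees with the stabilization map $\widetilde{\Fr}_n^E\to\widetilde{\Fr}_{n+1}^E$ on the nose, so that $\sigma$ acts as the identity already at the level of the presheaf $\widetilde{\Fr}^E$. This is false: inspecting the pairing in Lemma~\ref{frtilde}, the action $\sigma_X^*$ inserts the new copy of $\A^1$ in the \emph{first} coordinate and the framing uses the left inclusion $i_{n,1}^l=\chi_{n,1}\circ i_{n,1}$, whereas the stabilization map inserts it in the \emph{last} coordinate via $i_{n,1}$. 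The two maps therefore differ by the shuffle permutation $\chi_{1,n}$ acting simultaneously on $\A^{n+1}_X$ and on $V_{n+1}$. This permutation is even precisely when $n$ is even, and only then does the hypothesis of contractible alternating group action on $E$ (together with the $\GL$-action on the affine coordinates, as in Corollary~\ref{Pmpermut}) furnish an $\A^1$-homotopy between the two maps; passing to $C_*$ converts this into a simplicial homotopy. One then concludes that $\sigma^*$ induces the identity on $\pi_i(\widetilde{M}_E(Y))$ by working along the cofinal subsystem of even $n$. Without this permutation step you do not obtain stability.

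A smaller gap: for the $\ZF_*$-structure you must also verify radditivity, i.e.\ that the natural map $\widetilde{\Fr}_n^E(X_1\sqcup X_2,Y)\to\widetilde{\Fr}_n^E(X_1,Y)\times\widetilde{\Fr}_n^E(X_2,Y)$ is a bijection. The paper checks this explicitly; it is needed because a $\ZF_*$-presheaf is by definition a radditive framed presheaf of abelian groups, and ``linearization'' alone does not supply it.
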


\begin{proof}
For $X_1,X_2$ there is a natural bijection
$\widetilde{\Fr}^E_n(X_1\sqcup
X_2,Y)\to\widetilde{\Fr}^E_n(X_1,Y)\times\widetilde{\Fr}^E_n(X_2,Y)$,
hence there is an isomorphism $\widetilde{\Fr}^E(X_1\sqcup
X_2,Y\otimes\Ss)\to\widetilde{\Fr}^E_n(X_1,Y\otimes\Ss)\times
\widetilde{\Fr}^E_n(X_2,Y\otimes\Ss)$ of
$S^1$-spectra. Then the presheaves
$\pi_i(\widetilde{\Fr}^E(Y\otimes \Ss))$ are radditive with framed
transfers, and hence these are $\ZF_*$-presheaves.

Recall that $\sigma_X\in \Fr_1(X,X)$ uniquely corresponds to the canonical motivic equivalence
$X_+\w\PP^{\w1}\to X_+\w T$ and is given by the quadruple $(X\times 0,X\times\A^1,pr_{\A^1},pr_X)$.
Then $\sigma_X^*\colon\widetilde{\Fr}_n^E(X,Y)\to
\widetilde{\Fr}_{n+1}^E(X,Y)$ differs from the stabilization map
$\widetilde{\Fr}_n^E(X,Y)\to \widetilde{\Fr}_{n+1}^E(X,Y)$ by the
action of the shuffle permutation $\chi_{1,n}$ on $\A^{n+1}_X$ and on
the vector bundle $V_{n+1}$. As usual, when $n$ is even, they differ by
an $\A^1$-homotopy, hence induce homotopic maps
$C_*\widetilde{\Fr}_n^E(X,Y)\to C_*\widetilde{\Fr}_{n+1}^E(X,Y)$.
Thus $\sigma_X^*$ induces the identity map on presheaves of homotopy groups.
\end{proof}

\begin{dfn}\label{Cechdef}
For a map of simplicial presheaves $f\colon X\to Y$ denote by
$\check{C}(f)$ the diagonal of the \v{C}ech bisimplicial presheaf
with $n$-simplices given by simplicial presheaf
\[\check{C}(f)_n=X\times_Y\times\ldots\times_Y X\quad\text{ ($n+1$ times)}\]
with the usual face and degeneracy maps.
Then $f$ factors as a composition
\[X\xrightarrow{d(f)}\check{C}(f)\xrightarrow{p(f)} Y\]
where $d(f)$ is the diagonal map
\[d(f)_n\colon X_n\to X_n\times_{Y_n}\times\ldots\times_{Y_n} X_n\]
and $p(f)$ is the projection
\[p(f)_n\colon X_n\times_{Y_n}\times\ldots\times_{Y_n} X_n\to Y_n.\]
Note that if $X(U)\to Y(U)$ is surjective for $U\in\Sm_k$, then
$p(f)\colon\check{C}(f)(U)\to Y(U)$ is a weak equivalence of simplicial sets.
\end{dfn}

\begin{dfn}
For every simplicial presheaf $X$ denote by $C_*X$ the diagonal of
the bisimplicial presheaf $n\mapsto X(\Delta^n_k)$. Then there is a
canonical inclusion map $c_0\colon X\to C_*(X)$, and for every $U\in\Sm_k$ the map
\[C_*(c_0)\colon C_*X(U)\to C_*C_*X(U)\]
is a weak equivalence of simplicial sets.
\end{dfn}

\begin{lem}\label{homotopy}
Suppose there is an \'etale map $g\colon Y\to\A^d$ and
$fog_Y\colon\Fr_n^E(Y)\to\widetilde{\Fr}_n^E(Y)$ is the
forgetful map. Then
there exists a map of simplicial presheaves
$H_Y\colon\check{C}(fog_Y)\to C_*\Fr_n^E(Y)$ on the category of smooth
affine varieties, compatible with stabilization by $n$, and that
fits into the commutative diagram
\begin{equation}\label{ooo}
\xymatrix{
\Fr^E_n(Y)\ar[d]_{d(fog)}\ar[rr]^{c_0} && C_*\Fr^E_n(Y)\ar[d]^{C^*(fog_Y)}\\
\check{C}(fog_Y)\ar[rru]^{H_Y}\ar[rr]^{c_0\circ p(fog)} && C_*(\widetilde{\Fr}_n^E(Y)). }
\end{equation}
Moreover, the map $H_Y$ is functorial in $Y$ in the following sense: if $g\colon Y\to\A^d$ is \'etale,
$g'\colon Y'\to\A^d$ is \'etale, and $q\colon Y\to Y'$ is a map such that $g'q=g$, then the diagram
\begin{equation}\label{ooo2}
\xymatrix{
\check{C}(fog_Y)\ar[d]\ar[r]^{H_Y} & C_*\Fr^E_n(Y)\ar[d]\\
\check{C}(fog_{Y'})\ar[r]^{H_{Y'}} & C_*\Fr^E_n(Y')}
\end{equation}
is commutative. Here the vertical arrows are induced by $q$.
\end{lem}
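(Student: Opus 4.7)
The plan is to construct $H_Y$ by interpolating framings via a fiberwise convex combination. Given a $k$-simplex $\boldsymbol{x}=(x_0,\ldots,x_k)$ of $\check{C}(fog_Y)(X)$ over an affine $X$ with $x_j=(U_j,Z_j,\phi_j,f_j)\in\Fr_n^E(X,Y)$, all $fog(x_j)$ agree. Thus $Z_j=Z$ for all $j$, and passing to the common étale neighborhood $U:=U_0\times_{\A^n_X}\cdots\times_{\A^n_X}U_k$, the projections $\psi:=\pi\phi_j\colon U\to Z_n$, the normal-bundle isomorphisms $N(\phi_j)\colon N_{Z/\A^n_X}\to(\psi i)^*V_n$, and the restrictions $f_j|_Z\colon Z\to Y$ are independent of $j$. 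Since each $\phi_j$ factors through $\psi$, it corresponds to a section $\sigma_j$ of the pullback vector bundle $\psi^*V_n\to U$; on $U\times\Delta^k$ we form the section $\sigma_*=\sum_j t_j\sigma_j$ of $pr_U^*\psi^*V_n$ and let $\phi_*\colon U\times\Delta^k\to V_n$ be the associated map. For the regular function to $Y$, set $h_*:=\sum_j t_j(g\circ f_j)\colon U\times\Delta^k\to\A^d$; since $f_j|_Z$ is $j$-independent, $h_*|_{Z\times\Delta^k}=g\circ(f|_Z\circ pr_Z)$. Then $(U\times\Delta^k)\times_{\A^d}Y$ is étale over $U\times\Delta^k$ and admits the section $f|_Z\circ pr_Z$ over $Z\times\Delta^k$; take $W$ to be the clopen component containing this section and let $f_*\colon W\to Y$ be the second projection.

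The main obstacle is verifying that, after shrinking $W$ if necessary, $(\phi_*|_W)^{-1}(Z_n)=Z\times\Delta^k$ as closed subschemes, so that $H_Y(\boldsymbol{x}):=(W,Z\times\Delta^k,\phi_*|_W,f_*)$ is a genuine element of $\Fr_n^E(X\times\Delta^k,Y)$, hence a $k$-simplex of $C_*\Fr_n^E(Y)(X)$. Working in the Henselization of $\A^n_X\times\Delta^k$ along $Z\times\Delta^k$, pick local sections $g_1,\ldots,g_n$ of the ideal $J$ of $Z_n$ in $V_n$ giving a dual basis of $V_n$ over a trivializing open of $Z_n$. By construction $\phi_*^*(g_i)=\sum_j t_j\phi_j^*(g_i)$, and the equality $N(\phi_j)=N(\phi_0)$ forces $\phi_j^*(g_i)-\phi_0^*(g_i)\in I_Z^2$, whence $\phi_*^*(g_i)\equiv\phi_0^*(g_i)\pmod{I_Z^2}$. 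Since $\phi_0^*(g_1),\ldots,\phi_0^*(g_n)$ generate $I_Z$ (as $\phi_0$ is a framing and $Z$ is l.c.i.\ of codimension $n$), Nakayama in the Henselian pair $(A^h\otimes k[\Delta^k],I_Z\cdot(A^h\otimes k[\Delta^k]))$ gives that $\phi_*^*(g_1),\ldots,\phi_*^*(g_n)$ also generate $I_Z$ on a suitable étale neighborhood. This yields the required scheme-theoretic equality and ensures that $H_Y(\boldsymbol{x})$ is well defined up to the equivalence relation on $\Fr_n^E$.

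The diagram \eqref{ooo} then commutes by direct inspection. On a $0$-simplex $\Delta^0$ collapses and $\sum t_j\phi_j=\phi_0$, $f_*=f_0$, so $H_Y\circ d(fog)=c_0$. For the lower triangle, applying $C_*(fog_Y)$ to $H_Y(\boldsymbol{x})$ gives $(W,Z\times\Delta^k,N(\phi_*),\pi\phi_*,f_*|_{Z\times\Delta^k})$; using $\sum_j t_j=1$ one reads off $\pi\phi_*=\psi\circ pr_U$, $N(\phi_*)=N(\phi_0)$, and $f_*|_{Z\times\Delta^k}=f|_Z\circ pr_Z$, all pulled back from $X$, which is precisely $c_0\circ p(fog)$. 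Compatibility with stabilization $n\mapsto n+1$ is immediate: the stabilization operation adds a trivial factor of $\A^1$ to $U$ and to the bundle, and the convex-combination formula commutes with this operation.

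For functoriality in $Y$, given $q\colon Y\to Y'$ with $g'q=g$, the interpolating map $\sum_j t_j(g'\circ(qf_j))=\sum_j t_j(g\circ f_j)=h_*$ is unchanged, so the étale neighborhood underlying $H_{Y'}((q\circ x_j)_j)$ is obtained from that of $H_Y(\boldsymbol{x})$ by post-composition of the second-projection map with $q$; thus $\Fr_n^E(q)_*\circ H_Y=H_{Y'}\circ\check{C}(q)_*$, proving commutativity of \eqref{ooo2}. The whole construction is natural in $X$ on the category of smooth affine varieties, so $H_Y$ is a genuine map of simplicial presheaves.
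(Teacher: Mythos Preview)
Your proof is correct and follows essentially the same strategy as the paper: form the convex combination $\sum t_j\phi_j$ of the framings over $\Delta^k$ using the vector-bundle structure on $V_n\to Z_n$, verify via Nakayama that this is again a framing of $Z\times\Delta^k$, and use the \'etale map $g\colon Y\to\A^d$ to interpolate the $f_j$'s by the same affine formula. The paper packages the framing check as Lemma~\ref{Nphi12}(2) and works systematically with the Henselization $(\A^n_X)^h$ via Remark~\ref{henselremark}, while you work with the finite common refinement $U_0\times_{\A^n_X}\cdots\times_{\A^n_X}U_k$ and argue directly with generators; these are equivalent.

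Two small points of imprecision are worth tightening. First, ``the clopen component containing this section'' is not quite right: the section $Z\times\Delta^k\to(U\times\Delta^k)\times_{\A^d}Y$ of an \'etale map has open-and-closed image in the fiber over $Z\times\Delta^k$, but not necessarily in the total space; the clean statement is simply that the fiber product is itself an \'etale neighborhood of $Z\times\Delta^k$, and the paper avoids the issue by passing to the Henselization where the section is unique. Second, your check of the upper triangle (``On a $0$-simplex $\Delta^0$ collapses'') is mis-stated: what you need is that on the image of $d(fog)$ all $x_j$ coincide, whence $\sum t_j\phi_0=\phi_0$ and $\sum t_j gf_0=gf_0$ by $\sum t_j=1$, so $H_Y$ returns the constant simplex at every level $m$; the conclusion is right but the reason given is not. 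You should also say a word about why $H_Y$ respects the face and degeneracy maps of $\check{C}$ and $C_*$ (setting $t_i=0$ drops the $i$-th summand, etc.), which the paper records as ``clearly'' but which is needed for $H_Y$ to be a map of simplicial presheaves.
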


\begin{proof}
For brevity we sometimes write $fog$ instead of $fog_Y$.
For an affine smooth $X$ the set of $m$ simplices
$\check{C}(fog)_m(X)$ consists of $(m+1)$-triples of correspondences
$(Z,\phi_0,f_0),\ldots,(Z,\phi_m,f_m)$ in $\Fr^E_n(X,Y)$ such that the maps
$\pi\phi_0,\ldots, \pi\phi_m\colon U\to Z_n$ are equal, isomorphisms
on normal bundles $N(\phi_i)\colon N_{Z/A^n_X}\to (\pi\phi_i
i)^*V_n$ are equal for $i=0,\ldots, m$, and the regular maps $f_i\colon
U\to Y$ coincide on $Z$. Here $U$ denotes the Henselization of $Z$ in $\A^n_X$ and
we use Remark~\ref{henselremark} here.

The addition map $V_n\times_{Z_n}V_n\to V_n$ and scalar
multiplication map $\A^1\times V_n\to V_n$ give rise to the linear
combination map
\[V_n\times_{Z_n}V_n\times\ldots\times_{Z_n} V_n\times\A^{m+1}\to V_n,\]
\begin{equation*}\label{ggg}
((v_0,\ldots, v_m),(t_0,\ldots, t_m))\mapsto t_0v_0+\ldots +t_mv_m.
\end{equation*}

For a $(m+1)$-tuple $(Z,\phi_0,f_0),\ldots (Z,\phi_m,f_m)$ in
$\check{C}(fog)_m(X)$ the maps $\phi_0,\ldots,\phi_m$ coincide after
composing them with $\pi\colon V_n\to Z_n$, hence they define a map
$\phi\colon U\to V_n\times_{Z_n}\times\ldots\times_{Z_n}V_n$. Taking
composition with the linear combination map, we get a map
\[\Phi=t_0\phi_0+\ldots+t_m\phi_m\colon U\times\Delta^m_k\to V_n,\]
where $t_0,\ldots, t_m$ denote the barycentric coordinates on
$\Delta^m_k$. Let $J$ denote the sheaf of ideals defining $Z_n$ in $V_n$. For every
$\phi_i$ we have that $\phi^*_i(J)$ lies inside the ideal $I$ defining $Z$ in $U$.
Then $\Phi^*(J)$ lies inside the ideal $I\otimes_k k[\Delta^m]\subseteq k[U]\otimes_{k}k[\Delta^m]$
which defines $Z\times\Delta^m_k$ inside $U\times\Delta^m_k$.

Let $\Phi^h:(U\times\Delta^m_k)^h\to V_n$ denote the map on Henselization induced by $\Phi$.
Since $\Phi^*(J)\subseteq I\otimes k[\Delta^m]$, then $(\Phi^h)^*(J)\subseteq (I\otimes k[\Delta^m])^h$,
where the latter denotes the corresponding ideal in the Henselization ring $k(U\times\Delta^m)^h$.
The normal bundles $N_{Z\times\Delta^m_k/U\times\Delta^m_k}$ and $N_{Z\times\Delta^m_k/(U\times\Delta^m_k)^h}$
are canonically isomorphic. We denote them by $N_{Z\times\Delta^m_k}$ for brevity.
By~\ref{Nphi} $\Phi^h$ defines a morphism of vector bundles
\[N(\Phi^h)\colon N_{Z\times\Delta^m_k}\to (\pi\Phi^h i_{\Delta})^*V_n,\]
where $i_{\Delta}\colon Z\times\Delta^m_k\to (U\times\Delta^m_k)^h$ denotes the inclusion.
Let $i\colon Z\to U$ denote the inclusion, and $p\colon Z\times\Delta^m_k\to Z$ denote the projection.
Then $\pi\Phi^hi_{\Delta}=\pi\phi_j i p$ for every $j=0,\ldots,m$. In particular, $\pi\Phi^hi_{\Delta}=\pi\phi_0 i p$.
The normal bundle $N_{Z\times\Delta^m_k}$ is canonically
isomorphic to the pullback $p^*N_{Z/U}$. By construction, the morphism of bundles $N(\Phi^h)$ equals the sum
\[N(\Phi^h)=t_0p^*N(\phi_0)+\ldots +t_mp^*N(\phi_m)\colon p^*N_{Z/U}\to p^*(\pi\phi_0 i)^*V_n.\]
Since $N(\phi_i)=N(\phi_0)$ for all $i=0,\ldots, m$, then $N(\Phi^h)=p^*N(\phi_0)$
is an isomorphism, because so is $N(\phi_0)$.
Then $\Phi^h$ is a framing of $Z\times\Delta^m_k$ in
$(U\times\Delta^m_k)^h$ by Lemma~\ref{Nphi12}(2).

The maps $f_0,\ldots,f_m\colon U\to Y$ coincide on $Z$. Consider the
map \[ t_0gf_0+\ldots +t_mgf_m\colon
U\times\Delta^m_k\to\A^d.\] Then the fiber product
$U'=(U\times\Delta^m_k)\times_{\A^d}Y$ is an \'etale neighborhood of
$Z\times\Delta^m_k$ in $U\times\Delta^m_k$, hence there is a unique
section $s\colon (U\times\Delta^m_k)^h\to U'$, where
$(U\times\Delta^m_k)^h$ is the Henselization of $Z\times\Delta^m_k$
in $U\times\Delta^m_k.$ Denote by $t_0f_0+\ldots+t_mf_m$ the
composition
\[(U\times\Delta^m_k)^h\stackrel{s}\to U'\to Y.\]

Then for every $m\geqslant 0$ one gets a map $H_m\colon\check{C}(fog)_m\to C_m\Fr_n^E(Y)$ defined as
\[H_m\colon (Z,\phi_0,f_0),\ldots (Z,\phi_m,f_m)\mapsto
(Z\times\Delta^m_k,\Phi^h,t_0f_0+\ldots+t_mf_m)\]
in the notation of Remark~\ref{henselremark}.
Clearly, the maps $H_m$ are compatible with the face and degeneracy maps and yield
the desired morphism of simplicial presheaves $H_Y\colon\check{C}(fog_Y)\to
C_*\Fr_n^E(Y)$ on the category of smooth affine varieties.

If a $m$-tuple $((Z,\phi_0,f_0),\ldots (Z,\phi_m,f_m))$ in $\check{C}(fog)_m$
is in the image of $d(fog)$, then $(Z,\phi_i,f_i)=(Z,\phi_0,f_0)$ for $i=0,\ldots m$.
Thus $(Z\times\Delta^m_k,\Phi^h,t_0f_0+\ldots+t_mf_m)=(Z\times\Delta^m_k,\phi_0\circ pr,f_0\circ pr)$,
where $pr\colon U\times\Delta^m_k\to U$ is the projection. Then the left triangle in the
diagram~\eqref{ooo} is commutative.

As we have already proved, if $\Phi=t_0\phi_0+\ldots+t_m\phi_m$ then $N(\Phi^h)=p^*N(\phi_0)$.
It follows that the right triangle in the diagram~\eqref{ooo} is commutative as well.

To see that the diagram~\eqref{ooo2} commutes when $q\colon Y\to Y'$ is a map over $\A^d$,
we note that the following maps coincide:
\[t_0gf_0+\ldots+t_mgf_m=t_0g'qf_0+\ldots+t_mg'qf_m\colon U\times\Delta^m_k\to\A^d.\]
Then the diagram
\[\xymatrix{
(U\times\Delta^m_k)^h\ar[r]\ar[rd] & (U\times\Delta^m_k)\times_{\A^d}Y\ar[r]\ar[d] & Y\ar[d]^{q}\\
                  & (U\times\Delta^m_k)\times_{\A^d}Y'\ar[r] & Y'}\]
is commutative. Thus we get that
\[q(t_0f_0+\ldots+t_mf_m)=t_0qf_0+\ldots +t_mqf_m\colon(U\times\Delta^m_k)^h\to Y',\]
and hence~\eqref{ooo2} commutes.
\end{proof}



\begin{lem}\label{normfrcoret}
Suppose there is an \'etale map $g\colon Y\to\A^d.$ Then the natural map
$fog\colon\Fr^E(Y)\to\widetilde{\Fr}^E(Y)$ induces a local stable
weak equivalence of $S^1$-spectra
$M_E(Y)\stackrel{\simeq}\to \widetilde{M}_E(Y)$.
\end{lem}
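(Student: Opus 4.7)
The plan is to prove that $C_*(fog)\colon C_*\Fr^E_n(-,Y)\to C_*\widetilde{\Fr}^E_n(-,Y)$ is a local weak equivalence of simplicial presheaves for every $n\geqslant 0$, and then pass to the colimit and upgrade the statement to a stable local weak equivalence of Segal $S^1$-spectra $M_E(Y)\to \widetilde{M}_E(Y)$. The compatibility of $H_Y$ with stabilization by $n$, asserted in Lemma~\ref{homotopy}, makes the colimit step routine.

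\emph{Step 1.} By Lemma~\ref{locsurj} the forgetful map $fog\colon\Fr^E_n(-,Y)\to\widetilde{\Fr}^E_n(-,Y)$ is locally surjective in the Nisnevich topology, so the \v{C}ech projection $p(fog)\colon\check{C}(fog_Y)\to\widetilde{\Fr}^E_n(-,Y)$ of Definition~\ref{Cechdef} is a local weak equivalence; consequently $C_*(p(fog))$ is one as well.

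\emph{Step 2.} Applying $C_*$ to the commutative diagram~\eqref{ooo} of Lemma~\ref{homotopy} produces two identities of maps of simplicial presheaves:
\[
C_*(H_Y)\circ C_*(d(fog))=C_*(c_0),\qquad C_*C_*(fog)\circ C_*(H_Y)=C_*(c_0)\circ C_*(p(fog)).
\]
The map $C_*(c_0)$ is a sectionwise weak equivalence, so after identifying $C_*C_*X$ with $C_*X$ via this equivalence in the local homotopy category, the first identity says that $[C_*(H_Y)]$ admits $[C_*(d(fog))]$ as a section, while the second identity combined with Step~1 shows that $[C_*(H_Y)]$ also admits $[C_*(p(fog))]^{-1}\circ [C_*C_*(fog)]$ as a retraction. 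In any category a morphism possessing both a section and a retraction is an isomorphism, so $[C_*(H_Y)]$ is invertible. The second identity then expresses $[C_*C_*(fog)]$ as a composite of isomorphisms, whence $[C_*(fog)]$ is an isomorphism in the local homotopy category, which is precisely the assertion that $C_*(fog)$ is a local weak equivalence.

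\emph{Step 3 and the main obstacle.} Taking the colimit over $n$ yields a local weak equivalence $C_*\Fr^E(-,Y)\to C_*\widetilde{\Fr}^E(-,Y)$. This map is the $K=S^0$ level of a natural transformation of $\Gamma$-spaces $K\mapsto C_*\Fr^E(-,Y\otimes K)\to K\mapsto C_*\widetilde{\Fr}^E(-,Y\otimes K)$. Both $\Gamma$-spaces are locally special: for $\Fr^E$ this follows from Corollary~\ref{specialgamma} together with Lemma~\ref{facts}, and for $\widetilde{\Fr}^E$ the same argument applies using the analogous additivity-in-the-target property for normal framed correspondences, which is immediate from the explicit geometric description in Remark~\ref{henselremark}. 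The Segal machine then upgrades the pointwise local weak equivalence at $K=S^0$ to a stable local weak equivalence of the associated $S^1$-spectra $M_E(Y)\to \widetilde{M}_E(Y)$, completing the proof. The delicate point throughout is the bookkeeping of Step~2, where the identification $C_*X\simeq C_*C_*X$ provided by $C_*(c_0)$ must be used consistently so that both relations from Lemma~\ref{homotopy} genuinely witness a section and a retraction of $[C_*(H_Y)]$; a secondary concern is the verification of specialness in Step~3, requiring the additivity-in-the-target statement for $\widetilde{\Fr}^E$ analogous to Proposition~\ref{additivity}.
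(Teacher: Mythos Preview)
Your Steps~1 and~2 are correct and carry exactly the same section/retraction idea as the paper's proof: applying $C_*$ to diagram~\eqref{ooo} and using that $C_*(c_0)$ and $C_*(p(fog))$ are (local) weak equivalences shows that $C_*(H_Y)$ is invertible in the local homotopy category, whence $C_*(fog)$ is a local weak equivalence of simplicial presheaves.

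The gap is in Step~3. Specialness of the $\Gamma$-space $K\mapsto C_*\widetilde{\Fr}^E(-,Y\otimes K)$ is \emph{not} immediate from Remark~\ref{henselremark}: the natural map $\widetilde{\Fr}^E_n(X,Y\sqcup Y')\to\widetilde{\Fr}^E_n(X,Y)\times\widetilde{\Fr}^E_n(X,Y')$ fails to be surjective for precisely the same reason as for ordinary framed correspondences---the two supports may intersect in $\A^n_X$---so one would need an honest additivity theorem for $\widetilde{\Fr}^E$ of the shape of Proposition~\ref{additivity}, and that requires a genuine $\A^1$-homotopy moving supports apart. You cannot bootstrap it from the specialness on the $\Fr^E$ side without already knowing the equivalence you are trying to prove.

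The paper sidesteps this by using the second half of Lemma~\ref{homotopy}, the functoriality square~\eqref{ooo2}: since $Y\otimes K$ is \'etale over $\A^d$ via $Y\otimes K\to Y\xrightarrow{g}\A^d$ and every structure map $Y\otimes K\to Y\otimes K'$ is a map over $\A^d$, the maps $H_{Y\otimes K}$ assemble into a map of $\Gamma$-spaces (hence of Segal $S^1$-spectra) $H\colon\check{C}(fog\otimes\Ss)\to C_*\Fr^E_n(Y\otimes\Ss)$, and the section/retraction argument runs once at the spectrum level with no appeal to specialness. An equally simple fix of your argument is to apply your Step~2 verbatim to each $Y\otimes K$ (which is \'etale over $\A^d$), concluding that $C_*(fog_{Y\otimes K})$ is a local weak equivalence for every finite pointed set $K$; since $fog$ is natural in $K$ this yields a levelwise local weak equivalence of Segal spectra, again with no specialness needed. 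With this fix your route is in fact slightly more direct than the paper's, which first obtains a motivic stable weak equivalence and then upgrades it to a local one via Lemma~\ref{tildazf}, \cite[7.1]{GPMain} and~\cite[1.1]{GPPresheaves}.
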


\begin{proof}
For every $n\geqslant 0$ the map $fog\colon\Fr^E_n(Y)\to\widetilde{\Fr}^E_n(Y)$
is locally surjective by Lemma~\ref{locsurj}. It follows that the induced
map $\check{C}(fog)\to\widetilde{\Fr}^E_n(Y)$ is a local weak
equivalence. Let $\check{C}(fog\otimes\Ss)$ denote the presheaf of Segal $S^1$-spectra
associated to the presheaf of $\Gamma$-spaces $K\mapsto\check{C}(fog\otimes K)$,
where $fog\otimes K$ is the forgetful map $fog\colon\Fr^E_n(Y\otimes K)\to\widetilde{\Fr}^E_n(Y\otimes K).$
Then the induced map
   $$\check{C}(fog\otimes\Ss)\to{\widetilde{\Fr}}^E_n(Y\otimes\Ss)$$
is a levelwise local weak equivalence of $S^1$-spectra.


For any finite pointed set $K$  we have that $Y\otimes K$ is \'etale
over $\A^d$ via the natural composition $Y\otimes K\to
Y\xrightarrow{g}\A^d$ and for any map $K\to K'$ of pointed sets the
induced map $Y\otimes K\to Y\otimes K'$ is a map of varieties over
$\A^d$. Then the maps $H_{Y\otimes K}$ of Lemma~\ref{homotopy}
induces a map of presheaves of $S^1$-spectra $H\colon
\check{C}(fog\otimes\Ss)\to C_*\Fr^E_n(Y\otimes\Ss)$. Applying $C_*$
we get a commutative diagram
\[
\xymatrix{
C_*\Fr^E_n(Y\otimes\Ss)\ar[d]_{C_*d(fog)}\ar[r] & C_*C_*\Fr^E_n(Y\otimes\Ss)\ar[d]^{C_*C_*(fog)}\\
C_*\check{C}(fog\otimes\Ss)\ar[r]\ar[ru]^{C_*H} &
C_*C_*\widetilde{\Fr}^E_n(Y\otimes\Ss) }
\]
The horizontal arrows in the diagram are motivic stable weak equivalences.
Then $C_*H$ has both a left and a right inverse in $SH_{S^1}(k)$.
So $C_*H$ is a motivic stable weak equivalence as well, and hence so are the vertical arrows.
Since $C_*$ is an idempotent operation up to motivic equivalence and sequential colimits preserve
stable motivic equivalences, it follows that
$C_*(fog):M_E(Y)\to \widetilde{M}_E(Y)$ is a motivic stable weak equivalence. It follows
from Lemmas~\ref{cancel}(3),~\ref{tildazf}, \cite[7.1]{GPMain} and~\cite[1.1]{GPPresheaves} that local stable fibrant replacements
$M_E(Y)_f,{M}_E(Y)_f$ of $M_E(Y),{M}_E(Y)$ are motivically fibrant $S^1$-spectra.
Therefore the induced map $C_*(fog)_f:M_E(Y)_f\to \widetilde{M}_E(Y)_f$ is a sectionwise
level weak equivalence of spectra, and hence $C_*(fog):M_E(Y)\to \widetilde{M}_E(Y)$ is a
stable local weak equivalence, as required.
\end{proof}

\begin{lem}\label{normMV}
Suppose $Y\in\Sm_k$ equals the union of two open subschemes
$Y_1$ and $Y_2$. Let $Y_{12}=Y_1\cap Y_2$. Then
   \[\xymatrix{
     \widetilde{M}_E(Y_{12})\ar[r]\ar[d] & \widetilde{M}_E(Y_{1})\ar[d] &&
      M_E(Y_{12})\ar[r]\ar[d] & M_E(Y_{1})\ar[d]    \\
      \widetilde{M}_E(Y_{2})\ar[r] & \widetilde{M}_E(Y)&&
      M_E(Y_{2})\ar[r] & M_E(Y)}\]
are homotopy pushout squares in the local stable model structure of $S^1$-spectra.
\end{lem}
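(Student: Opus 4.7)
\emph{Plan.} I would prove the two squares separately, each by reducing to a Mayer--Vietoris decomposition already available in the paper or readily adapted from it.

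For the $M_E$-square, my approach would use the presentation $M_E(Y_*)=\colim_k\uhom(\PP^{\w k},M_{fr}(Y_{*+}\w E_k))$ from Corollary~\ref{factscor} and Lemma~\ref{factmotive} (where $Y_*$ ranges over $Y,Y_1,Y_2,Y_{12}$), combined with $E_k=\colim_i V_{k,i}/(V_{k,i}-Z_{k,i})$ from Definition~\ref{thomspectra}. For each fixed pair $(k,i)$, I would apply Proposition~\ref{MV} to the Zariski cover $Y\times V_{k,i}=(Y_1\times V_{k,i})\cup(Y_2\times V_{k,i})$ with closed subset $Y\times Z_{k,i}$, noting that the quotient sheaf equals $Y_+\w V_{k,i}/(V_{k,i}-Z_{k,i})$ with analogous formulas for the remaining corners. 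This would produce a homotopy pushout square of $S^1$-spectra in the flasque local stable model structure. Since filtered colimits preserve finite homotopy colimits there, passing to $\colim_{k,i}$ would yield the desired square.

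For the $\widetilde{M}_E$-square, I would imitate the chain Lemma~\ref{ZFtriangleaux}--Corollary~\ref{ZFtriangle}--Proposition~\ref{MV} for normal correspondences. Let $\widetilde{F}^E(U,Y)\subseteq\widetilde{\Fr}^E(U,Y)$ be the subset of quintuples $(W,Z,\phi,\psi,f)$ with connected support, and let $\Z\widetilde{F}^E(-,Y)$ be the associated Nisnevich sheaf of free abelian groups. Via the Segal-machine argument of Lemma~\ref{zfrhomol} (cf.\ \cite[1.2]{GNP}), using the $\Gamma$-space additivity coming from Lemma~\ref{tildazf}, one identifies $\pi_*^{Nis}(\widetilde{M}_E(Y))$ with $H_*(C_*\Z\widetilde{F}^E(Y))$. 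The central combinatorial step is to check that for $U$ local Henselian, the sequence
\[\widetilde{F}^E(U,Y_{12})\rightrightarrows\widetilde{F}^E(U,Y_1)\vee\widetilde{F}^E(U,Y_2)\to\widetilde{F}^E(U,Y)\]
is a coequalizer of pointed sets. The verification: a connected support $Z\subseteq\A^n_U$ finite over a Henselian $U$ is itself local Henselian, so its unique closed point lands in some $Y_i$, and by locality the entire map $f\colon Z\to Y$ factors through $Y_i$. This yields a short exact sequence
\[0\to\Z\widetilde{F}^E(Y_{12})\to\Z\widetilde{F}^E(Y_1)\oplus\Z\widetilde{F}^E(Y_2)\to\Z\widetilde{F}^E(Y)\to 0\]
of Nisnevich sheaves in analogy with Lemma~\ref{ZFtriangleaux}. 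Combined with quasi-stability from Lemma~\ref{tildazf}, Lemma~\ref{Totzero} would then deliver a distinguished triangle in the derived category of sheaves in analogy with Corollary~\ref{ZFtriangle}; connectivity of the Segal spectra $\widetilde{M}_E(Y_*)$ upgrades this to the desired homotopy pushout square of $S^1$-spectra.

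The main obstacle I anticipate is justifying the coequalizer above. The subtlety is that the auxiliary data $W,\phi,\psi$ in a quintuple live over $\A^n_U$ rather than over $Y$, so the decomposition according to the cover must come entirely from the map $f\colon Z\to Y$. Because $Y_1,Y_2$ are open in $Y$, restricting $f$ is unambiguous and leaves $(W,Z,\phi,\psi)$ untouched, so the argument is formally parallel to Lemma~\ref{ZFtriangleaux}; carefully verifying this compatibility (and the fact that the normal-bundle trivialization and framing $\psi$ are insensitive to the choice $Y_1$ vs.\ $Y_2$) is the only genuinely new ingredient beyond what is already in the paper.
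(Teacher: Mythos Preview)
Your approach is correct. For the $\widetilde{M}_E$-square your argument is essentially the paper's own: the paper introduces presheaves $\widetilde{\ZF}^E(Y)$ (obtained by imposing support-additivity on $\Z\widetilde{\Fr}^E(Y)$, which is equivalent for homology purposes to your free-abelian-group-on-connected-supports construction), cites~\cite[1.2]{GNP} to identify their homology with $\pi_*(\widetilde{M}_E(Y))$, and then repeats the chain Lemma~\ref{ZFtriangleaux}--Corollary~\ref{ZFtriangle}--Proposition~\ref{MV} verbatim. Your verification of the coequalizer step (connected support over Henselian local $U$ is local, hence $f$ factors through some $Y_i$, with the auxiliary data $(W,\phi,\psi)$ unaffected) is exactly what is needed and is left implicit in the paper.

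For the $M_E$-square you take a genuinely different route. The paper simply says ``the same proof applies,'' meaning one redoes the $\ZF$-sheaf argument for $M_E$ via Lemma~\ref{homolmotive}. You instead invoke the layer decomposition $M_E(Y_*)=\colim_{k,i}\uhom(\PP^{\w k},M_{fr}((Y_*)_+\w E_{k,i}))$ and apply the already-proved Proposition~\ref{MV} directly to the Zariski cover $Y\times V_{k,i}=(Y_1\times V_{k,i})\cup(Y_2\times V_{k,i})$, then pass to the filtered colimit. This is more economical: it reuses Proposition~\ref{MV} as a black box rather than rerunning its proof, and the passage to the colimit is justified because stable local weak equivalences (hence homotopy pushout squares) are preserved under filtered colimits in the flasque local model structure. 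The paper's route has the mild advantage of treating both squares uniformly, but your reduction is a cleaner argument for the $M_E$-case.
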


\begin{proof}
Similarly to~\cite[Definition 8.3]{GPMain} one can introduce the
presheaves of abelian groups $\widetilde{\ZF}^E(Y)$ imposing the
additivity relation on supports in $\Z\widetilde{\Fr}^E(Y)$. The
same reasons as in~\cite[Theorem 1.2]{GNP} show that homology of the
complex $C_*\widetilde{\ZF}(Y)(X)$ computes homology of the
$S^1$-spectrum $\widetilde{M}_E(Y)(X)$ for any $X\in\Sm_k.$
Repeating Lemma~\ref{ZFtriangleaux}, Corollary~\ref{ZFtriangle} and
Proposition~\ref{MV} literally, one gets that the sequence
\[0\to \widetilde{\ZF}(Y_{12})\to \widetilde{\ZF}(Y_1)\oplus \widetilde{\ZF}(Y_2)\to \widetilde{\ZF}(Y)\to 0\]
is locally exact, hence there is a triangle in the derived category
\[C_*\widetilde{\ZF}(Y_{12})\to C_*\widetilde{\ZF}(Y_1)\oplus C_*\widetilde{\ZF}(Y_2)\to C_*\widetilde{\ZF}(Y)\]
of complexes of sheaves. So the first square in the statement of the lemma
is homotopy pushout. The same proof applies to showing that
the second square is homotopy pushout.
\end{proof}

\begin{dfn}\label{frecolimit}
Suppose $E$ is a directed colimit of spectra $E^{(i)}$ satisfying
Condition~\ref{convention}. Define the presheaf $\widetilde{\Fr}^E(Y)$ as the directed colimit
\[\widetilde{\Fr}^E(Y)=\colim_i\widetilde{\Fr}^{E^{(i)}}(Y),\]
and the {\it normally $E$-framed motive $\widetilde{M}_E(Y)$\/} as the
directed colimit $\widetilde{M}_E(Y)=\colim_i
\widetilde{M}_{E^{(i)}}(Y)$.
\end{dfn}

\begin{prop}\label{normfrcor}
Suppose $Y\in\Sm_k$ and $E$ is a directed colimit of spectra $E^{(i)}$ satisfying Condition~\ref{convention}.
Then the natural forgetful map $fog\colon\Fr^E(Y)\to\widetilde{\Fr}^E(Y)$ induces a local stable
weak equivalence of $S^1$-spectra
$M_E(Y)\stackrel{\simeq}\to \widetilde{M}_E(Y)$.
\end{prop}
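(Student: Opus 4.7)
The plan is to reduce the general statement to the already-established Lemma~\ref{normfrcoret}, which handles the case of a single spectrum $E$ satisfying Condition~\ref{convention} and a smooth variety $Y$ admitting an \'etale map to $\A^d$. There are two reductions: one in $E$ and one in $Y$.

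First I would dispose of the colimit in $E$. Both $M_E(Y)=\colim_iM_{E^{(i)}}(Y)$ and $\widetilde{M}_E(Y)=\colim_i\widetilde{M}_{E^{(i)}}(Y)$ are sequential colimits by Corollary~\ref{factscor} and Definition~\ref{frecolimit}, and stable local equivalences of $S^1$-spectra are closed under sequential colimits in the flasque local stable model structure. The forgetful map is levelwise compatible with the structure maps of the directed system, so it suffices to establish the result for a fixed spectrum $E$ satisfying Condition~\ref{convention}.

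Next I would reduce in $Y$ to the case handled by Lemma~\ref{normfrcoret}. Since $Y\in\Sm_k$ is smooth of dimension $d$, each point has a Zariski neighborhood admitting an \'etale map to $\A^d$ (by the generators of the maximal ideal, as in Lemma~\ref{cover}). Choose a finite open cover $Y=Y_1\cup\dots\cup Y_n$ by such pieces and proceed by induction on $n$. The base case $n=1$ is Lemma~\ref{normfrcoret}. For the inductive step write $Y=U\cup Y_n$ with $U=Y_1\cup\dots\cup Y_{n-1}$; the intersection $U\cap Y_n$ is still covered by $n-1$ opens \'etale over affine spaces (intersecting each $Y_j$, $j<n$, with $Y_n$, each of which remains \'etale over $\A^d$ via the restriction). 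By Lemma~\ref{normMV} the commutative square
\[\xymatrix{
M_E(U\cap Y_n)\ar[r]\ar[d]&M_E(U)\vee M_E(Y_n)\ar[d]\\
\widetilde{M}_E(U\cap Y_n)\ar[r]&\widetilde{M}_E(U)\vee\widetilde{M}_E(Y_n)
}\]
extends to a morphism of homotopy pushout squares whose cofibres recover $M_E(Y)$ and $\widetilde{M}_E(Y)$ in the local stable model structure. The inductive hypothesis asserts that the vertical maps on $U$, $Y_n$ and $U\cap Y_n$ are stable local weak equivalences, hence so is the induced map $M_E(Y)\to\widetilde{M}_E(Y)$.

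The only mildly delicate point is verifying that intersections of opens \'etale over $\A^d$ with another such open retain a cover by opens \'etale over $\A^d$: this is immediate because restrictions of \'etale maps are \'etale, so each $Y_j\cap Y_n\to Y_j\to\A^d$ is again \'etale. The main conceptual work has already been done in Lemma~\ref{normfrcoret}, where the nontrivial homotopy $H_Y$ of Lemma~\ref{homotopy} bridges the forgetful map and its \v{C}ech resolution; here we simply glue along Mayer--Vietoris and pass to the colimit in the Thom spectrum index.
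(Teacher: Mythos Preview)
Your proposal is correct and matches the paper's own proof essentially line for line: reduce to a single $E^{(i)}$ via the colimit description of both $E$-framed and normal $E$-framed motives, cover $Y$ by Zariski opens \'etale over $\A^d$, and induct on the number of pieces using the Mayer--Vietoris squares of Lemma~\ref{normMV} with Lemma~\ref{normfrcoret} as the base case. The paper's proof is a one-sentence sketch invoking exactly the same three ingredients.
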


\begin{proof}
Every smooth variety $Y$ of dimension $d$ has a Zariski cover by
varieties $Y_i$ that admit \'etale maps $Y_i\to\A^d$. Then the
statement follows by induction on the number of varieties in the
cover of $Y$ if we apply Lemmas~\ref{normfrcoret} and~\ref{normMV}
as well as the fact that $M_E(Y)=\colim_i M_{E^{(i)}}(Y).$
\end{proof}

Similarly to framed correspondences there is a natural action of
the category $\Fr_0(k)$ on Nisnevich sheaves $\widetilde{\Fr}^E(-,X)$
that takes $U\in\Fr_0(k)$ to the sheaf $\widetilde{\Fr}^E(-\times U,X\times U)$.
The action gives rise to maps of $S^1$-spectra
   $$a_n:\widetilde{M}_E(X_+\w\Gm^{\w n})\to\uhom(\Gm^{\w 1},\widetilde{M}_E(X_+\w\Gm^{\w n+1})),\quad n\geqslant 0,$$
literally repeating the construction of the same maps for weighted $K$-motives in~\cite[Section~3]{GPmss}.

We finish the section by the following computation.

\begin{thm}\label{bisptilde}
Suppose $X\in \Sm_k$ and $E$ is a symmetric Thom $T$-spectrum with
the bounding constant $d=1$ and contractible alternating group action.
Then the $(S^1,\Gm^{\w 1})$-bispectrum
\[\widetilde{M}_E^{\mathbb G}(X)_f:=(\widetilde{M}_E(X)_f,\widetilde{M}_E(X_+\w\Gm^{\w 1})_f,\widetilde{M}_E(X_+\w\Gm^{\w 2})_f,\ldots)\]
with bonding maps induced by $a_n$-s above
is motivically fibrant and represents the $T$-spectrum $X_+\w E$ in
the category of bispectra, where ``$f$'' refers to stable local
fibrant replacements of $S^1$-spectra.
\end{thm}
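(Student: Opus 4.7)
The plan is to deduce the theorem by comparison with Theorem~\ref{mainbisp}(1) via the forgetful map. The setup of Proposition~\ref{normfrcor} gives a natural levelwise stable local weak equivalence of $S^1$-spectra $M_E(Y)\to\widetilde{M}_E(Y)$ for any smooth $Y$. Applying this with $Y = X_+\w\Gm^{\w n}$ for each $n\geqslant 0$ assembles (after taking stable local fibrant replacements) into a levelwise stable local weak equivalence of candidate bispectra
\[M_E^{\G}(X)_f \longrightarrow \widetilde{M}_E^{\G}(X)_f,\]
provided I first check that the forgetful maps are compatible with the $\Gm$-bonding maps defining the respective bispectra.

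The compatibility check is where the argument requires the most care, so I would do it first. The bonding maps of $M_E^{\G}(X)$ come from the $\Fr_0(k)$-action on $M_E$-motives, and the bonding maps $a_n$ of $\widetilde{M}_E^{\G}(X)$ are defined via the analogous $\Fr_0(k)$-action on $\widetilde{\Fr}^E(-,X\times U)$. Since the forgetful map $fog\colon\Fr^E\to\widetilde{\Fr}^E$ commutes with composition with framed correspondences in $\Fr_0(k)$ (this is immediate from the definitions of $\widetilde{\Fr}^E$ and the pairing of Lemma~\ref{frtilde}), the square
\[\xymatrix{
M_E(X_+\w\Gm^{\w n})\ar[r]\ar[d]_{fog} & \uhom(\Gm^{\w 1},M_E(X_+\w\Gm^{\w n+1}))\ar[d]^{fog}\\
\widetilde{M}_E(X_+\w\Gm^{\w n})\ar[r]^(.43){a_n} & \uhom(\Gm^{\w 1},\widetilde{M}_E(X_+\w\Gm^{\w n+1}))
}\]
commutes, which gives the desired map of bispectra.

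Once the comparison map is in hand, Proposition~\ref{normfrcor} shows that each component is a stable local weak equivalence of $S^1$-spectra, so after stable local fibrant replacement the levelwise map is a sectionwise levelwise weak equivalence. By Theorem~\ref{mainbisp}(1) the source $M_E^{\G}(X)_f$ is motivically fibrant and represents $X_+\w E$ in the category of bispectra. The sublemma proved on p.~\pageref{leveleq} (together with the fact that a levelwise weak equivalence between locally fibrant $\Omega$-spectra of $S^1$-spectra respects the $\Gm$-bonding fibrancy condition) then transfers both properties to $\widetilde{M}_E^{\G}(X)_f$: each entry $\widetilde{M}_E(X_+\w\Gm^{\w n})_f$ is motivically fibrant as an $S^1$-spectrum because its stable homotopy presheaves are $\A^1$-invariant quasi-stable $\ZF_*$-presheaves by Lemma~\ref{tildazf} (hence give strictly $\A^1$-invariant Nisnevich sheaves by~\cite[1.1]{GPPresheaves}), and the $\Gm$-bonding maps are local equivalences since the corresponding statement holds for $M_E^{\G}(X)_f$.

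The main obstacle I expect is the bookkeeping with the bonding maps, in particular verifying that the $\Fr_0(k)$-action used to construct $a_n$ is truly compatible with $fog$ at the level of the Segal spectrum structures (so that one obtains a \emph{strict} map of bispectra rather than merely a zig-zag). Once that is settled, the theorem is a formal consequence of Theorem~\ref{mainbisp}(1), Proposition~\ref{normfrcor}, and the sublemma on fibrancy transfer along levelwise local equivalences.
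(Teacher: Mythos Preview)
Your proposal is correct and follows essentially the same approach as the paper: both arguments use Proposition~\ref{normfrcor} to produce a levelwise stable local weak equivalence $M_E^{\G}(X)_f\to\widetilde{M}_E^{\G}(X)_f$, invoke Lemma~\ref{tildazf} together with~\cite{GPPresheaves} and~\cite[7.1]{GPMain} to see that each entry $\widetilde{M}_E(X_+\w\Gm^{\w n})_f$ is a motivically fibrant $S^1$-spectrum, and then appeal to Theorem~\ref{mainbisp}(1) to conclude. You are more explicit than the paper about the compatibility of $fog$ with the $\Fr_0(k)$-action needed to make the comparison a strict map of bispectra, which the paper simply asserts.
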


\begin{proof}
By Lemma~\ref{tildazf} the sheaves of stable homotopy groups of each $S^1$-spectrum in $\widetilde{M}_E^{\mathbb G}(X)_f$
are $\A^1$-invariant, stable with framed transfers. It follows from~\cite{GPPresheaves} that
they are strictly $\A^1$-invariant. By~\cite[7.1]{GPMain} all $S^1$-spectra of the bispectrum are motivically
fibrant. Observe that the natural map of bispectra
   $${M}_E^{\mathbb G}(X)_f\to\widetilde{M}_E^{\mathbb G}(X)_f,$$
induced by the forgetful map, is a level equivalence by Proposition~\ref{normfrcor}.
It follows from Theorem~\ref{mainbisp} that $\widetilde{M}_E^{\mathbb G}(X)_f$ is motivically
fibrant and represents the $T$-spectrum $X_+\w E$ in
the category of bispectra.
\end{proof}

\section{Computing the algebraic cobordism spectrum $MGL$}

In this section we give another description of the bispectrum
$M_E^{\G}(X)$ for the case $E=MGL$ in terms of Hilbert schemes and
$\Omega$-correspondences.

\begin{dfn}
Given a ring $R$, we call a submodule $M$ of $R^N$ {\it admissible\/} if the quotient
$R^N/M$ is projective. If $M$ is admissible, then it is also projective. We say that a
map $f\colon M\to R^N$ is an {\it admissible embedding} if $f$ is injective
and $f(M)$ is an admissible submodule of $R^N.$
\end{dfn}

\begin{dfn}\label{partialdef}
Given a ring $R$, denote by $R[\Delta^n]=R[t_0,\ldots t_n]/(t_0+\ldots+t_n-1)$
the coordinate ring on $\Delta^n_R$. Also, $R[\partial\Delta^n]:=R[\Delta^n]/(t_0t_1\ldots t_n)$
and for every $0\leqslant i\leqslant n,$ $R[\partial_i\Delta^n]:=R[\Delta^n]/t_i$ denotes
the ring of functions on the $i$-th face. We also set $R[\partial_{ij}\Delta^n]:=R[\Delta^n]/(t_i,t_j)$.

For every $R[\Delta^n]$-module $M$ denote by $\partial M=M
\otimes_{R[\Delta^n]}R[\partial\Delta^n]$, $\partial_iM=M
\otimes_{R[\Delta^n]}R[\partial_i\Delta^n],$ $\partial_{ij}M=M\otimes
_{R[\Delta^n]}R[\partial_{ij}\Delta^n].$
\end{dfn}

\begin{lem}\label{tilde=A}
For any affine $X$ there is a bijection between
$\widetilde{\Fr}_n^{MGL}(X,Y)$ and the set of quadruples
$(Z,\phi,\psi,f)$, where $Z$ is a closed l.c.i. subscheme of
$\A^n_X$, finite and flat over $X$, $R$ is the Henselization ring of
$Z$ in $\A^n_X$, $i\colon Z\to\Spec R$ is the embedding,
$\psi\colon\Spec R\to Gr(n)$, $\phi\colon N_{Z/\A^n_X}\to (\psi
i)^*\tau_n$ is an isomorphism of $k[Z]$-modules, and $f\colon Z\to
Y$ is a regular map.
\end{lem}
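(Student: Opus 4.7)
The plan is to unwind the colimit definition of $\widetilde{\Fr}_n^{MGL}$ and identify it with the claimed set of quadruples, using the presentation $MGL=\colim_jE^{(j)}$ with $E^{(j)}_n=Th(\mathcal{T}GL_{n,nj})$ over $Z_n^{(j)}=Gr(n,nj)$. By Definition~\ref{frecolimit},
$$\widetilde{\Fr}_n^{MGL}(X,Y)=\colim_j\widetilde{\Fr}_n^{E^{(j)}}(X,Y).$$
Since $X$ is affine, Remark~\ref{henselremark} identifies each term in this colimit with the set of quadruples $(Z,\phi_j,\psi_j,f)$, where $Z\subset\A^n_X$ is an l.c.i. closed subscheme finite and flat over $X$, $\psi_j\colon\Spec R\to Gr(n,nj)$ is regular, $\phi_j\colon N_{Z/\A^n_X}\cong(\psi_j i)^*\mathcal{T}GL_{n,nj}$ is an isomorphism, and $f\colon Z\to Y$ is regular; the data $Z$ and $f$ are independent of $j$.

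Next I would make the transition maps in the colimit explicit. The map $E^{(j)}\to E^{(j+1)}$ is induced by the standard closed embedding $\iota_j\colon Gr(n,nj)\hookrightarrow Gr(n,n(j+1))$ (adjoining trivial coordinates), together with the canonical isomorphism $\iota_j^*\mathcal{T}GL_{n,n(j+1)}\cong\mathcal{T}GL_{n,nj}$. Under this, a quadruple $(Z,\phi_j,\psi_j,f)$ at level $j$ is sent to $(Z,\phi_j,\iota_j\psi_j,f)$ at level $j+1$, the new normal-bundle isomorphism being the composite of $\phi_j$ with the canonical identification of pullbacks. Passing to the colimit, the $\psi_j$'s assemble into a single map $\psi\colon\Spec R\to Gr(n)$, where $Gr(n):=\colim_jGr(n,nj)$; the $\phi_j$'s assemble into an isomorphism $\phi\colon N_{Z/\A^n_X}\cong(\psi i)^*\tau_n$ with $\tau_n$ the tautological rank-$n$ bundle on $Gr(n)$; and the pair $(Z,f)$ is carried through unchanged.

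The main obstacle I anticipate is verifying the bijection
$$\colim_j\Hom(\Spec R,Gr(n,nj))\longrightarrow\Hom(\Spec R,Gr(n)),$$
so that every $\psi$ arises from some finite stage and distinct stage-$j$ maps remain distinct in the colimit. This comes from quasi-compactness of the affine scheme $\Spec R$ together with the fact that each $\iota_j$ is a closed embedding of Noetherian schemes of finite type: any map into the colimit (interpreted either as a filtered colimit of Nisnevich sheaves or as an ind-scheme) factors through a finite stage, and closed embeddings being monomorphisms makes the colimit map injective as well. Combining these ingredients delivers the claimed bijection between $\widetilde{\Fr}_n^{MGL}(X,Y)$ and the set of quadruples in the lemma.
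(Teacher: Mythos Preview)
Your proposal is correct and follows the same approach as the paper: unwind Definition~\ref{frecolimit}, apply Remark~\ref{henselremark} at each finite stage, and pass to the colimit over the Grassmannians. The paper's proof is a one-line citation of precisely these ingredients together with the fact that $Gr(n)(R)=\colim_N Gr(n,N)(R)$.

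One remark: the ``main obstacle'' you anticipate is not actually an obstacle in the paper's setup. In Remark~\ref{grind} a regular map $\psi\colon Z\to Gr(n)$ is \emph{defined} to be an element of $\colim_N\Hom(Z,Gr(n,N))$, so the bijection you seek to justify holds by convention rather than by an argument involving quasi-compactness or monomorphisms. Your sketch of why such a bijection should hold is reasonable, but unnecessary here.
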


\begin{proof}
This follows from Remark~\ref{henselremark}, Definition~\ref{frecolimit} and the fact
that $Gr(n)(R)$ equals $\colim Gr(n,N)(R)$ for any $k$-algebra $R$.
\end{proof}

\begin{dfn} For $X,Y\in\Sm_k$ denote by $\Emb_n(X,Y)$ the set of couples $(Z,f)$,
where $Z$ is a closed l.c.i. subscheme in $\A^n_X,$ finite and flat over $X$,
and $f$ is a regular map $f\colon Z\to Y$. Note that $\Emb_n(X,Y)$ is pointed
at the couple $(\emptyset,\emptyset\to Y)$.
\end{dfn}

We need the following intermediate object:

\begin{dfn}\label{defBn}
For $X,Y\in\Sm_k$ denote by $B_n(X,Y)$ the set of quadruples
$(Z,\phi,\psi,f)$, where $Z$ is a closed l.c.i. subscheme of $\A^n_X$, finite and flat over
$X$, $\psi\colon Z\to Gr(n)$, $\phi\colon N_{Z/\A^n_X}\to \psi^*\tau_n$ is an isomorphism
of vector bundles over $Z$, and $f\colon Z\to Y$ is a regular map.
\end{dfn}

\begin{rem}\label{grind}
The motivic space $Gr(n)=\colim_NGr(n,N)$ is a directed colimit of closed
embeddings of smooth varieties. For a $k$-scheme $Z$ by a regular map $\psi\colon Z\to Gr(n)$
we mean an element of $\colim_N \Hom(Z, Gr(n,N)).$ Then every
regular map $\psi\colon Z\to Gr(n)$ induces a vector bundle $\psi^*\tau(n)$ over $Z$.
Note that for a $k$-algebra $R$ the set $Gr(n,N)(R)$ is in bijective correspondence with
the set of rank $n$ admissible submodules of $R^N$ (see~\cite[Tag 089R]{stacks-project}).
\end{rem}

Note that $B_n(-,Y), \Emb_n(-,Y)$ are presheaves on $\Sm_k$. There are natural
forgetful maps $\widetilde{\Fr}_n^{MGL}(-,Y)\to B_n(-,Y)\to\Emb_n(-,Y)$. We shall
prove that for any smooth affine $X$ these maps induce weak equivalences of simplicial sets
\[C_*\widetilde{\Fr}_n^{MGL}(X,Y)\to C_*B_n(X,Y)\to C_*\Emb_n(X,Y).\]

\begin{lem}\label{A=B}
For every affine smooth $X$ the map $C_*\widetilde{\Fr}_n^{MGL}(X,Y)\to C_*B_n(X,Y)$
is a trivial Kan fibration of simplicial sets.
\end{lem}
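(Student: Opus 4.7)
The plan is to verify the right lifting property of $C_*\widetilde{\Fr}_n^{MGL}(X,Y)\to C_*B_n(X,Y)$ against the boundary inclusions $\partial\Delta^m\hookrightarrow\Delta^m$ for every $m\geqslant 0$. By Lemma~\ref{tilde=A}, the forgetful map sends $(Z,\phi,\psi,f)$—with $\psi$ defined on the Henselization $\Spec R$ of $Z$ in $\A^n_X$—to $(Z,\phi,\psi|_Z,f)$, preserving $\phi$ verbatim; the only datum being forgotten is the extension of $\psi$ off of $Z$. Unwinding the definition $(C_*K)_m=K(X\times\Delta^m_k)$, the lifting problem becomes the following: given $(Z,\phi,\psi_Z,f)\in B_n(X\times\Delta^m_k,Y)$ and a compatible family of lifts $\psi_I\colon\Spec R_I\to Gr(n)$ of $\psi_Z|_{Z_I}$ indexed by the proper faces $\Delta^I_k\subsetneq\Delta^m_k$ (where $R_I$ denotes the Henselization of $Z_I:=Z\times_{\Delta^m_k}\Delta^I_k$ in $\A^n_{X\times\Delta^I_k}$), produce a single extension $\psi\colon\Spec R\to Gr(n)$ that restricts to $\psi_Z$ on $Z$ and to each $\psi_I$ on each $\Spec R_I$.

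First I would reduce to a finite Grassmannian. Since the boundary involves only finitely many faces and each of $\psi_Z,\psi_I$ has a quasi-compact source, all these maps factor through $Gr(n,N)$ for $N\gg 0$, so I may replace $Gr(n)$ by the smooth variety $Gr(n,N)$. By Remark~\ref{grind} the data translate into: a rank-$n$ admissible $M_Z\subset\Os_Z^N$ and compatible rank-$n$ admissibles $M_I\subset R_I^N$ restricting to $M_Z$ over $Z_I$; the target is a rank-$n$ admissible $M\subset R^N$ with the analogous restrictions. The pair $(R,I_Z)$ is Henselian by construction of $R$, and $Gr(n,N)$ is smooth over $\Spec k$, so the standard Henselian pair lifting property (applied to the smooth morphism $Gr(n,N)\to\Spec k$) furnishes \emph{some} extension $\psi'\colon\Spec R\to Gr(n,N)$ of $\psi_Z$, disregarding boundary compatibility. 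Equivalently, lifting a local free basis of $M_Z$ to $R^N$ via Nakayama yields a rank-$n$ admissible $M'\subset R^N$ with $M'|_Z=M_Z$.

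To deform $\psi'$ into a $\psi$ that matches each $\psi_I$ on the boundary, I would mimic the barycentric-combination construction from the proof of Lemma~\ref{homotopy}. After shrinking to a smaller \'etale neighborhood of $Z$, lift $\psi'$ through the smooth surjective $GL_n$-torsor $V(n,N)\to Gr(n,N)$ to the Stiefel variety, and similarly lift each codimension-one $\psi_i$ to $V(n,N)$ so that its Stiefel lift agrees with that of $\psi'$ on $Z_i$. Since $V(n,N)\subset\A^{nN}$ is open, each lift becomes an $n\times N$ matrix $L_i$; extend each $L_i$ off of its face via the Stiefel lift of $\psi'$ so as to be defined on all of $\Spec R$, and form the barycentric combination $L=\sum t_i L_i$ on $\Spec R\times\Delta^m_k$. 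By construction $L|_{t_i=1}=L_i$ and $L|_Z$ is the common Stiefel lift of $\psi_Z$; projecting back to $Gr(n,N)$ produces the desired $\psi$. The main obstacle is the interplay between the non-affine $Gr(n,N)$ and its affine Stiefel cover required to make the linear combination meaningful: one must check that the combined matrix $L$ actually lands in the Stiefel open locus, i.e.\ the $n\times n$ minor ideal does not vanish. This is an open condition that holds on $Z$ by construction, and by Nakayama's lemma in the Henselian ring $R$ it extends to a Zariski neighborhood of $Z$ in $\Spec R$, to which we shrink. All remaining compatibilities—with $\phi$ and with the face data—follow from the fact that on $Z$ the interpolant is constant, exactly as in the proof of Lemma~\ref{homotopy}.
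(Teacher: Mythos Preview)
Your reduction of the lifting problem and the production of an arbitrary extension $\psi'\colon\Spec R\to Gr(n,N)$ of $\psi_Z$ via smoothness and the Henselian property are correct. The gap is in the deformation step. The barycentric combination $L=\sum_j t_j L_j$ controls the value of $L$ at the \emph{vertices} $\{t_j=1\}$ of $\Delta^m$, not on the codimension-one faces $\{t_j=0\}$ where the boundary data actually live. For $m\geqslant 2$, restricting your $L$ to the face $\{t_i=0\}$ yields $\sum_{j\neq i}t_j\,\widetilde L_j|_{t_i=0}$, and there is no reason this equals the prescribed lift of $\psi_i$: each $\widetilde L_j$ was only constrained on face $j$ and on $Z$, not on face $i$. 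The analogy with the interpolation in Lemma~\ref{homotopy} is misleading, because there all the data being averaged agree on a single closed subscheme $Z$, whereas here the constraints are spread over the whole simplicial boundary. In addition, the step ``extend each $L_i$ off of its face via the Stiefel lift of $\psi'$'' is itself an extension problem from the closed subscheme $\Spec R_i\cup Z$ to $\Spec R$ with values in an open subvariety of affine space; doing this compatibly across all faces and their intersections would already amount to solving the original problem. Finally, arranging that the Stiefel lifts $L_i,L_j$ agree on $\Spec R'_{ij}$ is not automatic: this is a torsor-trivialization problem, not a consequence of the underlying Grassmannian maps agreeing.

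The paper bypasses interpolation entirely. One first observes that the face lifts $\psi'_i\colon\Spec R'_i\to Gr(n)$ agree on pairwise intersections and hence glue to a single map $\psi'\colon\Spec R'\to Gr(n)$, where $R'=R\otimes_{k[\Delta^m]}k[\partial\Delta^m]=R/(t_0\cdots t_m)$ is the Henselization of $\partial Z$. One is then left with two maps into $Gr(n)$ defined on two closed subschemes of $\Spec R$: the original $\psi\colon Z=\Spec R/I\to Gr(n)$ and the boundary map $\psi'\colon\Spec R/J\to Gr(n)$ with $J=(t_0\cdots t_m)$, agreeing on $\Spec R/(I+J)$. Lemma~\ref{projpatching}, proved in the appendix via idempotent lifting in matrix rings over Henselian pairs, produces a common extension $\psi''\colon\Spec R\to Gr(n)$ in one stroke. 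This algebraic patching lemma is the key input you are missing; once it is available, no Stiefel lifts, no shrinking of neighborhoods, and no linear combinations are needed.
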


\begin{proof}
The map on zero simplices $\widetilde{\Fr}_n^{MGL}(X,Y)\to B_n(X,Y)$ is
surjective by Lemma~\ref{projpatching}.
Suppose $\sigma\colon\Delta[m]\to C_*(B_n(X,Y))$ is a $m$-simplex and there is a
lift of the boundary $\gamma\colon\partial\Delta[m]\to C_*\widetilde{\Fr}_n^{MGL}(X,Y)$.
Then $\gamma$ is represented by a collection
$\gamma_i\colon\partial_i\Delta[m]\to C_* \widetilde{\Fr}_n^{MGL}(X,Y),$ such that
$\gamma_i$ and $\gamma_j$ agree on the intersection $\partial_i\Delta[m]\cap\partial_j\Delta[m].$

Suppose $\sigma$ is represented by a quadruple $(Z,\phi,\psi,f)\in B_n(\Delta^m_X,Y)$.
Let $\partial\Delta^m_X$ be the variety $\Spec k[\Delta^m_X]/(t_0t_1\ldots t_m)$,
where $t_0,\ldots, t_m$ are the barycentric coordinates of the algebraic  simplex
$\Delta^m_X.$  Let $\partial Z=Z\times_{\Delta^m_X}\partial\Delta^m_X$ denote
the fiber of $Z$ over $\partial\Delta^m_X$.

Let $R$ denote the Henselization ring of $Z$ inside $\A^n\times{\Delta^m_X}$. Note that
by~\cite[Tag09XK]{stacks-project} the ring $R'=R\otimes_{k[\Delta^m_X]}k[\partial
\Delta^m_X]$, is the Henselization ring of $\partial Z$, and
$R'_i=R\otimes_{k[\Delta^m_X]}k[\partial_i\Delta^m_X]$ is the Henselization of $\partial_iZ$, and
$R'_i\otimes_R R'_j$ is the Henselization of $\partial_iZ\cap\partial_jZ.$

Then each $\gamma_i$ is represented by a quadruple $(\partial_iZ,\phi|_{\partial_iZ},\psi'_i,f|_{\partial_iZ})$
as in Lemma~\ref{tilde=A}, where $\psi'_i\colon\Spec R'_i\to Gr(n)$ extends the map
$\psi|_{\partial_iZ}\colon\partial_iZ\to Gr(n)$. For any $i,j$ the maps $\psi'_i$ and $\psi'_j$
agree on $\Spec R'_i\otimes_R R'_j$. Then they descend to a map $\psi'\colon\Spec R'\to Gr(n)$.

Then by Lemma~\ref{projpatching} there exists a map $\psi''\colon\Spec R\to Gr(n)$
that extends $\psi'$ and $\psi.$ Clearly, the quadruple $(Z,\psi'',\phi,f)$ in
$\widetilde{\Fr}_n^{MGL}(\Delta^m_X,Y)$ is the desired lift of $\sigma$ that extends $\gamma.$
\end{proof}

\begin{lem}\label{B=Emb}
For any $n$ and any affine smooth $X$ the forgetful map $f\colon
B_n(-,Y)\to\Emb_n(-,Y)$ induces a trivial Kan fibration of simplicial
sets $C_*f\colon C_*B_n(X,Y)\to C_*\Emb_n(X,Y).$
\end{lem}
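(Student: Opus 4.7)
The plan is to verify directly that $C_*f$ has the right lifting property against each boundary inclusion $\partial\Delta[m]\hookrightarrow\Delta[m]$ for $m\geqslant 0$. The case $m=0$ is surjectivity on vertices: given $(Z,f)\in\Emb_n(X,Y)$, the normal bundle $M:=N_{Z/\A^n_X}$ is a rank-$n$ projective $k[Z]$-module by the l.c.i.\ hypothesis (Remark~\ref{remlci}), hence a direct summand of a free module $k[Z]^N$. An admissible embedding $M\hookrightarrow k[Z]^N$ is, by Remark~\ref{grind}, the same datum as a pair $(\psi,\phi)$ with $\psi\colon Z\to Gr(n,N)\subseteq Gr(n)$ and $\phi\colon M\xrightarrow{\cong}\psi^*\tau_n$, producing the required preimage in $B_n(X,Y)$.

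For $m\geqslant 1$, I would start with $(Z,f)\in\Emb_n(\Delta^m_X,Y)$ together with compatible boundary lifts $(\partial_iZ,\phi_i,\psi_i,f|_{\partial_iZ})\in B_n(\partial_i\Delta^m_X,Y)$. After composing each $\psi_i$ with the standard inclusion $Gr(n,N_i)\hookrightarrow Gr(n,N)$ for $N=\max_iN_i$, all the $\psi_i$ share a common target $Gr(n,N)$; the compatibility on double intersections $\partial_iZ\cap\partial_jZ$ then glues the pairs $(\psi_i,\phi_i)$ into a single admissible embedding $\iota'\colon M|_{\partial Z}\hookrightarrow k[\partial Z]^N$, where $M:=N_{Z/\A^n_{\Delta^m_X}}$. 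The problem reduces to extending $\iota'$ to an admissible embedding $\iota\colon M\hookrightarrow k[Z]^{N+L}$ (for some $L\geqslant 0$) whose reduction modulo the ideal $J:=\ker(k[Z]\twoheadrightarrow k[\partial Z])$ agrees with $(\iota',0)$.

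The core of the proof is a two-step unipotent patching. First, using projectivity of $M$, I lift $\iota'$ to a $k[Z]$-linear map $\tilde\iota_0\colon M\to k[Z]^N$ (not yet admissible), and pick an auxiliary admissible embedding $\eta\colon M\hookrightarrow k[Z]^L$ with retraction $\rho\colon k[Z]^L\to M$. Then $(\tilde\iota_0,\eta)\colon M\to k[Z]^{N+L}$ is admissible (split by $(0,\rho)$), but its reduction modulo $J$ equals $(\iota',\eta|_{\partial Z})$ rather than the desired $(\iota',0)$. Second, since $\iota'$ is itself admissible it has a retraction $r\colon k[\partial Z]^N\to M|_{\partial Z}$, and the matrix
\[
A=\begin{pmatrix}I_N & 0\\ -\eta|_{\partial Z}\circ r & I_L\end{pmatrix}\in GL_{N+L}(k[\partial Z])
\]
sends $(\iota',\eta|_{\partial Z})$ to $(\iota',0)$. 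Any $k[\partial Z]$-linear map between free modules lifts to a $k[Z]$-linear map by surjectivity of $k[Z]\twoheadrightarrow k[\partial Z]$, so the off-diagonal block lifts to some $B\colon k[Z]^N\to k[Z]^L$, yielding a unipotent $\tilde A\in GL_{N+L}(k[Z])$ that reduces to $A$. Setting $\iota:=\tilde A\circ(\tilde\iota_0,\eta)$ produces an admissible embedding with $\iota\bmod J=(\iota',0)$; translating back to Grassmannian data gives the required $(\phi,\psi)$ on $Z$.

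The main obstacle will be this final unipotent-lifting step: trivial Kan fibration requires literal boundary matching rather than matching up to homotopy, and achieving the patching on the nose depends crucially on both the freeness of the off-diagonal target and the surjectivity of $k[Z]\twoheadrightarrow k[\partial Z]$. Everything else is a routine translation between admissible embeddings of projective modules into free modules, classifying maps to Grassmannians, and the induced isomorphisms of pulled-back tautological bundles.
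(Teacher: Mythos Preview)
Your proposal is correct and lands on the same statement, but the key extension step is handled by a genuinely different device. The paper isolates this step as Lemma~\ref{borderextend}: given the boundary embedding $f'\colon N_{\partial Z}\hookrightarrow k[\partial Z]^N$, one picks an auxiliary split embedding $j\colon N_Z\hookrightarrow k[Z]^d$, lifts $f'\circ(p\otimes\mathrm{id})$ entrywise to get $x_i\in k[Z]^N$, and defines $\Phi=F\circ j$ with $F(e_i)=(x_i,\,t\cdot e_i)$ where $t=t_0\cdots t_m$; admissibility of $\Phi$ is then checked via the dedicated criterion Lemma~\ref{Hn} (admissible on $\partial\Delta^m$ and after inverting $t$). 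Your route bypasses both of these lemmas: the pair $(\tilde\iota_0,\eta)$ is already admissible because it is split by $(0,\rho)$, and composing with a unipotent lift $\tilde A$ of the boundary-correcting matrix $A$ preserves admissibility while forcing the reduction to equal $(\iota',0)$. This uses only the surjectivity of $k[Z]\twoheadrightarrow k[\partial Z]$ and nothing about the specific simplicial function $t$, so it is both more elementary and more general (it would work over any quotient ring, not just the algebraic boundary). The paper's version has the compensating virtue of packaging the extension as a reusable standalone lemma about $R[\Delta^m]$-modules. For the preliminary gluing of the $(\psi_i,\phi_i)$ into a single admissible embedding on $\partial Z$, the paper invokes Lemma~\ref{partialglue}; you should cite it when you fill in that sentence.
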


\begin{proof}
Suppose $m\geqslant 0$, $\sigma\colon\Delta[m]\to C_*\Emb_n(X,Y)$ is a $m$-simplex
and $\gamma\colon
\partial\Delta[m]\to C_*B_n(X,Y)$ is a lift of its boundary. Let us prove that there 
is a $m$-simplex $\sigma'\colon\Delta[m]\to C_*B_n(X, Y)$ making the diagram
\[
\xymatrix{
\partial\Delta[m]\ar[r]^-{\gamma}\ar[d] & C_*B_n(X,Y)\ar[d]^{C_*f}\\
\Delta[m]\ar[r]_(.35){\sigma}\ar@{-->}[ur]^{\sigma'} & C_*\Emb_n(X,Y)
}
\]
commutative.

Suppose $\sigma$ is given by
a couple $(Z,f)\in \Emb_n(\Delta^m_X,Y)$. The map $\gamma$ is given
by a collection of quadruples
$\gamma_i=(\partial_iZ,\phi_i,\psi_i,f|_{\partial_iZ})\in
B_n(\partial_i \Delta^m_X,Y)$ as in Definition~\ref{defBn}, where
$\partial_i\Delta^m_X$ denotes the $i$-th face of the algebraic
simplex $\Delta^m_X$ and $\partial_iZ$ is the fiber of $Z$ over $
\partial_i\Delta^m_X$.
The elements $\gamma_i$ coincide on the intersections
$\partial_i\Delta^m_X\cap\partial_j\Delta^m_X$, and hence the
regular maps $\psi_i\colon\partial_iZ\to Gr(n)$ coincide on the
intersections $\partial_{ij}Z$. So they descend to a regular map
$\psi\colon \partial Z\to Gr(n,N)$ for some number $N$ by
Remark~\ref{grind}. The map $\psi$ defines an admissible submodule
$j\colon P=\psi^*\tau(n,N)\subseteq k[\partial Z]^N$. The
isomorphisms $\phi_i\colon N_{\partial_iZ}\to\partial_iP$ coincide
on intersections $N_{\partial_{ij}Z}$, and then by
Lemma~\ref{partialglue} there is a unique isomorphism $\phi\colon
N_{\partial Z}\to P$ that extends $\phi_i.$

Then $j\circ\phi\colon N_{\partial Z}\to k[\partial Z]^N$ is an admissible embedding.
By Lemma~\ref{borderextend} it can be extended to an admissible embedding
$\Phi\colon N_Z\to k[Z]^N\oplus k[Z]^d$ such that $\partial\Phi$ equals the composition
of $j\circ\phi$ and the standard embedding $k[\partial Z]^N\to k[\partial Z]^N\oplus k[\partial Z]^d$.
It follows that the image $\Phi(N_Z)\subseteq k[Z]^{N+d}$ is a rank $n$ admissible submodule, and so it
corresponds  to a regular map $\Psi\colon Z\to Gr(n,N+d)\to Gr(n)$ by Remark~\ref{grind} such that the
composition $\partial_iZ\to Z\xrightarrow{\Psi} Gr(n)$ equals $\psi_i$ for any $i=0,\ldots, n.$
Abusing notation, denote by $\Phi$ the isomorphism $\Phi\colon N_{Z/\A^n_X}\to\Phi(N_{Z/\A^n_X}).$
Then the quadruple $(Z,\Phi,\Psi,f)\in B_n(\Delta^m_X,Y)$ provides the desired $m$-simplex $\sigma'.$
\end{proof}


\begin{prop}\label{embn}
For $Y\in\Sm_k$ the sheaf $\Emb_n(-,Y)$ is representable by a
countable disjoint union $E_n^Y:=\bigsqcup_{d\geqslant 0}E_{n,d}^Y$ of smooth quasi-projective varieties.
\end{prop}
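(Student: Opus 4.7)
The plan is to represent $\Emb_n(-,Y)$ as a countable disjoint union indexed by the degree of $Z$ over $X$, each piece being obtained from the Hilbert scheme of points on $\A^n$ by a Weil restriction. Because $Z\to X$ is finite and flat, its degree is locally constant on $X$, so the presheaf splits naturally as $\bigsqcup_{d\geqslant 0}\Emb_{n,d}(-,Y)$, where $\Emb_{n,d}(X,Y)$ consists of those $(Z,f)$ with $\deg(Z/X)=d$. The $d=0$ piece is represented by $\Spec k$ (the basepoint), so it suffices to construct a smooth quasi-projective variety $E_{n,d}^{Y}$ representing $\Emb_{n,d}(-,Y)$ for each $d\geqslant 1$.

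For the first step I would invoke the Hilbert scheme of points $H_{n,d}:=\operatorname{Hilb}^{d}(\A^n/k)$, a quasi-projective $k$-scheme carrying a universal family $\pi\colon\mathcal{Z}_{n,d}\to H_{n,d}$ and representing the presheaf of closed subschemes of $\A^n_X$ finite flat of degree $d$ over $X$. The locus where $\mathcal{Z}_{n,d}\hookrightarrow\A^{n}_{H_{n,d}}$ is a local complete intersection is open in $H_{n,d}$, by Remark~\ref{remlci} combined with the standard fact that the condition of having an ideal locally generated by $n$ elements is open; call this quasi-projective open $H_{n,d}^{lci}\subseteq H_{n,d}$. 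At any l.c.i. point $[Z]\in H_{n,d}^{lci}$ the conormal sheaf $I_Z/I_Z^2$ is locally free of rank $n$ on the affine scheme $Z$, whence $\operatorname{Ext}^{1}_{\mathcal{O}_Z}(I_Z/I_Z^2,\mathcal{O}_Z)$ vanishes and the corresponding deformations are unobstructed; together with the tangent-space computation $\dim_{k}\Hom_{\mathcal{O}_Z}(I_Z/I_Z^2,\mathcal{O}_Z)=nd$ this shows that $H_{n,d}^{lci}$ is smooth over $k$ of dimension $nd$.

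For the second step I would then define
\[
  E_{n,d}^{Y}:=\mathrm{Res}_{\mathcal{Z}_{n,d}^{lci}/H_{n,d}^{lci}}\bigl(Y\times_{k}\mathcal{Z}_{n,d}^{lci}\bigr),
\]
the Weil restriction of $Y\times_{k}\mathcal{Z}_{n,d}^{lci}\to\mathcal{Z}_{n,d}^{lci}$ along the finite locally free morphism $\pi^{lci}\colon\mathcal{Z}_{n,d}^{lci}\to H_{n,d}^{lci}$. By construction, $E_{n,d}^{Y}(X)$ is naturally in bijection with the set of pairs $(Z,f)$ in $\Emb_{n,d}(X,Y)$. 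Since $Y\times_{k}\mathcal{Z}_{n,d}^{lci}\to\mathcal{Z}_{n,d}^{lci}$ is smooth of relative dimension $\dim Y$, the morphism $E_{n,d}^{Y}\to H_{n,d}^{lci}$ is smooth of relative dimension $d\cdot\dim Y$, so $E_{n,d}^{Y}$ is smooth over $k$; quasi-projectivity is inherited from $H_{n,d}^{lci}$ together with the fact that Weil restriction along a finite locally free morphism preserves quasi-projectivity of the target. The main technical point is the representability of this Weil restriction as a scheme, which requires every finite set of points in a fiber of $\pi^{lci}$ to have an affine open neighborhood in $Y\times_{k}\mathcal{Z}_{n,d}^{lci}$; this holds automatically when $Y$ is quasi-projective (as is the case in the applications of interest, in particular for $MGL$ where each relevant $Y$ is a smooth quasi-projective variety), and in the general smooth case one reduces to it by working Zariski-locally on $Y$ and patching.
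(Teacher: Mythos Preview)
Your proposal is correct and follows essentially the same route as the paper: split by degree $d$, use the l.c.i.\ locus in the Hilbert scheme of $d$ points on $\A^n$ as the base, and then apply Weil restriction along the universal finite flat family to add in the datum of $f\colon Z\to Y$. The paper's proof is terser---it cites \cite[5.1.3]{EHKSY} for the smoothness and quasi-projectivity of $H_{n,d}^{lci}$ rather than giving the deformation-theoretic argument you sketch, and it invokes \cite[7.6.4--7.6.5]{BLR} directly for the Weil restriction step---but the underlying mathematics is identical. You have in fact been more careful than the paper on one point: the representability of the Weil restriction as a scheme requires the (AF) condition of \cite{BLR}, which is automatic for quasi-projective $Y$ but not obviously for arbitrary $Y\in\Sm_k$; the paper's citation silently presumes this, while you flag it explicitly (and correctly note it is harmless for the applications at hand).
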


\begin{proof}
Denote by $\Emb_n(U,Y)_d$ the set of couples $(Z,f)$ where $Z$ is a
closed l.c.i. subscheme of $\A^n_U,$ finite of degree $d$ and flat
over $U$. Then $\Emb_n(-,Y)_d$ is a subpresheaf of $\Emb_n(-,Y)$,
and $\Emb_n(U,Y)$ is the disjoint union of $\Emb_n(U,Y)_d$,
$d\geqslant 0$, for any connected $U\in\Sm_k.$

By~\cite[Lemma 5.1.3]{EHKSY} the presheaf $\Emb_n(-,k)_d$ is
represented by a smooth quasi-projective scheme
$Hilb^{lci}_d(\A^n)$. There is the universal finite flat map $W_d\to
Hilb^{lci}_d(\A^n)$. Then the Weil restriction functor
$R_{W_d/Hilb^{lci}_d(\A^n)}(W_d\times_k Y)$ coincides with $\Emb_n(-,Y)_d$
and is represented by a quasi-projective smooth scheme $E_{n,d}^Y$
over $k$ by~\cite[7.6.4-7.6.5]{BLR}.
\end{proof}

The natural inclusions of affine spaces $\A^n\to\A^{n+1}$ induce stabilization maps of
pointed sheaves $\Emb_n(-,Y)\to\Emb_{n+1}(-,Y)$. Denote by $\Emb(-,Y)$ the
pointed sheaf $\Emb(-,Y)=\colim_n\Emb_n(-,Y).$ Note that forgetful maps
$\widetilde{\Fr}_n^{MGL}(-,Y)\to\Emb_n(-,Y)$ are consistent with the stabilization maps.

\begin{cor}\label{embscheme}
The sheaf $\Emb(-,Y)$ is isomorphic to a sequential colimit $E^Y$ of smooth quasi-projective varieties.
\end{cor}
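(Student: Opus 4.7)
The plan is to deduce the corollary directly from Proposition~\ref{embn} by assembling the representing schemes into a sequential colimit and applying Yoneda to convert the stabilization maps of sheaves into morphisms of schemes. First, by Proposition~\ref{embn}, for each $n\geqslant 0$ the pointed sheaf $\Emb_n(-,Y)$ is representable by $E_n^Y=\bigsqcup_{d\geqslant 0}E_{n,d}^Y$, a countable disjoint union of smooth quasi-projective varieties. Such a countable disjoint union is not itself of finite type, so I would present it as the sequential colimit
\[
E_n^Y \;=\; \colim_{D}\Bigl(\bigsqcup_{d\leqslant D}E_{n,d}^Y\Bigr),
\]
where each partial disjoint union $\bigsqcup_{d\leqslant D}E_{n,d}^Y$ is a finite disjoint union of smooth quasi-projective varieties, hence again smooth quasi-projective.

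Second, the stabilization map $\Emb_n(-,Y)\to\Emb_{n+1}(-,Y)$ induced by the linear embedding $\A^n\hookrightarrow\A^{n+1}$, which sends $(Z,f)$ to the couple $(Z\subset\A^n_U\subset\A^{n+1}_U,f)$, is a natural transformation of representable sheaves; by the Yoneda lemma it is induced by a unique morphism of schemes $E_n^Y\to E_{n+1}^Y$, and this morphism respects the degree stratification because the degree of $Z$ over $U$ is unchanged under the embedding. Thus for each fixed $D$ we obtain transition morphisms between smooth quasi-projective varieties $\bigsqcup_{d\leqslant D}E_{n,d}^Y\to\bigsqcup_{d\leqslant D}E_{n+1,d}^Y$.

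Third, I would define
\[
E^Y \;:=\; \colim_{N}\Bigl(\bigsqcup_{d\leqslant N}E_{N,d}^Y\Bigr),
\]
the sequential colimit taken along the cofinal diagonal $(n,D)=(N,N)$ inside $\mathbb{N}\times\mathbb{N}$; this is by construction a sequential colimit of smooth quasi-projective varieties. Since sequential (filtered) colimits in the category of pointed Nisnevich sheaves on $\Sm_k$ are computed stalkwise and since each $\Emb_n(-,Y)$ is representable, passing to the colimit of the associated sheaves commutes with the colimit of representing schemes. Applying this in both directions of the diagonalization gives a natural isomorphism of sheaves
\[
E^Y \;\cong\; \colim_{n}E_n^Y \;\cong\; \colim_n\Emb_n(-,Y) \;=\; \Emb(-,Y).
\]

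The argument is essentially formal once Proposition~\ref{embn} is invoked, so I do not anticipate any genuine obstacle. The only minor point that requires care is the diagonalization that collapses the double sequential colimit (over $n$ and over the truncation parameter $D$) into a single one, together with the observation that each finite disjoint union of smooth quasi-projective varieties is smooth quasi-projective; the rest is Yoneda plus the compatibility of sheafification with filtered colimits.
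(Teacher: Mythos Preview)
Your proposal is correct and takes essentially the same approach as the paper: both invoke Proposition~\ref{embn}, write each $E_n^Y=\bigsqcup_{d\geqslant 0}E_{n,d}^Y$ as a sequential colimit of finite disjoint unions of smooth quasi-projective varieties, and then pass to the double colimit over $n$ and the truncation parameter. Your treatment is in fact more explicit than the paper's (which simply writes $E^Y:=\colim_{n,k\geqslant 0}(E_{n,d_1}^Y\sqcup\cdots\sqcup E_{n,d_k}^Y)$), particularly in spelling out the Yoneda step and the cofinal diagonalization.
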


\begin{proof}
This follows from Proposition~\ref{embn} and the fact that $\bigsqcup_{d\geqslant 0}E_{n,d}^Y$
is $\colim_{k\geqslant 0}(E_{n,d_1}^Y\sqcup\cdots\sqcup E_{n,d_k}^Y)$. Hence
$\Emb(-,Y)$ is isomorphic to $E^Y:=\colim_{n,k\geqslant 0}(E_{n,d_1}^Y\sqcup\cdots\sqcup E_{n,d_k}^Y)$.
\end{proof}

We shall give an alternative description of the space
$C_*\Emb(-,Y)$ in terms of $\Omega$-correspon\-dences studied
in~\cite{Omega}.

\begin{dfn}
For $X,Y\in\Sm_k$ denote by $Cor_n^{\Omega}(X,Y)$ the groupoid with
objects given by the set $\Emb_n(X,Y)$ whose morphisms between $(Z_1,f_1)$
and $(Z_2,f_2)$ are isomorphisms $\alpha\colon Z_1\to Z_2$ such that
$\pi_{Z_2}\alpha=\pi_{Z_1}$ and $f_2\alpha=f_1$, where $\pi_{Z_i}$
denotes the projection $\pi_{Z_i}\colon Z_i\to\A^n_X\to X.$ The
assignment $X\mapsto Cor_n^{\Omega}(X,Y)$ defines a presheaf of
groupoids on $\Sm_k.$ There are natural stabilization maps $Cor_n^{\Omega}(-,Y)\to Cor_{n+1}^{\Omega}(-,Y)$
induced by the natural inclusions $\A^n_X\to\A^{n+1}_X.$ Denote by $Cor^{\Omega}(X,Y)$ the colimit
$Cor^{\Omega}(X,Y)=\colim_nCor_n^{\Omega}(X,Y).$
\end{dfn}

\begin{lem}\label{lcilem}
Suppose $f\colon X\to Y$ is a l.c.i. embedding, $g\colon X\to W$ is any regular map
and $W$ is regular. Then the map $(f,g)\colon X\to Y\times W$ is an l.c.i. embedding.
\end{lem}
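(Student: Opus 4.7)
The plan is to factor $(f,g)$ as a composition of two l.c.i. embeddings and then to use that the composition of l.c.i. embeddings is l.c.i. Specifically, decompose
$$X\xrightarrow{\Gamma_g} X\times W\xrightarrow{f\times\mathrm{id}_W} Y\times W,$$
where $\Gamma_g$ is the graph of $g$ and $f\times\mathrm{id}_W$ is the obvious product map. The composition is $(f,g)$.

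For $f\times\mathrm{id}_W$, observe that it is the base change of $f\colon X\to Y$ along the smooth projection $Y\times W\to Y$, so I would argue as follows: pick an affine neighborhood in $Y$ on which the ideal of $X$ is generated by a regular sequence $(h_1,\dots,h_n)$; on the corresponding affine open of $Y\times W$ the ideal of $X\times W$ is generated by the pulled-back functions $(h_1,\dots,h_n)$, and flatness of $W$ over $k$ ensures that this remains a regular sequence. For $\Gamma_g$, I would exhibit it as a base change of the diagonal: $\Gamma_g=(g\times\mathrm{id}_W)^{-1}(\Delta_W)\subset X\times W$. Since $W$ is regular over the perfect field $k$ (hence smooth), $\Delta_W\colon W\to W\times W$ is a regular closed immersion, so its ideal is locally generated by $\dim W$ elements; pulling back, the ideal of $\Gamma_g$ in $X\times W$ is locally generated by $\dim W$ elements. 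Because $\Gamma_g\cong X$ has codimension $\dim W$ in the regular scheme $X\times W$, Remark~\ref{remlci} gives that $\Gamma_g$ is an l.c.i. embedding.

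Finally, to see that the composition of two l.c.i. embeddings $i_1\colon X\hookrightarrow X'$ and $i_2\colon X'\hookrightarrow X''$ is l.c.i., I would lift a local regular sequence $(h_1,\dots,h_n)$ cutting out $X$ in $X'$ to functions $(\tilde h_1,\dots,\tilde h_n)$ on $X''$; concatenating with a local regular sequence $(k_1,\dots,k_m)$ cutting out $X'$ in $X''$ gives $(k_1,\dots,k_m,\tilde h_1,\dots,\tilde h_n)$, which is regular in $\mathcal O_{X'',x}$ since $(k_1,\dots,k_m)$ is regular in $\mathcal O_{X'',x}$ and its images cut out precisely $\mathcal O_{X',x}$, in which $(h_1,\dots,h_n)$ is regular by hypothesis. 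The main subtlety of the whole argument is not any single step, but rather the careful use of the regularity of $W$: it is exactly what lets one invoke Remark~\ref{remlci} for the graph embedding, since the pullback of a regular immersion along an arbitrary morphism need not be a regular immersion unless the codimension is preserved and the target is itself regular.
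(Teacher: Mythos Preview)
Your argument is essentially the paper's: factor $(f,g)$ as $\Gamma_g$ followed by $f\times\mathrm{id}_W$, realize $\Gamma_g$ as the pullback of the diagonal $\Delta_W$ and invoke Remark~\ref{remlci}, then compose l.c.i.\ embeddings (the paper cites \cite[B.7.4]{Fulton} for this last step, which you spell out by hand).

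One caveat worth noting: you assert that $X\times W$ is a \emph{regular} scheme in order to apply Remark~\ref{remlci}, but the lemma puts no regularity hypothesis on $X$, and in the paper's actual application (inside the proof of Lemma~\ref{emb=omega}) the role of $X$ is played by an l.c.i.\ subscheme that need not be regular. The paper's own appeal to Remark~\ref{remlci} has the same gap. The conclusion is nevertheless correct and easily repaired: since $W$ is smooth over $k$, the projection $X\times W\to X$ is smooth and $\Gamma_g$ is a section of it; any section of a smooth morphism is a regular immersion, with no regularity assumption on the base.
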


\begin{proof}
The map $(f,g)$ is the composition $X\xrightarrow{\Gamma_g} X\times W
\xrightarrow{f\times id}Y\times W$. The map $f\times id$ is a l.c.i. embedding.
The graph inclusion $\Gamma_g\colon X\to X\times W$ fits into the pullback diagram
\[
\xymatrix{
X\ar[r]\ar[d]^{\Gamma_g} & W\ar[d]\\
X\times W\ar[r] & W\times W
}
\]
where the right arrow is the diagonal embedding. Since $W$ is regular, the diagonal
map $W\to W\times W$ is a l.c.i. embedding, so the ideal defining $X$ in $X\times W$
is locally generated by $n$ elements, where $n=\dim W.$ Then $\Gamma_g\colon X\to X\times W$
is a l.c.i. embedding by Remark~\ref{remlci}. Then the composition of $\Gamma_g$ and $f\times id$
is an l.c.i. embedding by~\cite[B.7.4]{Fulton}.
\end{proof}

\begin{lem}\label{emb=omega}
Let $NCor^{\Omega}(X,Y)$ be the nerve of the groupoid
$Cor^{\Omega}(X,Y)$. Then for a smooth affine $X$ and $Y\in\Sm_k$
the natural map $f\colon \Emb(X,Y)\to NCor^{\Omega}(X,Y)$ induces a
weak equivalence of simplicial sets
\[C_*f\colon C_*\Emb(X,Y)\to C_*NCor^{\Omega}(X,Y).\]
\end{lem}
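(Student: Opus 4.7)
The plan is to apply the realization (Bousfield--Friedlander) lemma for bisimplicial sets to reduce to a per-$p$ statement, then build an explicit $\A^1$-homotopy via a graph-of-isomorphism interpolation. Writing $C_*NCor^{\Omega}(X,Y)$ as the diagonal of the bisimplicial set $(p,q)\mapsto NCor^{\Omega}(X\times\Delta^q_k,Y)_p$, the realization lemma reduces the problem to showing, for each $p\geqslant 0$, that the canonical map $C_*\Emb(X,Y)\to C_*NCor^{\Omega}_p(X,Y)$ (sending $Z$ to the degenerate $p$-chain $Z=\cdots=Z$) is a weak equivalence of simplicial sets, where $NCor^{\Omega}_p(-,Y)$ denotes the presheaf of $p$-simplices of the nerve. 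The case $p=0$ is tautological. For $p\geqslant 1$, the source-projection $r\colon NCor^{\Omega}_p(-,Y)\to\Emb(-,Y)$ retracts the map, so the only remaining task is to exhibit an $\A^1$-homotopy of presheaves between $\mathrm{id}$ and $i\circ r$; applying $C_*$ will then promote it to the required simplicial homotopy.

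The homotopy is built as follows. Given a $p$-chain $\gamma=(Z_0\xrightarrow{\alpha_1}Z_1\xrightarrow{\alpha_2}\cdots\xrightarrow{\alpha_p}Z_p)$ in $Cor^{\Omega}_n(X,Y)$ with embeddings $\iota_j\colon Z_j\hookrightarrow\A^n_X$, set $\beta_j=\alpha_j\cdots\alpha_1\colon Z_0\xrightarrow{\cong}Z_j$ (so $f_j\circ\beta_j=f_0$), and define $\tilde Z_j\subset\A^{2n}_{X\times\A^1}$ as the scheme-theoretic image of
\[
\Psi_j\colon Z_0\times\A^1\to\A^{2n}_{X\times\A^1},\qquad (z,t)\mapsto\bigl((1-t)\iota_0(z),\,t\,\iota_j(\beta_j(z)),\,t\bigr).
\]
The common parametrization by $Z_0\times\A^1$ gives canonical isomorphisms $\tilde\alpha_j\colon\tilde Z_{j-1}\to\tilde Z_j$ over $X\times\A^1$ and a compatible family of maps $f_0\circ\mathrm{pr}_{Z_0}\colon\tilde Z_j\to Y$, matching $f_j$ at $t=1$. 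This assembles into an element $\tilde\gamma\in NCor^{\Omega}(X\times\A^1,Y)_p$ specializing at $t=0$ to the degenerate chain at the standard stabilization of $(Z_0,f_0)$, and at $t=1$ to the original chain $\gamma$ placed in the ``shifted'' stabilization $\{0\}\times\A^n\subset\A^{2n}_X$. Since standard and shifted stabilizations differ by the shuffle $\chi_{n,n}$, which after at most one further stabilization becomes an even permutation and is therefore $\A^1$-homotopic to the identity on $C_*NCor^{\Omega}_p(X,Y)$ by the even-permutation argument appearing in the proof of Lemma~\ref{tildazf}, one obtains the required $\A^1$-homotopy from $\gamma$ to $(i\circ r)(\gamma)$ by concatenating $\tilde\gamma$ with this short shuffle homotopy.

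The main obstacle will be verifying that each $\tilde Z_j$ is a closed l.c.i.\ subscheme of $\A^{2n}_{X\times\A^1}$ of codimension $2n$, finite and flat over $X\times\A^1$. Finiteness and flatness are immediate from $\tilde Z_j\cong Z_0\times\A^1$ over $X\times\A^1$. Set-theoretic injectivity of $\Psi_j$ follows because the $\A^1$-coordinate $t$ separates fibers, and within each fiber one of the two $\A^n$-blocks has an injective coordinate: at $t=0$ the first, at $t=1$ the second (via $\beta_j$), and for $t\in\A^1\setminus\{0,1\}$ either. Closedness reduces to properness of $Z_0\times\A^1$ over $X\times\A^1$. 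The l.c.i.\ property requires, via Remark~\ref{remlci}, that the ideal of $\tilde Z_j$ be locally generated by $2n$ elements; this is verified by combining the l.c.i.\ hypothesis on $Z_0$ and $Z_j$ in $\A^n_X$ with a local analysis at the boundary of $\A^1$, where $(1-t)$ and $t$ each contribute a missing regular element. Once these verifications are in place, functoriality in $X$ and compatibility with the nerve's simplicial structure are automatic from the explicit formula for $\Psi_j$.
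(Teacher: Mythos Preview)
Your approach is essentially the paper's: reduce via the realization lemma to each nerve degree $m$, use the source-retraction $p\colon N_mCor^{\Omega}_n(-,Y)\to\Emb_n(-,Y)$, and build an $\A^1$-homotopy by re-embedding the chain into $\A^{2n}$ and interpolating. The paper's interpolation formula is
\[r_i\colon Z_i\times\A^1\to\A^n_X\times_X\A^n_X\times\A^1,\qquad (z,t)\mapsto\bigl((1-t)z+t\beta_i(z),\,t(t-1)z,\,t\bigr),\]
with $\beta_i=\alpha_0^{-1}\cdots\alpha_{i-1}^{-1}$. The point of choosing the second block to be $t(t-1)z$ rather than your $t\,\iota_j\beta_j(z)$ is that it vanishes at \emph{both} $t=0$ and $t=1$, so both endpoints of the homotopy land in the standard stabilization $\A^n\times\{0\}\subset\A^{2n}$: at $t=0$ one sees the original chain stabilized, at $t=1$ the degenerate chain on $Z_0$ stabilized. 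This eliminates your extra shuffle step (and with it the parity bookkeeping for $\chi_{n,n}$). The l.c.i.\ verification is identical to yours---rescale on the open interior and invoke Lemma~\ref{lcilem}, then glue with the boundary fibres via Lemma~\ref{lcicrit}.

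There is, however, a genuine gap in your write-up. The homotopies you build are maps $H(n)\colon N_mCor^{\Omega}_n(-,Y)\to N_mCor^{\Omega}_{2n}(\A^1\times-,Y)$, one for each level $n$, and they are \emph{not} compatible with the stabilization maps $n\to n+1$ (neither in your formula nor in the paper's: stabilizing before or after $H$ places the extra zero coordinates in different slots). Hence they do not assemble into a single ``$\A^1$-homotopy of presheaves between $\mathrm{id}$ and $i\circ r$'' on the colimit, as you assert. The paper closes this with Lemma~\ref{obv}: given inclusions $Y_n\subset X_n$ with retractions $p_n$ and levelwise simplicial homotopies $H(n)\colon X_n\times\Delta[1]\to X_{2n}$ between the inclusion and $p_n$-then-include that send $Y_n\times\Delta[1]$ into $Y_{2n}$, the map $\colim Y_n\to\colim X_n$ is a weak equivalence---no compatibility of the $H(n)$ across $n$ is required. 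You need to invoke (or reprove) such a statement; once you do, your construction goes through, and restricting to even $n$ handles the sign of $\chi_{n,n}$.
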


\begin{proof}
Note that $C_*NCor^{\Omega}(X,Y)$ is a bisimplicial set with
$m$-simplices given by $C_*N_mCor^{\Omega}(X,Y)$. Thus it is
sufficient to prove that for any $m$ the map
\[C_*f\colon C_*\Emb(X,Y)\to C_*N_mCor^{\Omega}(X,Y)\]
is a weak equivalence of simplicial sets. Note that the map of presheaves
$f\colon\Emb_n(-,Y)\to N_mCor^{\Omega}_n(-,Y)$ is an inclusion admitting a retraction
\[p\colon N_mCor_n^{\Omega}(-,Y)\to\Emb_n(-,Y)\] that sends
$((Z_0,f_0)\xrightarrow{\alpha_0}(Z_1,f_1)\to\cdots\xrightarrow{\alpha_{m-1}}(Z_m,f_m))
\in N_mCor_n^{\Omega}(X,Y)$ to $(Z_0,f_0)\in\Emb_n(X,Y)$.

For every smooth affine $X$ and
$((Z_0,f_0)\xrightarrow{\alpha_0}(Z_1,f_1)\xrightarrow{\alpha_1}\cdots\to (Z_m,f_m))\in N_mCor^{\Omega}_n(X,Y)$
consider the map
\[r_i\colon Z_i\times\A^1\to \A^n_X\times_X\A^n_X\times\A^1, \ \ \ (z,t)\mapsto ((1-t)z+t\beta_i(z),t(t-1)z,t).\]
Here $t$ denotes the coordinate on $\A^1$ and $\beta_i\colon Z_i\to Z_0$ is the
isomorphism of $X$-schemes $\beta_i=\alpha_0^{-1}\circ\ldots\circ\alpha_{i-1}^{-1}.$

Note that $r_i$ is a map of schemes over $X\times\A^1$. 
The map $r_i$, restricted to the fiber over $X\times (\A^1-\{0,1\})$, fits into the diagram
\[\xymatrix{
Z_i\times(\A^1-\{0,1\})\ar[r]^(.4){r_i}\ar[rd]_{r'_i} & \A^n_X\times_X\A^n_X\times(\A^1-\{0,1\})\ar[d]^{(x,y,t)\mapsto (x,\frac{y}{t(t-1)},t)}\\
& \A^n_X\times_X\A^n_X\times(\A^1-\{0,1\}),}\]
where $r'_i\colon(z,t)\mapsto((1-t)z+t\beta_i(z),z,t)$ is a l.c.i. embedding by Lemma~\ref{lcilem}.
The fiber of $r_i$ over $X\times 0$ and $X\times 1$ is a l.c.i. embedding.
Then the map $r_i$ is a l.c.i. embedding by Lemma~\ref{lcicrit}. Let us denote by $Z'_i$ the image of $r_i$. Note that the composition $Z_i\times\A^1\xrightarrow{r_i}Z'_i\subset\A^n_X\times_X\A^n_X\times\A^1\to X\times\A^1$ coincides with $\pi\times id_{\A^1}$, where $\pi$ is the projection $\pi: Z_i\subset\A^n_X\to X$. Therefore $Z'_i$ is finite and flat over $X\times\A^1.$

We construct an $\A^1$-homotopy
\[H\colon N_mCor^{\Omega}_n(-,Y)\to N_mCor^{\Omega}_{2n}(\A^1\times-,Y)\]
as follows. We set
\[H\colon ((Z_0,f_0)\xrightarrow{\alpha_0}(Z_1,f_1)\xrightarrow{\alpha_1}\cdots (Z_m,f_m))
\mapsto
((Z_0',f'_0)\xrightarrow{\gamma_0}(Z'_1,f'_1)\xrightarrow{\gamma_1}\cdots
(Z'_m,f'_m)),\] 
where each isomorphism $\gamma_i$ is given by the composition
\[\gamma_i\colon Z'_i\xrightarrow{r_i^{-1}} Z_i\times\A^1\xrightarrow{\alpha_i\times
id}Z_{i+1}\times\A^1\xrightarrow{r_{i+1}}Z'_{i+1},\] 
and each map $f_i'\colon Z'_i\to Y$ is given by the composition
   \[f'_i\colon Z'_i\xrightarrow{r_i^{-1}} Z_i\times\A^1\xrightarrow{\pi_{Z_i}}Z_i\xrightarrow{f_i} Y.\]

Then $H_0\colon N_mCor^{\Omega}_n(-,Y)\to N_mCor^{\Omega}_{2n}(-,Y)$
is the stabilization map and $H_1\colon N_mCor^{\Omega}_n(-,Y)\to
N_mCor^{\Omega}_{2n}(-,Y)$ equals the composition
\[H_1\colon N_mCor^{\Omega}_n(-,Y)\xrightarrow{p} \Emb_n(-,Y)\xrightarrow{f} N_mCor^{\Omega}_n(-,Y)\xrightarrow{stab} N_mCor^{\Omega}_{2n}(-,Y),\]
where the last arrow is the stabilization map. The $\A^1$-homotopy
$H$ gives rise to a simplicial homotopy
\[H\colon C_*N_mCor^{\Omega}_n(-,Y)\times\Delta[1]\to C_*N_mCor^{\Omega}_{2n}(-,Y).\]
By construction, for any $X$ we have
$H(C_*\Emb_n(X,Y)\times\Delta[1])\subseteq C_*\Emb_{2n}(X,Y)$. Then
by Lemma~\ref{obv} the map $f\colon C_*\Emb(X,Y)\to
C_*N_mCor^{\Omega}(X,Y)$ is a weak equivalence.
\end{proof}

\begin{lem}\label{obv}
Suppose $X_n\subseteq X_{n+1}$ is a directed system of inclusions of
simplicial sets, $Y_n\subseteq  Y_{n+1}$ is a directed system of simplicial subsets
$Y_n\subseteq X_n$, and $p_n\colon X_n\to Y_n$ is a sequence of
retractions that agree with inclusions $X_n\subseteq X_{n+1}$ and
$Y_n\subseteq Y_{n+1}.$ Assume that for every $n$ there is a homotopy
$H(n)\colon X_n\times \Delta[1]\to X_{2n}$ such that $H(n)_0\colon X_n\to X_{2n}$
is the inclusion map, $H(n)(Y_n\times\Delta[1])\subseteq Y_{2n}$, and
the map $H(n)_1\colon X_n\to X_{2n}$ equals the composition
\[X_n\xrightarrow{p_n} Y_n\subseteq Y_{2n}\subseteq X_{2n}.\]
 Then the inclusion $Y\to X$ is a weak equivalence, where $Y=\colim_n Y_n, X=\colim_n X_n$.
\end{lem}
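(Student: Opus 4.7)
The plan is to pass to geometric realizations and argue by CW-theoretic compactness. Since geometric realization commutes with filtered colimits of monomorphisms, the inclusions $X_n\subseteq X_{n+1}$ realize to CW subcomplex inclusions, so $|X|=\colim_n|X_n|$ and $|Y|=\colim_n|Y_n|$ with $|Y|$ a CW subcomplex of $|X|$. The retractions $|p_n|$ assemble into a continuous retraction $|p|\colon|X|\to|Y|$ of the inclusion $|i|\colon|Y|\to|X|$, which immediately yields injectivity of $|i|_*$ on $\pi_k$ at every basepoint and on $\pi_0$.

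For surjectivity I will show that $\pi_0(|Y|)\to\pi_0(|X|)$ is onto and that the relative group $\pi_k(|X|,|Y|,y_0)$ vanishes for every $y_0\in|Y|$ and $k\geq 1$. The $\pi_0$ statement is immediate: given $x\in|X|$, compactness places $x$ in some $|X_N|$, and $t\mapsto |H(N)|(x,t)$ is a path from $x$ to $|i|(|p_N|(x))\in|Y|$. For $k\geq 1$, given $\alpha\colon(D^k,S^{k-1},s_0)\to(|X|,|Y|,y_0)$, compactness of $D^k$ provides an $N$ through which $\alpha$ factors with $y_0\in|Y_N|$; then $|H(N)|\circ(\alpha\times\mathrm{id}_I)\colon D^k\times I\to|X_{2N}|\subseteq|X|$ is a free homotopy of pairs from $\alpha$ to $|i|\circ|p_N|\circ\alpha$, whose time-$1$ value maps $D^k$ entirely into $|Y_{2N}|$. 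Its restriction to $S^{k-1}\times I$ stays in $|Y_{2N}|$ by the hypothesis $H(N)(Y_N\times\Delta[1])\subseteq Y_{2N}$, and the basepoint trace $t\mapsto|H(N)|(y_0,t)$ is a loop in $|Y_{2N}|$ at $y_0$, since $|p_N|$ fixes $y_0\in|Y_N|$.

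The standard free-homotopy-of-pairs principle then gives $[\alpha]=\gamma_\#[\alpha_1]$ in $\pi_k(|X|,|Y|,y_0)$, where $\gamma$ is the basepoint loop and $\alpha_1$ is the time-$1$ map; since $\alpha_1$ factors through $|Y|$, we have $[\alpha_1]=0$, and hence $[\alpha]=0$. Combining this vanishing with the injectivity from the retraction and the long exact sequence of the pair shows $|i|_*$ is bijective on all $\pi_k$, so $Y\to X$ is a weak equivalence. The main technical point I anticipate is that the $H(n)$ are not assumed compatible as $n$ varies, so they cannot be assembled into a single simplicial homotopy $X\times\Delta[1]\to X$; compactness of $S^k$ and $D^k$ is precisely what lets me work with one $H(N)$ at a time, and the free-homotopy-of-pairs formalism absorbs the basepoint wandering that this introduces.
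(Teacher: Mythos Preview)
Your argument is correct and follows essentially the same route as the paper's proof: both reduce, by compactness, to a single finite stage $n$, observe that the free homotopy $H(n)$ relates the inclusion $X_n\to X_{2n}$ to the map factoring through $Y_n$, and handle the basepoint drift via the action of the loop $\gamma(t)=H(n)(y_0,t)$, which lies in $Y_{2n}$; injectivity and $\pi_0$-surjectivity come from the retraction and the path $t\mapsto H(n)(x,t)$ exactly as you say. The only packaging difference is that the paper argues directly that the image of $\pi_i(X_n,y)\to\pi_i(X_{2n},y)$ lands in the image of $\pi_i(Y_{2n},y)$ (since $[\gamma]_*$ preserves that image when $\gamma$ lies in $Y_{2n}$), whereas you phrase the same thing as vanishing of the relative groups $\pi_k(|X|,|Y|,y_0)$ after passing to geometric realizations; neither choice buys anything the other does not.
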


\begin{proof}
Consider a point $y\in Y_n$. The inclusion map $j\colon X_n\to X_{2n}$ and the
composition $f\colon X_n\xrightarrow{p_n} Y_n\subseteq X_{2n}$ are homotopic
by means of the free homotopy $H(n)$. Then the two induced maps
\[\pi_i(j),\pi_i(f)\colon\pi_i(X_n,y)\to\pi_i(X_{2n},y)\]
differ by the action $[\gamma]_*$ of the class $[\gamma]\in\pi_1(X_{2n},y)$ on $\pi_i(X_{2n},y)$,
where $\gamma\colon\Delta[1]\to Y_{2n},\gamma(t)=H(n)(y,t)$ is the
loop given by the image of the base point $y$ under the homotopy $H(n)$. Since the
loop $\gamma$ lies inside $Y_n$, the action of $[\gamma]$ on $\pi_i(Y_{2n},y)$
preserves the image of $\pi_i(Y_n,y)$ under the inclusion map $Y_n\to X_{2n}$. Then the image
\[\pi_i(j)(\pi_i(X_n,y))=[\gamma]_*\pi_i(f)(\pi_i(X_n,y))\]
lies inside the image of $\pi_i(Y_n,y)$. Then
$\pi_i(Y,y)\to\pi_i(X,y)$ is surjective for any point $y\in Y$. The
existence of retractions $p_n$ implies that the map
$\pi_i(Y,y)\to\pi_i(X,y)$ is also injective and for every point
$x\in X_n$ the map $t\mapsto H(n)(x,t)$ gives a path between $x$ and
the point of $Y_n$. We see that $\pi_0(Y)\to\pi_0(X)$ is surjective,
and hence $Y\to X$ is a weak equivalence.
\end{proof}

Note that  for any pointed finite set $K$ the assignment
\[K\mapsto \Emb(X_+\w K)\] defines a sheaf of $\Gamma$-spaces. Denote by $\Emb(X_+\w\Ss)$
the corresponding $S^1$-spectrum. For any $W\in\Sm_k$ there is a canonical map
\[\Emb(-,X)\to\Emb(-\times W,X\times W),\]
functorial in $W\in \Fr_0(k)$.
These two constructions give rise to a $(S^1,\Gm^{\w 1})$-bispectrum
\[(C_*\Emb(X_+\w\Ss), C_*\Emb(X_+\w\Ss\w\Gm^{\w 1}),\ldots).\]
Its structure maps literally repeat the construction of the structure
maps for $K$-motives~\cite[Section~3]{GPmss}. We also define a $(S^1,\Gm^{\w 1})$-bispectrum
\[(C_*NCor^{\Omega}(X_+\wedge\Ss),C_*NCor^{\Omega}(X_+\wedge\Gm^{\wedge 1}\wedge\Ss),\ldots)\]
in a similar fashion.

The following theorem computes $M_{MGL}^{\G}(X)$ as the above two bispectra.

\begin{thm}\label{mglembomega}
For $X\in\Sm_k$ there is a natural levelwise stable local equivalence between
$(S^1,\Gm^{\w 1})$-bispectra $M_{MGL}^{\G}(X)$ and
\[(C_*\Emb(X_+\wedge\Ss),C_*\Emb(X_+\wedge\Gm^{\wedge 1}\wedge\Ss),\ldots)\]
or
\[(C_*NCor^{\Omega}(X_+\wedge\Ss),C_*NCor^{\Omega}(X_+\wedge\Gm^{\wedge 1}\wedge\Ss),\ldots).\]
In particular, the $(S^1,\Gm^{\w 1})$-bispectra
\[(C_*\Emb(X_+\wedge\Ss)_f,C_*\Emb(X_+\wedge\Gm^{\wedge 1}\wedge\Ss)_f,\ldots)\]
and
\[(C_*NCor^{\Omega}(X_+\wedge\Ss)_f,C_*NCor^{\Omega}(X_+\wedge\Gm^{\wedge 1}\wedge\Ss)_f,\ldots)\]
are motivically fibrant and represent the $T$-spectrum $X_+\w MGL$ in
the category of bispectra, where ``$f$'' refers to stable local
fibrant replacements of $S^1$-spectra.
\end{thm}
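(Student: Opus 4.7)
The plan is to pass through the chain of comparisons
\[
M_{MGL}^{\G}(X)\;\longleftarrow\;\widetilde{M}_{MGL}^{\G}(X)\;\longleftarrow\;C_*B^{\G}(X)\;\longrightarrow\;C_*\Emb^{\G}(X)\;\longrightarrow\;C_*NCor^{\Omega,\G}(X),
\]
where the intermediate $\G$-decorated objects denote the bispectra built termwise by the obvious Segal $\Gamma$-space constructions. First I would apply Theorem~\ref{bisptilde} to $E=MGL$ (which is a directed colimit of Thom $T$-spectra satisfying Convention~\ref{convention}, with bounding constant $d=1$ and contractible alternating group action), combined with Proposition~\ref{normfrcor} applied to each $X_+\w\Gm^{\w i}$, to see that the forgetful map induces a levelwise stable local equivalence $M_{MGL}^{\G}(X)\to\widetilde{M}_{MGL}^{\G}(X)$.

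Next, for every finite pointed set $K$ and every $i\geqslant 0$, I would run Lemma~\ref{A=B} with $Y=X\w\Gm^{\w i}\w K$ (treating the sheaf arguments in the obvious way) and Lemma~\ref{B=Emb} to obtain a zig-zag of sectionwise trivial Kan fibrations
\[
C_*\widetilde{\Fr}^{MGL}_n(-,X_+\w\Gm^{\w i}\w K)\;\twoheadrightarrow\;C_*B_n(-,X_+\w\Gm^{\w i}\w K)\;\twoheadrightarrow\;C_*\Emb_n(-,X_+\w\Gm^{\w i}\w K),
\]
natural in both $K\in\Gamma^{\op}$ and in the variable $U\in\Sm_k$ over which sections are taken (for all smooth affine $U$, hence locally). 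Passing to Segal $S^1$-spectra and taking the colimit over $n$, this yields for each fixed $i$ a sectionwise levelwise weak equivalence of $S^1$-spectra between $\widetilde{M}_{MGL}(X_+\w\Gm^{\w i})$ and $C_*\Emb(X_+\w\Gm^{\w i}\w\Ss)$. Finally, Lemma~\ref{emb=omega} promotes $\Emb$ to $NCor^{\Omega}$ at each simplicial level of the Segal $\Gamma$-space, giving the third equivalence.

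The main work, and what I expect to be the chief obstacle, is verifying that all of these comparisons are compatible with the bispectral structure, i.e.\ with the $\Gm^{\w 1}$-bonding maps. On the $\widetilde{M}_E$ side the bonding maps come from the natural $\Fr_0(k)$-action discussed after Definition~\ref{frecolimit}, and on the $\Emb$ and $NCor^{\Omega}$ sides the bonding maps are built from the analogous canonical maps $\Emb(-,X_+\w\Gm^{\w i})\to\Emb(-\times\Gm,X_+\w\Gm^{\w i}\w\Gm^{\w 1})$ and similarly for $NCor^{\Omega}$, defined exactly as in the $K$-motive case of~\cite[Section~3]{GPmss}. All of the forgetful maps we use are defined by dropping a piece of the framing datum while remembering the underlying closed subscheme and the regular map to the target; this operation is manifestly natural under external product by a smooth variety $W\in\Fr_0(k)$, so the maps assemble into maps of bispectra. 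Once this naturality check is made, we have a chain of levelwise stable local equivalences between $M_{MGL}^{\G}(X)$ and each of the two target bispectra.

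The last assertion of the theorem is then immediate: by Theorem~\ref{mainbisp}(1) the bispectrum $M_{MGL}^{\G}(X)_f$ is motivically fibrant and represents $X_+\w MGL$ in the category of bispectra, and a levelwise stable local equivalence of $(S^1,\Gm^{\w 1})$-bispectra induces a levelwise stable local equivalence of their stable local fibrant replacements of $S^1$-spectra, which is then a sectionwise level equivalence between motivically fibrant objects. Hence $(C_*\Emb(X_+\w\Gm^{\w\bullet}\w\Ss)_f)_\bullet$ and $(C_*NCor^{\Omega}(X_+\w\Gm^{\w\bullet}\w\Ss)_f)_\bullet$ are both motivically fibrant bispectra representing $X_+\w MGL$.
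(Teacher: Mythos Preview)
Your proposal is correct and follows essentially the same route as the paper's proof: the chain of comparisons through Proposition~\ref{normfrcor} and Lemmas~\ref{tilde=A}, \ref{A=B}, \ref{B=Emb}, \ref{emb=omega}, followed by the fibrancy argument from Theorem~\ref{bisptilde} (equivalently Theorem~\ref{mainbisp}(1)). One cosmetic remark: in your displayed zig-zag the first two arrows should point to the right, since all the forgetful maps go $\Fr^{MGL}\to\widetilde{\Fr}^{MGL}\to B\to\Emb\to NCor^{\Omega}$; your surrounding text already has this right.
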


\begin{proof}
The first claim follows from Proposition~\ref{normfrcor} and
Lemmas~\ref{tilde=A},\ref{A=B},\ref{B=Emb},\ref{emb=omega}. The proof
of Theorem~\ref{bisptilde} shows that the bispectra
\[(C_*\Emb(X_+\wedge\Ss)_f,C_*\Emb(X_+\wedge\Gm^{\wedge 1}\wedge\Ss)_f,\ldots)\]
and
\[(C_*NCor^{\Omega}(X_+\wedge\Ss)_f,C_*NCor^{\Omega}(X_+\wedge\Gm^{\wedge 1}\wedge\Ss)_f,\ldots)\]
are motivically fibrant and represent the $T$-spectrum $X_+\w MGL$ in
the category of bispectra.
\end{proof}

We already know from Corollary~\ref{embscheme} that the sheaf $\Emb(-,Y)$ is isomorphic to a sequential colimit
$E^Y$ of smooth quasi-projective varieties. Thus the $(S^1,\Gm^{\w 1})$-bispectrum
\[(\Emb(X_+\wedge\Ss),\Emb(X_+\wedge\Gm^{\wedge 1}\wedge\Ss),\ldots),\quad X\in\Sm_k,\]
can be presented as the $(S^1,\Gm^{\w 1})$-bispectrum
$(E^{X_+\wedge\Ss},E^{X_+\wedge\Gm^{\wedge 1}\wedge\Ss},\ldots)$. By construction, the $(i,j)$-th
term of the latter bispectrum is a sequential colimit
of simplicial smooth quasi-projective varieties $E^{X_+\wedge\Gm^{\wedge i}\w S^j}$.

By using the preceding theorem, we therefore get the following result:

\begin{thm}\label{reprmgl}
The $(S^1,\Gm^{\w 1})$-bispectrum $M_{MGL}^{\G}(X)$ is isomorphic in $SH(k)$ to the bispectrum
$(E^{X_+\wedge\Ss},E^{X_+\wedge\Gm^{\wedge 1}\wedge\Ss},\ldots)$, each term of which is given by
a sequential colimit of simplicial smooth quasi-projective varieties $E^{X_+\wedge\Gm^{\wedge i}\w S^j}$,
$i,j\geqslant 0$.
\end{thm}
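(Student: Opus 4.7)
The plan is to combine Theorem~\ref{mglembomega} with Corollary~\ref{embscheme} and then keep track of structure maps. First, by Theorem~\ref{mglembomega} the bispectrum $M_{MGL}^{\G}(X)$ is connected to the bispectrum
\[(C_*\Emb(X_+\wedge\Ss),C_*\Emb(X_+\wedge\Gm^{\wedge 1}\wedge\Ss),\ldots)\]
by a levelwise stable local equivalence; in particular, an isomorphism in $SH(k)$. So it suffices to realise each term of the latter bispectrum as a sequential colimit of simplicial smooth quasi-projective varieties in a way compatible with both bonding directions.

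The second step is to identify, for each $i\geqslant 0$, the Segal $S^1$-spectrum $C_*\Emb(X_+\wedge\Gm^{\wedge i}\wedge\Ss)$ level by level. By construction its $j$-th space is $C_*\Emb(X_+\wedge\Gm^{\wedge i}\wedge S^j)$, and since the simplicial $j$-sphere $S^j$ is a finite pointed simplicial set at each simplicial degree, evaluating the $\Gamma$-space $K\mapsto\Emb(X_+\wedge\Gm^{\wedge i}\wedge K)$ at these finite pointed sets produces, via Corollary~\ref{embscheme}, a simplicial sheaf each of whose simplicial levels is a sequential colimit $E^{X_+\wedge\Gm^{\wedge i}\wedge K}$ of smooth quasi-projective varieties. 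Taking the Suslin complex $C_*$ then gives a sequential colimit of simplicial smooth quasi-projective varieties, which I will denote $E^{X_+\wedge\Gm^{\wedge i}\wedge S^j}$. This defines, for every pair $(i,j)$, the desired $(i,j)$-entry of the target bispectrum.

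Third, I would check that the two families of bonding maps pass to these schematic models. The $S^1$-bonding maps are those produced by Segal's machine from the $\Gamma$-space $K\mapsto\Emb(X_+\wedge\Gm^{\wedge i}\wedge K)$, and the $\Gm^{\wedge 1}$-bonding maps are built exactly as in~\cite[\S 3]{GPmss} from the canonical pairing $\Emb(-,Y)\to\Emb(-\times\Gm^{\wedge 1},Y\times\Gm^{\wedge 1})$ in the category $\Fr_0(k)$. Both are natural in $Y$, so the identifications $\Emb(-,Y)\cong E^Y$ from Corollary~\ref{embscheme}, being themselves natural in $Y$, intertwine the bonding maps on the sheaf side with induced maps between the schematic colimits. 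Assembling these data yields a bispectrum $(E^{X_+\wedge\Ss},E^{X_+\wedge\Gm^{\wedge 1}\wedge\Ss},\ldots)$ together with a levelwise isomorphism onto $(C_*\Emb(X_+\wedge\Ss),C_*\Emb(X_+\wedge\Gm^{\wedge 1}\wedge\Ss),\ldots)$, which combined with the first step produces the asserted isomorphism in $SH(k)$.

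The main obstacle is essentially bookkeeping rather than new mathematical content: one must verify that the representing quasi-projective schemes $E_{n,d}^Y$ constructed via Weil restriction in Proposition~\ref{embn} are sufficiently functorial in $Y$ to accommodate the $\Gm^{\wedge 1}$-pairing and the simplicial structure maps induced by $\Ss$, and that passage to the sequential colimit $E^Y$ preserves these functorialities. Once that naturality is in hand, the claim follows formally from Theorem~\ref{mglembomega} and Corollary~\ref{embscheme}.
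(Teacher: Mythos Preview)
Your strategy is exactly the paper's: invoke Theorem~\ref{mglembomega} together with Corollary~\ref{embscheme} and check naturality of the bonding maps. There is, however, one technical slip. You write that ``taking the Suslin complex $C_*$ then gives a sequential colimit of simplicial smooth quasi-projective varieties.'' This is not correct: if $W$ is a smooth quasi-projective variety, the sheaf $\uhom(\Delta^m_k,W)$ is not representable by a finite-type scheme for $m>0$, so applying $C_*$ destroys the schematic description you have just obtained.

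The paper avoids this by defining the target bispectrum \emph{without} $C_*$: it sets $E^{X_+\wedge\Gm^{\wedge i}\wedge S^j}$ to be the schematic model of $\Emb(X_+\wedge\Gm^{\wedge i}\wedge S^j)$ itself, which is indeed a sequential colimit of simplicial smooth quasi-projective varieties by Corollary~\ref{embscheme}. The link to $M_{MGL}^{\G}(X)$ then goes through the canonical map $c_0:\Emb(X_+\wedge\Gm^{\wedge i}\wedge\Ss)\to C_*\Emb(X_+\wedge\Gm^{\wedge i}\wedge\Ss)$, which is a levelwise stable motivic equivalence, followed by the levelwise stable local equivalence of Theorem~\ref{mglembomega}. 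So your step~2 should stop before applying $C_*$, and the passage to $SH(k)$ uses $c_0$ instead. With that adjustment your argument matches the paper's.
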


\appendix
\section{Technical lemmas}

In this section we recall standard facts about projective modules
over Henselian pairs. Throughout this section $R$ denotes a
Noetherian $k$-algebra. By~\ref{grind} the set $Gr(n,N)(R)$ equals
the set of rank $n$ admissible submodules of $R^N$. If $R\to S$ is a
map of $k$-algebras, by $P\otimes_R S$ we shall mean the image of
$P$ in $S^N$. It gives an element of $Gr(n,N)(S).$ It is important
to recall from~\cite[Tag 089R]{stacks-project} that $Gr(n,N)(R)$ is
{\it functorial\/} in $R$.

\begin{lem}
Suppose $(R,I)$ is a Henselian pair, and $J$ is an ideal in $R$.
Suppose $B$ is an integral $R$-algebra, and $e'\in B/IB$, $e''\in
B/JB$ are two idempotents that coincide in $B/(I+J)B$. Then there is
an idempotent $e\in B$ such that $e+IB=e'$ and $e+JB=e''.$
\end{lem}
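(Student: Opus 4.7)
The plan is to produce a single lift $\tilde e\in B$ of $e'$ and then verify that $\tilde e+JB$ is forced to equal $e''$; no further adjustment will be needed. The two tools I will use are standard preservation properties of Henselian pairs: if $(A,\mathfrak a)$ is a Henselian pair and $A\to C$ is integral, then $(C,\mathfrak aC)$ is Henselian; and for any ideal $\mathfrak b\subseteq A$ the quotient $(A/\mathfrak b,\mathfrak a(A/\mathfrak b))$ is again Henselian.

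Since $R\to B$ is integral and $(R,I)$ is Henselian, $(B,IB)$ is Henselian, so the idempotent $e'\in B/IB$ lifts to an idempotent $\tilde e\in B$. Passing to the quotient $R\to R/J$, the pair $(R/J,I\cdot(R/J))$ is Henselian, and $B/JB$ is integral over $R/J$, so $(B/JB,I(B/JB))$ is Henselian as well. Both $\tilde e+JB$ and $e''$ are then idempotents in $B/JB$, and by the hypothesis on $e',e''$ they project to the same idempotent in $(B/JB)/I(B/JB)=B/(I+J)B$.

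The crucial observation is uniqueness of idempotent lifts in a Henselian pair $(A,\mathfrak a)$: if $f_1,f_2\in A$ are idempotents with $f_1-f_2\in\mathfrak a$, then the identity
\[
(f_1-f_2)(f_1+f_2-1)=f_1^2-f_2^2-(f_1-f_2)=0
\]
together with the fact that the common reduction $\bar e$ satisfies $(2\bar e-1)^2=1$ in $A/\mathfrak a$ shows that $f_1+f_2-1$ is a unit modulo $\mathfrak a$, hence a unit in $A$ because $\mathfrak a\subseteq\mathrm{rad}(A)$ for any Henselian pair; consequently $f_1=f_2$. Applied inside $(B/JB,I(B/JB))$ this forces $\tilde e+JB=e''$, so $e:=\tilde e$ is the desired idempotent. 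No genuine obstacle is expected; the only step deserving real care is the invocation of the two preservation properties of Henselian pairs, both of which are standard.
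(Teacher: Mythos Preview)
Your proof is correct and follows essentially the same route as the paper's: lift $e'$ to an idempotent $\tilde e\in B$ using that $(B,IB)$ is Henselian (since $B$ is integral over $R$), then use that $(B/JB,\,I(B/JB))$ is Henselian to conclude that the two idempotents $\tilde e+JB$ and $e''$ in $B/JB$, which agree in $B/(I+J)B$, must coincide. The only difference is presentational: the paper simply invokes the Stacks Project tag asserting that idempotents lift bijectively through a Henselian pair, whereas you spell out the uniqueness part via the identity $(f_1-f_2)(f_1+f_2-1)=0$ and the observation that $f_1+f_2-1\equiv 2\bar e-1$ is a unit modulo the ideal, hence a unit since the ideal lies in the Jacobson radical.
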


\begin{proof}
By \cite[Tag 09XI]{stacks-project} there is a bijections between
idempotents in $B$ and $B/IB$ as well as there is a bijection
between idempotents in $B/J$ and $B/(I+J)B$. If $e$ is an idempotent
in $B$ such that $e+IB=e'$, then $e+JB=e''$.
\end{proof}

Denote by $Idemp_n(R)$ the set of idempotents of the matrix ring $M_n(R)$.

\begin{lem}\label{idemplift}
Suppose $(R,I)$ is a Henselian pair, $J$ is an ideal in $R$.
Consider a diagram of sets:
\[Idemp_n(R)\to Idemp_n(R/I)\times Idemp_n(R/J)\rightrightarrows Idemp_n(R/(I+J)).\]
Suppose $(x',x'')\in Idemp_n(R/I)\times Idemp_n(R/J)$ and the images
of $x',x''$ coincide in $Idemp_n(R/(I+J))$. Then there is $x\in
Idemp_n(R)$ such that the image of $x$ in $Idemp_n(R/I)$ equals
$x'$, and the image of $x$ in $Idemp(R/J)$  equals $x''$.
\end{lem}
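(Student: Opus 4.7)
The plan is to deduce this lemma as a direct application of the immediately preceding (unnumbered) lemma by taking the integral $R$-algebra to be $B = M_n(R)$. Since $M_n(R)$ is free of rank $n^2$ as an $R$-module, it is a finite, hence integral, $R$-algebra. Next I would record the canonical identifications
\[ M_n(R)/IM_n(R) \cong M_n(R/I),\quad M_n(R)/JM_n(R) \cong M_n(R/J),\quad M_n(R)/(I+J)M_n(R) \cong M_n(R/(I+J)), \]
under which the idempotents in these quotients are precisely the elements of $Idemp_n(R/I)$, $Idemp_n(R/J)$, and $Idemp_n(R/(I+J))$, respectively.

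With these identifications, the hypothesis that $(x',x'') \in Idemp_n(R/I)\times Idemp_n(R/J)$ have the same image in $Idemp_n(R/(I+J))$ reads exactly as the hypothesis of the preceding lemma applied to $B = M_n(R)$: namely, we have idempotents $e' = x'\in B/IB$ and $e'' = x''\in B/JB$ coinciding in $B/(I+J)B$. The preceding lemma then produces an idempotent $e \in B$ with $e + IB = x'$ and $e + JB = x''$, and setting $x := e$ gives the required element of $Idemp_n(R)$.

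The only point that merits a remark is that the preceding lemma, as it was stated, formally concerns an integral $R$-algebra $B$, while $M_n(R)$ is noncommutative whenever $n\geqslant 2$. This causes no trouble: the idempotent-lifting step the preceding proof relies on (Stacks Project Tag 09XI for $(R,I)$ Henselian and $B$ finite over $R$) holds verbatim for possibly noncommutative finite $R$-algebras. One way to see this is that for any preimage $b\in B$ of an idempotent $\bar b\in B/IB$, the commutative $R$-subalgebra $R[b]\subseteq B$ is still finite over $R$, so $(R[b],IR[b])$ is a Henselian pair in which Tag 09XI lifts $\bar b$ uniquely to an idempotent of $R[b]\subseteq B$. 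Thus the noncommutative extension of the preceding lemma is immediate, and the plan above carries through with no additional work.
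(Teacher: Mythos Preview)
Your plan has a real gap at the noncommutativity step. The proof of the preceding lemma uses Tag~09XI twice: first the surjectivity of $Idemp(B)\to Idemp(B/IB)$ to lift $e'$ to some idempotent $e\in B$, and then the \emph{injectivity} of $Idemp(B/JB)\to Idemp(B/(I+J)B)$ to conclude that $e+JB=e''$. Your $R[b]$ trick establishes surjectivity in the noncommutative setting, but injectivity genuinely fails there: over $R=k[[t]]$ with $I=(t)$, the matrices $\left(\begin{smallmatrix}1&0\\0&0\end{smallmatrix}\right)$ and $\left(\begin{smallmatrix}1&t\\0&0\end{smallmatrix}\right)$ are distinct idempotents in $M_2(R)$ with the same reduction modulo $t$. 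So the assertion that Tag~09XI ``holds verbatim for possibly noncommutative finite $R$-algebras'' is false, and the argument of the preceding lemma does not transfer to $B=M_n(R)$ as you suggest.

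The paper avoids this by never leaving the commutative world. It first chooses a single matrix $y\in M_n(R)$ that simultaneously lifts $x'$ and $x''$ (possible because they agree modulo $I+J$), sets $B=R[t]/f(t)$ with $f$ the characteristic polynomial of $y$, and uses the ring map $g\colon B\to M_n(R)$, $t\mapsto y$. Since $y$ reduces to an idempotent mod $I$ and mod $J$, one finds $N$ with $t^N(1-t)^N$ nilpotent in $B/IB$ and in $B/JB$, whence $e'=t^N/(t^N+(1-t)^N)$ and the analogous $e''$ are idempotents in the commutative rings $B/IB$, $B/JB$ mapping under $g$ to $x',x''$. The preceding (commutative) lemma now applies to give $e\in B$, and $x=g(e)$ is the desired idempotent matrix. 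Your idea is salvageable along the same lines --- pick $b\in M_n(R)$ lifting both $x'$ and $x''$ at once and run the preceding lemma inside the commutative subalgebra $R[b]$ --- but that simultaneous choice is precisely the step your proposal omitted.
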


\begin{proof}
There is a right exact sequence of $R$-modules
\[M_n(R)\to M_n(R/I)\oplus M_n(R/J)\to M_n(R/(I+J))\to 0.\]
Take a matrix $y\in M_n(R)$ to be a preimage of $(x',x'')$. Let $f(t)\in R[t]$
be the characteristic polynomial of the matrix $y$. Denote by
$B=R[t]/f(t)$. We follow the proof of~\cite[Tag07M5]{stacks-project}.
Note that $B$ is integral over $R$ and there is a ring map $g\colon B\to M_n(R)$ that
sends $t$ to $y$. For any prime ideal $p$ containing $J$ the image of $f(t)$ in $k(p)[t]$
is the characteristic polynomial of an idempotent matrix, hence it
divides $t^n(t-1)^n$. Then $t^n(1-t)^n\in\sqrt{J}B$ and there exists a constant $N_0$
such that for any $N\geqslant N_0$ the element $t^N+(1-t)^N$ is invertible in $B/JB$.
It follows that $e'':=\frac{t^N}{t^N+(1-t)^N}$ in $B/JB$ is an idempotent and a preimage
of $x''$ in $M_n(R/J)$. Likewise there is a constant $N_1$ such that for any $N>N_1$
the element $e'=\frac{t^N}{t^N+(1-t)^N}$ in $B/IB$ is an idempotent and a preimage of $x'$
in $M_n(R/I)$. Then for $N>max(N_0,N_1)$ the images of $e'$ and $e''$ coincide in
$B/(I+J)B$ and by the previous lemma there is an idempotent $e$ in $B$ lifting $e'$
and $e''$. Then $x=g(e)$ is an idempotent matrix in $M_n(R)$
such that the image of $x$ in $M_n(R/I)$ equals $x'$ and the image of $x$ in $M_n(R/J)$ equals $x''$.
\end{proof}

\begin{lem}\label{projectionlift}
Suppose $(R,I)$ is a Henselian pair, $i\colon P\subseteq R^n$ is an admissible submodule.
Then $i'\colon P\otimes_RR/I\subseteq (R/I)^n$ is an admissible submodule of $(R/I)^n$.
Assume that there is projection $\pi'\colon (R/I)^n\to P\otimes_R R/I$ such that $\pi'i'=id$.
Then there is a projection $\pi\colon R^n\to P$ such that $\pi i=id$ and $\pi\otimes R/I=\pi'$.
\end{lem}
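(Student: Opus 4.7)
My plan is to recast the statement as a splitting problem and use the projectivity of $R^n/P$, which is packaged into the admissibility hypothesis; the Henselian condition is not actually required for this particular lemma, though it fits the broader context of the section. First, admissibility of $P$ means that the sequence $0 \to P \xrightarrow{i} R^n \to R^n/P \to 0$ splits, so $R^n = i(P) \oplus Q$ with $Q \cong R^n/P$ finitely generated projective. Tensoring with $R/I$ preserves the splitting, giving $(R/I)^n = (P \otimes_R R/I) \oplus (Q \otimes_R R/I)$; in particular $i'$ is injective and the cokernel $Q \otimes_R R/I$ is projective, so $i'$ is admissible, proving the first claim.

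Next, to construct $\pi$, I would pick the projection $\tilde\pi \colon R^n \to P$ associated to the decomposition $R^n = i(P) \oplus Q$ (so $\tilde\pi i = \mathrm{id}$) and measure its failure to reduce to $\pi'$ by the difference
\[
\delta' := \pi' - \tilde\pi \otimes R/I \colon (R/I)^n \to P \otimes_R R/I.
\]
Since $\delta' i' = \mathrm{id} - \mathrm{id} = 0$, the map $\delta'$ vanishes on $P \otimes_R R/I$ and therefore factors as $\delta' = \bar\delta' \circ q'$, where $q' \colon (R/I)^n \twoheadrightarrow Q \otimes_R R/I$ is the canonical projection and $\bar\delta' \colon Q \otimes_R R/I \to P \otimes_R R/I$.

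The key step, which I regard as the main (though modest) obstacle, is to lift $\bar\delta'$ to an $R$-linear morphism $\delta \colon Q \to P$ with $\delta \otimes R/I = \bar\delta'$; granted such a $\delta$, I would set $\pi := \tilde\pi + \delta \circ q$ with $q \colon R^n \twoheadrightarrow Q$ the projection and verify directly that $\pi i = \tilde\pi i + \delta q i = \mathrm{id}$ (because $qi = 0$) and that $\pi \otimes R/I = \tilde\pi \otimes R/I + \bar\delta' q' = \pi'$. For the lift itself, projectivity of $Q$ makes the sequence
\[
0 \to \Hom_R(Q, IP) \to \Hom_R(Q, P) \to \Hom_R(Q, P \otimes_R R/I) \to 0
\]
short exact, and the standard adjunction identifies the rightmost term with $\Hom_{R/I}(Q \otimes_R R/I, P \otimes_R R/I)$; any preimage of $\bar\delta'$ supplies the desired $\delta$, which completes the proof.
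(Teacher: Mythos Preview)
Your proof is correct, and it takes a genuinely different route from the paper's. The paper lifts $\pi'$ naively to a map $p\colon R^n\to P$ by choosing, for each standard basis vector $e_i$, an arbitrary preimage in $P$ of $\pi'(\bar e_i)$; then $p\circ i$ is an endomorphism of $P$ reducing to the identity modulo $I$, hence invertible by Nakayama's lemma (using that $I\subseteq\mathrm{Jac}(R)$ for a Henselian pair), and one sets $\pi=(p\circ i)^{-1}p$. Your argument instead fixes a splitting $R^n=P\oplus Q$ in advance, takes the associated projection $\tilde\pi$, and corrects it by lifting the discrepancy $\pi'-\tilde\pi\otimes R/I$ along the surjection $\Hom_R(Q,P)\twoheadrightarrow\Hom_{R/I}(Q/IQ,P/IP)$, which is available because $Q$ is projective. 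Your approach is slightly longer but has the advantage you noted: it makes no use of the Henselian hypothesis, showing that admissibility alone suffices for this lemma. The paper's argument is shorter and more hands-on, but genuinely needs $I\subseteq\mathrm{Jac}(R)$ to invert $p\circ i$.
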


\begin{proof}
Let $e_i$, $i=1,\ldots,n,$ denote the standard basis of $R^n$ and let $\bar{e_i}$ be the
standard basis of $(R/I)^n$. Take a map $p\colon R^n\to P$ sending $e_i$ to some
preimage of $\pi'(\bar{e_i})$. Then $p\circ i$ is an endomorphism of $P$ such that
$(p\circ i)\otimes_R R/I$ is the identity endomorphism of $P\otimes_R R/I.$ Then $p\circ i$
is invertible by Nakayama's lemma. It follows that $\pi=(p\circ i)^{-1}p\colon R^n\to P$
is a projection onto $P$ lifting $\pi'$ and $\pi\circ i=id.$
\end{proof}

\begin{lem}
Suppose $(R,I)$ is a Henselian pair, $J$ is an ideal in $R$. Suppose
$P_1\in Gr(n,N)(R/I)$ and $P_2\in Gr(n,N)(R/J)$ are such that $P_1\otimes_{R/I}(R/I
+J)=P_2\otimes_{R/J}(R/I+J)$ in $Gr(n,N)(R/(I+J))$.
Then there is $P\in Gr(n,N)(R)$ such that $P\otimes_R R/I=P_1$ in $Gr(n,N)(R/I)$
and $P\otimes_R R/J=P_2$ in $Gr(n,N)(R/J).$
\end{lem}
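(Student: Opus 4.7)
The plan is to reduce the lifting problem for admissible submodules to the lifting problem for idempotent matrices, which has already been solved in Lemma~\ref{idemplift}. The crux is choosing projections compatibly.

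Recall that an admissible rank-$n$ submodule $P\subseteq R^N$ is precisely the image of a rank-$n$ idempotent $e\in M_N(R)$: the short exact sequence $0\to P\to R^N\to R^N/P\to 0$ splits (since $R^N/P$ is projective), and a choice of splitting determines an idempotent with image $P$. Thus, constructing $P$ amounts, once compatible splittings are chosen, to constructing a compatible idempotent.

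I would proceed as follows. First, fix any projection $\pi_1\colon (R/I)^N\twoheadrightarrow P_1$ splitting the inclusion $i_1$, and set $e_1:=i_1\pi_1\in M_N(R/I)$. Let $\bar e_1\in M_N(R/(I+J))$ denote its reduction modulo $J$; by hypothesis its image $P_1\otimes_{R/I}R/(I+J)$ equals $P_2\otimes_{R/J}R/(I+J)$, so $\bar\pi_1$ is also a projection onto that common submodule. Next, observe that $(R/J,(I+J)/J)$ is a Henselian pair, since quotients of Henselian pairs by arbitrary ideals are Henselian (the surjection $R\to R/J$ is integral). Hence Lemma~\ref{projectionlift}, applied to the Henselian pair $(R/J,(I+J)/J)$ and the admissible submodule $P_2\subseteq (R/J)^N$, produces a projection $\pi_2\colon (R/J)^N\twoheadrightarrow P_2$ whose reduction modulo $I+J$ coincides with $\bar\pi_1$. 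Setting $e_2:=i_2\pi_2$, the idempotents $e_1\in M_N(R/I)$ and $e_2\in M_N(R/J)$ now agree modulo $I+J$, and Lemma~\ref{idemplift} supplies a simultaneous idempotent lift $e\in M_N(R)$. Define $P:=eR^N$; by construction, $P\otimes_R R/I=P_1$ and $P\otimes_R R/J=P_2$ as submodules.

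It remains to verify $P\in Gr(n,N)(R)$, i.e.\ that $P$ is admissible of rank $n$. Admissibility is immediate from the splitting $R^N=eR^N\oplus (1-e)R^N$. For the rank, since $(R,I)$ is Henselian, $I$ lies in the Jacobson radical of $R$, so every maximal ideal of the Noetherian ring $R$ contains $I$; the rank of a finitely generated projective module is locally constant on $\Spec R$, and every connected component of $\Spec R$ is non-empty closed, hence contains a closed point of $\Spec R$, which lies in $V(I)$. Since the rank of $P$ equals $n$ on $V(I)$ (because $e_1$ defines $P_1$ of rank $n$), it equals $n$ on all of $\Spec R$. The main obstacle, and the reason for this indirect route, is that distinct idempotents in $M_N$ can define the same submodule: an arbitrary pair of splittings for $P_1$ and $P_2$ will typically yield idempotents that disagree modulo $I+J$ even though $P_1$ and $P_2$ themselves agree there. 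The essential trick is to fix $\pi_1$ first and then transport its residue modulo $I+J$ back up to a compatible projection $\pi_2$ via Lemma~\ref{projectionlift}, thereby forcing the hypothesis of Lemma~\ref{idemplift} to hold.
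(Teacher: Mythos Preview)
Your proof is correct and follows essentially the same route as the paper's: choose a projection onto $P_1$, use that $(R/J,(I+J)/J)$ is Henselian together with Lemma~\ref{projectionlift} to produce a compatible projection onto $P_2$, then invoke Lemma~\ref{idemplift} to lift the resulting idempotents and take the image. The paper's argument is the same except that it does not explicitly verify the rank-$n$ condition on the lifted $P$; your additional paragraph handling this via $I\subseteq\mathrm{Jac}(R)$ is a welcome clarification.
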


\begin{proof}
Choose a projection $p_1\colon (R/I)^N\to P_1$. Then
$p_1\otimes_{R/I} R/(I+J)$ is a projection onto $P_1\otimes_{R/I} R/(I+J)=P_2\otimes_{R/J}
R/(I+J)$. The pair $(R/J,I/I\cap J)$ is Henselian
by~\cite[Tag 09XK]{stacks-project}, then by Lemma~\ref{projectionlift}
there is a projection $p_2\colon (R/J)^N\to P_2$ that lifts
$p_1\otimes_{R/I} R/(I+J)$. Then $A_1=i_1p_1$ and $A_2=i_2p_2$ are
idempotents that coincide in $Idemp_N(R/(I+J))$.
By Lemma~\ref{idemplift} there is an idempotent $A\in Idemp_N(R)$ such
that $A\otimes_R R/I=A_1$ and $A\otimes R/J=A_2.$ Then $P=A(R^N)$ is an
element of $Gr(n,N)(R)$ such that $P\otimes R/I=P_1$ and $P\otimes R/J=P_2.$
\end{proof}

\begin{lem}\label{projpatching}
Suppose $(R,I)$ is a Henselian pair and $J$ is an ideal in
$R$. Suppose $f_1\colon\Spec R/I\to Gr(n)$ and $f_2\colon \Spec R/J\to Gr(n)$ coincide
on $\Spec R/(I+J)$. Then there is $f\colon\Spec R\to Gr(n)$ that extends $f_1$ and $f_2$.
\end{lem}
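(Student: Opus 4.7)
The plan is to reduce Lemma~\ref{projpatching} to the preceding (unlabeled) lemma, which handles patching over Henselian pairs for the finite Grassmannians $Gr(n,N)$. The point is simply that $Gr(n)=\colim_N Gr(n,N)$ is a filtered colimit along closed embeddings $Gr(n,N)\hookrightarrow Gr(n,N+1)$ (induced by the standard linear inclusions $\A^N\hookrightarrow \A^{N+1}$), and affine schemes are quasi-compact.

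First I would choose representatives. By Remark~\ref{grind} the data of $f_1$ amounts to a rank $n$ admissible submodule $P_1\subseteq (R/I)^{N_1}$ for some $N_1$, and similarly $f_2$ is given by $P_2\subseteq (R/J)^{N_2}$ for some $N_2$. Using the stabilization $(R/I)^{N_1}\hookrightarrow (R/I)^{N_1+1}$, $v\mapsto (v,0)$, and likewise for $J$, I may replace $N_1$ and $N_2$ by a common $N\geqslant\max(N_1,N_2)$ without changing the classes of $f_1,f_2$ in $Gr(n)$. Thus $P_1\in Gr(n,N)(R/I)$ and $P_2\in Gr(n,N)(R/J)$.

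Next I would upgrade the hypothesis. The assumption that $f_1$ and $f_2$ agree on $\Spec R/(I+J)$ means that the images of $P_1$ and $P_2$ coincide in $Gr(n)(R/(I+J))=\colim_M Gr(n,M)(R/(I+J))$. Since the transition maps in this filtered colimit of sets are injective (they are induced by closed embeddings $Gr(n,M)\hookrightarrow Gr(n,M+1)$, so on $R/(I+J)$-points they send an admissible $P\subseteq (R/(I+J))^M$ to $P\oplus 0\subseteq (R/(I+J))^{M+1}$, which is injective on isomorphism classes of admissible submodules), equality in the colimit is detected at the finite stage $N$. Hence
\[ P_1\otimes_{R/I}R/(I+J)=P_2\otimes_{R/J}R/(I+J)\quad\text{in }Gr(n,N)(R/(I+J)). \]

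Finally, the preceding lemma produces $P\in Gr(n,N)(R)$ with $P\otimes_R R/I=P_1$ and $P\otimes_R R/J=P_2$. The corresponding morphism $\Spec R\to Gr(n,N)\to Gr(n)$ is the desired extension $f$. There is no serious obstacle here: the only thing worth checking carefully is the injectivity of the stabilization maps on isomorphism classes of admissible submodules, which is immediate from the fact that $v\mapsto (v,0)$ realizes $Gr(n,N)$ as the closed subfunctor of $Gr(n,N+1)$ consisting of admissible submodules contained in the first $N$ coordinates.
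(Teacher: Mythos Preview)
Your proposal is correct and follows precisely the paper's approach: the paper's proof is the single sentence ``This follows from the previous lemma and the fact that $Gr(n)(R)=\colim_N Gr(n,N)(R)$,'' and you have simply unpacked what that sentence means---choosing a common $N$ via stabilization, using injectivity of the transition maps to detect agreement at stage $N$, and then invoking the unlabeled lemma for $Gr(n,N)$.
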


\begin{proof}
This follows from the previous lemma and the fact that $Gr(n)(R)=\colim_N Gr(n,N)(R).$
\end{proof}

In the ring $R[\Delta^n]$ denote by $t$ the product $t=t_0\ldots t_n$ of barycentric
coordinates in $R[\Delta^n]$. For any $R[\Delta^n]$-module we denote by $M_t$ the
localization $M_t=M\otimes_{R[\Delta^n]}R[\Delta^n][1/t]$.

\begin{lem}\label{partialglue}
Suppose $\partial B$ is a finite flat $R[\partial\Delta^n]$-algebra,
$M$ is a finitely generated projective $\partial B$-module,
$P\subseteq(\partial B)^N$ is an admissible submodule, and for
every $i=0,\ldots, n$ there is an isomorphism $f_i\colon\partial_iM\to\partial_iP$,
and for every $i,j$ the maps $f_i\otimes_{\partial_iB} \partial_{ij}B$ and
$f_j\otimes_{\partial_jB}\partial_{ij}B$ coincide on $\partial_{ij}M$ (see Definition~\ref{partialdef}).
Then there is an isomorphism $f\colon M\to P$ such that $f\otimes \partial_iB=f_i.$
\end{lem}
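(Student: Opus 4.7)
The plan is to identify $M$ and $P$ with equalizers of \v{C}ech-type products indexed by faces of $\partial\Delta^n$, and then to glue the $f_i$ via the universal property of equalizers.

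First I would establish the exact sequence of $R[\partial\Delta^n]$-modules
$$0\to R[\partial\Delta^n]\to\prod_{i=0}^n R[\partial_i\Delta^n]\to\prod_{i<j}R[\partial_{ij}\Delta^n],$$
where the right-hand arrow is the difference of the two natural restrictions. Injectivity follows from the computation $\bigcap_i(t_i)=(t_0\cdots t_n)$ in $R[\Delta^n]$, whose image vanishes in $R[\partial\Delta^n]$. Exactness in the middle is the \v{C}ech gluing for the face decomposition of $\partial\Delta^n$. I would prove it by induction on $n$, writing $\partial\Delta^n=\partial_0\Delta^n\cup\Lambda^n_0$ with $\Lambda^n_0:=\bigcup_{i\geqslant 1}\partial_i\Delta^n$ and $\partial_0\Delta^n\cap\Lambda^n_0\cong\partial\Delta^{n-1}$, so that $R[\partial\Delta^n]$ presents as a pullback $R[\partial_0\Delta^n]\times_{R[\partial\Delta^{n-1}]}R[\Lambda^n_0]$, and applying the inductive hypothesis to both $R[\Lambda^n_0]$ and $R[\partial\Delta^{n-1}]$.

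Since $\partial B$ is flat over $R[\partial\Delta^n]$, and both $M$ and $P$ are projective (hence flat) over $\partial B$ -- noting that $P$ is projective as a direct summand of $(\partial B)^N$ -- tensoring the sequence above over $R[\partial\Delta^n]$ with $M$ and with $P$ yields exact sequences
$$0\to M\to\prod_i\partial_iM\to\prod_{i<j}\partial_{ij}M,\qquad 0\to P\to\prod_i\partial_iP\to\prod_{i<j}\partial_{ij}P,$$
so $M$ and $P$ are the equalizers of the respective difference diagrams. Now the family $(f_i)$ defines $\prod f_i\colon\prod_i\partial_iM\to\prod_i\partial_iP$, and the hypothesis that $f_i$ and $f_j$ agree on $\partial_{ij}M$ means that this product intertwines the two difference maps into $\prod_{i<j}\partial_{ij}P$. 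By the universal property of equalizers, $\prod f_i$ restricts uniquely to a morphism $f\colon M\to P$ satisfying $f\otimes_{\partial B}\partial_iB=f_i$. To conclude that $f$ is an isomorphism, apply the same construction to $(f_i^{-1})$ -- which again satisfies the pairwise compatibility -- to produce $g\colon P\to M$; then $gf$ and $fg$ restrict to the identity on each $\partial_iM$ and $\partial_iP$ respectively, and uniqueness of the equalizer factorizations forces $gf=\mathrm{id}_M$ and $fg=\mathrm{id}_P$.

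The main obstacle is the first step: rigorously establishing the \v{C}ech exactness for the face decomposition of $\partial\Delta^n$. The induction sketched above is the natural approach, but it requires care in identifying the intersection schemes $\partial_0\Delta^n\cap\Lambda^n_0$ with $\partial\Delta^{n-1}$ and verifying that the relevant square of coordinate rings is genuinely a pullback. Once that is in hand, the remainder is a formal consequence of flatness together with the universal property of equalizers.
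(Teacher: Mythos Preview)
Your proposal is correct and follows essentially the same approach as the paper: both use the left exact \v{C}ech-type sequence $0\to R[\partial\Delta^n]\to\bigoplus_i R[\partial_i\Delta^n]\to\bigoplus_{i<j}R[\partial_{ij}\Delta^n]$, tensor it up using flatness of $\partial B$ and projectivity of $M$ and $P$, and then read off the glued map from the resulting equalizer diagrams. The paper simply asserts the exactness of this sequence and concludes in one line, whereas you supply more detail (the inductive proof of exactness and the explicit construction of the inverse), but the underlying argument is the same.
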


\begin{proof}
There is a left exact sequence of $R[\partial\Delta^n]$-modules
\[
0\to R[\partial\Delta^n]\to \oplus_{i=0}^n R[\partial_i\Delta^n]\to \oplus_{i<j}R[\partial_{ij}\Delta^n].
\]
Tensoring it with $\partial B$ over $R[\partial\Delta^n]$,
we get a left exact sequence for every projective $\partial B$-module.
The maps $f_i$ induce a commutative square in the following diagram
\[\xymatrix{
0\ar[r] & M\ar[r] & \oplus_i\partial_iM\ar[r]\ar[d]^{f_i} & \oplus_{i<j}\partial_{ij}M\ar[d]^{f_{ij}}\\
0\ar[r] & P\ar[r] & \oplus_i\partial_iP\ar[r] & \oplus_{i<j}\partial_{ij}P.}\]
Since $\partial B$ is flat $R[\partial\Delta^n]$, both $M$ and $P$ are flat
as $R[\partial\Delta^n]$-modules, then the rows in the diagram are exact,
as they are obtained by tensoring with the left exact sequence above. Then there is a
unique isomorphism $f\colon M\to P$ that makes the diagram commutative.
\end{proof}

\begin{lem}\label{Hn}
Suppose $M$ is a finitely generated projective $R[\Delta^n]$-module. A map
\[f\colon M\to R[\Delta^n]^N\]
is an admissible embedding if and only if its restriction to the boundary
\[f\otimes R[\partial\Delta^{n}]\colon M\otimes_{R[\Delta^n]}R[\partial\Delta^{n}]\to R[\partial\Delta^{n}]^N\]
is an admissible embedding and the localized map\[f_t\colon
M_t\to R[\Delta^n]_t^N\] is an admissible embedding.
\end{lem}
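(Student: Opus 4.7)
The forward direction is routine: if $f$ is an admissible embedding, the short exact sequence $0\to M\to R[\Delta^n]^N\to C\to 0$ with $C$ projective splits, and splittings are preserved under the base changes $-\otimes_{R[\Delta^n]}R[\partial\Delta^n]$ and $-\otimes_{R[\Delta^n]}R[\Delta^n]_t$; so both $f\otimes R[\partial\Delta^n]$ and $f_t$ are admissible embeddings.

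For the converse, set $A=R[\Delta^n]$, $\bar A=R[\partial\Delta^n]$, $\bar f=f\otimes\bar A$. The plan is to first prove that $f$ is injective and then prove that $C=\mathrm{coker}(f)$ is projective. For injectivity, let $K=\ker f$; it is finitely generated because $A$ is Noetherian and $M$ is finitely generated. The composition $K\otimes\bar A\to\bar M\xrightarrow{\bar f}\bar A^N$ is zero and $\bar f$ is injective by hypothesis, so $K\subseteq tM$; since $A^N$ is $t$-torsion free, this upgrades to $K=tK$, and hence $K=t^rK$ for every $r$. The hypothesis that $f_t$ is injective implies $K_t=0$, so each generator of $K$ is killed by some power of $t$, and so $t^rK=0$ for some $r$. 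Combining these gives $K=0$.

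For projectivity of $C$ note that $C$ is finitely presented (since $M$ is finitely generated projective, hence finitely presented), so it suffices to prove $C$ is flat over $A$, and this can be checked prime-by-prime. If $t\notin\mathfrak p$ then $A_\mathfrak p$ is a localization of $A_t$ and $C_\mathfrak p$ is a localization of $C_t$, hence free. If $t\in\mathfrak p$, apply the local criterion of flatness to the ideal $(t)\subseteq\mathfrak pA_\mathfrak p$: the two inputs are (i) that $C\otimes\bar A$ is projective over $\bar A$, so $C_\mathfrak p/tC_\mathfrak p$ is free over $\bar A_{\bar{\mathfrak p}}$, and (ii) that $\mathrm{Tor}_1^A(C,\bar A)=0$, which drops out of the four-term sequence
\[0\to\mathrm{Tor}_1^A(C,\bar A)\to\bar M\xrightarrow{\bar f}\bar A^N\to C\otimes\bar A\to 0\]
obtained from $0\to M\to A^N\to C\to 0$ together with the already-known injectivity of $\bar f$. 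Then $C_\mathfrak p$ is flat, hence free since finitely generated over a local ring, completing the proof.

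The main technical point is the second case of the flatness check, where the usual Zariski patching does not directly apply because $\{t=0\}$ is closed rather than open; this is precisely where the local criterion of flatness together with the vanishing of $\mathrm{Tor}_1^A(C,\bar A)$ is needed. The injectivity argument is essentially a Nakayama-style finiteness argument and should go through without difficulty.
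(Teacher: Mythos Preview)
Your proposal is correct. The overall architecture matches the paper's proof: establish injectivity of $f$, then show $C=\mathrm{coker}(f)$ is locally free by splitting into the cases $t\notin\mathfrak p$ and $t\in\mathfrak p$. The differences are in the implementation of each step.

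For injectivity, your argument is more elaborate than needed. The paper simply observes that $K=\ker f$ sits inside the projective (hence $t$-torsion-free) module $M$, so the localization map $K\to K_t$ is injective; since $K_t=\ker(f_t)=0$, one gets $K=0$ immediately. Your detour through $K\subseteq tM$ and $K=tK$ brings in the boundary hypothesis, which is not actually required for this half.

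For projectivity in the case $t\in\mathfrak p$, you invoke the local criterion of flatness together with $\mathrm{Tor}_1^A(C,\bar A)=0$, which is clean and conceptual. The paper instead argues directly with ranks: it first uses the boundary hypothesis to see that $C/mC$ has dimension $N-r$ over the residue field (where $r=\mathrm{rk}\,M$), lifts generators by Nakayama to a surjection $g\colon A_m^{N-r}\to C_m$, then checks $\ker(g)_t=0$ using that $C_t$ is projective of rank $N-r$, and concludes $\ker(g)=0$ since $\ker(g)$ is $t$-torsion-free. Your route avoids this rank bookkeeping at the cost of quoting a heavier theorem; the paper's route is more elementary but longer to write out.
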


\begin{proof}
Let $K$ and $C$ denote the kernel and cokernel of $f$ respectively.
We need to check that $K=0$ and $C$ is projective. Note that
$K$ is a submodule of a free finite rank $R[\Delta^n]$-module. Since $t$ is not a zero
divisor in $R[\Delta^n]$, then the localization map $K\to K_t$ is injective and $K_t=\ker(f_t)=0,$ hence $K=0$.
Let $r$ denote the rank of $M$. For every maximal ideal $m$ of
$R[\Delta^n]$ if $t\notin m$ then $C_m$ is a localization of $C_t$,
hence it is a free module of rank $N-r$. If $t\in m$, then $C/mC$ is a
free module of rank $N-r$. By Nakayama's lemma there is a
surjection $g\colon R[\Delta^n]^{N-r}_m\to C_m$ of modules over the local ring
$R[\Delta^n]_m$. Then localization $(C_m)_t$ is a localization of the projective
module $C_t$ of rank $N-r$. Then $(C_m)_t$ is projective of rank $N-r$.
Since $g_t$ is a surjective map between projective modules of the same rank,
then it is an isomorphism, and so $Ker(g)_t=0$. Then $Ker(g)=0$, since $Ker(g)$ is a
submodule of the free module $R[\Delta^n]^{N-r}_m$, and $t$ is not a zero divisor of $R[\Delta^n].$
\end{proof}

\begin{lem}\label{borderextend}
Suppose $M$ is a finitely generated projective $R[\Delta^n]$-module.
Assume that there is an admissible embedding $f'\colon M\otimes_{R[\Delta^n]}R[\partial\Delta^n]\to R[\partial\Delta^n]^N$.
Then there is a number $d$ and an admissible embedding $f\colon M\to R[\Delta^n]^N\oplus R[\Delta^n]^d$ such that the map
\[f\otimes R[\partial\Delta^n]\colon M\otimes_{R[\Delta^n]}R[\partial\Delta^n]\to
R[\partial\Delta^n]^N\oplus R[\partial\Delta^n]^d\] equals the composition of $f'$ and
the standard embedding $R[\partial\Delta^n]^N\to R[\partial\Delta^n]^N\oplus R[\partial\Delta^n]^d$.
\end{lem}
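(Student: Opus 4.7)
The plan is to realise $f$ as a direct sum $f = (g,h)\colon M \to R[\Delta^n]^N \oplus R[\Delta^n]^d$, where $g$ lifts $f'$ and $h$ is an auxiliary map that vanishes on the boundary but forces admissibility on the interior. The verification that the resulting $f$ is admissible will be reduced, via Lemma~\ref{Hn}, to two independent checks: one on $\partial\Delta^n$, and one on the localisation at $t := t_0 \cdots t_n$.

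First I would lift $f'$. Since $M$ is projective and the natural surjection $R[\Delta^n]^N \twoheadrightarrow R[\partial\Delta^n]^N$ of $R[\Delta^n]$-modules can be lifted along any projective, the composite $M \to M \otimes_{R[\Delta^n]} R[\partial\Delta^n] \xrightarrow{f'} R[\partial\Delta^n]^N$ factors through a map $g\colon M \to R[\Delta^n]^N$ satisfying $g \otimes R[\partial\Delta^n] = f'$. Next, because $M$ is finitely generated projective over $R[\Delta^n]$, I fix any admissible embedding $\iota\colon M \hookrightarrow R[\Delta^n]^d$ as a direct summand, together with a retraction $\pi\colon R[\Delta^n]^d \to M$ satisfying $\pi\iota = \mathrm{id}_M$. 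I then set $h := t\iota$ and $f := (g, h)$.

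The boundary compatibility is immediate: because $t \equiv 0$ in $R[\partial\Delta^n]$, the map $h \otimes R[\partial\Delta^n]$ vanishes, and so $f \otimes R[\partial\Delta^n]$ is exactly $f'$ postcomposed with the standard embedding $R[\partial\Delta^n]^N \hookrightarrow R[\partial\Delta^n]^N \oplus R[\partial\Delta^n]^d$, which is admissible since $f'$ is. For the interior, $t$ is a unit in $R[\Delta^n]_t$, so the map $(0, t^{-1}\pi_t)\colon R[\Delta^n]_t^N \oplus R[\Delta^n]_t^d \to M_t$ retracts $f_t$: for $x \in M_t$ one computes $t^{-1}\pi_t(t\iota_t(x)) = \pi_t\iota_t(x) = x$. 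Hence $f_t$ is split injective with projective cokernel, i.e.\ an admissible embedding. Lemma~\ref{Hn} then yields that $f$ itself is admissible, and by construction $f \otimes R[\partial\Delta^n]$ is the prescribed composite.

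I do not expect a genuine obstacle; the only real idea is the choice $h = t\iota$, which simultaneously kills the boundary (so that the prescribed restriction $f'$ is not disturbed) and becomes invertible on the locus where Lemma~\ref{Hn} actually demands admissibility. Everything else is a routine assembly of projectivity lifting and the admissibility criterion, with the sole caveat that one should check at the outset that the admissibility of $f' \oplus 0$ genuinely follows from that of $f'$, which is automatic since adding a free direct summand to the target preserves the cokernel up to a free summand.
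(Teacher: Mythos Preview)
Your proof is correct and follows essentially the same approach as the paper: both lift $f'$ to some $g\colon M\to R[\Delta^n]^N$, adjoin $t\iota$ for an admissible embedding $\iota\colon M\hookrightarrow R[\Delta^n]^d$, and then invoke Lemma~\ref{Hn}. The only cosmetic differences are that the paper constructs $g$ by first extending along a free cover of $M$ and lifting coordinatewise (rather than appealing to projectivity abstractly), and verifies admissibility of $f_t$ by factoring through that free module rather than by exhibiting your retraction $(0,t^{-1}\pi_t)$.
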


\begin{proof}
Consider some admissible embedding $j\colon M\to R[\Delta^n]^d$ and some
projection $p\colon R[\Delta^n]^d\to M$ such that $p\circ j=id_M$. Let $e_1,\ldots, e_d$
denote the standard basis of $R[\Delta^n]^d$ and let $\bar{e}_1,\ldots, \bar{e}_d$ be the
standard basis of $R[\partial\Delta^n]^d.$ Consider the composition
\[R[\partial\Delta^n]^d\xrightarrow{p\otimes id} M\otimes_{R[\Delta^n]} R[\partial\Delta^n]\xrightarrow{f'} R[\partial\Delta^n]^N.\]
For $i=1,\ldots, d$ take $x_i\in R[\Delta^n]^N$ to be any preimage
of $f'((p\otimes id)(\bar{e}_i))\in R[\partial\Delta^n]^N.$ Then
there is a homomorphism
\[F\colon R[\Delta^n]^d\to R[\Delta^n]^N\oplus R[\Delta^n]^d,\quad e_i\mapsto (x_i,(t_0t_1\ldots t_n)e_i),\]
where $t_0,\ldots, t_n$ denote the coordinates in the ring $R[\Delta^n]$.
Take $f\colon M\to R[\Delta^n]^N\oplus R[\Delta^n]^d$ to be the composition
$f=F\circ j.$ Let us check that $f$ is an admissible embedding.

Note that $f\otimes_{R[\Delta^n]}R[\partial\Delta^n]$ is the composition of $f'$ and the
standard embedding $R[\partial\Delta^n]^N\to R[\partial\Delta^n]^N\oplus R[\partial\Delta^n]^d$.
In particular, $f\otimes_{R[\Delta^n]}R[\partial\Delta^n]$ is an admissible embedding.

The localization $f_t\colon M_t\to R[\Delta^n]^{N+d}_t$ is the composition of $F_t\circ j_t$
and $F_t$ fits into a commutative triangle
\[
\xymatrix{
R[\Delta^n]_t^d\ar[r]^-{g\oplus id}\ar[rd]_{F_t} & R[\Delta^n]^N_t\oplus R[\Delta^n]_t^d\\
 & R[\Delta^n]_t^N\oplus R[\Delta^n]_t^d\ar[u]_{id\oplus \frac{1}{t}id}
}
\]
where $g\colon R[\Delta^n]_t^d\to R[\Delta^n]^N_t$ is the map that sends $e_i$ to $x_i$.
The right arrow of the triangle is an isomorphism and $g\oplus id$ is an admissible embedding.
Then $F_t$ is an admissible embedding, and hence so is $f_t$.
By Lemma~\ref{Hn} $f$ is an admissible embedding.
\end{proof}

\begin{lem}\label{lcicrit}
Suppose $X$ is an affine variety over $k$, $A$ and $Y$ are
equidimensional flat affine $X$-schemes, $A\to X$ is finite, $Y$ is
Cohen--Macaulay, and $f\colon A\to Y$ is a morphism over $X$.
Suppose $Z$ is a closed subset of $X$ and the map on the fiber
products $f_Z\colon A\times_X Z\to Y\times_X Z$ and
$A\times_X(X-Z)\to Y\times_X(X-Z)$ are l.c.i. embeddings. Then $f$
is an l.c.i. embedding.
\end{lem}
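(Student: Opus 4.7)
The plan is to verify the locally complete intersection condition for $f$ locally at each closed point of $A$, bootstrapping from the two hypotheses over $Z$ and $X-Z$ by means of flatness together with the Cohen--Macaulay property of $Y$. The first step is to check that $f$ is already a closed embedding. Since $A\to X$ is finite and $Y\to X$ is separated, $f$ is proper, so it suffices to show that the coherent cokernel $\mathcal{C}=\operatorname{coker}(\mathcal{O}_Y\to f_*\mathcal{O}_A)$ vanishes. Flatness of $A$ and $Y$ over $X$ makes the formation of this cokernel commute with base change along $Z\hookrightarrow X$ and $(X-Z)\hookrightarrow X$, so $\mathcal{C}\otimes_{\mathcal{O}_X}\mathcal{O}_Z$ and $\mathcal{C}|_{X-Z}$ are both zero by the closed embedding parts of the hypotheses. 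Since the support of $\mathcal{C}$ is therefore contained in $Y_Z$, while $\mathcal{C}=J\mathcal{C}$ there ($J\subset\mathcal{O}_X$ being the ideal of $Z$), a Nakayama argument at closed stalks of $Y_Z$ yields $\mathcal{C}=0$.

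Next I would work at a closed point $a\in A$ with $x\in X$ and $f(a)\in Y$, writing $R=\mathcal{O}_{X,x}$, $B=\mathcal{O}_{A,a}$, $C=\mathcal{O}_{Y,f(a)}$, and $I=\ker(C\twoheadrightarrow B)$. Finiteness and flatness of $B/R$ give $\dim B=\dim R$, while flatness of $C/R$ together with equidimensionality of $Y$ yield $\dim C=\dim R+r$, where $r$ is the fiber dimension of $Y\to X$; in particular $\operatorname{ht}(I)=r$. Since $C$ is Cohen--Macaulay, to show $I$ is generated by a regular sequence it is enough to show that $I$ can be generated by $r$ elements, because in a CM local ring any height-$r$ ideal with $r$ generators is automatically a complete intersection. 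By Nakayama this reduces to the bound $\dim_{k(a)}(I/I^2)\otimes_B k(a)\le r$, and for $a$ lying over $X-Z$ the bound is immediate from the l.c.i.\ hypothesis on the open piece.

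The remaining case, where $a$ lies over a point of $Z$, is the crux. Writing $J\subset R$ for the ideal of $Z$, the plan is to identify
\[(I/I^2)\otimes_R R/J\;\cong\;\bar I/\bar I^2,\]
where $\bar I=(I+JC)/JC\subset C/JC$ is the ideal of $A_Z$ in $Y_Z$, and then invoke the hypothesis that $f_Z$ is l.c.i.\ of codimension $r$. This identification rests on the equality $I\cap JC=JI$: flatness of $B$ over $R$ makes the sequence $0\to I\to C\to B\to 0$ stay exact after $\otimes_R R/J$, since $\operatorname{Tor}_1^R(B,R/J)=0$, so $I/JI\hookrightarrow C/JC$ with image exactly $\bar I$. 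Consequently $(I/I^2)/J(I/I^2)=I/(I^2+JI)=I/(I^2+I\cap JC)=\bar I/\bar I^2$, which by the hypothesis on $f_Z$ is locally free of rank $r$ over $B/JB$; tensoring with $k(a)$ and applying Nakayama over $B$ gives $r$ generators of $I/I^2$, and hence of $I$ over $C$. Cohen--Macaulayness of $C$ then finishes the proof.

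The step I expect to require the most care is precisely this last identification of $(I/I^2)\otimes_R R/J$ with the conormal module $\bar I/\bar I^2$: it is the one point where flatness of $A$ over $X$ has to be used decisively, and it is the bridge that transports the l.c.i.\ data from the closed fiber to the whole total space. Once it is secured, everything afterwards is formal Cohen--Macaulay bookkeeping combined with Nakayama's lemma.
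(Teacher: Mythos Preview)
Your argument is correct and follows essentially the same route as the paper. Both proofs first establish that $f$ is a closed embedding by a Nakayama argument over $Z$ combined with the open hypothesis over $X-Z$; both then bound the number of generators of the ideal $I$ at a point over $Z$ by using flatness of $A$ over $X$ to identify $I/JI$ with the defining ideal of $A_Z$ in $Y_Z$, and finally invoke the Cohen--Macaulay property of $Y$ to conclude that the resulting $r$ generators form a regular sequence. The only cosmetic difference is that you phrase the generator bound via the conormal module $I/I^2$ and the isomorphism $(I/I^2)\otimes_R R/J\cong\bar I/\bar I^2$, whereas the paper works directly with $I_y\otimes_{k[X]}k[Z]$; the underlying flatness input ($\operatorname{Tor}_1^R(B,R/J)=0$) and the Nakayama step are identical. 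Your invocation of properness of $f$ and of flatness for the base change of the cokernel are harmless but unnecessary in the affine setting, and you might make explicit (as you implicitly use) that the codimension of $A_Z$ in $Y_Z$ is again $r$, which follows from the same fiber-dimension count.
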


\begin{proof}
Denote by $n=\dim Y-\dim A$ and let $A_Z$ (resp. $Y_Z$, $A_{X-Z}$,
$Y_{X-Z}$) be the fiber product $A\times_X Z$ (respectively $Y\times_XZ$,
$A\times_X(X-Z)$, $Y\times_X(X-Z)$). Let us check that $k[Y]\to
k[A]$ is surjective. For every point $x\in X$ if $x\in X-Z$, then
the localization map $k[Y]_x\to k[A]_x$ is surjective. If $x\in Z$, then
the map $k[Y]\otimes_{k[X]}k(x)\to k[A]\otimes_{k[X]}k(x)$ is
surjective. It follows from Nakayama's lemma that the map on localizations
$k[Y]_x\to k[A]_x$ is surjective. Then $k[Y]\to k[A]$ is surjective,
hence $A\to Y$ is a closed embedding. Let $I$ denote the kernel of
$k[Y]\to k[A]$. For every point $y\in Y$ if $y$ is in
$Y_{X-Z}$, then $I_y$ is generated by a regular sequence of length
$n$. If $y$ is in $Y_Z$, the sequence
\[0\to I_y\otimes_{k[X]}k[Z]\to k[Y_Z]_y\to k[A_Z]_y\to 0\]
is exact, because $k[A]$ is flat over $k[X]$. Then
$I_y\otimes_{k[X]}k[Z]$ is generated by $n$ elements over
$k[Y_Z]_y$, hence $I_y\otimes_{k[Y]_y}k(y)$ is generated by $n$
elements. By Nakayama's lemma $I_y$ is generated by $n$
elements. Since $A$ has codimension $n$ in $Y$, these elements form
a regular sequence~\cite[III.4.5]{AK}. Then $A$ is an l.c.i.
subscheme in $Y$.
\end{proof}

\subsection*{Acknowledgements} The authors thank Marc Hoyois and Ivan Panin for helpful discussions.

\bibliographystyle{plain}

\begin{thebibliography}{10}

\bibitem{AK} A. Altman, S. Kleiman, \emph{Introduction to Grothendieck Duality Theory}, Lecture Notes in
             Mathematics, Vol.~146, Springer-Verlag, Berlin--New~York, 1970.

\bibitem{AN} A. Ananyevskiy, A. Neshitov, Framed and MW-transfers for homotopy
               modules, arXiv:1710.07412, {\em Sel. Math. New Ser.} 25 (2019), article 26.

\bibitem{AGP} A. Ananyevskiy, G.~Garkusha, I.~Panin, Cancellation theorem for
             framed motives of algebraic varieties, arXiv:1601.06642, \emph{Adv. Math.} 383 (2021), article 107681.

\bibitem{BLR} S. Bosch, W. L\"utkebohmert, M. Raynaud, \emph{N\'eron Models},
             Ergebnisse der Mathematik und ihrer Grenzgebiete, Vol.~21, Springer-Verlag, Berlin, 1990.
             
\bibitem{DKO} A. Druzhinin, H. Kolderup, P.~A. \O stv\ae r, Strict $\mathbb A^1$-invariance over the integers,
preprint arXiv:2012.07365.

\bibitem{DP} A.~Druzhinin, I.~Panin, Surjectivity of the etale excision map
             for homotopy invariant framed presheaves, arXiv:1808.07765,
             \emph{Proc. Steklov Inst. Math.} 320 (2023), 91--114.

\bibitem{EHKSY} E. Elmanto, M. Hoyois, A. Khan, V. Sosnilo, M. Yakerson, Motivic infinite loop spaces,
arXiv:1711.05248, \emph{Cambridge J. Math.} 9(2) (2021), 431--549.

\bibitem{EHKSY1} E. Elmanto, M. Hoyois, A. Khan, V. Sosnilo, M. Yakerson,
               Modules over algebraic cobordism, arXiv:1908.02162, \emph{Forum Math.
               Pi} 8:e14 (2020), 1--44.

\bibitem{Fulton} W. Fulton, \emph{Intersection theory}, Springer-Verlag, Berlin-Heidelberg, 1984.

\bibitem{GNP} G. Garkusha, A. Neshitov, I. Panin, Framed motives of relative motivic spheres,
               arXiv:1604.02732, \emph{Trans. Amer. Math. Soc.} 374(7) (2021), 5131--5161.

\bibitem{GPmss} G. Garkusha, I. Panin, On the motivic spectral sequence,
               arXiv:1210.2242, \emph{J. Inst. Math. Jussieu\/} 17(1) (2018), 137--170.

\bibitem{GPPresheaves} G. Garkusha, I. Panin, Homotopy invariant presheaves with framed transfers,
             arXiv:1504.00884, \emph{Cambridge J. Math.} 8(1) (2020), 1--94.

\bibitem{GPMain} G. Garkusha, I. Panin, Framed motives of algebraic varieties
             (after V. Voevosky), arXiv:1409.4372, \emph{J. Amer. Math. Soc.} 34(1) (2021), 261--313.

\bibitem{GP5} G. Garkusha, I. Panin, The triangulated categories of framed bispectra and framed motives,
             arXiv:1809.08006, \emph{Algebra i Analiz\/} 34(6) (2022), 135--169.

\bibitem{GPO} G.~Garkusha, I.~Panin, P.~A. \O stv\ae r, Framed motivic $\Gamma$-spaces, arXiv:1907.00433,
             \emph{Izv. Math.} 87(1) (2023), 3--32.

\bibitem{H} {M.~Hovey}, Spectra and symmetric spectra in general model categories,
             {\em J. Pure Appl. Algebra\/} 165(1) (2001), 63--127.


\bibitem{Is} D. Isaksen, Flasque model structures for simplicial presheaves, {\em K-Theory\/} 36 (2005), 371--395.

\bibitem{JardineMSS} J.F. Jardine, Motivic symmetric spectra, {\em Doc. Math.} 5 (2000), 445--552.

\bibitem{Lev} M. Levine, A comparison of motivic and classical stable homotopy theories, \emph{J. Topology\/} 7 (2014), 327--362.

\bibitem{MVW} C. Mazza, V. Voevodsky, C. Weibel, {\em Lecture notes on motivic cohomology}.
Clay Mathematics Monographs, 2. American Mathematical Society,
Providence, Cambridge, MA, 2006.

\bibitem{Omega} A. Neshitov, Rigidity theorem for presheaves with $\Omega$-transfers,
         \emph{Algebra i Analiz\/} 26(6) (2014), 78--98. English transl. in
         \emph{St. Petersburg Math. J.} 26(6) (2015), 919--932.

\bibitem{PPR} I. Panin, K. Pimenov, O. R\"ondigs, A universality theorem for Voevodsky's algebraic cobordism
spectrum, {\em Homology, Homotopy Appl.} 10(2) (2008), 211--226.

\bibitem{PPR1} I. Panin, K. Pimenov, O. R\"ondigs, On Voevodsky's algebraic K-theory
         spectrum, {\em Abel. Symp. Proc.} 4 (2009), 279--330.

\bibitem{PW} I. Panin, C. Walter, On the algebraic cobordism spectra $MSL$ and $MSp$, arXiv:1011.0651,
\emph{Algebra i Analiz\/} 34(1) (2022), 144--187.

\bibitem{Pontr} L.~S. Pontrjagin, Smooth manifolds and their applications in homotopy theory,
           Tr. Mat. Inst. Steklova 45 (1955), 1--139. (Russian). English transl. in AMS translations, ser. 2, 11,
           1--114, AMS, Providence, RI, 1959.

\bibitem{PraRa} G. Prasad, M. S. Raghunathan, On the Kneser--Tits problem,
         \emph{Comment. Math. Helv.} 60 (1985), 107--12l.

\bibitem{Qui} D. Quillen, On the formal group laws of unoriented and complex cobordism theory, \emph{Bull. Amer. Math. Soc.}
75(6) (1969), 1293--1298.

\bibitem{S} {G. Segal}, Categories and cohomology theories, \emph{Topology} 13 (1974), 293--312.

\bibitem{stacks-project} The Stacks Project Authors, {\em{Stacks Project}}, http://stacks.math.columbia.edu, 2018.

\bibitem{SV96} A. Suslin, V. Voevodsky, Singular homology of abstract algebraic varieties, 
\emph{Invent. Math.} {123} (1996), 61--94.


\bibitem{VoevodskyICM} V. Voevodsky, $\bold A^1$-homotopy theory,
Proceedings of the International Congress of Mathematicians, Vol. I
(Berlin, 1998), Doc. Math. 1998, Extra Vol. I, 579--604.

\bibitem{Voe2} V.~Voevodsky, Notes on framed correspondences, unpublished, 2001.
         Also available at math.ias.edu/vladimir/files/framed.pdf
\end{thebibliography}

\end{document}